\let\counterwithin\relax
\newcommandx{\addmath}[2][1=]{\todo[linecolor=red,backgroundcolor=red!25,bordercolor=red,#1]{#2}}
\newcommandx{\fixtext}[2][1=]{\todo[linecolor=blue,backgroundcolor=blue!25,bordercolor=blue,#1]{#2}}
\newcommandx{\note}[2][1=]{\todo[linecolor=yellow,backgroundcolor=yellow!25,bordercolor=yellow,#1]{#2}}
\newcommand{\bbL}{\mathbb{L}}
\newcommand{\bbN}{\mathbb{N}}
\newcommand{\bbQ}{\mathbb{Q}}
\newcommand{\bbR}{\mathbb{R}}
\newcommand{\bbV}{\mathbb{V}}
\newcommand{\bbZ}{\mathbb{Z}}
\newcommand{\bfA}{\mathbf{A}}
\newcommand{\bfB}{\mathbf{B}}
\newcommand{\bfP}{\mathbf{P}}
\newcommand{\mfa}{\mathfrak{a}}
\newcommand{\mfb}{\mathfrak{b}}
\newcommand{\mfc}{\mathfrak{c}}
\newcommand{\sF}{\mathcal{F}}
\newcommand{\sG}{\mathcal{G}}
\newcommand{\sT}{\mathcal{T}}
\newcommand{\sU}{\mathcal{U}}
\newcommand{\sV}{\mathcal{V}}
\newcommand{\one}{\mathbf{1}}
\newcommand{\lf}{\lfloor}
\newcommand{\rf}{\rfloor}
\DeclareMathOperator\Exp{Exp}
\def\dd{\mathrm d}
\def\Lpt{\mathrm L}
\def \G{\mathrm G}
\def \B{\mathrm B}
\def\ww{\mathrm{w}}
\def\hor{\mathrm{hor}}
\def\ver{\mathrm{ver}}
\newcommand{\ter}[1]{\overline{#1}}
\newcommand{\init}[1]{\underline{#1}}
\newcommand{\ir}[2]{\mathbf{i}_{#1}^{#2}}
\newcommand{\jr}[2]{\mathbf{j}_{#1}^{#2}}
\definecolor{darkblue}{rgb}{.1,.1,.6}
\def\widebreve{\mathpalette\wide@breve}
\def\wide@breve#1#2{\sbox\z@{$#1#2$}%
     \mathop{\vbox{\m@th\ialign{##\crcr
\kern0.08em\brevefill#1{0.8\wd\z@}\crcr\noalign{\nointerlineskip}%
                    $\hss#1#2\hss$\crcr}}}\limits}
\def\brevefill#1#2{$\m@th\sbox\tw@{$#1($}%
  \hss\resizebox{#2}{\wd\tw@}{\rotatebox[origin=c]{90}{\upshape(}}\hss$}
\newcommand{\wt}[1]{\widetilde{#1}}
\newcommand{\wh}[1]{\widehat{#1}}
\newcommand{\wc}[1]{\widecheck{#1}}
\newcommand{\wb}[1]{\Grave{#1}}
\newcommand{\Fabs}[4]
{
\ifthenelse{\isempty{#2}}
{
{#1}_{#3}^{#4}
}
{
{#1}_{#3}^{#4}(#2)
}
}
\def\P{\bfP}
\def\shor{\shp_{\operatorname{hor}}}
\def\sver{\shp_{\operatorname{ver}}}
 \def\shp{\gamma}
\newcommand\geo[3]{\pi_{#1}^{#2,#3}} 
\newcommand\geod[1]{\pi_{#1}} 
\newcommand\LPP[2]{\G_{#1,#2}} 
\def \I{\mathrm I}
\def \J{\mathrm J}
\newcommand\zmin[1]{\chi^{#1}}
\newcommand\Path[2]{\Pi_{#1}^{#2}}
\def\w{\omega}
\newcommand\ex{\tau} 
\DeclareMathOperator\Rect{\mathrm{R}}
\newcommand\eset{\Lambda}
\definecolor{darkgreen}{rgb}{.1,.6,.1}
\def\Z{\mathbb Z} 
\newcommand{\be}{\begin{equation}}   
\newcommand{\ee}{\end{equation}}   
\def\Cif{U}
\def\Cifs{V}
\def\Cifa{\varphi}  
\def\Cifb{\psi}    
\def\cif{\xi_*}    
\newcommand{\Buse}[3]{\B_{#1,#2}^{#3}}    
\newcommand{\buse}[3]{\B_{#1,#2}^{#3}}    
\def\vela{v^*}  
\def\ddd{\displaystyle} 
\newcommand{\smin}[1]{{#1}^{\text{\rm min}}}   
\newcommand{\sinf}[1]{{#1}^{\text{\rm inf}}}   
\newcommand\bbullet{{\raisebox{0.5pt}{\scaleobj{0.6}{\bullet}}}} 
\newcommand*{\cl}[1]{\overline{#1}}
\newtheorem{theorem}{Theorem}[section]
\newtheorem{proposition}[theorem]{Proposition}
\newtheorem{lemma}[theorem]{Lemma}
\theoremstyle{remark}
\newtheorem{remark}[theorem]{Remark}
\newtheorem{example}[theorem]{Example}
\newtheorem{condition}[theorem]{Condition}
\begin{document}

\title[Anomalous geodesics in the iCGM]{Anomalous geodesics in the \\ inhomogeneous corner growth model}
 

\author[E.~Emrah]{Elnur Emrah}
\address{Elnur Emrah\\ University of Bristol \\ School of Mathematics \\ Bristol \\ United Kingdom}
\email{e.emrah@bristol.ac.uk}
\urladdr{https://sites.google.com/view/elnur-emrah}
\thanks{E.\ Emrah was partially supported by the EPSRC grant EP/W032112/1, by grant KAW 2015.0270 from the Knut and Alice Wallenberg Foundation, and by the Mathematical Sciences Departments at Carnegie Mellon University through a postdoctoral position.}

\author[C.~Janjigian]{Christopher Janjigian}
\address{Christopher Janjigian\\ Purdue University\\  Department of Mathematics \\ 150 N. University St.\\   West Lafayette, IN 47907\\ USA.}
\email{cjanjigi@math.purdue.edu}
\urladdr{http://www.math.purdue.edu/~cjanjigi}
\thanks{C.\ Janjigian was partially supported by a postdoctoral grant from the Fondation Sciences Math\'ematiques de Paris while working at Universit\'e Paris Diderot, a postdoctoral position at the University of Utah, and by National Science Foundation grant DMS-2125961.}

\author[T.~Sepp\"al\"ainen]{Timo Sepp\"al\"ainen}
\address{Timo Sepp\"al\"ainen\\ University of Wisconsin-Madison\\  Mathematics Department\\ Van Vleck Hall\\ 480 Lincoln Dr.\\   Madison WI 53706-1388\\ USA.}
\email{seppalai@math.wisc.edu}
\urladdr{http://www.math.wisc.edu/~seppalai}
\thanks{T.\ Sepp\"al\"ainen was partially supported by  National Science Foundation grants  DMS-1602486, DMS-1854619, DMS-2152362,  and DMS-2448375, by Simons Foundation grant 1019133, and by the Wisconsin Alumni Research Foundation.} 
\keywords{Busemann function, competition interface, exclusion process, geodesic, last-passage percolation, M/M/1 queue, random environment, second-class particle, zero-range process}
\subjclass[2000]{60K35, 60K37} 

\thanks{}

\begin{abstract}   
We study Busemann functions, semi-infinite geodesics, and competition interfaces in the exactly solvable last-passage percolation with inhomogeneous exponential weights. New phenomena  concerning geodesics arise due to inhomogeneity. These include novel Busemann functions associated with  flat regions of the limit shape and thin rectangles, semi-infinite geodesics with intervals of asymptotic directions, non-trivial axis-directed geodesics, intervals with no geodesic directions, and isolated geodesic directions. We further observe a new dichotomy for competition interfaces and second-class customers in a series of memoryless continuous-time queues with inhomogeneous service rates: a second-class customer either becomes trapped or proceeds through the service stations at strictly positive speed.
\end{abstract}
\maketitle

\setcounter{tocdepth}{1}
\tableofcontents

\section{Introduction}

The {\it corner growth model} (CGM), which is also known as {\it directed last-passage percolation} (LPP), is one of the best-studied models in the Kardar-Parisi-Zhang (KPZ) universality class. Viewed as a growth model, it describes a growing random set of infected sites on the first quadrant of the integer lattice  $\bbZ_{\geq 0}^2$, which begins with the origin infected and then evolves by infecting north and east neighbors of already infected sites. The input to the model is the 
\textit{environment} which is a collection of random weights, typically taken to be i.i.d.~or ergodic. If they are non-negative, these represent the time it takes for a site to be added to the cluster once its neighbors to the left and below have already joined. In its LPP formulation, the model can be thought of as a directed version of first-passage percolation (FPP). When the weights are exponentially distributed, the evolution is Markovian and the model is closely linked to other extensively studied stochastic models such as TASEP and series of M/M/1 queues. 

The CGM with i.i.d.~exponential weights is {\it exactly solvable}, meaning that the model has structure which allows explicit computation of statistics of interest. This exact solvability lies behind Johansson's seminal result \cite{Joh-06} showing Tracy-Widom fluctuations of the passage times, confirming rigorously that the model lies in the KPZ class, and subsequent works extending this to process-level convergence to the KPZ fixed point/directed landscape \cite{Dau-Ort-Vir-22-,Dau-Vir-21-, Mat-Qua-Rem-21}. 

The present paper studies the solvable inhomogeneous extension of the exponential CGM. In the LPP formulation, this means varying rates  along rows and columns. In TASEP language, this corresponds to particles and holes each carrying their own exponential clocks with different rates. Some aspects of the inhomogeneous model have been studied previously: hydrodynamics and shape theorems \cite{Emr-16, Emr-Jan-Sep-21, Sep-Kru-99}, some limiting statistics \cite{Bor-Pec-08, Dim-24-,Joh-Rah-22}, and large deviations \cite{Emr-Jan-17}. There has been recent interest in further exact formulas in this model, as well as in its discrete-time and continuous-space counterparts \cite{Bis-etal-23,Joh-Rah-22, Kni-Pet-Sae-19}. There has also been recent work  in the physics literature on fluctuations and connections to localization in the Brownian analogue of the inhomogeneous model we study and its positive temperature counterpart \cite{Kra-LeD-Oco-21}. The localization phenomena observed there are related to some of the novel behavior of infinite geodesics we outline momentarily. 

\subsection{Highlights of main results} 
In the i.i.d.~exponential CGM, considerable work has been devoted to the study of interrelated questions concerning semi-infinite geodesics, competition interfaces, and Busemann functions (directional limits of passage-time increments)  \cite{Cat-Pim-12,Cat-Pim-13,Fan-Sep-20,Fer-Mar-Pim-06, Fer-Pim-05}. These include distributional structure of Busemann functions, directedness, uniqueness,  and coalescence of semi-infinite geodesics. Such problems are also connected to asymptotics of second-class particles  in TASEP and second-class customers in series of queues. 

Our interest is in the impact of inhomogeneity on Busemann functions, semi-infinite geodesics and competition interfaces. 
\textit{We establish new phenomena that arise from inhomogeneity and are not present in the i.i.d.~ setting.} These  include the following:
\begin{enumerate}
\item In Theorem \ref{thm:buslim}, we show that there are (potentially infinitely many) non-trivial Busemann functions obtained as the limit of passage-time increments along fixed rows or columns. Limits of these \emph{thin rectangle} Busemann functions give the (unique) Busemann function associated to each flat segment of the limit shape.  
\item In Theorem \ref{thm:geo}, we show that environments exist with infinitely many non-trivial non-coalescing semi-infinite geodesics rooted at zero which have the same \textit{fixed} asymptotic direction. See Example \ref{ex:suffcond}\eqref{ex:suffcond:iidunif} for a concrete example. 
\item Theorem \ref{thm:geo} also shows that axis directed geodesics exist which do not become trapped on a row or column. Again, see Example \ref{ex:suffcond}\eqref{ex:suffcond:iidunif} for a concrete example.
\item Non-empty intervals of directions exist such that no semi-infinite geodesic anywhere on the lattice has a subsequential limit direction in these intervals. Example \ref{ex:suffcond}\eqref{ex:suffcond:empty} gives an extreme case where the set of such directions is the entire linear segment other than the boundary direction. See also Examples \ref{ex:suffcond}\eqref{ex:suffcond:interval}, \eqref{ex:suffcond:isolated}, and \eqref{ex:suffcond:critical}.
\item In Example \ref{ex:suffcond}\eqref{ex:suffcond:interval}, we show existence of a semi-infinite geodesic with a prescribed interval as its set of subsequential limit directions. In particular, there are geodesics in a continuous independent environment without an asymptotic direction.
\item In Example \ref{ex:suffcond}\eqref{ex:suffcond:isolated}, we show existence of isolated directions of geodesics, meaning that a semi-infinite geodesic has direction $\zeta$ but a neighborhood around $\zeta$ contains no other subsequential limit direction of any other  semi-infinite geodesic.
\item In Theorems  \ref{thm:cif1} and \ref{th:2cl}, we prove a sharp dichotomy for competition interfaces and the asymptotic behavior of a second-class customer in a series of inhomogeneous memoryless queues: the competition interface either becomes trapped on a row or column or else converges to a direction in the strictly concave region of the limit shape; similarly, a second-class customer either moves at a strictly positive speed or is eventually  trapped at a single service station.
\end{enumerate}

Theorem \ref{thm:geo} shows that in general each  semi-infinite geodesic from a fixed initial point falls in exactly one of three types:  
\begin{itemize} 
\item 
directed  into the strictly concave region of the limit shape,
\item divergent $e_1$ and $e_2$ coordinates and subsequential limit directions contained in one of the two (possibly degenerate) linear segments, or 
\item trapped on a row or column. 
\end{itemize}  The first and last types always exist, with the first type behaving largely similarly to geodesics in the homogeneous model. We show that there exist geodesics which are directed into the linear segments and which do not become trapped on a row or column of the lattice if and only if there is no most favorable row or column (in the sense of weight means). These are the most novel (and subtle) of the semi-infinite geodesics we observe and they generate most of the anomalous examples mentioned above. The behavior of the geodesics in the linear region depends strongly on the precise form of the inhomogeneity, as described in Theorem \ref{thm:lindir}.

\subsection{Background: geodesics and regularity of the limit shape}
In metric-like stochastic growth models, convexity and differentiability  of the limit shape are closely connected to the geometry of geodesics. In FPP and LPP, the limit shape in i.i.d.~models can have flat regions if the minimum (resp.\ maximum) of the vertex weight is attained frequently enough to create an infinite cluster.  When this happens, the shape function is   affine in a cone symmetric about the diagonal of the plane. In FPP,  this phenomenon traces back to the classic paper of Durrett and Liggett \cite{Dur-Lig-81}, and was subsequently studied by Marchand \cite{Mar-02} and Auffinger-Damron \cite{Auf-Dam-13}.  The phenomenon is the same in LPP, as recorded in Section 3.2 of \cite{Geo-Ras-Sep-17-ptrf-1}. In ergodic FPP, it is known that any compact convex subset of $\bbR^2$ with the symmetries of $\bbZ^2$ arises  as a limit shape \cite{Hag-Mee-95}. The proof of this fact, as well as the construction of the polygonal shapes in \cite{Ale-Ber-18, Bri-Hof-21}, rely on  random favorable paths in a sea of unfavorable weights, carefully constructed to preserve ergodicity. 

In models like ours, linear segments arise from a related but different source, where favorable regions are created by independent weights with different distributions. A particular phenomenon leading to linear segments in this model has previously been studied under the name of  \textit{mesoscopic clustering}.
It is perhaps easiest to understand in TASEP where the jump rates of the particles are chosen randomly from an ergodic distribution and the jump rates of the holes are constant.  Denote by $c>0$ the infimum of the support of the random rate and assume that the left tail of the distribution is sufficiently thin near $c$. In this case, particles with rates close to $c$ occur infinitely often, but relatively rarely. Because of the exclusion rule, faster moving particles become trapped behind slow particles, forming {\it platoons}. Ahead of each such slow particle, however, is another even slower particle and so over time platoons merge and move at speeds approaching $c$. This merging and the subsequent slow-down of the model occurs on a mesoscopic scale, below the hydrodynamic scale. Consequently, at densities below a certain critical density, one sees only rigid transport at speed $c$. This fixed-speed evolution manifests itself as a flat segment on the limit shape of the growth model. See \cite{And-etal-00,Bah-Bod-18, Gri-Kan-Sep-04, Kru-Fer-96, Sep-Kru-99,Sly-16-} for previous work on this TASEP formulation. Linear segments similar to the ones we observe have also appeared in an inhomogeneous FPP \cite{Ahl-Dam-Sid-16}. 

When the limit shape exhibits linear segments, the standard convexity and curvature considerations which enforce directedness of semi-infinite geodesics no longer apply. In \cite{Ale-Ber-18}, Alexander and Berger gave an example of an ergodic FPP model with a polygonal limit shape, where the mechanism creating linear segments enforces that all semi-infinite geodesics are directed into the corners of the shape. Brito and Hoffman \cite{Bri-Hof-21} subsequently produced another ergodic FPP model where a different mechanism results in a polygonal shape. In that model, there is one semi-infinite geodesic directed into each of the linear segments and this geodesic has the full linear segment as its set of subsequential limit directions. In our inhomogeneous but independent setting,  a richer structure is possible, with essentially arbitrary sub-intervals of the linear segment arising as the set of directions of unique semi-infinite geodesics.

\subsection{Methods}

As alluded to previously, the limits of passage-time increments along a given direction, row, or column define the corresponding Busemann function. Our approach begins with establishing the existence of Busemann functions and accessing their distributional structure. 
Specifically, for each Busemann function, we compute the marginal distributions along each nearest-neighbor edge and show that these are independent along any down-right path. The latter feature is an aspect of the \emph{Burke property} discussed further in Section \ref{sec:Burke}. In our model, 
the edge marginals are exponentially distributed with certain inhomogeneous rates. The form of the inhomogeneity of the rates is chosen to preserve this notion of exact solvability. 

The properties of the Busemann functions are collected in Theorem \ref{thm:buslim}. The proof of this result relies on various couplings with the stationary version of the inhomogeneous exponential CGM and its Burke property. In the strictly concave regions, the argument proceeds similarly to the homogeneous case \cite{Geo-etal-15, Sep-18}, through squeezing the Busemann functions by increments of the stationary models. To implement this approach for the inhomogeneous CGM, we utilize shape theorems developed in our previous work \cite{Emr-Jan-Sep-21}. The flat regions, being adjacent to the axes, cannot be treated in the same fashion because the squeezing argument breaks down from one side (the axis direction).

To overcome the preceding difficulty, we introduce thin-rectangle Busemann functions. Once again, it is unclear \textit{a priori} how to squeeze from the axis direction. Since one works with a fixed number of rows or columns at this level, there is always a first most favorable row or column. Our technical innovation is to interpret this as a boundary coming from a stationary model. This observation enables us to execute a version of the squeezing argument for the thin-rectangle case. We subsequently show that the Busemann functions of the flat regions can be squeezed from the axis direction via the limits of the thin-rectangle Busemann functions. Agreement of the limits of thin rectangle Busemann functions with limits coming from the strictly concave region underlies the uniqueness of the Busemann functions in flat regions as well as the dichotomy we prove for competition interfaces. 


Busemann functions in lattice growth models trace back to the seminal work of Newman \cite{New-95}, with subsequent work including \cite{Cat-Pim-12, Cat-Pim-13, Dam-Han-14, Geo-Ras-Sep-17-ptrf-2, Geo-Ras-Sep-17-ptrf-1, Hof-05, Hof-08, Jan-Ras-Sep-22-}. The aforementioned Burke property was first observed in a quadrant growth model by Cator and Groeneboom \cite{Cat-Gro-05,Cat-Gro-06}. Shortly thereafter, Bal\'azs, Cator, and the last author extended this to the homogeneous CGM \cite{Bal-Cat-Sep-06}. 



Modulo some technical differences, most of our results concerning geodesics follow from the distributional structure of Busemann functions similar to arguments in \cite{Geo-Ras-Sep-17-ptrf-2}, with two exceptions: coalescence and our result giving control over the linear segment geodesics. Theorem \ref{thm:geo} describes the 
general structure of semi-infinite geodesics (part \eqref{thm:geo:coal} covers coalescence), while the result controlling linear segment geodesics is Theorem \ref{thm:lindir}. In both of these results, we once again utilize estimates from our previous work \cite{Emr-Jan-Sep-21}. 

The much-used  Licea-Newman \cite{Lic-new-96} coalescence argument is not available to us  because the environment is no longer shift-invariant. Recent years have seen a variety of arguments for coalescence which bypass this argument in various solvable models \cite{Rah-Vir-21-,Sep-18, Sep-Sor-23}. Our techniques to prove coalescence are a variant of the approach introduced in \cite{Sep-18, Sep-20}. 

Our argument for controlling linear segment geodesics is new and somewhat counter-intuitive. We develop bounds similar to the classical curvature bounds that have been used previously to control geodesics in the strictly concave region of models in ergodic environments, but apply these in the linear segment. The reason this is possible despite studying directions where the shape is flat is that the finite volume passage time in this setting is naturally concentrated not on the true limit shape, but rather on the limit shape that would have appeared had the inhomogeneity been periodic. Such limit shapes always have curvature which is bounded from below, but of course these bounds break down as one takes limits. Nevertheless, under mild hypotheses, we are able to retain enough uniform control over the passage times to govern the sets of limit directions of the geodesics. 


Our main theorem on competition interfaces is Theorem \ref{thm:cif1}, which follows from arguments similar to those in \cite{Fer-Pim-05,Geo-Ras-Sep-17-ptrf-2}. 
Using a coupling due to Ferrari and Pimentel \cite{Fer-Pim-05}, these have consequences for second-class customers in the inhomogeneous M/M/1 queue, as mentioned above. These are recorded as Theorem \ref{th:2cl}.

\subsection{Extensions and applications}
The first natural direction of extension would be to study a more general inhomogeneity structure under which the environment still homogenizes. Extending beyond column-row inhomogeneity or to non-exponential (or geometric) distributions may be challenging because these changes would break exact solvability. Our use of solvability begins with our reliance on a detailed understanding of the structure of the limit shape that appears for any collection of inhomogeneity parameters satisfying our mild regularity assumptions. The product-form structure of passage-time increments of models with appropriate boundaries coming from the Burke property and uniform tail estimates for exponential random variables with rates bounded away from zero also play an important role in several of our proofs. Some of our coupling arguments also rely on the full Burke structure of the model, including the dual weights. In particular, we highlight the coalescence argument in Section \ref{sec:dualcoal}.

A second natural direction would be to other models which admit the same inhomogeneity structure while remaining solvable. This would include, for example, the inhomogeneous log-gamma polymer studied in \cite{Cor-etal-14} and the inhomogeneous Brownian last-passage percolation and O'Connell-Yor polymers studied in \cite{Kra-LeD-Oco-21}. Without having written out the details carefully, we expect that results similar to ours can be obtained in these settings with similar methods, though there may be some additional technical challenges.

Finally, we note that the results of this work have seen recent application in \cite{Bat-etal-25-} as part of a novel description of the joint distribution of Busemann functions in the homogeneous model.

\subsection{Organization of the paper}
Section \ref{sec:CGM} introduces the model we study. Section \ref{sec:res} contains the statements of our main results. Section \ref{sec:Burke} introduces our main tool, the Burke property.  We prove existence and some key properties of Busemann functions in Section \ref{sec:Bus}. These Busemann functions are then used as tools to study the structure of semi-infinite geodesics in Section \ref{sec:geo}, competition interfaces in Section \ref{sec:cif}, and the interacting particle system interpretation of the model in Section \ref{sec:particles}.

\subsection{Notation and conventions}
\label{SsNotCon}

 $\bbZ$, $\bbQ$ and $\bbR$ stand for the sets of integers, rational numbers and real numbers, respectively.  $\cl{\bbR}$ denotes the extended reals $\bbR \cup \{-\infty, \infty\}$. Restricted subsets are indicated  with subscripts, such as  $\bbZ_{>k}=\{k+1, k+2,  k+3, \dots\}$ and $\bbZ_{\ge k}=\{k, k+1, k+2,\dotsc\}$. For $n \in \bbZ_{\ge 0}$,  $[n] = \{i \in \bbZ_{>0}: i \le n\}$. In particular, $[0]$ is the empty set $\emptyset$. For $x \in \bbR$,  $x^+ = \max(x, 0)$. Given $a,b \in \bbR$, we will denote $\min(a,b) = a \wedge b$ and $\max(a,b) = a \vee b$. 

 The standard basis vectors of $\bbR^2$ are $e_1 = (1,0)$ and $e_2 = (0,1)$. We denote by $[e_2,e_1] = \{t e_2 + (1-t) e_1 : 0 \leq t \leq 1\}$. For $\zeta, \eta \in [e_2,e_1], ]\zeta,\eta[ = \{t \zeta + (1-t) \eta : 0 < t < 1\}$. The half-open intervals $[\zeta,\eta[$ and $]\zeta,\eta]$ are defined analogously. 

We call a path (a sequence) $\geod{} = (\pi_i)$ on $\bbZ^2$ up-right if $\geod{i} - \geod{i-1} \in \{e_1,e_2\}$ and down-right if $\geod{i} - \geod{i-1} \in\{e_1,-e_2\}$. It will be convenient at times to identify an up-right or down-right path $\pi$ with its set $\{\pi_{i}\}$ of vertices.

We write $\le$ for the coordinatewise partial order on $\bbZ^2$. Thus, for $x, y \in \bbZ^2$, the inequality $x \le y$ means that $x\cdot e_1 \leq y \cdot e_1$ and $x \cdot e_2 \leq y \cdot e_2$. For $x,y\in\bbZ^2$, we define the coordinate-wise 
maximum $x \vee y$ via $(x\vee y)\cdot e_i = (x\cdot e_i)\vee (y\cdot e_i)$ for $i\in\{1,2\}.$  The minimum $x \wedge y$ is defined analogously. 
For $x, y \in \bbZ$, let
\begin{align}
\label{eq:Rect}
\Rect_{x}^{y} = \{v \in \bbZ^2: x \le v \le y\}
\end{align}
denote the rectangle (rectangular grid) of lattice sites  bounded from below by $x$ and above by $y$. By definition, $\Rect_x^y = \emptyset$ unless $x \le y$. A down-right path $\pi$ from the upper left corner $(x \cdot e_1, y \cdot e_2)$ to the lower right corner  $(y \cdot e_1, x \cdot e_2)$ (necessarily $x \le y$) partitions $\Rect_{x}^y \smallsetminus \pi$ into the two sets  
\begin{align}
\label{eq:Gpm}
\sG_{x, y, \pi}^\pm = \{p \in \Rect_{x}^y: p \mp k(e_1 +e_2) \in \pi \text{ for some } k \in \bbZ_{>0}\}. 
\end{align}
Equivalently, $p \in \Rect_{x}^y$ satisfies  $p \in \sG_{x, y, \pi}^+$ if and only if $p > q$ for some $q \in \pi$, and  satisfies $p \in \sG_{x, y, \pi}^-$ if and only if $p < q$ for some $q \in \pi$. 

For $r \in \bbR$, the $r$ level in $\bbR^2$ is $\bbV_r = \{x \in \bbR^2 : x \cdot (e_1+e_2) = r\}$.  An up-right path 
$\geod{}$ on $\bbZ^2$  is indexed so that $\geod{n} \in \bbV_n$. The dual lattice will be denoted by $\bbZ^{2*}= \bbZ^2 + (1/2,1/2)$. We take the notational convention that if $\geod{}$ is a path in $\bbZ^{2*}$, $\geod{n} \cdot (e_1+e_2) = n + 1$.

 We define an ordering on $\bbV_r$ by $\zeta \preceq \eta$ if $\zeta,\eta \in \bbV_r$ and $\zeta\cdot e_1 \leq \eta \cdot e_1$. Similarly, $\zeta \prec \eta$ if $\zeta,\eta \in \bbV_r$ and $\zeta \cdot e_1< \eta \cdot e_1$.  Given a sequence of sites $v_n \in \bbV_1$, we define limsup and liminf using this ordering: $\varlimsup v_n = (\varlimsup v_n \cdot e_1, 1 - \varlimsup v_n \cdot e_1)$ and $\varlimsup v_n = (\varliminf v_n \cdot e_1, 1 - \varliminf v_n \cdot e_1)$.

For $0<\lambda<\infty$, $X \sim \Exp(\lambda)$  means that random variable $X$  has exponential distribution with rate $\lambda$:  $P(X>x)=e^{-\lambda x^+}$ for $x \in \bbR$. $X \sim \Exp(0)$ means that $X = \infty$ almost surely. We use the notational conventions $1/0 = \infty$,  $\infty/\infty = 1$, $x/\infty = 0$ for $x \in \bbR$. 

$a_{-\infty:\infty} = (a_i)_{i\in \bbZ}$ and the restriction of $a_{-\infty:\infty}$ to indices between $m$ and $n$ is denoted by $a_{m:n}$. We denote by $\smin{c}_{k: n} = \min_{k \le i \le n}c_i$ and $\sinf{c}_{n:\infty} = \inf_{i: i\ge n} c_i.$ The minimum of an empty sequence is infinity.

A Borel measure is non-zero if it is not the zero measure. Given a non-zero Borel measure $\mu$ on $\bbR$, the essential infimum under $\mu$ is denoted $\underline{\mu}$. The vague topology on Borel measures on $\bbR$ is the weak$^*$ topology generated by integrating against continuous functions that vanish at infinity. 

\subsection{Acknowledgements}
The authors thank Pierre Le Doussal for pointing out the connection to the phenomena observed in \cite{Kra-LeD-Oco-21} at the R\'enyi Institute's workshop on stochastic interacting particle systems and random matrices in 2025. We also thank two anonymous referees for helpful comments.

\section{Last-passage percolation with inhomogeneous exponential weights} \label{sec:CGM}

\subsection{Last-passage times}
Given a \textit{weight configuration} $\ww \in \bbR^{\bbZ^2}$, the associated last-passage times are defined by 
\begin{align}
\Lpt_{x, y} = \Lpt_{x, y}(\ww) = \max_{\pi \in \Pi_{x}^{y}} \bigg\{\sum_{p \in \pi} \ww_p\bigg\} \quad \text{ for } x, y \in \bbZ^2  \label{eq:Lpt}
\end{align}
where $\Pi_x^y$ is the set of all up-right paths (see Subsection \ref{SsNotCon}) $\pi$ on $\bbZ^2$ with $\min \pi = x$ and $\max \pi = y$. We define $\Lpt_{x, y} = -\infty$ if $x \le y$ fails. Last passage times can be computed through the following recursions, which are immediate from \eqref{eq:Lpt}. For $x,y$ with $x \le y$ and 
$\ww \in\bbR^{\bbZ^2}$, 
\begin{align}
\label{eq:LptRec}
\begin{split}
\Lpt_{x, y} &= \ww_x + (\Lpt_{x+e_1, y} \vee \Lpt_{x+e_2, y})^+ = \ww_y + (\Lpt_{x, y-e_1} \vee \Lpt_{x, y-e_2})^+.
\end{split}
\end{align}
Throughout the paper, we  consider several different choices of the weights $\ww$ in coupling arguments. We will phrase results which hold for all $\ww \in \bbR^{\bbZ^2}$ in terms of $\Lpt$ and then introduce new notation for the process evaluated at randomly sampled $\ww$ as the paper progresses.
It will at times be important to note that $\Lpt_{x, y}$ only depends on the entries of $\ww$ indexed by the rectangle $\Rect_x^y$.

\subsection{Last-passage increments}

For $x \le y$, define the last-passage increments with respect to the initial point by 
\begin{align}
\init{\I}_{x, y} = \Lpt_{x, y}-\Lpt_{x+e_1, y} \quad \text{ and } \quad \init{\J}_{x, y} = \Lpt_{x, y}-\Lpt_{x+e_2, y}, \label{eq:incint}
\end{align}
and with respect to the terminal point by 
\be\begin{aligned}
\ter{\I}_{x, y} = \Lpt_{x, y}-\Lpt_{x, y-e_1} \quad \text{ and } \quad \ter{\J}_{x, y} = \Lpt_{x, y}-\Lpt_{x, y-e_2}.
\end{aligned}\label{eq:incbw}
\ee
Note that $\init{\I}_{x, y} = \infty = \ter{\I}_{x,y}$ and $\init{\J}_{x, y} = \infty = \ter{\J}_{x,y}$, respectively, when the inequalities $x+e_1 \le y$ and $x+e_2 \le y$ do not hold. 
From \eqref{eq:LptRec}, \eqref{eq:incint} and \eqref{eq:incbw}, one obtains the following increment recursions for $x+e_1+e_2 \le y$: 
\begin{align}
\init{\I}_{x, y} = \ww_x + (\init{\I}_{x+e_2, y}-\init{\J}_{x+e_1, y})^+, &\qquad  \init{\J}_{x, y} = \ww_x + (\init{\J}_{x+e_1, y}-\init{\I}_{x+e_2, y})^+, \quad \text{ and } \label{eq:inrec} \\ 
\ter{\I}_{x, y} = \ww_y + (\ter{\I}_{x, y-e_2}-\ter{\J}_{x, y-e_1})^+, &\qquad \ter{\J}_{x, y} = \ww_y + (\ter{\J}_{x, y-e_1}-\ter{\I}_{x, y-e_2})^+. \label{eq:bwrec}
\end{align}
One can also recover the initial and terminal weights from the increments as follows: 
\begin{align}
\label{eq:wrec}
\init{\I}_{x, y} \wedge \init{\J}_{x, y}  = \ww_x \quad \text{ and } \quad \ter{\I}_{x, y} \wedge \ter{\J}_{x, y} = \ww_y \quad \text{ for } x < y. 
\end{align}

\subsection{Inhomogeneous exponential LPP}
\label{sec:iExpLPP}

Consider bi-infinite sequences of real numbers, $a_{-\infty:\infty}$ and $b_{-\infty:\infty}$, which satisfy
\begin{align}
& \sinf{a}_{i:\infty} + \sinf{b}_{j:\infty} > 0 \quad \text{ for every } i,j \in \bbZ, \label{as:inf} \\
&\lim_{n \rightarrow \infty}\frac{1}{n} \sum_{k=1}^n \delta_{a_k} = \alpha, \quad \text{ and } \quad \lim_{n \rightarrow \infty}\frac{1}{n} \sum_{k=1}^n \delta_{b_k} = \beta, \label{as:weakcon}
\end{align}
where $\alpha$ and $\beta$ are non-zero subprobability measures on $\bbR$ and the limits hold in the vague topology. Note that \eqref{as:inf} is slightly weaker than the inequality $a_{-\infty:\infty}^{\inf} + b_{-\infty:\infty}^{\inf} > 0$. These assumptions are essentially minimal. We refer the reader to   \cite{Emr-Jan-Sep-21} for a discussion of the (stronger) hypotheses which have appeared previously in the literature.

Let $\{\ex_x: x \in \bbZ^2\}$ be independent $\Exp(1)$ random variables defined on a probability space $(\Omega,\sF,\bfP)$. For $x = (i,j) \in \bbZ^2$, write $\ex_x = \ex_{i,j}$ and define the \emph{weights}  $\w \in \bbR^{\bbZ^2}$ via
\begin{align}
\w_x = \w_{i,j} = \frac{\ex_{i,j}}{a_i+b_j}.  \label{eq:wdef}
\end{align}
Then the weights are independent and $\omega_{i,j} \sim \Exp(a_i+b_j)$. For $x,y \in \bbZ^2$, 
we introduce the 
{\it last passage times} by 
\begin{align}
\label{eq:LPP}
\LPP{x}{y} &= \Lpt_{x,y}(\w) = \max_{\pi \in \Path{x}{y}}\left\{\sum_{p \in \pi} \w_{p}\right\}, 
\end{align}
and their increments with respect to the initial point by 
\begin{align}
\label{eq:LPPInc}
\begin{split}
\I_{x, y} &= \init{\I}_{x, y}(\w) = \G_{x, y}-\G_{x+e_1, y}, \\ 
\J_{x, y} &= \init{\J}_{x, y}(\w) = \G_{x, y}-\G_{x+e_2, y}. 
\end{split}
\end{align}

\subsection{Limit shape and direction duality}
\label{sec:LimShp}
Note that \eqref{as:inf} implies that the sum of the essential infima of measures $\alpha$ and $\beta$ is positive: 
$\underline{\alpha} + \underline{\beta}>0$. For $z \in [-\underline{\alpha}, \underline{\beta}]$
and $\xi = (\xi_1,\xi_2) \in [e_2,e_1]$, we define 
\be\label{eq:statshape}
\begin{aligned}
\shp_z(\xi) &= \xi_1 \int_0^\infty \frac{\alpha(\dd a)}{a+z} +  \xi_2 \int_0^\infty \frac{\beta(\dd b)}{b-z} = \xi_1 \shor(z) + \xi_2 \sver(z), \qquad \text{ where }\\
\shor(z) &= \shp_z(e_1)  \qquad \text{and}\qquad \sver(z) = \shp_z(e_2).
\end{aligned}
\ee
Note that one of the integrals above may be infinite at each of the boundary points $z \in \{- \underline{\alpha}, \underline{\beta}\}$. For $x = (i,j) \in \bbZ^2$ and $\xi \in [e_2,e_1]$, we define the \textit{limit shape} to be
\begin{align}
\label{eq:shape}
\shp^x(\xi) = \inf_{-\sinf{a}_{i:\infty} <\, z\, < \sinf{b}_{j:\infty}}\left\{ \shp_z(\xi) \right\}= \shp_{\zmin{x}(\xi)}(\xi),
\end{align}
where $\zmin{x}(\xi)$ is the unique value of $z \in [- \sinf{a}_{i:\infty},  \sinf{b}_{j:\infty} ]$ for which the equality $\shp^x(\xi) = \shp_z(\xi)$ holds. A detailed study of the structure of this function and its appearance as the shape function in the last-passage percolation model described above appears in our previous paper \cite{Emr-Jan-Sep-21}. We summarize a handful of key properties which are important in this project. The main connection is the following shape theorem, which follows from Theorems 3.6 and 3.7 in \cite{Emr-Jan-Sep-21}.
\begin{proposition}\label{prop:shape}
The following holds $\bfP$-almost surely.  For all $x  \in \bbZ^2$ and all sequences $v_n \in \bbZ^2$ satisfying that\begin{align*}
\lim_{n\to\infty}v_n/n = \xi \in [e_2,e_1] \quad \text{ and } \quad \lim_{n\to \infty} v_n \cdot e_1 = \lim_{n\to \infty} v_n \cdot e_2 = \infty,
\end{align*}   
we have
\begin{align*}
\lim_{n\to\infty }\frac{\LPP{x}{v_n}}{n} = \shp^x(\xi).
\end{align*}
Moreover, if $x= (i,j)$ then for $m \geq i$ and $n \geq j$ fixed, 
\begin{align*}
\lim_{\ell\to\infty}\frac{\LPP{x}{(m,\ell)}}{\ell} = \int \frac{\beta(\dd b)}{b + \smin{a}_{i:m}},\qquad \text{ and } \qquad \lim_{k\to\infty}\frac{\LPP{x}{(k,n)}}{k} = \int \frac{\alpha(\dd a)}{a + \smin{b}_{j:n}}.
\end{align*}
\end{proposition}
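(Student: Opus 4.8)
The plan is to prove Proposition~\ref{prop:shape} by assembling the two main limits from Theorems~3.6 and~3.7 of \cite{Emr-Jan-Sep-21}, reconciling the indexing and hypotheses, and then checking that the thin-rectangle limits are genuinely the correct degenerate cases of the general shape function. I would begin with the bulk statement: the hypothesis is that $v_n/n \to \xi \in [e_2,e_1]$ and both coordinates of $v_n$ diverge, and the conclusion is $\G_{x,v_n}/n \to \shp^x(\xi)$. The cited theorems presumably give such a limit along deterministic straight-line sequences or cone-directed sequences; the first thing to do is confirm that the version I need (arbitrary sequences with $v_n/n\to\xi$, both coordinates $\to\infty$, from a fixed lattice point $x$, with the almost-sure event uniform over all $x\in\bbZ^2$) is exactly what is provided, or else obtain it by a short monotonicity-and-countability argument: $\G_{x,\cdot}$ is monotone in the endpoint, $\shp^x$ is continuous on $[e_2,e_1]$ (a known property from \cite{Emr-Jan-Sep-21}), so sandwiching $v_n$ between two rational-direction lattice sequences and taking a countable intersection over $x$ and over a dense set of rational directions upgrades the deterministic-direction convergence to the stated form. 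The appeal to continuity of $\xi\mapsto\shp^x(\xi)$ and to the representation $\shp^x(\xi)=\shp_{\zmin{x}(\xi)}(\xi)$ in \eqref{eq:shape} is where I would lean on the earlier paper rather than reprove anything.

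Next I would handle the two axis-adjacent, thin-rectangle limits. Fix $x=(i,j)$; for $m\ge i$, consider $\G_{x,(m,\ell)}$ as $\ell\to\infty$ with $m$ held fixed. Here the relevant sub-model is LPP on the finite strip of rows (columns) $i,\dots,m$, i.e.\ on $\bbZ\times\{\dots\}$ with only finitely many distinct $a$-parameters $a_i,\dots,a_m$ in play and the full bi-infinite $b$-sequence. The claim is $\G_{x,(m,\ell)}/\ell \to \int \beta(\dd b)/(b+\smin{a}_{i:m})$. The natural route is again to invoke the strip shape theorem from \cite{Emr-Jan-Sep-21} if it is stated there, or to derive it: in a strip of fixed width, an up-right path from $(i,j)$ to $(m,\ell)$ uses at most $m-i$ vertical steps' worth of ``column changes'' and spends the overwhelming bulk of its length moving along a single row; the optimal asymptotic rate per unit of $e_2$-displacement along a row with parameter $a_k$ is $\int \beta(\dd b)/(b+a_k)$ by the one-dimensional ergodic averaging coming from \eqref{as:weakcon}, and since the path is free to pick the best row $k\in\{i,\dots,m\}$ (the one minimizing $a_k$, hence maximizing the rate), the leading order is governed by $\smin{a}_{i:m}=\min_{i\le k\le m}a_k$. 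One must check the corrections from switching rows are $o(\ell)$, which is immediate because only finitely many switches can help and each contributes $O(1)$-per-switch in expectation (bounded by a geometric-type argument, or by monotonicity between the strip and a single best row extended periodically). The symmetric statement for $\G_{x,(k,n)}/k$ follows by the reflection symmetry $e_1\leftrightarrow e_2$, $a\leftrightarrow b$.

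The step I expect to be the main obstacle is the reconciliation of hypotheses and the passage from ``straight-line/deterministic-direction'' shape theorems in \cite{Emr-Jan-Sep-21} to the ``arbitrary diverging sequence, all $x$ simultaneously, including the boundary-direction strip limits'' form stated here — in particular making sure the single almost-sure event works uniformly in $x$ and that the strip limits are not a separate ``harder'' regime but a controlled degeneration. Concretely, the subtlety is that as $\xi\to e_1$ or $\xi\to e_2$ the minimizer $\zmin{x}(\xi)$ runs to the boundary of $[-\sinf{a}_{i:\infty},\sinf{b}_{j:\infty}]$ and one of the integrals in \eqref{eq:statshape} can blow up, so the strip limits with their explicit $\smin{a}_{i:m}$ and $\smin{b}_{j:n}$ are genuinely a \emph{different} asymptotic object (a fixed finite number of rows, not a fixed slope), and I should present them as consequences of the strip-geometry argument above rather than as limits of the bulk formula. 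Once both ingredients are in hand, the proof is just: quote Proposition-equivalents from \cite{Emr-Jan-Sep-21}, take a countable intersection of almost-sure events over $x\in\bbZ^2$ and over all $(m,\ell)$-strip and $(k,n)$-strip parameters, apply continuity of $\shp^x$ for the sandwich, and conclude.
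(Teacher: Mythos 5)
Your proposal ultimately rests on the same foundation as the paper, which offers no independent argument: it simply states that the proposition follows from Theorems 3.6 and 3.7 of \cite{Emr-Jan-Sep-21}, the former giving the shape theorem for arbitrary sequences with $v_n/n\to\xi$ and both coordinates diverging, and the latter giving the fixed-strip (thin-rectangle) limits with the $\smin{a}_{i:m}$ and $\smin{b}_{j:n}$ rates. Your additional sandwiching and strip-geometry reconstructions are therefore not needed (those statements are already in the cited forms), but they do not introduce any error.
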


In general, $\shp^x$ is homogeneous of degree one: for $c>0$, $\shp^x(c\xi)=c\shp^x(\xi)$. The variational expression for $\gamma^x$ in \eqref{eq:shape} defines a duality between  $z \in [- \sinf{a}_{i:\infty},  \sinf{b}_{j:\infty} ]$ and directions $\xi \in [e_2,e_1]$. Recall our convention that $1/\infty=0$; the form of the minimizer in the variational problem \eqref{eq:shape} leads us to define for 
$z \in [-\underline{\alpha}, \underline{\beta}]$, a direction $\rho(z)\in [e_2,e_1]$ via 
\be
\begin{aligned} \label{eq:chardir}
\rho(z) \cdot e_1 &= \frac{\sver'(z)}{\sver'(z)-\shor'(z)} =  \frac{\int_0^\infty (b-z)^{-2}\beta(\dd b)}{\int_0^\infty (a+z)^{-2}\alpha(\dd a) + \int_0^\infty (b-z)^{-2}\beta(\dd b)}. 
\end{aligned}
\ee
Assumptions \eqref{as:inf} and \eqref{as:weakcon} and the hypothesis that neither $\alpha$ nor $\beta$ is the zero measure imply that the integrals in \eqref{eq:chardir} are positive and finite.

Calculus shows that for each $x=(i,j) \in \bbZ^2$, $\rho$ defines a differentiable bijection between  $(- \sinf{a}_{i:\infty}, \sinf{b}_{j:\infty})$ and  $]\mfc_{1}^x, \mfc_{2}^x[$, where $e_2 \preceq \mfc_{1}^x \prec \mfc_{2}^x \preceq e_1$ and the \textit{critical directions} $\mfc_1^x, \mfc_2^x \in [e_2,e_1]$ are given by
\be \label{eq:crit}
\begin{aligned}
\mfc_{1}^x \cdot e_1 &= \frac{\sver'(-\sinf{a}_{i:\infty})}{\sver'(-\sinf{a}_{i:\infty})-\shor'(-\sinf{a}_{i:\infty})} = \frac{\int_0^\infty (b+ \sinf{a}_{i:\infty})^{-2}\beta(\dd b) }{\int_0^\infty (a- \sinf{a}_{i:\infty})^{-2}\alpha(\dd a) +\int_0^\infty (b+ \sinf{a}_{i:\infty})^{-2}\beta(\dd b)  }, \\
 \mfc_{2}^x \cdot e_1 &= \frac{\sver'(\sinf{b}_{j:\infty})}{\sver'(\sinf{b}_{j:\infty})-\shor'(\sinf{b}_{j:\infty})} = \frac{\int_0^\infty (b- \sinf{b}_{j:\infty})^{-2}\beta(\dd b)}{\int_0^\infty (a+ \sinf{b}_{j:\infty})^{-2}\alpha(\dd a) + \int_0^\infty (b- \sinf{b}_{j:\infty})^{-2}\beta(\dd b)}. 
\end{aligned}
\ee
Recalling the notational convention $1/0=\infty$, the cases  $\mfc_{1}^x = e_2$ and $\mfc_{2}^x = e_1$ are equivalent to the conditions $\int_0^\infty (a- \sinf{a}_{i:\infty})^{-2}\alpha(\dd a) = \infty$ and $\int_0^\infty (b- \sinf{b}_{j:\infty})^{-2}\beta(\dd b) = \infty$, respectively. We see immediately from \eqref{eq:crit} that if $x_1 \cdot e_1 \leq x_2 \cdot e_1$ and $y_1 \cdot e_2 \leq y_2 \cdot e_2$ then 
\begin{align}
\mfc_{1}^{x_2}\preceq \mfc_{1}^{x_1} \quad \text{ and } \quad \mfc_{2}^{y_1} \preceq\mfc_{2}^{y_2}. \label{eq:critineq}
\end{align} 
With the notation $\zmin{x}(\xi)$ from \eqref{eq:shape} and the fact that $\rho$ is an invertible map on $]\mfc_{1}^x,\mfc_{2}^x[$, some calculus gives that
\begin{align}
\label{eq:zetaext}
\zmin{x}(\xi) = 
\begin{cases}
- \sinf{a}_{i:\infty} \quad &\text{ for } \xi \in[e_2, \mfc_{1}^x]  \\
(\rho)^{-1}(\xi) & \xi \in ]\mfc_{1}^x,\mfc_{2}^x[ \\
 \sinf{b}_{j:\infty} \quad &\text{ for } \xi \in [\mfc_{2}^x, e_1]
\end{cases}.
\end{align}
From the above observations, it is straightforward to see that $\shp^x(\bbullet)$ is strictly concave on the (non-degenerate) interval $]\mfc_1^x,\mfc_2^x[$ and linear on the (possibly degenerate) intervals $[e_2,\mfc_1^x]$ and $[\mfc_2^x,e_1]$. See Figure \ref{fig:shapex} for an example. 

\begin{figure}
\includegraphics[scale=.75]{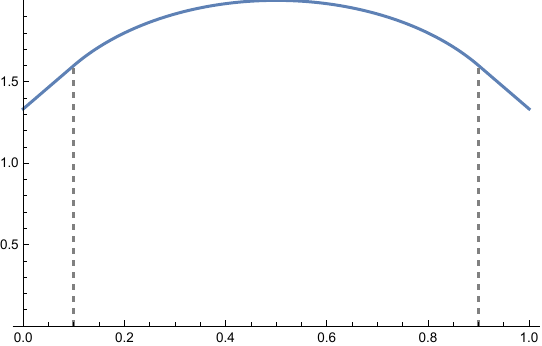}
\caption{$\xi_1 \mapsto \shp^{(0,0)}(\xi_1,1-\xi_1)$  for $\xi_1\in[0,1]$, where $\alpha(da) = \delta_{1/2}(da)$ and $\beta(db) = \delta_{1/2}(db)$ are both Dirac masses at $1/2$ and $\sinf{a}_{0:\infty} = \sinf{b}_{0:\infty} = 1/4$. Here, $\mfc_1^{(0,0)}=(1/10,9/10)$ and $\mfc_2^{(0,0)}=(9/10,1/10)$, so the depicted shape is linear for $\xi_1 \in [0,1/10]\cup [9/10,1]$ and strictly concave for $\xi_1 \in (1/10,9/10)$. This is the shape function from every lattice site if $a_n=b_n=1/2$ except for an infinite forward density zero set of $n\in\bbZ$, on which both are equal to $1/4$.}\label{fig:shapex}
\end{figure}

\subsection{Geodesics and competition interfaces}
Given a fixed $\ww \in \bbR^{\bbZ^2}$, a path $\geod{} \in \Path{x}{y}$ is called a (finite) \emph{geodesic} from $x \in \bbZ^2$ to $y \in \bbZ^2$ if $\geod{}$ is a maximizer in \eqref{eq:Lpt}.  The models we study have weights which are independent and have continuous distributions and therefore there is an event of full probability on which there is a unique geodesic between $x$ and $y$ for each pair $x,y \in \bbZ^2$ with $x \le y$. Some of our results concern \textit{semi-infinite geodesics}, which are up-right paths which have a first site but no last site and have the property that every finite subpath is a (finite) geodesic between its endpoints.

Given $x,y \in \bbZ^2$ with $x \cdot (e_1+e_2) = k$, $y \cdot (e_1+e_2) = n$, and $x \leq y$, if geodesics are unique in the environment $\ww$, the unique geodesic $\geod{}$ from $x$ to $y$ evolves according to the following local rules: $\geod{k} = x$ and for $\ell < n$,
\be
\geod{\ell+1} = \begin{cases}
\geod{\ell} + e_1 & \text{ if } \Lpt_{\geod{\ell}+e_1,y} > \Lpt_{\geod{\ell}+e_2,y}\\
\geod{\ell} + e_2 & \text{ if } \Lpt_{\geod{\ell}+e_1,y} < \Lpt_{\geod{\ell}+e_2,y}
\end{cases} = \begin{cases}
\geod{\ell} + e_1 & \text{ if } \init{\I}_{\geod{\ell}, y} < \init{\J}_{\geod{\ell}, y} \\
\geod{\ell} + e_2 & \text{ if } \init{\J}_{\geod{\ell}, y} < \init{\I}_{\geod{\ell}, y} \\
\end{cases}.\label{eq:geoloc}\ee

Similarly, it follows from the uniqueness of finite geodesics in the environment $\ww$ that for each site $x \in \bbZ^2$, the collection of geodesics from $x$ to the sites $y \in \bbZ^2$ with $y \geq x$ forms a tree, which we denote by $\sT_x$. Each such geodesic rooted at $x$ either passes through $x+e_1$ or $x+e_2$. This splits $\sT_x$ into two subtrees, $\sT_{x,x+e_1}$ and $\sT_{x,x+e_2}$, which can be thought of as competing infections. The {\it competition interface} is a dual lattice  path $\Cifa^{x}$ (living on the dual lattice $\bbZ^2 + (1/2,1/2))$ which separates them. It is defined by setting $\Cifa_k^{x}=x + (1/2,1/2)$ and then evolving according to the following rules for $n \geq k$: 
\be   
\Cifa_{n+1}^x=  
\begin{cases}  
\Cifa_n^x+e_1, &\Lpt_{x,\Cifa_n^x -(1/2,1/2)+e_1} < \Lpt_{x, \Cifa_n^x -(1/2,1/2)+e_2} \\
\Cifa_n^x+e_2, &\Lpt_{x,\Cifa_n^x -(1/2,1/2) +e_1} > \Lpt_{x,\Cifa_n^x -(1/2,1/2)+e_2} . 
 \end{cases}  \label{eq:cifdef}
\ee

From this definition, one checks inductively that 
$\Cifa_n^x$ is the unique point $x$ on the line segment $\bbL^x_n=\{y \geq x : y \cdot (e_1+e_2) = n\}$ such that 
\[  
x+  (1/2,1/2) + e_1\Z_{>0}\subset\sT_{x,x+e_1}\quad\text{and}\quad 
x +  (1/2,1/2)+e_2\Z_{>0}\subset\sT_{x,x+e_2}. \] 
\begin{figure}[H]
\includegraphics[scale=1]{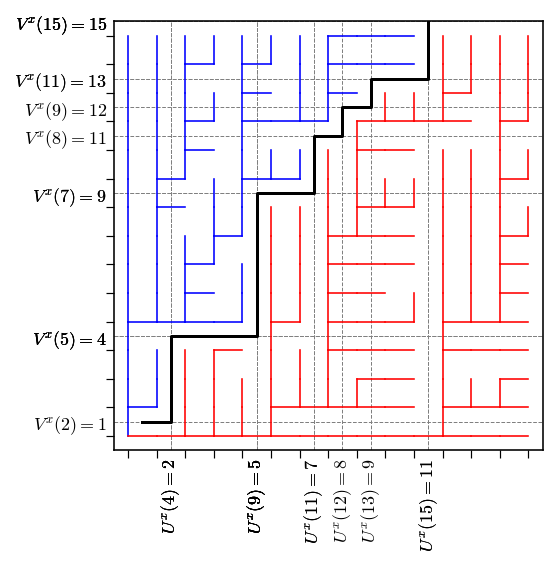}
\caption{A simulation of the geodesic tree rooted at $x=(1,1)$, separated into two subtrees $\sT_{x,x+e_1}$ (red) and $\sT_{x,x+e_2}$ (blue) on a $15 \times 15$ grid. The competition interface $\Cifa^{x}$ (black) is plotted on the dual lattice and the locations $\Cif^x$ and $\Cifs^x$ at which levels are first reached are labeled at the upper endpoint of each interval on which they are constant. In this simulation, $\Cif^x$ is 
equal to $2$ on $[2,4]$, $5$ on $[5,9]$, $7$ on $[10,11]$, and  $11$ on $[14,15]$. $\Cifs^x$ is 
equal to $4$ on $[2,5]$, $9$ on $[6,7]$, and $15$ on $[14,15]$.}\label{fig:CIF}
\end{figure}

It will be convenient to track competition interfaces through the locations where they pass horizontal and vertical lines. We define, for $n> x\cdot e_2$, 
\be\label{eq:cifslope}  \begin{aligned}
\Cif^x(n)&=\sup\{m\in\Z_{\ge x \cdot e_1}:  \Lpt_{x+e_2,(m,n)} > \Lpt_{x+e_1,(m,n)}\} \\
&=\min\{ m: (m+1/2,n+1/2)\in\Cifa^x \} =\max\{ m: (m+1/2,n-1/2)\in\Cifa^x \}.
\end{aligned}  \ee
$\Cif^x(n)$ tracks the first coordinate of the point at which the competition interface first reaches the horizontal level of index $n$. The symmetric counterpart is defined for $m> x \cdot e_1$ by
\be\label{def:Cifs}   \begin{aligned}
\Cifs^{x}(m)&=\sup\{n\in\Z_{\ge x \cdot e_2}: \Lpt_{x+e_1,(m,n)} > \Lpt_{x+e_2,(m,n)} \} \\
&=\min\{ n: (m+1/2,n+1/2)\in\Cifa^x\} =\max\{ n: (m-1/2,n+1/2)\in\Cifa^x \}.
\end{aligned}  \ee
$\Cifs^{x}(m)$ tracks the point at which the competition interface first reaches the vertical level of index $m$.

It follows from Lemma \ref{lem:incineq} below that $\Cif^x(n)$ and  $\Cifs^{x}(m)$ are both monotone non-decreasing in $n > x \cdot e_1$ and $m > x \cdot e_2$ respectively. We denote the limits by
\begin{align}
\lim_{n \to \infty}\Cif^x(n) = \Cif^x(\infty) \qquad \text{ and } \qquad \lim_{m \to \infty} \Cifs^{x}(m) = \Cifs^{x}(\infty).
\end{align}

\subsection{Inhomogeneous TASEP}\label{sec:inTASEP}
With certain initial conditions, there is a bijective correspondence between the inhomogeneous exponential CGM discussed above and an inhomogeneous generalization of the totally asymmetric simple exclusion process (TASEP). This correspondence comes from the seminal work of Rost \cite{Ros-81}. 

TASEP is a model typically defined on the state space $\{0,1\}^{\bbZ}$, describing the evolution of infinitely many particles, represented by $1s$, and holes, represented by $0s$, on the lattice $\Z$.  Particles always march to the right and holes to the left. We restrict attention to initial conditions with infinitely many particles and holes, where there is a rightmost particle and a leftmost hole.  For such initial conditions, we index  particles and holes by $\Z_{>0}$. At time $t\in\bbR_{\ge0}$,  $H_i(t)$ is the position of hole $i$ and  $P_j(t)$  the position of particle $j$,  for $i,j\in\Z_{>0}$.   Holes are labeled from left to right, so that for all $i \in \bbZ_{>0}$ and for all $t \in \bbR_{\geq0}$, $H_i(t)<H_{i+1}(t)$.  Particles move from right to left and we have $P_{j+1}(t)<P_j(t)$ for all $j \in \bbZ_{>0}$ and $t \in \bbR_{\geq0}$. The system evolves according to the following rules: once hole $i$ lies immediately to the right of particle $j$, i.e.~$P_j=H_i-1$,   they switch positions at exponential rate $a_i+b_j$ to become $H_i=P_j-1$. The process can be realized through a Harris-type construction by attaching to hole $i$ a Poisson clock with rate $a_i$ and to particle $j$ a Poisson clock with rate $b_j$. In this construction, whenever a particle is immediately to the left of a hole, they interchange places if either of their Poisson clocks rings. The hypothesis above on the initial condition ensures that this construction is well-defined; at any given time, one only needs to keep track of finitely many Poisson clocks to determine the next jump.

Consider the initial  configuration 
\be\label{t:67}   \begin{cases}  P_1(0)=1\\ P_j(0)=1-j\text{ for } j\ge 2  \end{cases} 
\quad\text{and}\quad 
 \begin{cases}  H_1(0)=0\\ H_i(0)=i\text{ for } i\ge 2.  \end{cases}  \ee
 
 If at time $t$ hole $i$ and particle $j$ are adjacent in either order, they occupy sites $i-j$ and $i-j+1$, i.e.~$\{H_i(t), P_j(t)\}=\{ i-j, i-j+1\}$.  One can check inductively that this property is preserved by every particle-hole interchange.  
  Each particle-hole pair $(P_j,H_i)$  exchanges  positions exactly once   
  during the evolution to become a  hole-particle pair  $(H_i,P_j)$. 

 The {\it *pair} (``star pair'') is a hole-particle pair in the process whose  moves are dictated by the underlying particle evolution. It was introduced in \cite{Fer-Pim-05} to encode the evolution of a second-class particle in TASEP.  At time $t\in\bbR_{\ge0}$, we denote by  $(H^*(t), P^*(t))$ the position of the *pair and let $I(t)$ and $J(t)$ denote  the hole and particle  indices of the *pair. Initially $(H^*(0), P^*(0))=(0,1)$ and  $(I(0),J(0))=(1,1)$. The underlying particle dynamics are as described above and the *pair evolves within these dynamics as follows: whenever a particle interchanges with the hole in the *pair, the *pair moves one unit to the left and whenever the particle in the *pair interchanges with a hole, the *pair moves one unit to the right. These moves can be represented schematically as below, where 0 denotes a hole, 1 denotes a particle, and $(0 \  1)^*$ denotes the *pair: 
 \begin{align}
\label{star1}  \text{*pair moves left:} \  \ &\text{from}  \quad  1 \ \ (0\ \ 1)^*  \quad \text{to}\quad (0\ \ 1)^*\ 1 \\
\label{star2}    \text{*pair moves right:} \ \  &\text{from}  \quad   (0\ \ 1)^*\ 0  \quad \text{to}\quad  0 \ \ (0\  \ 1)^* .
 \end{align}
We see that for all $t \in \bbR_{\geq0},$ $(H^*(t), P^*(t))=(H_{I(t)}(t), P_{J(t)}(t))=(I(t)-J(t), I(t)-J(t)+1)$.

In two-class TASEP, particles are either labelled as first-class or second-class. Whenever a first-class particle is to the immediate left of a second-class particle, the pair interchange as if the second-class particle were a hole in the discussion above. Otherwise, the dynamics proceed exactly as above. A mapping from the *pair to a second-class particle which is valid in our setting is given in  \cite[Lemma 6]{Fer-Pim-05}. 
 
 \begin{lemma}\label{lem:fpcouple} \cite[Lemma 6]{Fer-Pim-05} There is a coupling of two-type TASEP with initial condition where all of the sites $x \leq -1$ are occupied by first-class particles and the particle at $0$ is a second-class particle to the process described above in which the location of the second-class particle $X(t)$ is equal to the difference $I(t) - J(t)$ for all $t \geq 0$.
 \end{lemma}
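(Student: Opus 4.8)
I would follow the coupling argument of Ferrari and Pimentel \cite[Lemma~6]{Fer-Pim-05}, indicating the steps and the point at which the inhomogeneous rates enter. The plan is to realize the single-class TASEP of \eqref{t:67} through the Harris construction described above (a rate-$a_i$ clock on hole $i$, a rate-$b_j$ clock on particle $j$, with $P_j$ and $H_i$ interchanging at each ring of either clock whenever $P_j=H_i-1$), and then to build the inhomogeneous two-type TASEP \emph{on the same probability space} as an explicit deterministic function of this trajectory. Once that function is defined so that the second-class particle sits at $H^*(t)$, the identity $X(t)=H^*(t)=I(t)-J(t)$ is immediate, and the only thing left is to check that the resulting process is a faithful realization of the inhomogeneous two-type dynamics with the stated initial condition.

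First I would exploit the structural invariant recorded in the text: from the initial condition \eqref{t:67}, whenever hole $i$ and particle $j$ are adjacent they occupy the sites $\{i-j,\,i-j+1\}$, and each hole--particle pair interchanges exactly once. Hence the $*$pair sits at $(H^*(t),P^*(t))=(I(t)-J(t),\,I(t)-J(t)+1)$; the particle immediately to the left of $H_{I(t)}$, when there is one, is forced to be $P_{J(t)+1}$; and the hole immediately to the right of $P_{J(t)}$, when there is one, is forced to be $H_{I(t)+1}$. It follows that the left move \eqref{star1} of the $*$pair fires at rate $a_{I(t)}+b_{J(t)+1}$ and its right move \eqref{star2} at rate $a_{I(t)+1}+b_{J(t)}$, and it is exactly these labels that must be matched by the two-type dynamics.

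Next I would define the coupled two-type configuration by transporting the single-class configuration across the $*$pair: at a site $x\le H^*(t)$ the two-type occupation is that of the single-class TASEP at $x$, with the hole occupying $H^*(t)$ recoloured as the second-class particle, and at a site $x>H^*(t)$ the two-type occupation is that of the single-class TASEP at $x+1$ (so that the $*$pair's particle $P^*$ is skipped). At $t=0$ this reproduces the configuration in the statement with the second-class particle at $0=I(0)-J(0)$. The core of the argument is a case analysis over the locally finitely many ring times. Under this bijection: an interchange of $P_{J+1}$ with $H_I$ (move \eqref{star1}) becomes a first-class particle hopping over the second-class particle and displacing it one step left; an interchange of $P_J$ with $H_{I+1}$ (move \eqref{star2}) becomes the second-class particle hopping into the hole on its right and moving one step right; and every other single-class interchange becomes an ordinary first-class-over-hole hop that leaves the second-class particle fixed. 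In each case the ring occurs at rate $a_i+b_j$ for the participating labels, which is exactly the inhomogeneous two-type rate in view of the identification in the previous paragraph; together with the matching initial condition this shows the transported process is a genuine realization of the inhomogeneous two-type TASEP, and then $X(t)=H^*(t)=I(t)-J(t)$ for all $t\ge0$ by construction.

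I expect the only real work beyond the homogeneous argument of \cite{Fer-Pim-05} to lie in the last step, namely in matching labels and clock rates: one must arrange the labelling of the two-type holes and first-class particles so that a hole the second-class particle hops into carries the current rate of $H_{I+1}$ and a first-class particle that hops over it carries the current rate of $P_{J+1}$, and this is precisely where the invariant of the first step does the work, by pinning down which particle and which hole participate in each $*$pair move. A secondary nuisance is that a right move of the $*$pair leaves a ``stranded'' hole just to the left of $H^*(t)$, so the first-class particles need not stay contiguous behind the second-class particle; but such holes are exactly the ones that the two-type process itself produces behind a second-class particle that has just hopped right, and the configuration transport above accounts for them automatically.
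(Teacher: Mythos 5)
Note first that the paper contains no proof of this lemma: it is invoked verbatim from \cite[Lemma 6]{Fer-Pim-05}, so there is no in-paper argument to compare yours against, and your reconstruction of the coupling is genuinely additional content. Structurally it is correct and is the Ferrari--Pimentel argument: the ordering invariants force the particle adjacent to the left of $H_{I(t)}$ to be $P_{J(t)+1}$ and the hole adjacent to the right of $P_{J(t)}$ to be $H_{I(t)+1}$, so the moves \eqref{star1} and \eqref{star2} fire at rates $a_{I}+b_{J+1}$ and $a_{I+1}+b_{J}$ respectively; the transport across the $*$pair reproduces the initial condition of the lemma with the second-class particle at $0=I(0)-J(0)$; and the jump-by-jump case analysis (including the hole left stranded behind the second-class particle after a right move) is exactly the verification required, after which $X(t)=H^*(t)=I(t)-J(t)$ holds by construction.

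The one point you leave implicit, and should state outright, is what ``the inhomogeneous two-type rate'' means, because under your transport the rate carried by a \emph{physical} first-class particle is not constant in time. Just before a left move the overtaking particle is $P_{J+1}$ (rate $b_{J+1}$); just after, your transport identifies the first-class particle now sitting immediately to the right of the second-class particle with $P_{J}$ (rate $b_{J}$), so the overtaker deposits its rate into the second-class particle and picks up the rate the second-class particle was holding; symmetrically, after a right move the stranded hole carries $a_{I}$ while the second-class particle acquires $a_{I+1}$. Equivalently, the two-type process your coupling realizes attaches the rates $b_j$ to the right-to-left rank among \emph{all} particles (second-class included) and the rates $a_i$ to the left-to-right rank among \emph{all} holes (second-class included), with ranks --- hence rates --- swapping at each interaction with the second-class particle; one checks that every first-class particle's rank equals its single-class label, so the transport implements exactly this convention. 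If instead one froze rates to physical first-class particles and holes, the rate matching you assert would fail at the first overtaking event, so this rank convention is the precise sense in which the transported process is an autonomous inhomogeneous two-type TASEP. Your final paragraph gestures at this labelling issue; once it is stated in the above form the proof is complete, and it reduces to \cite[Lemma 6]{Fer-Pim-05} in the homogeneous case, where the issue is invisible because all rates coincide.
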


Now, let $\Cifa=\Cifa^{(1,1)}$ denote the competition interface rooted at $(1,1)$ and let $\Cifa^* = \Cifa - (1/2,1/2)$, so that $\Cifa^*_2 = (1,1)$.  Call $\tau_n=\LPP{(1,1)}{\Cifa_n^*}-\w_{(1,1)}$ (i.e., the passage time with the first weight removed so that $\tau_2 = 0$) and define a continuous-time extension of the competition interface by 
\be\label{cif-456} \Cifb_t
=\Cifa_n^* \quad\text{for }  t\in[\tau_n, \tau_{n+1}), \ n\in\Z_{\ge2}. \ee
As discussed around \cite[(21)]{Fer-Pim-05}, a consequence of the coupling in \cite[Lemma 6]{Fer-Pim-05} is the following lemma.
\begin{lemma}\label{fp-lem} 
$\bigl((I(t),J(t)):t\in\bbR_{\ge0}\bigr)$  has the same distribution as $\bigl(\Cifb(t):t\in\bbR_{\ge0}\bigr)$.   
\end{lemma}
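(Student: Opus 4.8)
The plan is to realise the exclusion process started from \eqref{t:67} and the last-passage array $\G$ on a single probability space via Rost's correspondence, and then to verify by induction on the successive jump times $\tau_n$ that, on this coupling, the index process $\bigl(I(t),J(t)\bigr)$ of the *pair is \emph{identically} equal to the piecewise-constant path $\Cifb$ of \eqref{cif-456}. Equality in distribution is then immediate.

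First I would build the coupling. Run the exclusion process from the pure step configuration (particle $j$ initially at site $1-j$ and hole $i$ at site $i$, for $i,j\ge 1$) under the Harris graphical construction with rate-$a_i$ clocks on hole $i$ and rate-$b_j$ clocks on particle $j$, and let $\w_{i,j}$ denote the time at which hole $i$ and particle $j$ interchange. Because the clocks are memoryless and the time intervals over which a given clock is relevant are disjoint and begin at stopping times, the strong Markov property gives that $\{\w_{i,j}:i,j\ge1\}$ is a family of independent variables with $\w_{i,j}\sim\Exp(a_i+b_j)$; thus the induced last-passage array has the law of $\G$, and Rost's theorem \cite{Ros-81} identifies the interchange time of hole $m$ with particle $n$ as $\LPP{(1,1)}{(m,n)}$. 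Since the configuration immediately after the first interchange---hole $1$ with particle $1$, at time $\w_{(1,1)}=\LPP{(1,1)}{(1,1)}$---is exactly \eqref{t:67}, a further application of the strong Markov property realises the \eqref{t:67}-process as the step process shifted in time by $\w_{(1,1)}$, so in the \eqref{t:67}-dynamics hole $m$ and particle $n$ interchange at time $T_{m,n}:=\LPP{(1,1)}{(m,n)}-\w_{(1,1)}$ (with $T_{1,1}=0$). I would also record two structural features of this TASEP: each particle--hole pair interchanges exactly once, and hole $m$ meets particles in the order $1,2,3,\dots$ while particle $n$ meets holes in the order $1,2,3,\dots$; the ordering assertions follow from the preservation in time of $H_i<H_{i+1}$ and $P_{j+1}<P_j$.

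The core step is then an induction showing that for every $n\ge2$ one has $\bigl(I(t),J(t)\bigr)=\Cifa_n^*$ for $t\in[\tau_n,\tau_{n+1})$, and that at time $\tau_n$ the *pair is $\bigl(H_m,P_\ell\bigr)$ with $(m,\ell)=\bigl(I(\tau_n),J(\tau_n)\bigr)=\Cifa_n^*$, hole $m$ having already interchanged with particles $1,\dots,\ell$ and particle $\ell$ with holes $1,\dots,m$. The base case $n=2$ is immediate from $\bigl(H^*(0),P^*(0)\bigr)=(0,1)$, $\bigl(I(0),J(0)\bigr)=(1,1)=\Cifa_2^*$, and $\tau_2=0$. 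For the inductive step, put $(m,\ell)=\Cifa_n^*$; by the order property the next particle that can interchange with the *pair-hole $H_m$ is particle $\ell+1$ and the next hole that can interchange with the *pair-particle $P_\ell$ is hole $m+1$, so by \eqref{star1}--\eqref{star2} these are the only two events that can next move the *pair, occurring at times $T_{m,\ell+1}=\LPP{(1,1)}{(m,\ell+1)}-\w_{(1,1)}$ (a left move, to index $(m,\ell+1)$) and $T_{m+1,\ell}=\LPP{(1,1)}{(m+1,\ell)}-\w_{(1,1)}$ (a right move, to index $(m+1,\ell)$). Both exceed $\tau_n$ and are almost surely distinct, so the *pair jumps at $\min(T_{m,\ell+1},T_{m+1,\ell})$, moving right precisely when $\LPP{(1,1)}{(m+1,\ell)}<\LPP{(1,1)}{(m,\ell+1)}$---which is exactly rule \eqref{eq:cifdef} for the competition interface at $x=(1,1)$. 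Hence $\Cifa_{n+1}^*$ is whichever of $(m+1,\ell),(m,\ell+1)$ minimises $\LPP{(1,1)}{\cdot}$, so $\tau_{n+1}=\LPP{(1,1)}{\Cifa_{n+1}^*}-\w_{(1,1)}=\min(T_{m,\ell+1},T_{m+1,\ell})$ coincides with the next jump time of $\bigl(I(t),J(t)\bigr)$, the new index equals $\Cifa_{n+1}^*$, and the bookkeeping hypothesis survives because the triggering interchange appends the new partner to the relevant list. This closes the induction and yields $\bigl(I(t),J(t)\bigr)=\Cifb(t)$ for all $t\ge0$ on the coupling, hence the claimed equality in distribution.

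I expect the main obstacle to be the setup rather than the induction: making Rost's correspondence rigorous in the inhomogeneous model with the non-step initial data \eqref{t:67}---in particular justifying the independence and exponential laws of the interchange times $\w_{i,j}$ and the time-shift by $\w_{(1,1)}$---and pinning down that when the *pair carries index $(m,\ell)$ its two available moves are exactly the interchanges of hole $m$ with particle $\ell+1$ and of particle $\ell$ with hole $m+1$, with the stated times. Both points rest on the ``interchanges in index order'' property and on carefully tracking which holes and particles are adjacent to the *pair, and this is where one must go somewhat beyond simply quoting \cite{Fer-Pim-05}.
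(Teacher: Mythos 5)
Your proposal is correct and is essentially the argument the paper itself relies on: the paper proves this lemma only by invoking the coupling of \cite{Fer-Pim-05} (their Lemma 6 and the discussion around their (21)), and what you spell out---Rost's correspondence identifying the interchange time of hole $m$ and particle $n$ with $\G_{(1,1),(m,n)}$, the time shift by $\w_{(1,1)}$ to reach the initial condition \eqref{t:67}, and the induction showing that from index $(m,\ell)$ the *pair's only admissible moves occur at times $\G_{(1,1),(m,\ell+1)}-\w_{(1,1)}$ and $\G_{(1,1),(m+1,\ell)}-\w_{(1,1)}$, so that it follows exactly the rule \eqref{eq:cifdef} with jump times $\tau_n$---is precisely that argument, transported verbatim to the inhomogeneous rates $a_i+b_j$. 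The technical points you flag (joint independence and exponential laws of the interchange waiting times, and the fact that hole $m$ meets particles, and particle $\ell$ meets holes, in increasing index order) are the standard content of Rost's correspondence, which the paper likewise takes as given.
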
 
Thus, our results on competition interfaces will have immediate consequences for the behavior of second-class particles in TASEP.

\subsection{TAZRP and inhomogeneous queues}\label{sec:TAZRP}
We now specialize the rates to $a_i=0$ for all $i$, so that holes become indistinguishable. This extra hypothesis is needed only to simplify the interpretation of model we now introduce. The totally asymmetric zero range process (TAZRP) can be interpreted as a series of memoryless continuous-time queues, with service stations labelled $j=1,2,3,\dots$ that  carry service rates $b_1,b_2,b_3,\dots$. In the model we study, customers come in two types: first-class and second-class. First-class customers are always served before second-class customers who are waiting in the same queue.   Customers of the same type are indistinguishable. 

Rigorously, we define the inhomogeneous TAZRP $\eta(t)$ through a coupling as a function of the inhomogeneous TASEP described above by letting $\eta_j(t)=P_{j-1}(t)-P_j(t)-1$ for $j\in\Z_{>0}$. In words, the number of holes between TASEP  particle locations $P_j(t)<P_{j-1}(t)$ is the number of customers at station $j$ at time $t$. A jump of TASEP particle $j$ at time $t$  ($P_j(t)=P_j(t-)+1$) is a departure from server $j$ at time $t$ and a simultaneous arrival at server $j+1$. We add an extra TASEP particle $P_0(t)\equiv\infty$ at infinity to have $\eta_1(t)\equiv\infty$, corresponding to the assumption that there are initially infinitely many customers in the queue at station $1$.

The initial condition \eqref{t:67} corresponds to one where there is a single second-class customer in the queue at station 2 and, as noted above, infinitely many first-class customers in line at station 1. We denote the location of the second-class customer at time $t$ by $Z(t)$. The location of this second-class customer is tracked by the *pair. This is recorded in the next lemma, which can be verified straightforwardly jump-by-jump in the coupling.

\begin{lemma} \label{2cl-star-lm}  At time $t\in\bbR_{\ge0}$,
 $Z(t)=J(t)+1$ and $I(t)-1$ is the number of  first-class customers that have passed the second-class customer by time $t$.   
\end{lemma}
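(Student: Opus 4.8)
The plan is to prove both assertions simultaneously by induction on the successive jump times of the TASEP (equivalently, of the coupled TAZRP), tracking the *pair through the local rules \eqref{star1}--\eqref{star2} and reading off the queue via $\eta_j(t) = P_{j-1}(t) - P_j(t) - 1$. Recall that under the coupling the second-class customer is the hole $H_{I(t)}(t)$ of the *pair. For the base case, \eqref{t:67} gives $\eta_1(0) = \infty$, $\eta_2(0) = 1$ and $\eta_j(0) = 0$ for $j \geq 3$, so this customer sits at station $2$; since $(I(0),J(0)) = (1,1)$ this is $Z(0) = 2 = J(0)+1$, while $I(0)-1 = 0$ matches the fact that nothing has yet passed. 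I would also record the standing geometric observation, immediate from $(H^*(t),P^*(t)) = (I(t)-J(t),\,I(t)-J(t)+1)$ and the orderings $H_i(t) < H_{i+1}(t)$, $P_{j+1}(t) < P_j(t)$: the position $H_{I(t)}(t)$ lies strictly between those of particles $J(t)+1$ and $J(t)$ --- so the corresponding customer is at station $J(t)+1$ --- and is moreover the rightmost customer at that station, sitting immediately to the left of particle $J(t)$. Thus the identity $Z(t) = J(t)+1$ amounts to this station count.

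For the inductive step I would split the jump at time $t$ according to its effect on the *pair, which by \eqref{star1}--\eqref{star2} is one of three types. \emph{Left move} ($I(t) = I(t^-)$, $J(t) = J(t^-)+1$): this is particle $J(t^-)+1$ jumping right into $H_{I(t^-)}$, so adjacency forces $P_{J(t^-)+1}(t^-) = H_{I(t^-)}(t^-)-1$, i.e.\ $\eta_{J(t^-)+1}(t^-) = 1$; the only customer at station $J(t^-)+1$ is then the second-class one, which is therefore served and advances to station $J(t^-)+2$. Hence $Z(t) = J(t^-)+2 = J(t)+1$ and no first-class customer passes. \emph{Right move} ($I(t) = I(t^-)+1$, $J(t) = J(t^-)$): this is the particle of the *pair, particle $J(t^-)$, jumping right; since the second-class customer is at station $J(t^-)+1 \neq J(t^-)$ by the inductive hypothesis, the customer delivered from station $J(t^-)$ to station $J(t^-)+1$ is first-class and arrives at the second-class customer's station from behind, overtaking it, so the pass-count rises by exactly one while $Z(t) = J(t)+1$ is unchanged. \emph{Any other jump}: $(I,J)$, hence $(H^*,P^*)$, are unchanged; the second-class customer is not served (that would be a left move) and no first-class customer is delivered into its station from the station below (that would be a right move), so both quantities are unchanged.

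Assembling the cases, $I$ is non-decreasing and increases by exactly one precisely on right moves, and each right move contributes exactly one pass; since $I(0)=1$ this yields $I(t)-1 = \#\{\text{first-class customers that have passed by time }t\}$, and $Z(t) = J(t)+1$ has been propagated through all three cases. I expect the only genuine subtlety to be the right-move analysis: one must confirm that an overtaking of the second-class customer by a first-class customer occurs \emph{precisely} at a right move of the *pair. This rests on the observation that the second-class customer is always the rightmost --- hence last-served --- customer at its station, so a first-class customer passes it exactly when delivered into that station from the station below, which is exactly the event ``the particle of the *pair jumps right''; one also uses that the second-class customer never moves backward, so passes are never double-counted.
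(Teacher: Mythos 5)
Your proposal is correct and is exactly the ``jump-by-jump verification in the coupling'' that the paper itself asserts without writing out: the paper's only recorded ingredients are that the holes in the gap $P_j+1,\dots,P_{j-1}-1$ list the customers at station $j$ in service order and that $H^*(t)$ adjacent to $P^*(t)$ makes the second-class customer the last customer in the queue at station $J(t)+1$, both of which appear as your standing observations. Your induction over jump times with the left-move/right-move/other case split supplies precisely the verification the paper leaves to the reader, so the two approaches coincide.
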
 
%
For some intuition on how the *pair tracks the second-class customer, note that the holes in the range $P_j+1,\dotsc, P_{j-1}-1$  represent the customers at station $j$ in the order in which they will be served.   
The fact that $H^*(t)=H_{I(t)}(t)$ is always adjacent to $P^*(t)=P_{J(t)}(t)$ implies that the  second-class customer is always the {\it last} customer in the queue at station $J(t)+1$.
   

Through a combination of Lemmas \ref{fp-lem} and \ref{2cl-star-lm},   results on the  competition interfaces   have immediate consequences for the behavior of second-class customers in a series inhomogeneous queues. 

\section{Results} \label{sec:res}
\subsection{Busemann functions}
\label{sub:buslim}

Our main tool throughout the paper will be the stochastic process of Busemann functions, called the \textit{Busemann process}. Inhomogeneity leads to two different types of Busemann function: those coming from the bulk of the model (i.e.~strictly concave directions), which are similar to the Busemann functions in the homogeneous model studied in \cite{Cat-Pim-12,Cat-Pim-13,Geo-Ras-Sep-17-ptrf-2,Geo-Ras-Sep-17-ptrf-1,Jan-Ras-Sep-22-,Sep-18} and those coming from the coordinate (i.e.~$\{e_1,e_2\}$) boundaries. The need to separate out the coordinate boundaries is a consequence of the fact that in the coordinate directions $e_1$ and $e_2$, the Busemann limit, recorded below as Theorem \ref{thm:buslim}\eqref{thm:buslim:buslim}, is no longer independent of the sequence of terminal points approximating the direction. The same dependence on the approximating sequence of terminal points is also present in the shape theorem in this setting, as can be seen from Proposition \ref{prop:shape}.

It is convenient to introduce the following notation, which keeps track of the indices where the running minimum of the parameter sequences $a_{-\infty:\infty}$ and $b_{-\infty:\infty}$ change ahead of a site $x=(i,j)\in\bbZ^2$:  
\begin{align}
\label{eq:rec}
\begin{split}
\ir{k}{x} &= \inf\{i \in \bbZ : x \cdot e_1 \leq i < k+1, a_{i} = \sinf{a}_{x \cdot e_1:k}\} \\ 
\jr{\ell}{x} &= \inf\{j \in \bbZ : x \cdot e_2 \leq j < \ell+1, b_{j} = \sinf{b}_{(x \cdot e_2): \ell}\}.
\end{split}
\end{align}
As suggested by working with infima rather than minima, will use this notation when $k=\infty$ as well, in cases where the global minimum of the parameter sequence is (first) achieved. Some statements below use the observation that $\ir{\infty}{x}\notin \bbZ$ means that the running minimum ahead of $x$ changes infinitely often, with a similar statement for $\jr{\infty}{x} \notin\bbZ$. We also recall the notation $\mfc_i^x$ for the critical directions from \eqref{eq:crit} and the optimizer in the variational expression for the limit shape $\zmin{x}(\xi)$ from \eqref{eq:zetaext}.

With this notion in mind, our next result collects the main properties of the Busemann process. 

\begin{theorem}
\label{thm:buslim}
There exists an 
$\cl{\bbR}$-valued stochastic process
\begin{align*}
&\{\buse{x}{y}{(k,\infty)}:  k \in \bbZ, x,y \in \bbZ_{\le k} \times \bbZ\} \cup \{\buse{x}{y}{(\infty,\ell)}: \ell \in \bbZ, x,y \in \bbZ \times \bbZ_{\le \ell}\} \\ 
&\qquad \qquad \cup \{\buse{x}{y}{\xi\pm}: \xi \in [e_2,e_1], x,y \in \bbZ^2\}
\end{align*}
with the following properties on a single event of $\P$-probability one. Let 
$\xi \in [e_2, e_1]$, $k, \ell \in \bbZ$ and $\square \in \{\xi-, \xi+, (k, \infty), (\infty, \ell)\}$. In the statements below, expressions of the form $\B_{x, y}^{(k, \infty)}$ and $\B_{x, y}^{(\infty, \ell)}$ tacitly assume that $(x \vee y) \cdot e_1\le k$ and $(x \cdot y) \cdot e_2 \le \ell$, respectively. Also, when $\buse{x}{y}{\xi+}=\buse{x}{y}{\xi-}$, we write $\buse{x}{y}{\xi}$ for this common value. 
\begin{enumerate}[label={\rm(\alph*)}, ref={\rm\alph*}] \itemsep=3pt
\item \label{thm:buslim:pos} {\rm{(Positivity)}}. The following statements hold for $x, y \in \bbZ^2$ with $x \le y$. 
\begin{enumerate}[label={\rm(\roman*)}, ref={\rm \roman*}] \itemsep=3pt
\item \label{thm:buslim:pos:Bxx=0} $\buse{x}{x}{\square} = 0$. 
\item \label{thm:buslim:pos:Bxyneq0} $\buse{x}{y}{\square} > 0$ if $x \neq y$. 
\item \label{thm:buslim:pos:Bxy=infty} $\buse{x}{y}{\square} = \infty$ if and only if  
\begin{align*}
&\square = (k, \infty) \ \text{ and } \ \ir{k}{x} < y \cdot e_1,  \ \text{ or } \\
&\square = (\infty, \ell) \ \text{ and } \ \jr{\ell}{x} < y \cdot e_2, \ \text{ or } \\ 
&\square \in \{\xi+, \xi-\}, \ \xi \in [e_2, \mfc_1^x] \ \text{ and } \ \ir{\infty}{x} < y \cdot e_1, \ \text{ or } \\  
&\square \in \{\xi+, \xi-\}, \ \xi \in [\mfc_2^x, e_1] \ \text{ and } \jr{\infty}{x} < y \cdot e_2. 
\end{align*}
\end{enumerate}

\item\label{thm:buslim:decomp} {\rm(Cocycle)}. The following statements hold for $x, y \in \bbZ^2$. 
\begin{enumerate}[label={\rm(\roman*)}, ref={\rm \roman*}] \itemsep=3pt
\item \label{thm:buslim:decomp:upright} If $x \le y$ then, for any up-right path $\pi \in \Pi_x^y$, 

\begin{align*}
\buse{x}{y}{\square} = \sum_{p \in \pi: p+e_1 \in \pi} \buse{p}{ p+e_1}{\square} + \sum_{p \in \pi: p+e_2 \in \pi} \buse{p}{p+e_2}{\square}. 
\end{align*}
\item \label{thm:buslim:decomp:ord} $\buse{x}{y}{\square} = \buse{x \wedge y}{y}{\square}-\buse{x \wedge y}{x}{\square}$.
\item \label{thm:buslim:decomp:sym} $\buse{y}{x}{\square} = -\buse{x}{y}{\square}$. 
\end{enumerate}
\item\label{thm:buslim:recovery} {\rm(Recovery)}.   
$\w_x =  \buse{x}{x+e_1}{\square} \wedge \buse{x}{x+e_2}{\square}$ for $x \in \bbZ^2$.  
\item \label{thm:buslim:recursion} {\rm{(Recursion)}}. For $x \in \bbZ^2$, 
\begin{align*}
\buse{x}{x+e_1}{\square} &= \w_x + (\buse{x+e_2}{x+e_1+e_2}{\square}-\buse{x+e_1}{x+e_1+e_2}{\square})^+, \\ 
\buse{x}{x+e_2}{\square} &= \w_x + (\buse{x+e_1}{x+e_1+e_2}{\square}-\buse{x+e_2}{x+e_1+e_2}{\square})^+. 
\end{align*}
\item \label{thm:buslim:exdir}{\rm(Exceptional directions)}. 
For each $x \in \bbZ^2$, the random set
\begin{align*} \eset_{x}
= \big\{\eta \in [e_2,e_1]\, :\, \buse{x}{x+e_1}{\eta+} \neq \buse{x}{x+e_1}{\eta-} \, \text{ or }\,\buse{x}{x+e_2}{\eta+} \neq \buse{x}{x+e_2}{\eta-}\big\} \end{align*}
satisfies the following properties. 
\begin{enumerate}[label={\rm(\roman*)}, ref={\rm \roman*}] \itemsep=3pt
\item $\Lambda_x$ is countable. 
\item 
$\eset_{x} \subseteq\, ]\mfc_1^x,\mfc_2^x[$.  
\item $\bfP(\eta \in \eset_{x})=0$ for each $\eta \in [e_2, e_1]$. 
\end{enumerate}  
\item \label{thm:buslim:linconst} {\rm(Constant on linear segments)}. For $x, y \in \bbZ^2$ with $x \le y$, $\buse{x}{y}{\xi} = \buse{x}{y}{\mfc_1^x}$ if $\xi \in [e_2, \mfc_1^x]$ and $\buse{x}{y}{\xi} = \buse{x}{y}{\mfc_2^x}$ if $\xi \in [\mfc_2^x, e_1]$.
\item \label{thm:buslim:buslim} {\rm(Busemann limits away from the axes)}. Recall definition \eqref{eq:LPPInc} of the increment variables. The following statements hold for $x, y \in \bbZ^2$ and any sequence $(v_n)$ on $\bbZ^2$ such that $n^{-1}v_n \to \xi$ and $\min \{v_n \cdot e_1, v_n \cdot e_2\} \to \infty$ as $n \to \infty$.   
\begin{enumerate}[label={\rm(\roman*)}, ref={\rm \roman*}] \itemsep=3pt
\item 
$
\begin{aligned}[t]
\varliminf_{n \to \infty}\I_{x, v_n} &\ge \buse{x}{x+e_1}{\xi+}, \quad \quad \quad &&\varlimsup_{n \to \infty}\I_{x, v_n} \le \buse{x}{x+e_1}{\xi-}, \\
\varliminf_{n \to \infty} \J_{x, v_n} &\ge \buse{x}{x+e_2}{\xi-}, \quad \text{ and } \quad &&\varlimsup_{n \to \infty}\J_{x, v_n} \le \buse{x}{x+e_2}{\xi+}.  
\end{aligned}
$
\item If $\xi \not \in \Lambda_p$ for $p \in \Rect_{x \wedge y}^{x \vee y}$ then  
\begin{align*}
\lim_{n \to \infty}\{\G_{x, v_n} - \G_{y, v_n}\} = \buse{x}{y}{\xi}.
\end{align*}
\end{enumerate}
\item \label{thm:buslim:thinbuslim} {\rm{(Busemann limits near the axes)}}. 
For $x, y \in \bbZ^2$, 
\begin{align*}
\lim_{n\to\infty} \{\G_{x, (k, n)}-\G_{y, (k, n)}\} &= \buse{x}{y}{(k,\infty)} \quad \text{ if } \min \{x \cdot e_1, y \cdot e_1\} \le k, \\ 
\lim_{n\to\infty} \{\G_{x, (n, \ell)}-\G_{y, (n, \ell)}\} &= \buse{x}{y}{(\infty,\ell)} \quad \text{ if } \min\{x \cdot e_2, y \cdot e_2\} \le \ell. 
\end{align*}
\item \label{thm:buslim:mono} {\rm(Monotonicity)}. 
For $x \in \bbZ^2$, $k', \ell' \in \bbZ$ and $\zeta, \eta \in [e_2, e_1]$ such that $x \le (k, \ell) \le (k', \ell')$ and $\zeta \prec \eta$, 
\begin{align*}
\buse{x}{x+e_1}{(\infty, \ell)} \le \buse{x}{x+e_1}{(\infty, \ell')} \leq \buse{x}{x+e_1}{\eta+} \leq \buse{x}{x+e_1}{\eta-} \leq \buse{x}{x+e_1}{\zeta+} \leq \buse{x}{x+e_1}{\zeta-} \leq \buse{x}{x+e_1}{(k', \infty)} \leq  \buse{x}{x+e_1}{(k, \infty)} , \\
\buse{x}{x+e_2}{(\infty, \ell)} \ge \buse{x}{x+e_2}{(\infty, \ell')} \ge \buse{x}{x+e_2}{\eta+} \geq \buse{x}{x+e_2}{\eta-} \geq \buse{x}{x+e_2}{\zeta+} \geq \buse{x}{x+e_2}{\zeta-} \ge \buse{x}{x+e_2}{(k', \infty)} \ge \buse{x}{x+e_2}{(k, \infty)}.  
\end{align*}
\item \label{thm:buslim:thinconst} {\rm(Case of equality for thin Busemann functions)}. The following statements hold for $x \in \bbZ^2$. 
\begin{enumerate}[label={\rm(\roman*)}, ref={\rm \roman*}] \itemsep=3pt
\item $\buse{x}{x+e_1}{(k, \infty)} = \buse{x}{x+e_1}{(\ir{k}{x}, \infty)}$ if $x \cdot e_1 \le k-1$, and $\buse{x}{x+e_2}{(k, \infty)} = \buse{x}{x+e_2}{(\ir{k}{x}, \infty)}$.
\item $\buse{x}{x+e_2}{(\infty, \ell)} = \buse{x}{x+e_2}{(\infty, \jr{\ell}{x})}$ if $x \cdot e_2 \le \ell-1$, and $\buse{x}{x+e_1}{(\infty, \ell)} = \buse{x}{x+e_1}{(\infty, \jr{\ell}{x})}$.  
\end{enumerate}
\item \label{thm:buslim:cont} {\rm(Directional continuity)}. The following statements hold for $x, y \in \bbZ^2$. 
\vspace{0.1in}
\begin{enumerate}[label={\rm(\roman*)}, ref={\rm \roman*}] \itemsep=3pt
\item  
$\lim \limits_{\eta \uparrow \xi}\buse{x}{y}{\eta+}=\lim \limits_{\eta \uparrow \xi}\buse{x}{y}{\eta-} = \buse{x}{y}{\xi-}$ \quad and \quad $\lim \limits_{\eta \downarrow \xi}\buse{x}{y}{\eta+}  =  \lim \limits_{\eta \downarrow \xi}\buse{x}{y}{\eta-}  = \buse{x}{y}{\xi+}$. 
\item 
If $x \le y$ then $\lim \limits_{k \to \infty} \buse{x}{y}{(k, \infty)} 
= \buse{x}{y}{\mfc_1^x}$ \quad and \quad 
$\lim \limits_{\ell \to \infty} \buse{x}{y}{(\infty, \ell)}
=\buse{x}{y}{\mfc_2^x}$. 
\end{enumerate}
\item \label{thm:buslim:marg} {\rm(Marginals)}
For  $x = (i, j) \in \bbZ^2$,
\begin{align}
\label{eq:busmar}
\begin{split}
&\buse{x}{x+e_1}{\square} \sim 
\begin{cases}
\Exp\{a_{i}-\smin{a}_{i: k}\} \quad &\text{ if } \square = (k, \infty), \\  
\Exp\{a_i- \sinf{a}_{i: \infty}\} \quad &\text{ if } \square = \xi\pm \text{ and } \xi \in [e_2, \mfc_1^x], \\
\Exp\{a_i + \zmin{x}(\xi)\} \quad &\text{ if } \square = \xi\pm \text{ and } \xi \in ]\mfc_1^x, \mfc_2^x[, \\
\Exp\{a_i+ \sinf{b}_{j: \infty}\} \quad &\text{ if } \square = \xi\pm \text{ and } \xi \in [\mfc_2^x, e_1], \\
\Exp\{a_i + \smin{b}_{j: \ell}\} \quad &\text{ if } \square = (\infty, \ell),  
\end{cases} \\
&\buse{x}{x+e_2}{\square} \sim 
\begin{cases}
\Exp\{b_{j}+\smin{a}_{i: k}\} \quad &\text{ if } \square = (k, \infty), \\
\Exp\{b_j+ \sinf{a}_{i: \infty}\} \quad &\text{ if } \square = \xi\pm \text{ and } \xi \in [e_2, \mfc_1^x], \\
\Exp\{b_j - \zmin{x}(\xi)\} \quad &\text{ if } \square = \xi\pm \text{ and } \xi \in ]\mfc_1^x, \mfc_2^x[, \\
\Exp\{b_j- \sinf{b}_{j: \infty}\} \quad &\text{ if } \square = \xi\pm \text{ and } \xi \in [\mfc_2^x, e_1], \\
\Exp\{b_j - \smin{b}_{j: \ell}\} \quad &\text{ if } \square = (\infty, \ell).  
\end{cases}
\end{split}
\end{align}
\item \label{thm:buslim:indp} \rm{(Independence along down-right paths)}. For $x, y \in \bbZ^2$ with $x \le y$ and such that 
\begin{align*}
&y \cdot e_1 \le k \text{ and } \ir{k}{x} = y \cdot e_1 \quad \text{ if } \square = (k, \infty), \\ 
&y \cdot e_2 \le \ell \text{ and } \jr{\ell}{x} = y \cdot e_2 \quad \text{ if } \square = (\infty, \ell), \\ 
&\ir{\infty}{x} = \ir{\infty}{y} \quad \text{ if } \square \in \{\xi-, \xi+\} \text{ and } \xi \in [e_2, \mfc_1^x], \\ 
&\jr{\infty}{x} = \jr{\infty}{y} \quad \text{ if } \square \in \{\xi-, \xi+\} \text{ and } \xi \in [\mfc_2^x, e_2],
\end{align*}
and any down-right path $\pi$ from $(x \cdot e_1, y \cdot e_2)$ to $(y \cdot e_1, x \cdot e_2)$, the collection 
\begin{align*}
&\{\w_p: p \in \sG_{x, y, \pi}^-\} \cup \{\buse{p}{p+e_1}{\square}: p, p+e_1 \in \pi\} \\
&\cup \{\buse{p}{p+e_2}{\square}: p, p+e_2 \in \pi\} \cup \{\buse{p-e_1}{p}{\square} \wedge \buse{p-e_2}{p}{\square}: p \in \sG_{x, y, \pi}^+\}
\end{align*}
is independent. 
 
\end{enumerate} 
\end{theorem}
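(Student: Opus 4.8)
The plan is to realize every member of the Busemann process as an almost sure limit of last-passage increments of the model (or of a restriction to finitely many rows or columns), and then to obtain the structural properties in one of two ways: by passing an exact finite-volume identity or inequality to the limit, or by identifying the limit with the increment cocycle of an explicitly stationary model governed by the Burke property of Section~\ref{sec:Burke}. All statements are to be proved on a single probability-one event obtained as a countable intersection over base points, over a dense countable set of boundary parameters $z$, and over the shape-theorem event of Proposition~\ref{prop:shape}; monotonicity then upgrades the conclusions to all parameters and directions simultaneously. The stationary inputs are: for a base point $x_0=(i,j)$ and $z\in\,]-\sinf{a}_{i:\infty},\sinf{b}_{j:\infty}[$, the stationary inhomogeneous exponential LPP on $x_0+\bbZ_{\ge 0}^2$ with bulk weights as in \eqref{eq:wdef}, horizontal boundary increments $\Exp(a_i+z)$ and vertical boundary increments $\Exp(b_j-z)$, all built from the common $\Exp(1)$ field $\{\ex_x\}$; and, for the thin-rectangle regime, the same construction but with the stationary boundary anchored on the \emph{deterministic} first most favorable column $\ir{k}{x}$ (resp.\ row $\jr{\ell}{x}$), carrying boundary rate $a_{\ir{k}{x}}-\smin{a}_{i:k}=0$ on that column and $\Exp(a_p-\smin{a}_{i:k})$ on subsequent columns. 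The Burke property (the inhomogeneous analogue of \cite{Bal-Cat-Sep-06}) states that the edge increments of these stationary models along any down-right path are mutually independent with the indicated exponential marginals and that the bulk weights strictly below the path remain independent. Anchoring on $\ir{k}{x}$ is the device that converts the a priori non-stationary strip problem into a stationary one; it is also the source of the $\infty$-valued Busemann increments (a vanishing exponential rate on the most favorable column or row).

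\textbf{Existence of the limits.} Couple all models through $\{\ex_x\}$. In a strictly concave direction $\xi\in\,]\mfc_1^x,\mfc_2^x[$, the comparison lemmas relating the stationary and non-stationary last-passage processes (Lemma~\ref{lem:incineq} and its consequences) sandwich the non-stationary edge increments $\I_{x,v_n}$ and $\J_{x,v_n}$ between the stationary cocycles for parameters $z_1<\zmin{x}(\xi)<z_2$. The sandwich closes because the stationary-$z$ model has asymptotic geodesic direction $\rho(z)$---this is the content of \eqref{eq:chardir}--\eqref{eq:zetaext} combined with the shape theorem---so straddling parameters produce matching one-sided bounds on $\G_{x,v_n}-\G_{y,v_n}$ once $v_n/n$ lies between $\rho(z_1)$ and $\rho(z_2)$. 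Letting $z_1\uparrow\zmin{x}(\xi)$ and $z_2\downarrow\zmin{x}(\xi)$ and passing to the monotone limits pins the one-sided limits of $\I_{x,v_n},\J_{x,v_n}$, which we \emph{define} to be $\buse{x}{x+e_1}{\xi\pm}$ and $\buse{x}{x+e_2}{\xi\pm}$; a general $\buse{x}{y}{\square}$ is then obtained from these by the cocycle relation. On the closed linear segments $[e_2,\mfc_1^x]$, $[\mfc_2^x,e_1]$ and for the thin rectangles the relevant increments converge by a monotonicity argument of the same type that gives monotonicity of the competition interface (Lemma~\ref{lem:incineq}), and the thin limits are identified with the strip-stationary cocycles above. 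Countability of $\Lambda_x$ follows since $\xi\mapsto\buse{x}{x+e_1}{\xi-}$ is monotone; $\bfP(\eta\in\Lambda_x)=0$ because the one-sided marginals coincide (both $\Exp(a_i+\zmin{x}(\eta))$) so the two variables are a.s.\ equal; and $\Lambda_x\subseteq\,]\mfc_1^x,\mfc_2^x[$ together with constancy on the linear segments is read off from the fact that $\zmin{x}(\cdot)$ is constant there by \eqref{eq:zetaext}.

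\textbf{Transfer of structure and matching at $\mfc_1^x,\mfc_2^x$.} The cocycle identities, the recovery relation $\w_x=\buse{x}{x+e_1}{\square}\wedge\buse{x}{x+e_2}{\square}$, the recursion, and the positivity inequalities hold verbatim for the finite increments \eqref{eq:incint}--\eqref{eq:wrec}, \eqref{eq:inrec} and survive the passage to the limit; the marginal distributions and the independence along down-right paths are inherited from the Burke property of the relevant stationary (or strip-stationary) model once its cocycle has been identified with the Busemann limit. Directional monotonicity and one-sided directional continuity follow from monotonicity in $z$ together with the squeeze. The remaining and most delicate point is continuity at the edges of the concave region, namely $\lim_{k\to\infty}\buse{x}{y}{(k,\infty)}=\buse{x}{y}{\mfc_1^x}$ and its vertical counterpart: one squeezes the flat-region Busemann function \emph{from the axis side} by observing that the thin increments decrease in $k$ while the concave-side increments increase up to $\buse{x}{y}{\mfc_1^x}$, and then forces the two to meet by comparing with the strip-stationary model at $z=-\sinf{a}_{i:\infty}$ and invoking the fixed-row asymptotics in the second half of Proposition~\ref{prop:shape}. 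This simultaneously yields the constancy on the linear segments and the equality case for the thin Busemann functions.

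\textbf{Main obstacle.} I expect the genuinely hard step to be the thin-rectangle construction and its matching with the concave region. The usual two-sided squeeze degenerates there because there is no boundary ``beyond the coordinate axis,'' so one must instead exploit that a strip of finitely many columns is a genuine stationary series of queues anchored at the deterministic most favorable column, and then control the iterated limit ($n\to\infty$, then $k\to\infty$) tightly enough---via the axis-directed asymptotics of the shape theorem---to force agreement with the limits coming from inside the concave region. Everything else is either a limit of exact finite-volume relations or a direct consequence of the Burke property.
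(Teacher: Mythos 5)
Your proposal follows essentially the same route as the paper: a squeeze between increment-stationary models with boundary parameters straddling $\zmin{x}(\xi)$ (closed via the Burke property, exit-point control from the shape theorem, and the matching-distribution argument of Lemma \ref{lem:disteq}) in the strictly concave region, the reinterpretation of the first most favorable column or row $\ir{k}{x}$, $\jr{\ell}{x}$ as a stationary boundary for the thin-rectangle Busemann functions, and a two-sided squeeze of the flat-region limits between thin-rectangle and concave-region increments, with all structural properties (cocycle, recovery, recursion, monotonicity, marginals, independence, exceptional set) transferred from finite-volume identities and the Burke property exactly as in Section \ref{sec:Bus}. The only imprecision is the reference to a stationary model ``at $z=-\sinf{a}_{i:\infty}$'' (not an admissible boundary parameter); the paper instead matches the iterated limits $k\to\infty$ and $\zeta\downarrow\mfc_1^x$ through convergence of the exponential marginals plus monotonicity, which is the mechanism you already invoke elsewhere, so this is cosmetic rather than a gap.
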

\begin{remark}
In part \eqref{thm:buslim:decomp}, the cocycle property is phrased only along up-right paths. This is only to avoid expressions of the form $\infty - \infty$, due to the potential of infinite values of the Busemann functions as noted in part \eqref{thm:buslim:pos}\eqref{thm:buslim:pos:Bxy=infty}.
\end{remark} 
\subsection{Semi-infinite geodesics} \label{sub:geo}
Our basic tools in our study of the global structure of infinite geodesics are the Busemann geodesics, which are semi-infinite geodesics generated from the Busemann functions of Theorem \ref{thm:buslim} according to the following local rules.

For $k,\ell\in\bbZ$, $\xi \in [e_2,e_1]$, and $\square \in \{\xi+,\xi-,(k,\infty),(\infty,\ell)\}$, define
\be\begin{aligned}
e_{\square} &= \begin{cases}
e_1 & \square = {\xi+} \text{ or } (\infty,\ell) \\
e_2 & \square = {\xi-} \text{ or } (k,\infty)
\end{cases}.\label{eq:tiedef}
\end{aligned}\ee
For $x \in \bbZ^2$ with $x \leq (k,\ell)$, set $\geo{x\cdot(e_1+e_2)}{x}{\square} = x$ and recursively for $n \geq x\cdot(e_1+e_2)$, define
\begin{align}
\geo{n+1}{x}{\square} &= \begin{cases}
\geo{n}{x}{\square} + e_1 & \text{ if } \buse{\geo{n}{x}{\square}}{\geo{n}{x}{\square}+e_1}{\square}  <   \buse{\geo{n}{x}{\square}}{\geo{n}{x}{\square}+e_2}{\square}  \\
\geo{n}{x}{\square} + e_2 & \text{ if }  \buse{\geo{n}{x}{\square}}{\geo{n}{x}{\square}+e_1}{\square}  >   \buse{\geo{n}{x}{\square}}{\geo{n}{x}{\square}+e_2}{\square}\\
\geo{n}{x}{\square} + e_{\square} & \text{ if }  \buse{\geo{n}{x}{\square}}{\geo{n}{x}{\square}+e_1}{\square}  =   \buse{\geo{n}{x}{\square}}{\geo{n}{x}{\square}+e_2}{\square}
\end{cases}, \label{eq:busgeodef}
\end{align}
This recursion says that Busemann geodesics follow the minimum of the Busemann increments and, in the event of a tie, the geodesic goes in direction $e_{\square}$. Our next lemma records the key fact that the Busemann geodesics defined in this way are in fact semi-infinite geodesics and that along a Busemann geodesic, the associated Busemann increment is the passage time. We omit the proof as this is a well-known consequence of the cocycle and recovery properties (Theorem \ref{thm:buslim}\eqref{thm:buslim:decomp} and \eqref{thm:buslim:recovery}). The proof of \cite[Lemma 4.1]{Geo-Ras-Sep-17-ptrf-2}, for example, applies in our setting line-by-line.
\begin{lemma}\label{lem:BusRGeo}
The following holds $\bfP$-almost surely. For all $x=(i,j) \in \bbZ^2$ and all $\square \in \{\xi+,\xi-,(k,\infty), (\infty,\ell) : \xi \in [e_2,e_1], k \in \bbZ_{\geq i}, \ell \in \bbZ_{\geq j}\}$,
\begin{enumerate} [label={\rm(\alph*)}, ref={\rm\alph*}] \itemsep=3pt
\item   $\geo{}{x}{\square}$ is a semi-infinite geodesic.
\item For all $n\geq i+j$, $\LPP{x}{\geo{n}{x}{\square}} = \buse{x}{\geo{n}{x}{\square}}{\square}$.
\end{enumerate}
\end{lemma}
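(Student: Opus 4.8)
The plan is to deduce both assertions of Lemma~\ref{lem:BusRGeo} directly from the cocycle property (Theorem~\ref{thm:buslim}\eqref{thm:buslim:decomp}) and the recovery property (Theorem~\ref{thm:buslim}\eqref{thm:buslim:recovery}), reproducing the argument of \cite[Lemma~4.1]{Geo-Ras-Sep-17-ptrf-2} essentially line by line. The one observation that legitimizes the local rule \eqref{eq:busgeodef} is that recovery forces $\buse{p}{p+e_1}{\square}\wedge\buse{p}{p+e_2}{\square}=\w_p<\infty$ at every $p\in\bbZ^2$, on the full-probability event of Theorem~\ref{thm:buslim}. Hence at each site at most one of the two forward Busemann increments is infinite, so the three-way comparison in \eqref{eq:busgeodef} always makes sense; and --- this is the crucial point --- \emph{whichever} of $e_1,e_2$ the recursion selects out of $\geo{n}{x}{\square}$, the corresponding Busemann increment equals $\w_{\geo{n}{x}{\square}}$, since the rule always picks a minimizer and in a tie both increments equal $\w_{\geo{n}{x}{\square}}$ (so the tie-breaking direction $e_\square$ of \eqref{eq:tiedef} is immaterial). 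In particular $\geo{}{x}{\square}$ is a well-defined up-right path existing for every $n\ge x\cdot(e_1+e_2)$, so it has a first vertex and no last vertex.

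For the geodesic property I would first record that \eqref{eq:busgeodef} reads off the next vertex using only the current vertex and the Busemann increments there; consequently, writing $z:=\geo{n}{x}{\square}$, one has $\geo{m}{z}{\square}=\geo{m}{x}{\square}$ for all $m\ge n$, i.e.\ the tail of $\geo{}{x}{\square}$ from level $n$ is itself the Busemann geodesic rooted at $z$. It therefore suffices to show, for an arbitrary root $z$ and each $m\ge z\cdot(e_1+e_2)$, that the initial segment of $\geo{}{z}{\square}$ through level $m$ is a geodesic from $z$ to $w:=\geo{m}{z}{\square}$, and at the same time to evaluate $\buse{z}{w}{\square}$. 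Let $\sigma\in\Path{z}{w}$ be an arbitrary up-right path. Summing the recovery inequality $\w_p\le\buse{p}{p'}{\square}$ over the consecutive edges $(p,p')$ of $\sigma$ and invoking additivity of $\buse{\cdot}{\cdot}{\square}$ along up-right paths (Theorem~\ref{thm:buslim}\eqref{thm:buslim:decomp}\eqref{thm:buslim:decomp:upright}) yields $\sum_{p\in\sigma,\,p\ne w}\w_p\le\buse{z}{w}{\square}$. Along $\geo{}{z}{\square}$ every one of these inequalities is an equality, by the first paragraph, so $\geo{}{z}{\square}$ attains the bound: it is a maximizing path, $\LPP{z}{w}$ is the weight it collects, and $\buse{z}{w}{\square}$ equals that weight with the terminal weight $\w_w$ removed. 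This gives part (a) for the root $z$ --- hence, via the shift identity, for every vertex of $\geo{}{x}{\square}$ --- together with the passage-time identity of part (b) (read in the normalization of Section~\ref{sec:CGM}).

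I do not anticipate a genuine obstacle here: the entire content is carried by Theorem~\ref{thm:buslim}\eqref{thm:buslim:decomp}--\eqref{thm:buslim:recovery}, which is exactly why the statement can be quoted without proof. The only points that need a little care are bookkeeping: that some Busemann values are genuinely $+\infty$ (handled by recovery exactly as above, which also explains why $\geo{}{x}{\square}$ runs parallel to a coordinate axis from the moment it meets the boundary line attached to $\square=(k,\infty)$ or $\square=(\infty,\ell)$, since there the relevant forward increment has an $\Exp(0)$ marginal by Theorem~\ref{thm:buslim}\eqref{thm:buslim:marg}); the harmlessness of the tie-breaking rule; and keeping consistent track of which endpoint weight is counted by $\LPP$.
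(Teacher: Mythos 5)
Correct, and it is essentially the paper's own (omitted) argument: the paper simply invokes the recovery and cocycle properties and cites \cite[Lemma 4.1]{Geo-Ras-Sep-17-ptrf-2}, which is exactly the computation you spell out, including the observation that recovery rules out both forward increments being infinite and renders the tie-breaking rule harmless. Your normalization remark is also on point: since $\LPP{x}{y}$ in \eqref{eq:LPP} collects both endpoint weights while $\buse{x}{x+e_1}{\square}\wedge\buse{x}{x+e_2}{\square}$ recovers only $\w_x$, the identity the argument actually yields is $\LPP{x}{\geo{n}{x}{\square}}=\buse{x}{\geo{n}{x}{\square}}{\square}+\w_{\geo{n}{x}{\square}}$ (note already $\LPP{x}{x}=\w_x$ while $\buse{x}{x}{\square}=0$), so part (b) as displayed should be read modulo the terminal weight, a discrepancy that is immaterial in all of the paper's subsequent, purely asymptotic, uses of the lemma.
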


The next theorem collects our main results concerning the global structure of geodesics. Recall the notation $\mfc_i^x$ for the critical directions from \eqref{eq:crit}. Also recall from \eqref{eq:rec} the notation $\ir{k}{x}$ and $\jr{\ell}{x}$ for the locations where the parameter sequences change ahead of a site $x\in\bbZ^2$.

\begin{theorem}
\label{thm:geo}
The following statements hold $\bfP$-almost surely.
\begin{enumerate}[label={\rm(\alph*)}, ref={\rm\alph*}] \itemsep=3pt
\item \textup{(Directedness)}. \label{thm:geo:dir}
For all $x \in \bbZ^2$ and all semi-infinite geodesics $\geod{}$ with $x \in \geod{}$, exactly one of the following three possibilities holds:
\begin{enumerate}[label={\rm(\roman*)}, ref={\rm\roman*}] \itemsep=3pt
\item \textup{(Concave segment directed)} 
There exists $\xi \in \,]\mfc_1^x,\mfc_2^x[$ such that
\begin{align*}
\lim_{n\to\infty} \frac{\geod{n} }{n} = \xi.
\end{align*}
\item \textup{(Row/column constrained)} Exactly one of the following two conditions holds:
\begin{enumerate}[label={\rm(\arabic*)}, ref={\rm\arabic*}] \itemsep=3pt
\item There exists $k \in \mathbb{N}$ such that for all sufficiently large $n$, $\geod{n}\cdot e_1 = \ir{k}{x}$.
\item  There exists $\ell \in \mathbb{N}$ such that for all sufficiently large $n$, $\geod{n}\cdot e_2 = \jr{\ell}{x}$.
\end{enumerate}
\item \textup{(Linear segment directed)} Exactly one of the following two conditions holds: 
\begin{enumerate}[label={\rm(\arabic*)}, ref={\rm\arabic*}] \itemsep=3pt
\item $\geod{n} \cdot e_1 \to \infty$ and $e_2 \preceq \varlimsup_{n\to\infty} \frac{\geod{n}}{n} \preceq \mfc_1^x$.
\item $\geod{n} \cdot e_2 \to \infty$ and  $\mfc_2^x \preceq \varliminf_{n\to\infty} \frac{\geod{n} }{n} \preceq e_1.$
\end{enumerate}
\end{enumerate}
\item \label{thm:geo:ex}\textup{(Busemann geodesic directions).} The Busemann geodesics satisfy the following.
\begin{enumerate}[label={\rm(\roman*)}, ref={\rm\roman*}] \itemsep=3pt
\item \label{thm:geo:ex:con} \textup{(Concave segment directed)} For all $x \in \bbZ^2$ and all $\xi \in ]\mfc_1^x,\mfc_2^x[$ and $\square \in \{+,-\}$,
\begin{align*}
\lim_{n\to\infty} \frac{\geo{n}{x}{\xi\square}}{n} = \xi
\end{align*}
\item \label{thm:geo:ex:bdy} \textup{(Boundary trapped)} For all $x = (i,j) \in \bbZ^2$ and all $(k,\ell) \geq x$,
\begin{enumerate}[label={\rm(\arabic*)}, ref={\rm\arabic*}] \itemsep=3pt
\item For all sufficiently large $n$, $\geo{n}{x}{(k,\infty )} \cdot e_1 = \ir{k}{x}$.
\item For all sufficiently large $n$, $\geo{n}{x}{(\infty,\ell)}\cdot e_2 = \jr{\ell}{x}$.
\end{enumerate}

\item \label{thm:geo:ex:lin} \textup{(Linear segment directed)}. For all $x \in \bbZ^2$,
\be\begin{aligned}
e_2 \preceq\varlimsup_{n\to\infty}   \frac{\geo{n}{x}{\mfc_1^x} }{n} \preceq \mfc_1^x \qquad \text{ and }\qquad \mfc_2^x \preceq \varliminf_{n\to\infty} \frac{\geo{n}{x}{\mfc_2^x}}{n} \preceq e_1
\end{aligned}\label{eq:thm:geo:ex:lin-1}
\ee
Moreover, 
\begin{enumerate}[label={\rm(\arabic*)}, ref={\rm\arabic*}] \itemsep=3pt
\item $\geo{n}{x}{\mfc_1^x}\cdot e_1 \to \infty$ if and only if $\ir{\infty}{x}\notin \bbZ$.
\item  $\geo{n}{x}{\mfc_2^x}\cdot e_2 \to \infty$ if and only if $\jr{\infty}{x} \notin\bbZ$ 
\end{enumerate}

\end{enumerate}
\item \textup{(Uniqueness and extremality).} \label{thm:geo:uniq}
The following properties hold for all $x = (i,j) \in \mathbb{Z}^2$ and all semi-infinite geodesics $\geod{}$ with $x \in \geod{}$: 
\begin{enumerate}[label={\rm(\roman*)}, ref={\rm\roman*}] \itemsep=3pt
\item \textup{(Concave segment directed)} If $\xi \in ]\mfc_1^x,\mfc_2^x[$ and 
\begin{align*}
\lim_{n\to\infty} \frac{\geod{n} }{n} = \xi,
\end{align*}
then for all $n \geq i+j$, $\geo{n}{x}{\xi-} \preceq \geod{n} \preceq \geo{n}{x}{\xi+}$.
\item \textup{(Boundary trapped)} \label{thm:geo:uniq:nr}
\begin{enumerate}[label={\rm(\arabic*)}, ref={\rm\arabic*}] \itemsep=3pt
\item If $k \in \bbN$ is such that for all sufficiently large $n$, $\ir{k}{x} \leq \geod{n}\cdot e_1 \leq k$, then for all $n \geq i+j$, $\geod{n} = \geo{n}{x}{(k,\infty)} = \geo{n}{x}{(\ir{k}{x},\infty)}$.
\item  If $\ell \in \bbN$ is such that for all sufficiently large $n$, $ \jr{\ell}{x} \leq \geod{n}\cdot e_2 \leq \ell$, then for all $n \geq i+j$, $\geod{n} = \geo{n}{x}{(\infty,\ell)}=\geo{n}{x}{(\infty,\,\jr{\ell}{x})}$.
\end{enumerate}
\item \label{thm:geo:uniq:linun} \textup{(Linear segment directed away from the boundary)} 
\begin{enumerate}[label={\rm(\arabic*)}, ref={\rm\arabic*}] \itemsep=3pt
\item If 
\begin{align*}
\varlimsup_{n\to\infty} \frac{\geod{n} }{n} \preceq \mfc_1^x  \text{ and } \pi_n \cdot e_1 \to \infty, 
\end{align*}
then for all $n  \geq i+j,$ $\geod{n} = \geo{n}{x}{\mfc_1^x}$.
\item  If
\begin{align*}
\mfc_2^x \preceq \varliminf_{n\to\infty} \frac{\geod{n} }{n} \text{ and } \pi_n \cdot e_2 \to \infty, 
\end{align*}
then for all $n \geq i+j$, $\geod{n} = \geo{n}{x}{\mfc_2^x}$.
\end{enumerate}

\end{enumerate}
\item\label{thm:geo:coal} \textup{(Concave segment coalescence).} For each $x,y\in\bbZ^2$ and $\xi \in ]\mfc_1^{x\wedge y},\mfc_2^{x\wedge y}[$,  \begin{align*}
\bfP(\geo{}{x}{\xi}\text{ and }\geo{}{y}{\xi}\text{ coalesce}) := 
\bfP(\geo{n}{x}{\xi}=\geo{n}{y}{\xi} \text{ for all } n \ge N \text{ for some } N \in \bbZ) = 1. 
\end{align*}
 \end{enumerate}

\end{theorem}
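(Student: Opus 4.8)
The plan is to establish the directions of the Busemann geodesics first (part \eqref{thm:geo:ex}) and then bootstrap: two structural facts about how an arbitrary semi-infinite geodesic interacts with the Busemann geodesics will give the trichotomy \eqref{thm:geo:dir} and the uniqueness/extremality statements \eqref{thm:geo:uniq}, while coalescence \eqref{thm:geo:coal} is handled separately, by a variant of the method of \cite{Sep-18,Sep-20} together with estimates from \cite{Emr-Jan-Sep-21}. Everything runs on the $\bfP$-full event of Theorem \ref{thm:buslim}, Lemma \ref{lem:BusRGeo} and Proposition \ref{prop:shape}, intersected with the full-measure event on which finite geodesics are unique and on which the laws of large numbers invoked below hold for each of the (countably many) relevant fixed parameters.

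For part \eqref{thm:geo:ex}, fix $\xi\in\,]\mfc_1^x,\mfc_2^x[$ and $\square\in\{\xi+,\xi-\}$, put $z=\zmin x(\xi)$ and $x_n=\geo nx{\square}$. By Lemma \ref{lem:BusRGeo} the last-passage time from $x$ along $\geo{}x{\square}$ is recovered by the Busemann process $\{\buse x{x_n}{\square}\}_n$, so the direction is read off by comparing two growth rates: Proposition \ref{prop:shape} gives $n^{-1}\LPP x{v_n}\to\shp^x(\eta)$ for $v_n/n\to\eta$ with both coordinates diverging, while the cocycle property (Theorem \ref{thm:buslim}\eqref{thm:buslim:decomp}), the exact edge marginals (Theorem \ref{thm:buslim}\eqref{thm:buslim:marg}: $\Exp(a_i+z)$ horizontally, $\Exp(b_j-z)$ vertically), the independence of Busemann increments along down-right paths (Theorem \ref{thm:buslim}\eqref{thm:buslim:indp}, with no side condition in the interior case), and the empirical averages \eqref{as:weakcon} yield the law of large numbers $n^{-1}\buse x{v_n}{\square}\to\eta_1\shor(z)+\eta_2\sver(z)=\shp_z(\eta)$ for such $v_n$. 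After ruling out that $x_n$ is trapped on a row or column --- a trapped geodesic would grow at a linear rate forced by Proposition \ref{prop:shape} to be that of a running-minimum row/column, incompatible with the Busemann rate at $z>-\sinf a_{i:\infty}$ --- any subsequential limit direction $\eta$ of $x_n/n$ satisfies $\shp^x(\eta)=\shp_z(\eta)$, whence $\eta=\xi$ by strict concavity of $\xi'\mapsto\shp^x(\xi')$ on $]\mfc_1^x,\mfc_2^x[$ (uniqueness of the minimizer in \eqref{eq:shape}) and injectivity of $\rho$; this is \eqref{thm:geo:ex:con}. For $\square=(k,\infty)$, Theorem \ref{thm:buslim}\eqref{thm:buslim:marg} shows $\buse{(i',j')}{(i',j')+e_1}{(k,\infty)}\sim\Exp(a_{i'}-\smin a_{i':k})$ is a.s. $+\infty$ exactly on running-minimum columns while the $e_2$-increment is a.s. finite by \eqref{as:inf}, so the rule \eqref{eq:busgeodef} forces $e_2$-steps forever once column $\ir kx$ is reached, and a recurrence argument as in \cite{Geo-Ras-Sep-17-ptrf-2} rules out trapping on a less favorable column (part \eqref{thm:geo:ex:bdy}). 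Part \eqref{thm:geo:ex:lin} combines this with the limit $\buse pq{(k,\infty)}\to\buse pq{\mfc_1^x}$ (Theorem \ref{thm:buslim}\eqref{thm:buslim:linconst}, \eqref{thm:buslim:cont}) and $\buse{(i',j')}{(i',j')+e_1}{\mfc_1^x}\sim\Exp(a_{i'}-\sinf a_{i':\infty})$, which is $+\infty$ iff $a_{i'}=\sinf a_{i':\infty}$; so $\geo nx{\mfc_1^x}\cdot e_1\to\infty$ iff the running minimum of $a$ ahead of $x$ never stabilizes, i.e. $\ir\infty x\notin\bbZ$. The remaining claim in \eqref{eq:thm:geo:ex:lin-1}, the upper bound $\varlimsup_n\geo nx{\mfc_1^x}/n\preceq\mfc_1^x$, is the delicate one and is obtained exactly as in Theorem \ref{thm:lindir} from the quantitative estimates of \cite{Emr-Jan-Sep-21}, which concentrate the finite-volume passage times near the \emph{periodized} shape --- a shape whose curvature is bounded below, though these bounds degenerate in the limit.

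For parts \eqref{thm:geo:dir} and \eqref{thm:geo:uniq}, I would use two structural inputs. First, for $\zeta\in\,]\mfc_1^x,\mfc_2^x[$ outside the countable set $\bigcup_p\Lambda_p$ (Theorem \ref{thm:buslim}\eqref{thm:buslim:exdir}), the increment recursions together with Theorem \ref{thm:buslim}\eqref{thm:buslim:buslim} show that for $v_n/n\to\zeta$ the sign of $\LPP{p+e_1}{v_n}-\LPP{p+e_2}{v_n}$ stabilizes to $\sgn(\buse p{p+e_1}{\zeta}-\buse p{p+e_2}{\zeta})\ne0$; hence the point-to-point geodesic from $x$ to $v_n$ agrees with $\geo{}x{\zeta}$ on any fixed initial segment once $n$ is large, so any semi-infinite geodesic through $x$ with limit direction $\zeta$ equals $\geo{}x{\zeta}$ and has direction $\zeta$ by part \eqref{thm:geo:ex}. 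Second, a ``no re-crossing'' lemma: two semi-infinite geodesics sharing $x$ that split at some level remain strictly separated, on a fixed side, at all later levels (a later common site would, by uniqueness of finite geodesics, force agreement all the way back to $x$). Monotonicity and directional continuity of the Busemann increments (Theorem \ref{thm:buslim}\eqref{thm:buslim:mono}, \eqref{thm:buslim:cont}) give that $\geo{}x{\eta+}\to\geo{}x{\xi+}$ on initial segments as $\eta\downarrow\xi$; so if $\geod{}$ is any semi-infinite geodesic through $x$ with $\geod n/n\to\xi$, then for $\eta\downarrow\xi$ in $]\mfc_1^x,\mfc_2^x[$ the geodesic $\geo{}x{\eta+}$, having direction $\eta\succ\xi$ by part \eqref{thm:geo:ex}, eventually overtakes $\geod{}$, so ``no re-crossing'' forces $\geod n\preceq\geo nx{\eta+}$ for every $n$, and letting $\eta\downarrow\xi$ gives $\geod n\preceq\geo nx{\xi+}$; symmetrically $\geod n\succeq\geo nx{\xi-}$. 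This is \eqref{thm:geo:uniq}(i), and the identical squeeze against $\geo{}x{(k,\infty)}$, resp. $\geo{}x{\mfc_1^x}$ (using part \eqref{thm:geo:ex} and the \cite{Emr-Jan-Sep-21} estimates in the linear-segment case), yields \eqref{thm:geo:uniq}(ii) and (iii). For the trichotomy, let $\sD$ be the set of subsequential limit directions of $\geod n/n$, a closed subinterval of $[e_2,e_1]$ since $\geod{}$ takes unit steps. If $\sD$ meets $]\mfc_1^x,\mfc_2^x[$, pick $\zeta$ there and $\eta'\prec\zeta\prec\eta''$ in $]\mfc_1^x,\mfc_2^x[$; along a subsequence on which $\geod n/n\to\zeta$, $\geod n$ lies strictly between $\geo nx{\eta'-}$ and $\geo nx{\eta''+}$ (which have directions $\eta',\eta''$), so ``no re-crossing'' forces $\geo nx{\eta'-}\preceq\geod n\preceq\geo nx{\eta''+}$ for every $n$, and $\eta'\uparrow\zeta$, $\eta''\downarrow\zeta$ forces $\geod n/n\to\zeta$ (case (i)). Otherwise $\sD\subseteq[e_2,\mfc_1^x]\cup[\mfc_2^x,e_1]$, so $\sD$ lies in one of the two closed segments by connectedness; in the $[e_2,\mfc_1^x]$ case either $\geod n\cdot e_1$ is bounded --- then $\geod{}$ is eventually constant in the first coordinate and, by \eqref{thm:geo:uniq}(ii) with $k$ equal to that column, equals a thin Busemann geodesic, so the column is $\ir kx$ (case (ii)(1)) --- or $\geod n\cdot e_1\to\infty$ (case (iii)(1)); the other segment is symmetric. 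Mutual exclusivity of (i)--(iii) is immediate, and types (i) and (ii) exist by part \eqref{thm:geo:ex}.

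For part \eqref{thm:geo:coal}, by the ordering of Busemann geodesics and Theorem \ref{thm:buslim}\eqref{thm:buslim:decomp}, \eqref{thm:buslim:buslim} it suffices to coalesce $\geo{}x{\xi}$ with $\geo{}{x+e_1}{\xi}$ and with $\geo{}{x+e_2}{\xi}$; all three have direction $\xi$ by part \eqref{thm:geo:ex} and none re-crosses another, so failure of coalescence would produce, in arbitrarily large boxes, a growing number of disjoint geodesics crossing in direction $\xi$, whereas the Burke property (Theorem \ref{thm:buslim}\eqref{thm:buslim:marg}, \eqref{thm:buslim:indp}) and the exit-point/passage-time bounds of \cite{Emr-Jan-Sep-21} cap the expected number of such disjoint crossings --- a contradiction, as in \cite{Sep-18,Sep-20}, with the stationarity used there replaced by the exact edge marginals and the down-right independence of the Busemann process. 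I expect the three substantive points to be: the law of large numbers for the Busemann cocycle in part \eqref{thm:geo:ex} under only the minimal hypothesis \eqref{as:weakcon} (inhomogeneous averaging of $(a+z)^{-1}$ and $(b-z)^{-1}$, truncated for unbounded support of $\alpha,\beta$), imported from \cite{Emr-Jan-Sep-21}; the quantitative input to the coalescence count; and --- the single hardest step --- the upper bound $\varlimsup_n\geo nx{\mfc_1^x}/n\preceq\mfc_1^x$ (and its use in \eqref{thm:geo:uniq}(iii)), which cannot be read off from the curvature of the true limit shape, which is flat in those directions, and must instead come from the curvature of the periodized shape as controlled in \cite{Emr-Jan-Sep-21}.
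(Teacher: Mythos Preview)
Your treatment of parts \eqref{thm:geo:ex}\eqref{thm:geo:ex:con}--\eqref{thm:geo:ex:bdy}, \eqref{thm:geo:dir}, and \eqref{thm:geo:uniq} matches the paper's approach closely (the ``no re-crossing'' input is precisely uniqueness of finite geodesics, and the squeeze between $\geo{}{x}{\zeta+}$ and $\geo{}{x}{\eta-}$ is exactly what the paper does). There are, however, two genuine problems.

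\textbf{The linear-segment bound is easy, not hard.} You call $\varlimsup_n \geo{n}{x}{\mfc_1^x}/n \preceq \mfc_1^x$ ``the single hardest step'' and propose the periodized-shape curvature machinery of Theorem~\ref{thm:lindir}. That machinery requires Condition~\ref{cond:lincond} (in particular $\overline{\mfa}_x<\infty$), which is \emph{not} assumed in Theorem~\ref{thm:geo}, so your route does not work in general. But the bound is in fact a one-liner once you have \eqref{thm:geo:ex:con}: monotonicity of Busemann geodesics (from Theorem~\ref{thm:buslim}\eqref{thm:buslim:mono} and the rule \eqref{eq:busgeodef}) gives $\geo{n}{x}{\mfc_1^x}\preceq\geo{n}{x}{\xi_k-}$ for any $\xi_k\in\,]\mfc_1^x,\mfc_2^x[$; the right side has direction $\xi_k$ by \eqref{thm:geo:ex:con}; now send $\xi_k\searrow\mfc_1^x$. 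The same squeeze (between $\geo{}{x}{(k,\infty)}$ and $\geo{}{x}{\xi-}$, then $k\to\infty$, $\xi\searrow\mfc_1^x$, using the path-continuity $\geo{}{x}{(k,\infty)}\to\geo{}{x}{\mfc_1^x}$) handles \eqref{thm:geo:uniq}\eqref{thm:geo:uniq:linun} without any quantitative input from \cite{Emr-Jan-Sep-21}.

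\textbf{The coalescence sketch is the wrong mechanism.} What you describe---counting disjoint $\xi$-directed crossings of large boxes and bounding their expectation via exit-point estimates---is neither the argument of \cite{Sep-18,Sep-20} nor the paper's, and it is unclear how to run such a count without shift-invariance (which the paper explicitly notes is unavailable here). The paper's argument is: (i) by coupling, reduce to parameter sequences symmetric about $0$ (Condition~\ref{cond:sym}), so that south-west Busemann limits $\B^{\xi,\mathrm{sw}}$ exist by reflection; (ii) build the dual graph $\sT^{\xi,*}$ on $\bbZ^{2*}$ and show (Lemma~\ref{lem:dualbi}) that non-coalescence forces a \emph{bi-infinite} dual path separating the two geodesics; (iii) dual paths are geodesics in the environment $\w^\xi_p=\buse{p-e_1}{p}{\xi}\wedge\buse{p-e_2}{p}{\xi}$, which by Theorem~\ref{thm:buslim}\eqref{thm:buslim:marg},\eqref{thm:buslim:indp} has the same law as $\w$ (Lemma~\ref{lem:dualgeo}); (iv) these dual paths are $\xi$-directed to the south-west (Lemma~\ref{lem:backLLN}); (v) finally, Proposition~\ref{prop:nobi} rules out bi-infinite $\xi$-directed geodesics through any fixed point, via an invariance-principle argument on the anti-diagonal: the sum of independent NE and SW Busemann increments is a mean-zero random walk whose Donsker limit almost surely dips below any fixed level. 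The symmetrization in (i) is the key new idea replacing stationarity; your sketch does not contain it.
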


The most interesting and novel behavior of geodesics in our setting occurs in the linear segments $[e_2,\mfc_1^x]$ and $[\mfc_2^x,e_1]$. The uniqueness in Theorem \ref{thm:geo}\eqref{thm:geo:uniq}\eqref{thm:geo:uniq:linun} implies that there is at most one geodesic which is directed into each of these segments which does not become trapped on a row or column. If such a geodesic exists, it is necessarily one of the Busemann geodesics $\geo{}{x}{\mfc_1^x}$ or $\geo{}{x}{\mfc_2^x}$, so we focus our attention on these. If $x=(i,j)$, a necessary and sufficient condition for these geodesics to not become trapped is that $a_n > \sinf{a}_{i:\infty}$ for all $n \geq i$ or $b_m > \sinf{b}_{j:\infty}$ for all $m \geq j,$ respectively.

Our main interest lies in exploring the range of possible phenomena concerning asymptotic directions. To avoid some technical issues  in the linear region, we restrict attention to sequences satisfying certain  mild simplifying hypotheses.

In the statement of the next result, we will write for $x=(i,j)$,
\be\label{eq:bdypar}\begin{aligned}
\varlimsup_{n \to \infty} \frac{1}{n} \sum_{k=i}^{n} \frac{1}{(a_k-\sinf{a}_{i:\infty})^2} &= \overline{\mfa}_x \qquad \text{ and } \qquad  \varliminf_{n \to \infty} \frac{1}{n} \sum_{k=i}^{n} \frac{1}{(a_k-\sinf{a}_{i:\infty})^2} = \underline{\mfa}_x \\
\varlimsup_{n \to \infty} \frac{1}{n} \sum_{k=j}^{n} \frac{1}{(b_k-\sinf{b}_{j:\infty})^2} &= \overline{\mfb}_x \qquad \text{ and } \qquad \varliminf_{n \to \infty} \frac{1}{n} \sum_{k=j}^{n} \frac{1}{(b_k-\sinf{b}_{j:\infty})^2} = \underline{\mfb}_x
\end{aligned}.\ee
We will also use the following notation:
\begin{align*}
    \bfA_x = \int \frac{1}{(a+\sinf{b}_{j:\infty})^2}\alpha(da)\qquad \text{ and } \qquad \bfB_x = \int \frac{1}{(b+\sinf{a}_{i:\infty})^2}\beta(db).
\end{align*}
Under the hypothesis that $\overline{\mfa}_x<\infty$ and $\overline{\mfb}_x<\infty$ in addition to the following condition, we show that any closed subintervals of $]e_2,\mfc_1^x]$ and
$[\mfc_2^x,e_1[$ can be achieved as the set of subsequential limits of $\geo{n}{x}{\mfc_1^x}/n$ and $\geo{n}{x}{\mfc_2^x}/n$. 
\begin{condition}\label{cond:lincond}
For each $x =(i,j) \in \bbZ^2$, there exists $\epsilon\in(0,1/2)$ and $N \in \bbN$ for which 
\begin{align}
\smin{a}_{i: n} - \sinf{a}_{i:\infty} \geq n^{-1/2+\epsilon} \quad \text{ for } n \ge N, \label{eq:lincond-1}\\
\smin{b}_{j: n} - \sinf{b}_{j:\infty} \geq n^{-1/2+\epsilon} \quad \text{ for } n \ge N.   \label{eq:lincond-2}
\end{align}
\let\qed\relax
\end{condition}
Note if $\overline{\mfa}_x<\infty$, then we must have $\smin{a}_{i:n}-\sinf{a}_{i:\infty}\geq c n^{-1/2}$ for some $c>0$, so \eqref{eq:lincond-1} is not far from optimal under that hypothesis. That the collection of limit points of $\geo{n}{x}{\mfc_1^x}/n$ must be an interval follows from the path structure. The exclusion of the endpoints $e_1$ and $e_2$ in our next result is almost certainly a purely technical point: our proof relies on concentration estimates which break down if $\overline{\mfa}_x=\infty$ or $\overline{\mfb}_x=\infty$ are permitted. With this caveat, this means that all other possible collections of potential limit points of geodesics directed into the linear region consistent with nearest-neighbor paths are possible. See Example \ref{ex:suffcond} for concrete examples.

\begin{theorem}\label{thm:lindir}
The following holds $\bfP$ almost surely for all $x =(i,j) \in \bbZ^2$.
\begin{enumerate}[label={\rm(\alph*)}, ref={\rm\alph*}] \itemsep=3pt
\item If \eqref{eq:lincond-1} holds and $\overline{\mfa}_x <\infty$, then the set limit points of $\geo{n}{x}{\mfc_1^x}/n$ is precisely the collection of vectors $\xi \in [e_2,e_1]$ with 
\begin{align}
\xi\cdot e_1 \in  \bigg[\frac{\bfB_x}{\overline{\mfa}_x + \bfB_x}, \frac{\bfB_x}{\underline{\mfa}_x + \bfB_x}\bigg].
\end{align}
\item If \eqref{eq:lincond-2} holds and $\overline{\mfb}_x <\infty$, then the set limit points of $\geo{n}{x}{\mfc_2^x}/n$ is precisely the collection of vectors $\xi \in [e_2,e_1]$ with
\begin{align}
\xi\cdot e_1 \in  \bigg[\frac{\underline{\mfb}_x}{\bfA_x + \underline{\mfb}_x}, \frac{\overline{\mfb}_x}{\bfA_x + \overline{\mfb}_x}\bigg].
\end{align}

\end{enumerate}
\end{theorem}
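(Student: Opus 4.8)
The plan is to prove part (a); part (b) follows from (a) by the diagonal reflection that exchanges $e_1\leftrightarrow e_2$ and the sequences $(a_\bullet)\leftrightarrow(b_\bullet)$, since this interchanges the two linear segments, sends $\mfc_1^x$ to $\mfc_2^x$ and $\geo{}{x}{\mfc_1^x}$ to $\geo{}{x}{\mfc_2^x}$, and maps the interval in (a) onto the one in (b). Write $\pi=\geo{}{x}{\mfc_1^x}$ and $\ell_n=\pi_n\cdot e_1$. The first step is a reduction. Condition \eqref{eq:lincond-1} forces $\smin{a}_{i:n}>\sinf{a}_{i:\infty}$ for every $n$, so the global infimum of $(a_k)_{k\ge i}$ is not attained; hence $\ir{\infty}{x}\notin\bbZ$, and moreover $\sinf{a}_{m:\infty}=\sinf{a}_{i:\infty}$ and therefore $\mfc_1^{(m,\ell)}=\mfc_1^x$ for all $(m,\ell)\ge x$. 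By Theorem~\ref{thm:geo}\eqref{thm:geo:ex}\eqref{thm:geo:ex:lin} this gives $\ell_n\to\infty$, so $\sigma(M):=\min\{n:\ell_n=M\}$ is finite for all $M\ge i$ and $\pi_{\sigma(M)}=(M,\sigma(M)-M)$. Since $\pi$ has nearest-neighbour steps, the set of subsequential limits of $\pi_n/n$ is automatically a closed subinterval of $[e_2,\mfc_1^x]$, and an elementary computation gives $\varlimsup_n\ell_n/n=(\varliminf_M\sigma(M)/M)^{-1}$ and $\varliminf_n\ell_n/n=(\varlimsup_M\sigma(M)/M)^{-1}$. Consequently, writing $V(M)=\sigma(M)-M-j$ for the vertical displacement of $\pi$ upon first reaching column $M$, part (a) reduces to the single almost sure statement
\begin{align*}
V(M)=\frac{1}{\bfB_x}\sum_{k=i}^{M-1}\frac{1}{(a_k-\sinf{a}_{i:\infty})^2}+o(M)\qquad\text{as }M\to\infty,
\end{align*}
because then $\sigma(M)/M=1+\bfB_x^{-1}M^{-1}\sum_{k=i}^{M-1}(a_k-\sinf{a}_{i:\infty})^{-2}+o(1)$, whose $\varlimsup$ and $\varliminf$ equal $1+\overline{\mfa}_x/\bfB_x$ and $1+\underline{\mfa}_x/\bfB_x$, which invert to the two endpoints.

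Next I would set up the stationary structure that makes the right-hand side visible. By Lemma~\ref{lem:BusRGeo}, along $\pi$ one has $\LPP{x}{\pi_n}=\buse{x}{\pi_n}{\mfc_1^x}$, and $\pi$ follows the minimum of the increments $\buse{\cdot}{\cdot+e_1}{\mfc_1^x}$ and $\buse{\cdot}{\cdot+e_2}{\mfc_1^x}$; by Theorem~\ref{thm:buslim}\eqref{thm:buslim:recovery}, \eqref{thm:buslim:marg} and \eqref{thm:buslim:indp} these increments form the increment process of a stationary last-passage model with south-boundary (row $j$) weights $\sim\Exp(a_m-\sinf{a}_{i:\infty})$, west-boundary (column $i$) weights $\sim\Exp(b_\ell+\sinf{a}_{i:\infty})$ and bulk weights $\omega_p$, $p>x$, whose corner geodesic from $x$ is $\pi$. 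The parameter $-\sinf{a}_{i:\infty}=\zmin{x}(\mfc_1^x)$ is exactly the critical corner parameter, so $\mfc_1^x=\rho(-\sinf{a}_{i:\infty})$ is the critical direction of this model. The point that produces the answer is that the law of the horizontal increment $\buse{(m,\ell)}{(m,\ell)+e_1}{\mfc_1^x}\sim\Exp(a_m-\sinf{a}_{i:\infty})$ does not depend on the height $\ell$: the column inhomogeneity is felt by $\pi$ at every level, so $\pi$ behaves as though permanently attached to a favourable south boundary with column-dependent rates $a_m-\sinf{a}_{i:\infty}$ whose variances are $(a_m-\sinf{a}_{i:\infty})^{-2}$, and it is precisely this boundary inhomogeneity that makes the critical-direction geodesic drift east at all.

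The technical core is the quantitative estimate yielding the displayed asymptotic for $V(M)$. Here I would follow the strategy announced in the introduction: rather than the true limit shape $\shp^x$ — which is flat in this direction, hence useless for curvature — work with the pre-limit shape built from the finitely many columns $a_i,\dots,a_{M-1}$ (equivalently, the shape one would see after making the environment periodic beyond column $M$). This pre-limit shape is strictly concave with a curvature lower bound controlled by $\smin{a}_{i:M}-\sinf{a}_{i:\infty}$, and combining the shape theorems and concentration bounds for the inhomogeneous model and its periodic truncations from \cite{Emr-Jan-Sep-21} with the classical curvature argument for transversal fluctuations (exactly as in the strictly concave case), one controls the entry and exit rows of $\pi$ at each column $m\le M$ and shows that $V(M)$ concentrates around $\bfB_x^{-1}\sum_{k=i}^{M-1}(a_k-\sinf{a}_{i:\infty})^{-2}$; here $\bfB_x=\sver'(-\sinf{a}_{i:\infty})$ enters as the curvature-normalising constant relating boundary variance to transversal displacement, in the familiar stationary-LPP fashion. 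A Borel--Cantelli argument over $M\in\bbN$ then upgrades this to an almost sure statement valid for all large $M$ simultaneously. I expect this to be the main obstacle: the curvature lower bound of the pre-limit shape decays to $0$ as $M\to\infty$, so the error terms in the concentration estimates are \emph{not} uniform in $M$, and one must verify that under Condition~\ref{cond:lincond} (which forces $\smin{a}_{i:n}-\sinf{a}_{i:\infty}\ge n^{-1/2+\epsilon}$, so the curvature decays only polynomially) together with the hypothesis $\overline{\mfa}_x<\infty$ (which keeps the variance sums linear in $M$), these non-uniform errors still sum to $o(M)$ — and that the estimates of \cite{Emr-Jan-Sep-21} genuinely apply with column-dependent boundary rates over a window of columns growing with $M$. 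Part (b) is then obtained by the reflection described above.
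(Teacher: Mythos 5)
Your framing steps are sound and essentially match the paper's: the reflection reducing (b) to (a), the observation that under \eqref{eq:lincond-1} the infimum is never attained so $\mfc_1^y=\mfc_1^x$ for $y\ge x$ and $\geo{n}{x}{\mfc_1^x}\cdot e_1\to\infty$, the fact that the limit-point set is an interval, the renewal-type inversion between $\sigma(M)/M$ and $\pi_n\cdot e_1/n$, and the identification of the Busemann increments along the critical direction as a stationary structure with horizontal marginals $\Exp(a_m-\sinf{a}_{i:\infty})$. Your reformulation of the theorem as the single asymptotic $V(M)=\bfB_x^{-1}\sum_{k=i}^{M-1}(a_k-\sinf{a}_{i:\infty})^{-2}+o(M)$ is also a correct equivalent of what must be shown (it is the column-parametrized version of the statement that $k^{-1}\partial_z\shp^{x,\pi(k)}_z(\pi(k))\big|_{z=-\sinf{a}_{i:\infty}}\to0$).

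The genuine gap is that this asymptotic --- which is the entire content of the theorem --- is asserted rather than proved. You defer it to ``the classical curvature argument for transversal fluctuations'' applied to the periodic truncations, with $\bfB_x$ entering ``as the curvature-normalising constant,'' and you yourself flag the decisive issue (non-uniformity of the curvature and concentration errors as $M\to\infty$, and applicability of the estimates of \cite{Emr-Jan-Sep-21} with column-dependent rates) as something that ``one must verify.'' That verification is exactly where Condition \ref{cond:lincond} and $\overline{\mfa}_x<\infty$ do their work, and without it there is no proof. Moreover, the mechanism you sketch (controlling entry/exit rows of the geodesic column by column via transversal-fluctuation bounds) is not how the constant $\bfB_x^{-1}(a_k-\sinf{a}_{i:\infty})^{-2}$ is actually extracted in the paper. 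The paper instead exploits the identity $\LPP{x}{\pi(k)}=\buse{x}{\pi(k)}{\mfc_1^x}$ quantitatively: the Busemann sum concentrates around $\shp^{x,\pi(k)}_{-\sinf{a}_{i:\infty}}(\pi(k))$ (sums of independent exponentials, Theorem \ref{thm:buslim}\eqref{thm:buslim:marg},\eqref{thm:buslim:indp}), the passage time concentrates around $\shp^{x,\pi(k)}(\pi(k))=\shp^{x,\pi(k)}_{\chi_k}(\pi(k))$ (\cite{Emr-Jan-Sep-21}), and convexity of $z\mapsto\shp^{x,\pi(k)}_z(\pi(k))$ converts the resulting $O(k^{1/2+\eta})$ gap between the two centerings into a bound on the derivative at $z=-\sinf{a}_{i:\infty}$; this requires a case split according to whether $|\chi_k+\sinf{a}_{i:\infty}|$ is smaller or larger than $k^{-1/2+\epsilon-\delta}$, with \eqref{eq:lincond-1} supplying the lower bound $\smin{a}_{i:n}-\sinf{a}_{i:\infty}\ge n^{-1/2+\epsilon}$ and $\overline{\mfa}_x<\infty$ keeping the variance sums linear in $k$. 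Some version of this (or an equally quantitative substitute) must appear in your argument before the reduction you set up can be closed.
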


\begin{example}\label{ex:suffcond}
We record here sufficient conditions for the novel behaviors of geodesics which are not seen in the i.i.d.~setting which were described in the introduction.
\begin{enumerate}  \itemsep=2pt
\item \label{ex:suffcond:empty} \textit{No geodesics exist with limit points in a non-empty interval of directions and infinitely many non-trivial non-coalescing geodesics.}  For $x=(i,j)$, it follows immediately from \eqref{eq:crit} that a sufficient condition for $]e_2,\mfc_1^x]$ to be non-empty is $\textup{ess inf}\{\alpha\}=\underline{\alpha} >  \sinf{a}_{i:\infty}$. In particular, if $\underline{\alpha} >  \sinf{a}_{-\infty:\infty}$ and $\sinf{a}_{-\infty:\infty}=a_k$ for infinitely many $k \in \bbN$, but the density of such indices is zero, then $\mfc_1^x = \mfc_1$ does not depend on $x$ and there is no infinite geodesic $\pi$ anywhere on the lattice which satisfies that $\geod{n}\cdot e_1 \to \infty$ and that $\geod{n} /n$ has a subsequential limit in $[e_2,\mfc_1]$. From each site $x$, the $\mfc_1$ geodesic becomes trapped on the first column ahead of $x$ where the global minimum of the parameter sequence is realized, which implies existence of infinitely many non-coalescing non-trivial geodesics in the $e_2$ direction.

For a concrete example, take $a_i = b_i$ for all $i\in\bbZ$ to be defined as follows: if $i$ is not equal to $n^2$ for any $n\in\bbN$, let $a_i=b_i=1/2$; if $i=n^2$ for some $n\in\bbN$, then set $a_i=b_i=1/4$. The limit shape for this model is plotted in Figure \ref{fig:shapex}. In this case, for all $x\in\bbZ^2$, $\mfc_1^x:=\mfc_1 = (1/10,9/10)$ and $\mfc_2^x:=\mfc_2=(9/10,1/10)$. We also have that $\alpha=\beta = \delta_{1/2}$ so $\underline{\alpha}=\underline{\beta}=1/2$. In this example, $]e_2,\mfc_1]$ and $[\mfc_2,e_1[$ contain no asymptotic directions of semi-infinite geodesics from any site of the lattice.
\item \label{ex:suffcond:iidunif} \textit{Non-trapped axis-directed geodesic.}  If $\alpha(dx) = 1_{(0,1)}(x)dx$ and if $a_1,a_2,\dots$ is an i.i.d.~sequence drawn from $\alpha$, then $\sinf{a}_{1:\infty}=0 =\underline{\alpha}$. By \eqref{eq:crit}, $\mfc_1^{(1,1)}=e_2$. By Theorem \ref{thm:geo}\eqref{thm:geo:ex}, $e_2= \lim_{n\to\infty}\geo{n}{(1,1)}{e_2}/n$ and $\geo{n}{(1,1)}{e_2}\cdot e_1\to\infty$. In this case, there are also infinitely many $e_1$ directed geodesics rooted at $(1,1).$
\item \label{ex:suffcond:interval} \textit{A geodesic which wanders inside a specified interval.}  Fix $t>1$, $p\in (0,1/2)$, and $r>0$. Let $b_j = 1$ for all $j$. If $k,i \in \bbZ_{>0}$ are such that $t^k \leq i < t^{k}+t^{(1-2p)k}<t^{k+1}$, set $a_i = \sqrt{r} t^{-pk}$. For all other $i$, set $a_i=1.$ Then $\alpha=\beta=\delta_1$, $\sinf{a}_{1:\infty}=0$, and $\mfc_1=\mfc_1^{(1,1)}=(1/2,1/2)$ is the critical direction. $\smin{a}_{1:n} \sim n^{-p}$, so condition \eqref{eq:lincond-1} is satisfied.
Computation shows that the set limit points of $\geo{n}{(1,1)}{\mfc_1}/n$ is the collection of vectors $\xi \in [e_2,e_1]$ with 

\[
\xi \cdot e_1 \in \bigg[\frac{1}{2 + \frac{t}{r(t-1)}}, \frac{1}{2+\frac{1}{r(t-1)}}\bigg].
\]
If $0 < a < b < 1/2,$ we may choose  \[\frac{1}{r} = \frac{1}{a}-\frac{1}{b}\text{ and }t= \frac{b}{a} \frac{1-2a}{1-2b},\] in which case the limit points are all vectors with $\xi\cdot e_1\in[a,b].$ 

This example can be modified to allow for the critical direction as a limit point by making minor changes. If $k,i \in \bbZ_{>0}$ are such that $2^{k^2} \leq i < 2^{k^2}+2^{(1-2p)k^2}$, instead set $a_i = \sqrt{r} 2^{-pk^2}$ with $a\equiv 1$ otherwise. Again, $\alpha=\beta=\delta_1$, $\sinf{a}_{1:\infty}=0$, and the critical direction $\mfc_1^{(1,1)}=(1/2,1/2)$.
Computation shows that the limit points are vectors with $\xi \cdot e_1 \in [1/(2+1/r),1/2]$.

\item \label{ex:suffcond:isolated} \textit{A geodesic with an isolated interior asymptotic direction.} The structure of the previous example can also be modified to allow for a fixed asymptotic direction in $]e_2,\mfc_1^{(1,1)}[$. Fix any $r>0$, $p \in (0,1/2)$, and let $q = (1-p)/2 \in (1/4,1/2).$ If $k,i \in \bbZ_{>0}$ are such that $k^2 \leq i < k^2 + k^p$, set $a_i = \sqrt{r/2} k^{-q}$ and let $a_i = 1$ otherwise. Again, let $b_j = 1$ for all $j$. As above, $\alpha=\beta=\delta_1$, $\sinf{a}_{1:\infty}=0$, and $\mfc_1=\mfc_1^{(1,1)}=(1/2,1/2)$.
Since $\smin{a}_{1:n} \sim n^{-q/2}$, condition \eqref{eq:lincond-1} is satisfied. 
Computation checks that $\geo{n}{(1,1)}{\mfc_1}/n$ converges to $\zeta = (\frac{1}{2+1/r}, 1- \frac{1}{2+1/r})$. Therefore, there are no geodesics rooted at $(1,1)$ with limit points in either $]e_2, \zeta[$ or $]\zeta,(1/2,1/2)[$.
\item \label{ex:suffcond:critical} \textit{A geodesic with a critical asymptotic direction.} Let $a_1,a_2,\dots$ be an i.i.d.~sequence drawn from the measure $\alpha(da) = 7a^6\one_{(0,1)}(a)da$ and let $b_j = 1$ for all $j$. Borel-Cantelli checks that $\smin{a}_{1:n}$ satisfies \eqref{eq:lincond-1} almost surely with $\epsilon =1/6$. We have $\sinf{a}_{1:\infty} = \textup{ess inf}\{\alpha\} = \underline{\alpha}=0$ and $\mfc_1=\mfc_1^{(1,1)}=(5/12,7/12)$. Because $a_1^{-2}$ is integrable, it follows from the law of large numbers and Theorem \ref{thm:lindir} that the limit of $\geo{n}{(1,1)}{\mfc_1}/n$ is $\mfc_1$. \qedhere

\end{enumerate}
\end{example}

\subsection{Asymptotic directions of the competition interfaces}
Recall the definition \eqref{eq:cifdef} of the competition interface $\Cifa^x$ at $x=(i,j)$, as well as the locations $\Cif^x(n)$ and $\Cifs^x(m)$ where it crosses horizontal and vertical lines, given in \eqref{eq:cifslope} and \eqref{def:Cifs}. The next theorem  collects our main results about the asymptotic directions of competition interfaces. By Theorem \ref{thm:buslim} we have the following representation of the limits of these quantities: 
\be
\label{ERinfty}
\begin{aligned}
\Cif^{x}(\infty) &= \sup_{n } \Cif^{x}(n) = \lim_{n \rightarrow \infty} \Cif^{x}(n) = \sup \{m \geq i : \buse{x}{x+e_1}{(m,\infty)} > \buse{x}{x+e_2}{(m,\infty)}\}.  \\
\Cifs^{x}(\infty) &= \sup_m \Cifs^x(m) = \lim_{m\to\infty} \Cifs^x(m) = \sup\{n \geq j : \buse{x}{x+e_2}{(\infty,n)} > \buse{x}{x+e_1}{(\infty,n)}\}
\end{aligned}
\ee
\begin{theorem}\label{thm:cif1}  
Fix $x = (i,j) \in \bbZ^2$.  
\begin{enumerate}[label={\rm(\alph*)}, ref={\rm\alph*}] \itemsep=3pt 
\item \label{thm:cif1:1} $\Cif^x(\infty)$ has distribution given for  $m\in\Z_{\ge i}$ by
\be\label{cif29} \begin{aligned}
\bfP\left(\Cif^x(\infty) = m\right) &=  \frac{\smin{a}_{i:m}-\smin{a}_{i:m+1}}{a_i+b_j}
\quad
\text{and}\quad   \bfP\left(\Cif^x(\infty) = \infty\right) &=  \frac{ \sinf{a}_{i:\infty}+b_j}{a_i+b_j} . 
\end{aligned}\ee
\item \label{thm:cif1:2} $\Cifs^x(\infty)$ has distribution given for $n \in \Z_{\geq j}$ by 
\be\label{cif31} \begin{aligned}
\bfP\left(\Cifs^{x}(\infty) = n\right) &=  \frac{\smin{b}_{j:n}-\smin{b}_{j:n+1}}{a_i+b_j}\quad
\text{and}\quad   
\bfP\left(\Cifs^x(\infty) = \infty\right) &=  \frac{a_i+ \sinf{b}_{j:\infty}}{a_i+b_j} . 
\end{aligned}\ee

\item \label{thm:cif1:3}
The $[e_2,e_1]$-valued limit $\ddd\cif^{x}=\lim_{n\to\infty} \Cifa_n^x/n$ exists $\bfP$-almost surely. Its distribution is given for $\xi \in [e_2,e_1[$ by 
\be\label{cif5} \begin{aligned}
\bfP\left(\cif^{x}=e_2\right)=\frac{a_{i}- \sinf{a}_{i:\infty}}{a_{i}+b_{j}}, \hspace{1pc}
\bfP\left(\cif^{x} \preceq \xi\right)=\frac{a_{i}+\zmin{x}(\xi)}{a_{i}+b_{j}}, \hspace{1pc} 
\bfP\left(\cif^{x}=e_1\right)=\frac{b_{j}- \sinf{b}_{j:\infty}}{a_{i}+b_{j}},
\end{aligned}\ee
\end{enumerate} 
where $\zmin{x}(\xi)$ is defined in equation \eqref{eq:zetaext}.
\end{theorem}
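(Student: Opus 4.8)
The plan is to derive all three parts from the Busemann representation \eqref{ERinfty}, the marginals, independence and monotonicity collected in Theorem~\ref{thm:buslim}, and the squeezing of the competition interface by Busemann geodesics.

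\emph{Parts \eqref{thm:cif1:1} and \eqref{thm:cif1:2}.} Start from \eqref{ERinfty}. By Theorem~\ref{thm:buslim}\eqref{thm:buslim:mono}, the map $m\mapsto \buse{x}{x+e_1}{(m,\infty)}-\buse{x}{x+e_2}{(m,\infty)}$ is non-increasing, so $S:=\{m\ge i:\buse{x}{x+e_1}{(m,\infty)}>\buse{x}{x+e_2}{(m,\infty)}\}$ is an initial segment of $\bbZ_{\ge i}$, and it is nonempty since $\buse{x}{x+e_1}{(i,\infty)}=\infty$ while $\buse{x}{x+e_2}{(i,\infty)}=\w_x<\infty$ by Theorem~\ref{thm:buslim}\eqref{thm:buslim:marg} and \eqref{thm:buslim:recovery}. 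Hence $\{\Cif^x(\infty)\ge m\}=\{m\in S\}$ for all $m\ge i$, and it remains to compute $\bfP(m\in S)$. The two increments involved are independent: when $a_i>\smin{a}_{i:m}$ this is Theorem~\ref{thm:buslim}\eqref{thm:buslim:indp} applied to the down-right path that descends from $(i,j+1)$ to $x$, turns east, and runs along row $j$ to column $\ir{m}{x}\in[i+1,m]$ (so the hypotheses $y\cdot e_1\le m$ and $\ir{m}{x}=y\cdot e_1$ hold); when $a_i=\smin{a}_{i:m}$ the first increment is $+\infty$ and the second is $\w_x$, so the joint law is again explicit. Combining this with the marginals $\buse{x}{x+e_1}{(m,\infty)}\sim\Exp\{a_i-\smin{a}_{i:m}\}$, $\buse{x}{x+e_2}{(m,\infty)}\sim\Exp\{b_j+\smin{a}_{i:m}\}$ from Theorem~\ref{thm:buslim}\eqref{thm:buslim:marg} and $\bfP(X>Y)=\mu/(\lambda+\mu)$ for independent $X\sim\Exp(\lambda)$, $Y\sim\Exp(\mu)$ (with $\Exp(0)=\infty$) gives $\bfP(\Cif^x(\infty)\ge m)=(b_j+\smin{a}_{i:m})/(a_i+b_j)$; differencing in $m$ and letting $m\to\infty$ yield \eqref{cif29}. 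Part~\eqref{thm:cif1:2} is the mirror image under interchange of the two coordinate axes, using the other half of \eqref{ERinfty} and the $(\infty,\ell)$ marginals.

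\emph{Part \eqref{thm:cif1:3}, existence of $\cif^x$.} By Theorem~\ref{thm:buslim}\eqref{thm:buslim:mono} and \eqref{thm:buslim:cont} the maps $\eta\mapsto\buse{x}{x+e_1}{\eta\pm}-\buse{x}{x+e_2}{\eta\pm}$ are monotone, so there is a.s.\ a crossover direction $\zeta^x$ with $\buse{x}{x+e_1}{\eta}>\buse{x}{x+e_2}{\eta}$ for $\eta\prec\zeta^x$ and $<$ for $\eta\succ\zeta^x$ off the countable set $\Lambda_x$ of Theorem~\ref{thm:buslim}\eqref{thm:buslim:exdir}, using \eqref{thm:buslim:linconst} to reduce to $\mfc_1^x$ or $\mfc_2^x$ on the linear segments. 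For $\eta\succ\zeta^x$ with $\eta\notin\Lambda_x$, the Busemann geodesic $\geo{}{x}{\eta-}$ is a genuine geodesic (Lemma~\ref{lem:BusRGeo}) whose first step is $e_1$, hence lies in $\sT_{x,x+e_1}$; since the competition interface separates $\sT_{x,x+e_1}$ from $\sT_{x,x+e_2}$, it is asymptotically trapped on the $e_2$-side of $\geo{}{x}{\eta-}$, whose asymptotic slope is $\preceq\eta$ by Theorem~\ref{thm:geo}\eqref{thm:geo:ex} (via \eqref{thm:geo:ex:con} if $\eta\in\,]\mfc_1^x,\mfc_2^x[$ and \eqref{thm:geo:ex:lin} if $\eta\in\,]e_2,\mfc_1^x]$). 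The symmetric bound with $\geo{}{x}{\eta+}$, $\eta\prec\zeta^x$, pins $\varlimsup_n\Cifa_n^x\cdot e_1/n=\varliminf_n\Cifa_n^x\cdot e_1/n=\zeta^x\cdot e_1$, so $\cif^x=\zeta^x$ exists; this is the monotone-coupling argument of \cite{Fer-Pim-05,Geo-Ras-Sep-17-ptrf-2}.

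\emph{Part \eqref{thm:cif1:3}, the law.} Fix $\xi\in\,]e_2,e_1[$; since each $\Lambda_p$ is countable with $\bfP(\xi\in\Lambda_p)=0$, a.s.\ $\xi\notin\Lambda_p$ for $p\in\{x,x+e_1,x+e_2,x+e_1+e_2\}$. Using $\cif^x=\zeta^x$, the monotonicity of Theorem~\ref{thm:buslim}\eqref{thm:buslim:mono}, the directional continuity of \eqref{thm:buslim:cont}, and (if desired) the Busemann limit of Theorem~\ref{thm:buslim}\eqref{thm:buslim:buslim} applied to $\G_{x+e_2,v_n}-\G_{x+e_1,v_n}=\I_{x,v_n}-\J_{x,v_n}$, one obtains the sandwich
\[
\{\buse{x}{x+e_1}{\xi}<\buse{x}{x+e_2}{\xi}\}\ \subseteq\ \{\cif^x\preceq\xi\}\ \subseteq\ \{\buse{x}{x+e_1}{\xi}\le\buse{x}{x+e_2}{\xi}\}.
\]
The extreme events differ only by $\{\buse{x}{x+e_1}{\xi}=\buse{x}{x+e_2}{\xi}\}$, which is $\bfP$-null: by Theorem~\ref{thm:buslim}\eqref{thm:buslim:indp} (the path $x+e_2\to x\to x+e_1$ for $\xi$ in the concave region, and after the reduction \eqref{thm:buslim:linconst} to $\mfc_1^x$/$\mfc_2^x$ a slightly longer path reaching the first favorable column or row otherwise) the two increments are independent, and each is either a genuine exponential or a.s.\ $+\infty$. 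Hence $\bfP(\cif^x\preceq\xi)=\bfP(\buse{x}{x+e_1}{\xi}<\buse{x}{x+e_2}{\xi})$, which by the exponential identity and the marginals of Theorem~\ref{thm:buslim}\eqref{thm:buslim:marg} equals $(a_i+\zmin{x}(\xi))/(a_i+b_j)$, since by \eqref{eq:zetaext} the two rates are $a_i+\zmin{x}(\xi)$ and $b_j-\zmin{x}(\xi)$ on each of $]e_2,\mfc_1^x]$, $]\mfc_1^x,\mfc_2^x[$ and $[\mfc_2^x,e_1[$; this is the middle identity of \eqref{cif5}. Letting $\xi\downarrow e_2$ and $\xi\uparrow e_1$, using continuity of measure together with $\zmin{x}(\cdot)\equiv-\sinf{a}_{i:\infty}$ on $[e_2,\mfc_1^x]$ and $\zmin{x}(\cdot)\equiv\sinf{b}_{j:\infty}$ on $[\mfc_2^x,e_1]$, produces the remaining two identities of \eqref{cif5}.

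\emph{Main obstacle.} The exponential computations and telescoping sums are routine. The two delicate points are: (i) in \eqref{thm:cif1:1}--\eqref{thm:cif1:2}, routing the down-right path to the first favorable column $\ir{m}{x}$ (resp.\ row $\jr{\ell}{x}$) so that the Burke-type independence of Theorem~\ref{thm:buslim}\eqref{thm:buslim:indp} applies, together with the degenerate case $\ir{m}{x}=x\cdot e_1$; and (ii), the principal difficulty, the existence of $\cif^x$ and the matching of $\{\cif^x\preceq\xi\}$ with the Busemann comparison inside the linear segments, where the bounding Busemann geodesics $\geo{}{x}{\mfc_1^x}$, $\geo{}{x}{\mfc_2^x}$ need not be directed, so one must rely on Theorem~\ref{thm:geo}\eqref{thm:geo:ex}\eqref{thm:geo:ex:lin} and reach the boundary values $\cif^x\in\{e_1,e_2\}$ only through the limiting argument rather than a fixed-direction Busemann limit.
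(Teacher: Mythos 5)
Your parts \eqref{thm:cif1:1} and \eqref{thm:cif1:2}, and the concave-region portion of part \eqref{thm:cif1:3}, follow essentially the paper's own route: the representation \eqref{ERinfty}, monotonicity, the marginals and down-right independence of Theorem \ref{thm:buslim}, and the two-exponential computation; the extra care you take with the routing of the down-right path to $\ir{m}{x}$ and with the degenerate case $a_i=\smin{a}_{i:m}$ is fine and consistent with what the paper leaves implicit.

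The gap is in part \eqref{thm:cif1:3}, exactly at the point you flag as the principal difficulty. On the positive-probability event $\{\buse{x}{x+e_1}{\mfc_1^x}<\buse{x}{x+e_2}{\mfc_1^x}\}$ (which must carry the atom at $e_2$, of mass $(a_i-\sinf{a}_{i:\infty})/(a_i+b_j)$), and when the flat segment $[e_2,\mfc_1^x]$ is nondegenerate, your sandwich bounds the interface by $\geo{}{x}{\eta-}=\geo{}{x}{\mfc_1^x}$ for $\eta\in\,]e_2,\mfc_1^x]$, and Theorem \ref{thm:geo}\eqref{thm:geo:ex}\eqref{thm:geo:ex:lin} only gives $\varlimsup_n \geo{n}{x}{\mfc_1^x}/n\preceq \mfc_1^x$, not $\preceq\eta$; indeed Theorem \ref{thm:lindir} shows this geodesic can genuinely have limit points spread over a subinterval of $]e_2,\mfc_1^x]$. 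So your argument yields only $e_2\preceq\varliminf_n\Cifa_n^x/n\preceq\varlimsup_n\Cifa_n^x/n\preceq\mfc_1^x$ on this event: it proves neither that the limit exists there nor that it equals $e_2$, and correspondingly the inclusion $\{\buse{x}{x+e_1}{\xi}<\buse{x}{x+e_2}{\xi}\}\subseteq\{\cif^x\preceq\xi\}$ is unjustified for $\xi$ in the flat segment, which is precisely what is needed to rule out mass in $]e_2,\mfc_1^x]$ and obtain the atom at $e_2$ (symmetrically at $e_1$). Two repairs are available with tools you already invoke. The paper's route: Theorem \ref{thm:buslim}\eqref{thm:buslim:buslim} is valid at $\xi=e_2$ (the only requirements are $\min\{v_n\cdot e_1,v_n\cdot e_2\}\to\infty$ and $e_2\notin\Lambda_p$, the latter automatic since $\Lambda_p\subset\,]\mfc_1^p,\mfc_2^p[$), so along, say, $v_n=(\lceil\sqrt n\,\rceil,n)$ the increments converge to the $\mfc_1^x$-Busemann values, whence on this event $v_n$ eventually lies in $\sT_{x,x+e_1}$ and $\Cif^x(n)=o(n)$; contrary to your closing remark, a fixed-direction Busemann limit at the axis direction is exactly what does the job. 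Alternatively, by \eqref{ERinfty}, Theorem \ref{thm:buslim}\eqref{thm:buslim:mono} and \eqref{thm:buslim:cont}(ii), this event coincides $\bfP$-a.s.\ with $\{\Cif^x(\infty)<\infty\}$ from your part \eqref{thm:cif1:1}, on which the interface is eventually confined to the columns $i,\dots,\Cif^x(\infty)$ and is therefore trivially $e_2$-directed. Either patch closes the gap; without one of them, the existence of $\cif^x$ on the trapped events and the boundary values in \eqref{cif5} are not established.
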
 
The previous result implies the following perhaps unexpected dichotomy:  either   $\Cif^{x}(n)$ remains bounded (in which case, the competition interface becomes trapped on a horizontal level) as $n$ grows or else it grows ballistically. 
The analogous statement also holds for $\Cifs^{x}(m)$ as $m$ grows. From \eqref{cif31} we see that  $\Cifs^{x}(\infty)$ has an atom at $n \in \bbZ_{\ge j}$  if and only if $b_{n+1} < \smin{b}_{j,n}$. That is to say, the  rows or columns where  the competition interface can become stuck are exactly those at which the running minimum of the parameter sequence decreases. If 
 $\Cifs^{x}(\infty)=n$, the entire tree  $\sT_{x,x+e_1}$  is confined to $\Z_{\ge i}\times[j,n]$.

From \eqref{cif5} we see that  the only possible atoms of $\cif^{x}$ are the coordinate directions $e_1$ and $e_2$.  Furthermore, the flat segments with the boundary removed are not included in the support of the random variable $\cif^x$: 
\be\label{cif6} 
\bfP\left(\cif^{x}  \in]e_2, \mfc_1^x] \right) =0 
\quad\text{and}\quad 
\bfP\left(\cif^{x}  \in[\mfc_2^x,e_1[ \right) =0.
\ee

\subsection{Asymptotics of the second-class particle}
Through the couplings in Sections \ref{sec:inTASEP} and \ref{sec:TAZRP}, the results above have immediate consequences for the asymptotics of second-class particles in the inhomogeneous TASEP and second-class customers in the inhomogeneous TAZRP. Recall that we denote the location of the second-class particle in the inhomogeneous TASEP by $X(t)$ and in 
the inhomogeneous TAZRP by $Z(t)$. Because of the distributional identities in Lemmas \ref{lem:fpcouple} and \ref{fp-lem}, the asymptotic behavior of $X(t)$ is already explained by Theorem \ref{thm:cif1}, so we omit the statement. The following is our main result on the long-term behavior of the second-class customer $Z(t)$.

\begin{theorem}\label{th:2cl}   
\begin{enumerate}[label={\rm(\alph*)}, ref={\rm\alph*}] \itemsep=3pt
Suppose that $a_i = 0$ for all $i$, then for the TAZRP described in Section \ref{sec:TAZRP},
\item \label{th:2cl:1} The  $\Z_{\ge2}\cup\{\infty\}$-valued  almost sure limit  $\ddd Z(\infty)=\lim_{t\to\infty} Z(t)$ exists and has the following distribution:
\be\label{2cl31} \begin{aligned}
\bfP(Z(\infty) = n) &=  \frac{\smin{b}_{1:n-1}-\smin{b}_{1:n}}{b_1}
\quad\text{for } n\in\Z_{\ge2} \ \text{and} \\ 
\bfP(Z(\infty) = \infty) &= \frac{\sinf{b}_{1:\infty}}{b_1} . 
\end{aligned}\ee

\item \label{th:2cl:2} The  limiting speed  $\vela=\lim_{t\to\infty} t^{-1}Z(t)$  exists and satisfies
\begin{align*}
\vela \in \left[0, \left(\int_0^\infty b^{-1} \beta(db)\right)^{-1}\right]
\end{align*}
$\bfP$-almost surely. The distribution of $\vela$ is given by
\be\label{2cl5} \begin{aligned}
 \bfP(\vela=0)&=1- \frac{ \sinf{b}_{1:\infty}}{b_{1}},\\
\bfP(\vela\le s)&=1-\frac{(\gamma^{-1})'(1/s,1)}{b_{1}} 
\quad \text{for }  s\in \left(0, \left(\int_0^\infty b^{-1} \beta(db)\right)^{-1}\right],
\end{aligned}\ee
where $(\shp^{-1})'(s,1)$ is the derivative of the inverse of the function $s \mapsto \shp^{(1,1)}((s,1))$ defined through \eqref{eq:shape}. \qedhere
\end{enumerate}
\end{theorem}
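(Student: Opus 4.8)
The plan is to translate everything into the language of competition interfaces via the couplings assembled in Section \ref{sec:TAZRP} and then read off the statements from Theorem \ref{thm:cif1}. Recall that with $a_i=0$ for all $i$, we have $\sinf{a}_{1:\infty}=\underline\alpha=0$, so the measure $\alpha$ is concentrated at $0$ and $\shp_z(e_1)=\infty$ for $z>0$ while $\shor(0)=\int \alpha(da)/a=\infty$ unless... in fact with $\alpha=\delta_0$ one has $\shor(z)=1/z$. The key point is that by Lemma \ref{2cl-star-lm}, $Z(t)=J(t)+1$, and by Lemma \ref{fp-lem}, $(I(t),J(t))$ has the same law as the continuous-time competition interface $\Cifb_t$ rooted at $(1,1)$. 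Therefore the almost sure limit and its distribution are governed by $\Cifa^{(1,1)}$ and, more precisely, by the crossing locations $\Cif^{(1,1)}$ and $\Cifs^{(1,1)}$ of \eqref{eq:cifslope}--\eqref{def:Cifs}.

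\textbf{Part \eqref{th:2cl:1}.} First I would argue that $Z(\infty)=\lim_t Z(t)$ exists in $\bbZ_{\ge 2}\cup\{\infty\}$: since $Z(t)=J(t)+1$ and $J(t)$ is the particle index of the *pair, $Z$ is nondecreasing in $t$ (the *pair never decreases its particle index), so the limit exists. In competition-interface terms, $Z(\infty)-1=\lim_t J(t)$ is exactly the eventual horizontal coordinate the interface settles on, i.e. $\lim_t J(t)$ corresponds to $\Cif^{(1,1)}(\infty)$ being finite — more carefully, when $a_i\equiv 0$ the $e_1$-direction is degenerate and the interface $\Cifa^{(1,1)}$ either moves off to infinity in the $e_2$ direction or gets trapped at a finite vertical coordinate, which is the quantity $\Cifs^{(1,1)}(\infty)$. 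One then matches indices: $Z(\infty)=n$ corresponds to $\Cifs^{(1,1)}(\infty)=n-1$ (the shift by $1$ coming from the root at $(1,1)$ and the relabeling $\eta_j=P_{j-1}-P_j-1$ in Section \ref{sec:TAZRP}, together with $Z(t)=J(t)+1$). Plugging $x=(1,1)$, $a_1=0$ into \eqref{cif31} of Theorem \ref{thm:cif1}\eqref{thm:cif1:2} and reindexing gives
\begin{align*}
\bfP(Z(\infty)=n)=\frac{\smin{b}_{1:n-1}-\smin{b}_{1:n}}{b_1}\quad (n\ge 2),\qquad \bfP(Z(\infty)=\infty)=\frac{\sinf{b}_{1:\infty}}{b_1},
\end{align*}
since $a_1+b_1=b_1$ and $a_1+\sinf{b}_{1:\infty}=\sinf{b}_{1:\infty}$. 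One checks these sum to $1$ by telescoping.

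\textbf{Part \eqref{th:2cl:2}.} The existence of the speed $\vela=\lim_t Z(t)/t$ and its value follow from Theorem \ref{thm:cif1}\eqref{thm:cif1:3}: the limit $\cif^{(1,1)}=\lim_n \Cifa_n^{(1,1)}/n$ exists a.s., and one converts the lattice-index limit into the time limit using $\tau_n=\LPP{(1,1)}{\Cifa_n^*}-\w_{(1,1)}$ from \eqref{cif-456} together with the shape theorem, Proposition \ref{prop:shape}: along the interface $\Cifa_n^*\approx n\cif^{(1,1)}$, so $\tau_n/n\to\shp^{(1,1)}(\cif^{(1,1)})$ and hence $\Cifb_t/t\to \cif^{(1,1)}/\shp^{(1,1)}(\cif^{(1,1)})$; the particle coordinate $Z(t)$ reads off the $e_1$-component scaled appropriately, giving $\vela = \big(\cif^{(1,1)}\cdot e_1\big)/\shp^{(1,1)}(\cif^{(1,1)})$ when $\cif^{(1,1)}\ne e_2$, and $\vela=0$ when $\cif^{(1,1)}=e_2$. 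From \eqref{cif5} with $a_1=0$, $\bfP(\cif^{(1,1)}=e_2)=1-\sinf{b}_{1:\infty}/b_1$, which gives the atom at $\vela=0$. For $s>0$, the event $\{\vela\le s\}$ translates (via strict monotonicity of $\xi\mapsto (\xi\cdot e_1)/\shp^{(1,1)}(\xi)$ on the strictly concave arc and the duality $z\leftrightarrow\rho(z)$ of \eqref{eq:chardir}) into $\{\cif^{(1,1)}\preceq\xi_s\}$ for the direction $\xi_s$ at which the speed equals $s$; with $a_i\equiv 0$ one computes that this is the direction with $\zmin{(1,1)}(\xi_s)$ matching the level where $(\shp^{-1})'(1/s,1)$ is evaluated, and then $\bfP(\cif^{(1,1)}\preceq\xi_s)=(a_1+\zmin{(1,1)}(\xi_s))/b_1=(\shp^{-1})'(1/s,1)/b_1$, yielding $\bfP(\vela\le s)=1-(\shp^{-1})'(1/s,1)/b_1$. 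The upper endpoint of the speed range, $\big(\int_0^\infty b^{-1}\beta(db)\big)^{-1}$, is the reciprocal of $\sver(0)=\shp^{(1,1)}(e_2)$ evaluated in the $a\equiv0$ case — it is the speed attained as $\cif^{(1,1)}\downarrow e_2$ within the concave region, i.e. the ballistic speed of a free customer.

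\textbf{Main obstacle.} The genuinely delicate step is Part \eqref{th:2cl:2}: converting the lattice-scaled limit $\cif^{(1,1)}/n$ into the \emph{time}-scaled speed $Z(t)/t$ requires controlling $\tau_n/n$ uniformly along the random interface path and handling the degenerate $e_1$-direction where $\shor(0)$ may be infinite — one must make sure the shape theorem Proposition \ref{prop:shape} applies along $(\Cifa_n^*)$ (whose direction is random but converges), and separately verify the formula $\vela=(\cif\cdot e_1)/\shp^{(1,1)}(\cif)$ with the right normalization, including the boundary case $\cif=e_2$ where $Z(t)$ stays bounded so $\vela=0$ trivially. The algebraic identification of $(a_1+\zmin{(1,1)}(\xi_s))$ with $(\shp^{-1})'(1/s,1)$ via \eqref{eq:shape}–\eqref{eq:zetaext} is a routine but careful calculus computation that I would carry out using the explicit form of $\shp^{(1,1)}$ when $\alpha=\delta_0$.
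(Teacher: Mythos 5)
Your overall route is the paper's: monotonicity of $Z(t)$ gives existence of $Z(\infty)$, the couplings of Lemmas \ref{2cl-star-lm} and \ref{fp-lem} identify $Z$ with one plus a coordinate of the competition interface, Theorem \ref{thm:cif1} supplies the distributions, and Proposition \ref{prop:shape} converts the lattice normalization $\Cifa_n^*/n$ into the time normalization $\Cifb_t/t$ via $\tau_n$. Part \eqref{th:2cl:1} lands correctly: after your own mid-sentence correction it is $\Cifs^{(1,1)}(\infty)$ and \eqref{cif31} that are relevant, and $Z(\infty)=\Cifs^{(1,1)}(\infty)+1$ gives \eqref{2cl31}.

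Part \eqref{th:2cl:2}, however, contains a genuine coordinate mix-up that breaks the derivation as written. Since $Z(t)=J(t)+1$ and $(I(t),J(t))\stackrel{d}{=}\Cifb(t)$, the customer's station is the \emph{second} coordinate, so $\vela=(\cif\cdot e_2)/\shp(\cif)$, not $(\cif\cdot e_1)/\shp(\cif)$, and the zero-speed atom comes from $\{\cif=e_1\}$. Indeed, \eqref{cif5} with $a_i\equiv0$ gives $\bfP(\cif=e_2)=(a_1-\sinf{a}_{1:\infty})/b_1=0$ and $\bfP(\cif=e_1)=1-\sinf{b}_{1:\infty}/b_1$, so your claim $\bfP(\cif=e_2)=1-\sinf{b}_{1:\infty}/b_1$ contradicts the very formula you cite; it also contradicts your part \eqref{th:2cl:1}, since $\{Z(\infty)<\infty\}\subseteq\{\vela=0\}$ already has probability $1-\sinf{b}_{1:\infty}/b_1$, whereas your identification would force $\bfP(\vela=0)=\bfP(\cif=e_2)=0$. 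The swap also corrupts the CDF step: with the correct formula, writing $\cif=(1-t,t)$ one has $\vela=1/\shp(1/t-1,1)$, which is \emph{decreasing} in $\cif\cdot e_1$, so $\{\vela\le s\}$ is the event $\{\cif\succeq\zeta(s)\}$ and $\bfP(\vela\le s)=1-\bfP(\cif\prec\zeta(s))=1-\zmin{}(\zeta(s))/b_1$ — this is where the ``$1-$'' in \eqref{2cl5} comes from. In your write-up you instead equate $\{\vela\le s\}$ with $\{\cif\preceq\xi_s\}$, compute $\bfP(\cif\preceq\xi_s)$, and then pass to its complement anyway to reproduce the stated answer; those two steps cannot both be correct. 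The remaining calculus identification $\zmin{}(\zeta(s))=(\shp^{-1})'(1/s,1)$, via $\shp'(x,1)=1/\zmin{}((x,1))$ and homogeneity, is the same as the paper's and is fine once the coordinates are straightened out.
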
 
We have again an almost sure  dichotomy.  If  $b_1 \neq  \sinf{b}_{1:\infty}$ then  with probability $1- { \sinf{b}_{1:\infty}}/{b_{1}}$    the second-class customer becomes stuck at some  station $j = \jr{\ell}{(1,1)}$ for some $\ell \geq 2$.  With the complementary  probability $\sinf{b}_{1:\infty}/{b_{1}}$ the second-class customer escapes with  positive speed.

\section{Busemann function proofs} \label{sec:Bus}
This section establishes Theorem \ref{thm:buslim}. The proof is carried out in stages, treating the strictly concave region, the boundary thin rectangle regions, and flat regions separately. 

\subsection{Deterministic preliminaries} 
We begin by recording some deterministic structure of last-passage percolation. During this discussion, we also recall a notion of duality which plays a key role in what follows.

Given weights $\ww = \{\ww_x \in \bbR: x \in \Rect_u^v\}$ on a rectangle $\Rect_u^v$ and $p \in \bbZ^2$, one obtains weights $\ww_{\bullet + p} = \{\ww_{x+p}: x \in \Rect_{u-p}^{v-p}\}$ on the rectangle $\Rect_{u-p}^{v-p}$ via translation by $p$.  The following is clear from definitions \eqref{eq:Lpt}, \eqref{eq:incint} and \eqref{eq:incbw}.
\begin{lemma}
\label{lem:LppShf}
The following statements hold for $p \in \bbZ^2$ and $x, y \in \Rect_{u-p}^{v-p}$. 
\begin{enumerate}[label={\rm(\alph*)}] 
\itemsep=3pt
\item $\Lpt_{x, y}(\ww_{\bullet+p}) = \Lpt_{x+p, y+p}(\ww)$.  
\item If $x \le y$ then 
\begin{align*}
\begin{split}
&\init{\I}_{x, y}(\ww_{\bullet + p}) = \init{\I}_{x+p, y+p}(\ww) \quad \text{ and } \quad \init{\J}_{x, y}(\ww_{\bullet + p}) = \init{\J}_{x+p, y+p}(\ww), \\
&\ter{\I}_{x, y}(\ww_{\bullet + p}) = \ter{\I}_{x+p, y+p}(\ww) \quad \text{ and } \quad \ter{\J}_{x, y}(\ww_{\bullet + p}) = \ter{\J}_{x+p, y+p}(\ww). 
\end{split}
\end{align*}
\end{enumerate}
\end{lemma}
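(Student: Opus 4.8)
The statement is a straightforward bookkeeping consequence of translation-covariance of up-right paths, so the proof will be short. For part (a), the plan is to exhibit the obvious bijection $\pi \mapsto \pi + p$ between the path families $\Pi_x^y$ and $\Pi_{x+p}^{y+p}$: a sequence $(\pi_i)$ on $\bbZ^2$ is up-right with $\min\pi = x$, $\max\pi = y$ if and only if $(\pi_i + p)$ is up-right with $\min = x+p$, $\max = y+p$, since the increment set $\{e_1,e_2\}$ and the coordinatewise order are translation invariant. Under this bijection one has, by the definition of $\ww_{\bullet+p}$,
\begin{align*}
\sum_{q \in \pi} (\ww_{\bullet+p})_q \;=\; \sum_{q \in \pi} \ww_{q+p} \;=\; \sum_{q' \in \pi + p} \ww_{q'},
\end{align*}
so taking the maximum over $\pi \in \Pi_x^y$ on the left and over $\pi + p \in \Pi_{x+p}^{y+p}$ on the right gives $\Lpt_{x,y}(\ww_{\bullet+p}) = \Lpt_{x+p,y+p}(\ww)$ whenever $x \le y$. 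When $x \le y$ fails, so does $x+p \le y+p$, and both sides equal $-\infty$ by convention; this disposes of the remaining case.

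For part (b), since $x \le y$ is assumed, the plan is simply to substitute part (a) into the definitions \eqref{eq:incint} and \eqref{eq:incbw}. For instance,
\begin{align*}
\init{\I}_{x,y}(\ww_{\bullet+p}) &= \Lpt_{x,y}(\ww_{\bullet+p}) - \Lpt_{x+e_1,y}(\ww_{\bullet+p}) \\
&= \Lpt_{x+p,y+p}(\ww) - \Lpt_{(x+e_1)+p,\,y+p}(\ww) = \init{\I}_{x+p,y+p}(\ww),
\end{align*}
and the identical manipulation handles $\init{\J}$, $\ter{\I}$, $\ter{\J}$ using the appropriate shift by $e_1$ or $e_2$. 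One should note in passing that $x + e_i \le y$ holds if and only if $(x+e_i)+p \le y+p$, so the convention assigning the value $\infty$ to out-of-range increments (stated after \eqref{eq:incbw}) is respected on both sides and no special case is needed.

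There is no real obstacle here; the only points requiring a line of care are the matching of the degenerate conventions ($-\infty$ for $\Lpt$ when $x \not\le y$, and $\infty$ for increments when a neighbor falls outside the rectangle), both of which are immediate because translation preserves the partial order $\le$. The main substantive content—the path bijection and the invariance of path weight-sums—is exactly the "clear from definitions" remark in the statement, and I would present it in the two or three lines sketched above.
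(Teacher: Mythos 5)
Your argument is correct and is exactly the translation-bijection reasoning the paper has in mind; the paper simply omits the proof, stating the lemma is clear from definitions \eqref{eq:Lpt}, \eqref{eq:incint} and \eqref{eq:incbw}. Your attention to the degenerate conventions ($-\infty$ and $\infty$) is a fine touch but does not change the substance.
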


Our next lemma records a key monotonicity property for the last-passage increments defined at \eqref{eq:incint} and \eqref{eq:incbw}. For two different proofs of this result, known as the \emph{comparison} or \emph{path crossing lemma}, we refer the reader to \cite[Lemma 6.2]{Ras-18} and \cite[Lemma 4.6]{Sep-18}.  
\begin{lemma}
\label{lem:incineq}
Let $x, y \in \Rect_{u}^v$ with $x \le y$. 
\begin{enumerate}[label={\rm(\alph*)}, ref={\rm\alph*}] \itemsep=3pt
\item \label{lem:incineq:1} If $y+e_1 \le v$ then $\init{\I}_{x, y} \ge \init{\I}_{x, y+e_1}$ and $\init{\J}_{x, y} \le \init{\J}_{x, y+e_1}$. 
\item If $y+e_2 \le v$ then $\init{\I}_{x, y} \le \init{\I}_{x, y+e_2}$ and $\init{\J}_{x, y} \ge \init{\J}_{x, y+e_2}$. 
\item If $x-e_1 \geq u$ then $\ter{\I}_{x,y} \geq \ter{\I}_{x-e_1,y}$ and $\ter{\J}_{x,y}\leq \ter{\J}_{x-e_1,y}$.
\item If $x-e_2 \geq u$ then $\ter{\I}_{x,y}\leq \ter{\I}_{x-e_2,y}$ and $\ter{\J}_{x,y}\geq \ter{\J}_{x-e_2,y}.$
\end{enumerate}
\end{lemma}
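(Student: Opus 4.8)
The plan is to reduce each of the eight increment inequalities to a single \emph{four-corner} (``superadditivity'') inequality for the bare last-passage times $\Lpt$, and then prove the latter by a path-crossing and path-swapping argument, as in \cite[Lemma 6.2]{Ras-18}. For instance, the first assertion of part \eqref{lem:incineq:1}, $\init{\I}_{x,y}\ge\init{\I}_{x,y+e_1}$, unwinds through \eqref{eq:incint} to
\[
\Lpt_{x,y}+\Lpt_{x+e_1,\,y+e_1}\ \ge\ \Lpt_{x,\,y+e_1}+\Lpt_{x+e_1,\,y},
\]
while the companion assertion $\init{\J}_{x,y}\le\init{\J}_{x,y+e_1}$ unwinds to $\Lpt_{x,y}+\Lpt_{x+e_2,\,y+e_1}\le\Lpt_{x,\,y+e_1}+\Lpt_{x+e_2,\,y}$, i.e. the reversed inequality with the two diagonals of the ``corner rectangle'' of endpoints interchanged. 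Parts (b)--(d) are the analogous statements obtained under the reflection $e_1\leftrightarrow e_2$ and under reversal of paths (which exchanges initial- and terminal-point increments), so it suffices to treat one representative inequality with care. First I would dispose of the degenerate cases: if $x+e_1\not\le y$ (resp.\ $x+e_2\not\le y$) then $\init{\I}_{x,y}=\infty$ (resp.\ $\init{\J}_{x,y}=\infty$) by the convention fixed after \eqref{eq:incbw}, and in each such case the shifted terminal point either keeps the same increment infinite or leaves the claimed inequality true for trivial reasons; hence one may assume all increments appearing are finite reals.

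For the crossing step I would record the elementary planarity fact: if $\pi$ and $\sigma$ are up-right paths on $\bbZ^2$ and, along the common range of levels that both paths meet, $\pi$ starts weakly to the left of $\sigma$ and ends weakly to the right of $\sigma$ (comparing $e_1$-coordinates at equal values of $\cdot(e_1+e_2)$), then $\pi$ and $\sigma$ share a vertex. This holds because the difference of their $e_1$-coordinates along consecutive common levels changes by $0$ or $\pm1$, begins $\le 0$ and ends $\ge 0$, hence equals $0$ at some level, at which level the two paths occupy the same lattice site.

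With this in hand the representative inequality is proved by swapping. Let $\pi$ be a geodesic from $x$ to $y+e_1$ and $\sigma$ a geodesic from $x+e_1$ to $y$, so their weights realize the right-hand side. On the common levels $\{n:x\cdot(e_1+e_2)<n\le y\cdot(e_1+e_2)\}$ the path $\pi$ starts weakly left of $\sigma$ (at level $x\cdot(e_1+e_2)+1$ it has $e_1$-coordinate at most $x\cdot e_1+1$, whereas $\sigma$ sits at $x+e_1$) and ends weakly right of $\sigma$ (symmetrically at level $y\cdot(e_1+e_2)$), so the planarity fact produces a common site $z$ with $x\le z\le y$. Concatenating $\pi$ from $x$ to $z$ with $\sigma$ from $z$ to $y$ gives a path in $\Path{x}{y}$, and concatenating $\sigma$ from $x+e_1$ to $z$ with $\pi$ from $z$ to $y+e_1$ gives a path in $\Path{x+e_1}{y+e_1}$; the combined weight of these two recombined paths is exactly $\sum_{p\in\pi}\ww_p+\sum_{p\in\sigma}\ww_p$, since the weight $\ww_z$, double-counted when each original path is split at $z$, is removed once from each concatenation. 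Therefore $\Lpt_{x,y}+\Lpt_{x+e_1,\,y+e_1}$ is at least this common total, which equals $\Lpt_{x,\,y+e_1}+\Lpt_{x+e_1,\,y}$. The reversed inequality---hence the $\init{\J}$ statement, and likewise all of parts (b)--(d)---follows in the same way, now taking geodesics for the two terms on the \emph{smaller} side (e.g.\ $\Lpt_{x,y}$ and $\Lpt_{x+e_2,\,y+e_1}$ for the $\init{\J}$ part of (a)) and recombining them at their crossing point into paths joining the other pair of corners.

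I do not expect a substantial obstacle: this is the classical comparison/path-crossing lemma. The care points are bookkeeping ones---matching the direction of each four-corner inequality to whether one is comparing $\init{\I}$ or $\init{\J}$ (the two diagonals play opposite roles, and the direction also flips between initial- and terminal-point increments), checking in each instance that the chosen pair of geodesics really does satisfy the hypothesis of the planarity fact, and treating the infinite-increment boundary cases. An alternative I would keep in reserve, following \cite[Lemma 4.6]{Sep-18}, is induction on $y\cdot(e_1+e_2)-x\cdot(e_1+e_2)$ via the increment recursions \eqref{eq:inrec}--\eqref{eq:bwrec}: the update map $(I,J)\mapsto\bigl(\ww_x+(I-J)^+,\ \ww_x+(J-I)^+\bigr)$ is monotone with respect to the partial order that is increasing in the first coordinate and decreasing in the second, so feeding both the $\init{\I}$ and $\init{\J}$ parts of the inductive hypothesis (at distance one smaller) through this monotonicity yields the claim at the next level, the base case being a one-cell computation.
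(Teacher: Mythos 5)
Your argument is correct: the reduction of each increment inequality to the four-corner inequality $\Lpt_{x_1,y_1}+\Lpt_{x_2,y_2}\ge \Lpt_{x_1,y_2}+\Lpt_{x_2,y_1}$, proved by crossing and swapping geodesics, is exactly the standard comparison/path-crossing argument, and your fallback induction via the monotone update $(I,J)\mapsto(\ww+(I-J)^+,\ww+(J-I)^+)$ is the other standard route. The paper itself gives no proof but cites precisely these two arguments (\cite[Lemma 6.2]{Ras-18} and \cite[Lemma 4.6]{Sep-18}), so your proposal matches the intended approach.
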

Another basic planarity argument concerning geodesics will come up several times in our proofs. This argument has been used previously in the proof of \cite[Lemma 6.1]{Emr-Jan-Sep-23}, for example. The proof is illustrated in Figure \ref{fig:geosqueeze}.

\begin{lemma}
\label{lem:plan}
The following statements hold for $x, y \in \Rect_{u}^v$ with $x \le y$. 
\begin{enumerate}[label={\rm(\alph*)}, ref={\rm\alph*}] \itemsep=3pt
\item \label{lem:plan:e1} If $\Lpt_{x, y} = \Lpt_{x, y-e_1} + \ww_y$ then 
$\Lpt_{p, y} = \Lpt_{p, y-e_1} + \ww_y$ for $p \in \Rect_{(u \cdot e_1, x \cdot e_2)}^{(x \cdot e_1, y \cdot e_2)}$. 
\item \label{lem:plan:e2} If $\Lpt_{x, y} = \Lpt_{x, y-e_2} + \ww_y$ then $\Lpt_{p, y} = \Lpt_{p, y-e_2} + \ww_y$ for $p \in \Rect_{(x \cdot e_1, u \cdot e_2)}^{(y \cdot e_1, x \cdot e_2)}$. 
\end{enumerate}
\end{lemma}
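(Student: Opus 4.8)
The plan is to rephrase both claims in terms of the terminal increments $\ter{\I}$ and $\ter{\J}$ of \eqref{eq:incbw} and then run a short monotonicity argument off Lemma~\ref{lem:incineq}. I will treat part \eqref{lem:plan:e1}; part \eqref{lem:plan:e2} is identical after swapping $e_1$ and $e_2$ (and parts (c) and (d) of Lemma~\ref{lem:incineq}). First one may assume $x\cdot e_1<y\cdot e_1$: otherwise $x\le y-e_1$ fails, so $\Lpt_{x,y-e_1}=-\infty$ while $\Lpt_{x,y}$ is finite, and the hypothesis of \eqref{lem:plan:e1} cannot hold, making the statement vacuous. Granting $x\cdot e_1<y\cdot e_1$ (so $x<y$), relation \eqref{eq:wrec} shows that the hypothesis $\Lpt_{x,y}=\Lpt_{x,y-e_1}+\ww_y$, i.e.\ $\ter{\I}_{x,y}=\ww_y$, is equivalent to $\ter{\I}_{x,y}\le\ter{\J}_{x,y}$. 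Likewise, every site $p$ in the rectangle $\Rect_{(u\cdot e_1,\,x\cdot e_2)}^{(x\cdot e_1,\,y\cdot e_2)}$ satisfies $p\cdot e_1\le x\cdot e_1<y\cdot e_1$ and $p\le y$, hence $p<y$, so by \eqref{eq:wrec} the desired conclusion $\Lpt_{p,y}=\Lpt_{p,y-e_1}+\ww_y$ is equivalent to $\ter{\I}_{p,y}\le\ter{\J}_{p,y}$. Thus it suffices to show that $\ter{\I}_{x,y}\le\ter{\J}_{x,y}$ forces $\ter{\I}_{p,y}\le\ter{\J}_{p,y}$ for all such $p$.

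The second step is to reach $p$ from $x$ along a convenient lattice path and propagate this inequality. Since $u\cdot e_1\le p\cdot e_1\le x\cdot e_1$ and $x\cdot e_2\le p\cdot e_2\le y\cdot e_2$, go from $x$ to the corner $(x\cdot e_1,\,p\cdot e_2)$ by unit $+e_2$ steps and then to $p$ by unit $-e_1$ steps; every site $q$ visited lies in $\Rect_u^v$, keeps $e_1$-coordinate in $[u\cdot e_1,x\cdot e_1]$ and $e_2$-coordinate in $[x\cdot e_2,y\cdot e_2]$, and satisfies $q<y$. Along a $+e_2$ step from $q$ to $q+e_2$, part (d) of Lemma~\ref{lem:incineq} (with its ``$x$'' taken to be $q+e_2$) gives $\ter{\I}_{q+e_2,y}\le\ter{\I}_{q,y}$ and $\ter{\J}_{q+e_2,y}\ge\ter{\J}_{q,y}$; along a $-e_1$ step from $q$ to $q-e_1$, part (c) (with its ``$x$'' taken to be $q$) gives $\ter{\I}_{q-e_1,y}\le\ter{\I}_{q,y}$ and $\ter{\J}_{q-e_1,y}\ge\ter{\J}_{q,y}$. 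The side condition $q-e_2\ge u$ for part (d) holds since $q\in\Rect_u^v$, and the side condition $q-e_1\ge u$ for part (c) holds because a $-e_1$ step is only taken from sites with $e_1$-coordinate $>p\cdot e_1\ge u\cdot e_1$. Chaining these inequalities along the path from $x$ to $p$ yields $\ter{\I}_{p,y}\le\ter{\I}_{x,y}\le\ter{\J}_{x,y}\le\ter{\J}_{p,y}$, which is what we wanted; the chain is valid even if some increment equals $+\infty$, since all comparisons take place in $\cl{\bbR}$.

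I do not anticipate a genuine difficulty here: the statement is exactly the planarity fact illustrated in Figure~\ref{fig:geosqueeze} --- a geodesic from $p$ to $y$ is forced to cross a geodesic from $x$ to $y-e_1$, and the two pieces splice into a geodesic from $p$ that enters $y$ through $y-e_1$ --- and the argument above is just its bookkeeping through increments. The two points requiring care are getting the orientation of the monotonicity right (it is precisely when the source site is moved \emph{up and to the left} that ``$y$ can be entered from $y-e_1$'' is preserved rather than lost), and verifying the domain hypotheses of Lemma~\ref{lem:incineq} at each intermediate site, which is why routing the path through the corner $(x\cdot e_1,\,p\cdot e_2)$ is convenient. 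Disposing of the degenerate case $x\cdot e_1=y\cdot e_1$ at the outset removes the only other subtlety.
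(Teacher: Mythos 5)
Your proof is correct, and it takes a genuinely different formal route from the paper's. The paper establishes Lemma \ref{lem:plan} by the direct geodesic-splicing planarity argument illustrated in Figure \ref{fig:geosqueeze}: fix a geodesic from $x$ to $y$ through $y-e_1$, take any geodesic from $p$ to $y$, and concatenate at their first intersection to produce a geodesic from $p$ that enters $y$ through $y-e_1$. You instead translate both hypothesis and conclusion, via the recovery identity \eqref{eq:wrec}, into the inequality $\ter{\I}_{\cdot,y}\le\ter{\J}_{\cdot,y}$, and propagate that inequality from $x$ to $p$ using the comparison lemma, Lemma \ref{lem:incineq}(c)--(d): moving the source up or left weakly decreases $\ter{\I}$ and weakly increases $\ter{\J}$, so $\ter{\I}_{p,y}\le\ter{\I}_{x,y}\le\ter{\J}_{x,y}\le\ter{\J}_{p,y}$. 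Your bookkeeping checks out: the staircase through the corner $(x\cdot e_1,\,p\cdot e_2)$ stays inside $\Rect_u^v$ so the side conditions of Lemma \ref{lem:incineq} hold at every step, the orientation of the monotonicity is the right one (sources to the northwest inherit horizontal entry into $y$), the degenerate case $x\cdot e_1=y\cdot e_1$ is correctly dismissed as vacuous, and the possibility $\ter{\J}_{p,y}=\infty$ when $p\cdot e_2=y\cdot e_2$ is harmless (the conclusion is then trivial anyway, all paths from $p$ to $y$ being horizontal). What each route buys: your derivation avoids any discussion of where the two geodesics cross, needs no uniqueness of geodesics, and handles ties and infinite increments automatically, but it leans on Lemma \ref{lem:incineq}, which is itself a path-crossing statement imported from \cite{Ras-18, Sep-18}; the paper's splicing argument is self-contained and geometric, which is why the authors are content to give it only through the figure.
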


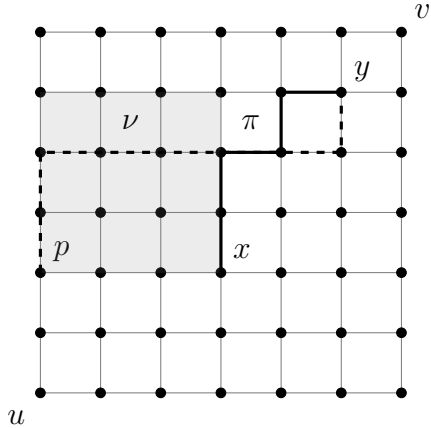
\begin{figure}[h]
    \begin{tikzpicture}[scale = 0.8]
                \draw (4.5,4.5) node[above]{$\qquad$};
		\draw[gray,thin] (-2,-2) grid (4,4);
		\foreach \x in {-2,-1,0,1,2,3,4}
			\foreach \y in {-2,-1,0,1,2,3,4}
				\filldraw (\x,\y) circle [radius=.08];
            \node at (3.35,3.35){$y$};
            \node at (1.35,.35){$x$};
            \node at (4.35,4.35){$v$};
            \node at (-2.4,-2.4){$u$};
            \node at (-1.65,.35){$p$};
            \fill[gray,opacity=.15,]  (-2,0) rectangle (1,3);
            \draw[very thick] (1,0) -- (1,1) -- (1,2) -- (2,2) --(2,3) -- (3,3);
            \node at (-.5,2.5){$\nu$};
            \node at (1.5,2.5){$\pi$};
            \draw[very thick, dashed] (-2,0) -- (-2,2) -- (3,2) -- (3,3);
    \end{tikzpicture}
  \caption{If a geodesic $\pi$ (solid) from $x$ to $y$ passes through $y-e_1$, then for any $p$ with $p \cdot e_1 \leq x \cdot e_1$ and $p  \cdot e_2 \leq y \cdot e_2$ (shaded), there is a geodesic from $p$ to $y$ passing through $y-e_1$ obtained by taking any geodesic $\nu$ (dashed) from $p$ to $y$ and concatenating the segment until the first intersection of $\nu$ and $\pi$ with the remaining segment of $\pi$.}\label{fig:geosqueeze}
\end{figure}
%

Continuing with basic identities, let $\ww^{\leftarrow} = \{\ww_x^{\leftarrow} \in \bbR: x \in \Rect_u^v\}$ denote the \emph{reflected weights} given by 
\begin{align}
\label{eq:wref}
\ww_x^{\leftarrow} = \ww_{u+v-x} \quad \text{ for } x \in \Rect_{u}^v. 
\end{align}
It can be seen from definitions \eqref{eq:Lpt} and \eqref{eq:wref} that 
\begin{align}
\label{eq:Lref}
\begin{split}
\Lpt_{x, y}(\ww^{\leftarrow}) &= \max_{\pi \in \Pi_{x}^y} \sum_{p \in \pi} \ww_p^{\leftarrow} = \max_{\pi \in \Pi_{x}^y} \sum_{p \in \pi} \ww_{u+v-p} = \max_{\pi \in \Pi_{u+v-y}^{u+v-x}} \sum_{p \in \pi} \ww_{p} \\ 
&= \Lpt_{u+v-y, u+v-x}(\ww) \quad \text{ for } x, y \in \Rect_u^v. 
\end{split}
\end{align}
The following lemma is an immediate consequence of identity \eqref{eq:Lref} and the definitions of the increments. Since the map $\ww \mapsto \ww^{\leftarrow}$ is clearly an involution (a bijection that is its own inverse) on $\Rect_u^v$, the identities below also hold after interchanging $\ww$ and $\ww^{\leftarrow}$. 
\begin{lemma}
\label{lem:incid-2}
The following statetements hold for $x, y \in \Rect_u^v$ with $x \le y$. 
\begin{enumerate}[label={\rm(\alph*)}, ref={\rm\alph*}] \itemsep=3pt
\item $\ter{\I}_{x, y}(\ww^{\leftarrow}) = \init{\I}_{u+v-y, u+v-x}(\ww)$.  
\item $\ter{\J}_{x, y}(\ww^{\leftarrow}) = \init{\J}_{u+v-y, u+v-x}(\ww)$.  
\end{enumerate}
\end{lemma}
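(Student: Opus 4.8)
\textbf{Proof proposal for Lemma \ref{lem:incid-2}.} The plan is to read both identities off directly from the reflection identity \eqref{eq:Lref}, together with the definitions \eqref{eq:incbw} of the terminal increments and \eqref{eq:incint} of the initial increments. It suffices to prove part (a); part (b) is verbatim the same after replacing $e_1$ by $e_2$ and $\ter{\I},\init{\I}$ by $\ter{\J},\init{\J}$.

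First I would dispose of the degenerate case. By the conventions stated after \eqref{eq:incbw}, $\ter{\I}_{x, y}(\ww^{\leftarrow}) = \infty$ exactly when $x + e_1 \le y$ fails, and likewise $\init{\I}_{u+v-y,\, u+v-x}(\ww) = \infty$ exactly when $(u+v-y) + e_1 \le u+v-x$ fails. Since the latter inequality is equivalent to $x + e_1 \le y$, the two sides agree (both equal $\infty$) whenever $x + e_1 \le y$ fails. In the remaining case $x + e_1 \le y$ we have $u \le x \le y - e_1 \le y \le v$, so $y - e_1 \in \Rect_u^v$ and $\Lpt_{x,\, y-e_1}(\ww^{\leftarrow})$ is a genuine last-passage value to which \eqref{eq:Lref} applies.

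Then I would simply chain: apply \eqref{eq:incbw} to expand $\ter{\I}_{x,y}(\ww^{\leftarrow}) = \Lpt_{x,y}(\ww^{\leftarrow}) - \Lpt_{x,\,y-e_1}(\ww^{\leftarrow})$; apply \eqref{eq:Lref} to each of the two terms (for the index pairs $(x,y)$ and $(x,y-e_1)$); use $u+v-(y-e_1) = (u+v-y)+e_1$; and recognize the result via \eqref{eq:incint}:
\[
\ter{\I}_{x,y}(\ww^{\leftarrow}) = \Lpt_{u+v-y,\, u+v-x}(\ww) - \Lpt_{(u+v-y)+e_1,\, u+v-x}(\ww) = \init{\I}_{u+v-y,\, u+v-x}(\ww).
\]
The claimed symmetry under interchanging $\ww$ and $\ww^{\leftarrow}$ is immediate because $\ww \mapsto \ww^{\leftarrow}$ is an involution on $\Rect_u^v$, as already noted before the statement. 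There is no real obstacle here; the only point that needs a moment's care is checking that the edge-case conventions (where an increment is $+\infty$) match up on the two sides, which the degenerate-case discussion above handles.
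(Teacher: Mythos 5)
Your argument is correct and is exactly the route the paper takes: the paper states the lemma as an immediate consequence of the reflection identity \eqref{eq:Lref} and the increment definitions \eqref{eq:incint}--\eqref{eq:incbw}, which is precisely your chain of equalities. Your extra check that the $+\infty$ conventions match when $x+e_1\le y$ fails is a harmless (and correct) refinement of the same proof.
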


We next introduce the function $F = (F_1, F_2, F_3):\bbR^3 \to \bbR^3$ by   
\begin{align}
F(I,J,W) = (W+(I-J)^+,W+(J-I)^+,I\wedge J).\label{eq:Lindley}
\end{align}
The first two components of $F$ capture the increment recursion in \eqref{eq:inrec} and \eqref{eq:bwrec}. These recursions can now be expressed concisely as  
\begin{align}
\label{eq:IncRec}
\begin{split}
(\init{\I}_{x, y}, \init{\J}_{x, y}) &= (F_1, F_2)(\init{\I}_{x+e_2, y}, \init{\J}_{x+e_1, y}, \w_x), \\
(\ter{\I}_{x, y}, \ter{\J}_{x, y}) &= (F_1, F_2)(\ter{\I}_{x, y-e_2}, \ter{\J}_{x, y-e_1}, \w_y).
\end{split}
\end{align}
With the third component, $F$ becomes an involution. Consequently, one can write the recursions in \eqref{eq:IncRec} as well as the recovery property \eqref{eq:wrec} also in the form  
\begin{align}
\label{eq:IncRec-2}
\begin{split}
(\init{\I}_{x+e_2, y}, \init{\J}_{x+e_1, y}, \w_x) &= F(\init{\I}_{x, y}, \init{\J}_{x, y}, \init{\I}_{x+e_2, y} \wedge \init{\J}_{x+e_1, y}), \\
(\ter{\I}_{x, y-e_2}, \ter{\J}_{x, y-e_1}, \w_y) &= F(\ter{\I}_{x, y}, \ter{\J}_{x, y}, \ter{\I}_{x, y-e_2} \wedge \ter{\J}_{x, y-e_1}). 
\end{split}
\end{align}

We now extend the involution $F$ to rectangles. From the given $\ww$-weights on $\Rect_u^v$, define the \emph{dual weights} $\ww^* = \{\ww^*_x \in \bbR: x \in \Rect_{u}^v\}$ by
\begin{align}
\label{Edwop}
\begin{split}
\ww^*_x &=  (\ter{\I}_{u, x+e_1} \wedge \ter{\J}_{u, x+e_2}) \one_{\{x \le v-e_1-e_2\}} \\
&+ \ter{\I}_{u, x+e_1}\one_{\{x \cdot e_2 = v \cdot e_2,\, x < v\}} + \ter{\J}_{u, x+e_2}\one_{\{x \cdot e_1 = v \cdot e_1,\, x < v\}} \quad \text{ for } x \in \Rect_{u}^v. 
\end{split}
\end{align}
In particular, $\ww^*_v = 0$. Also, since $\ww_u$ is irrelevant to definition \eqref{Edwop}, one may assume here that $\ww_u = 0$ without any loss. By \eqref{eq:IncRec-2} and definition \eqref{Edwop}, in the special case $v = u + e_1 + e_2$ of a unit square, the three nontrivial $\ww^*$-weights form the vector  
\begin{align*}
(\ww^*_{u+e_2}, \ww^*_{u+e_1}, \ww^*_{u}) &= (\ter{\I}_{u, u+e_1+e_2}, \ter{\J}_{u, u+e_1+e_2}, \ter{\I}_{u, u+e_1} \wedge \ter{\J}_{u, u+e_2}) \\ 
&= F(\ter{\I}_{u, u+e_1}, \ter{\J}_{u, u+e_2}, \ww_{u+e_1+e_2}) = F(\ww_{u+e_1}, \ww_{u+e_2}, \ww_{u+e_1+e_2}). 
\end{align*}
Therefore, the $*$-map restricted to the weights on $\Rect_{u}^{u+e_1+e_2} \smallsetminus \{u\}$ coincides with the involution $F$ up to permuting and re-indexing the components. The following lemma observes that the involutive property of the $*$-map (composed with the reflection map $\leftarrow$) holds for an arbitrary rectangle. This gives a sense in which $\ww$ and $\ww^*$ are dual to each other. 
\begin{lemma}
\label{lem:dwop}
The map $\ww \mapsto (\ww^*)^{\leftarrow}$ is an involution on the space $\{\ww \in \bbR^{\Rect_u^v}: \ww_u = 0\}$.
\end{lemma}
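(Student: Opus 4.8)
The plan is to work throughout with last-passage arrays and to reduce everything to one clean duality identity. Write $T(\ww) = (\ww^*)^{\leftarrow}$. By definition \eqref{Edwop} we have $\ww^*_v = 0$, hence $T(\ww)_u = \ww^*_{u+v-u} = \ww^*_v = 0$ for \emph{every} $\ww$; in particular $T$ maps $\{\ww \in \bbR^{\Rect_u^v} : \ww_u = 0\}$ into itself, and it remains to show that $T\circ T$ is the identity on this set. (Recall, as observed after \eqref{Edwop}, that $\ww^*$ does not depend on $\ww_u$, so $T$ is actually well defined on all of $\bbR^{\Rect_u^v}$.)

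The core of the argument is the identity
\begin{equation}\label{eq:plandual}
\Lpt_{x, v}(\ww^*) = \Lpt_{u, v}(\ww) - \Lpt_{u, x}(\ww) \qquad \text{for all } x \in \Rect_u^v ,
\end{equation}
valid for every $\ww \in \bbR^{\Rect_u^v}$ (both sides are independent of $\ww_u$, consistently with the previous remark). I would prove \eqref{eq:plandual} by downward induction on $x\cdot(e_1+e_2)$: the base case $x = v$ is immediate since $\ww^*_v=0$, and for $x<v$ one applies the recursion \eqref{eq:LptRec} to $\Lpt_{x, v}(\ww^*)$ (the first step of a path from $x$ to $v$ goes to $x+e_1$ or $x+e_2$), inserts the inductive hypothesis, and then checks that the resulting required equality is exactly the defining formula \eqref{Edwop} for $\ww^*_x$, after rewriting $\Lpt_{u, x+e_1}(\ww)-\Lpt_{u, x}(\ww) = \ter{\I}_{u, x+e_1}$ and $\Lpt_{u, x+e_2}(\ww)-\Lpt_{u, x}(\ww) = \ter{\J}_{u, x+e_2}$. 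The verification splits into the three cases distinguished in \eqref{Edwop} (interior $x$; $x$ on the top edge; $x$ on the right edge of $\Rect_u^v$), each matching one of the three terms there. I expect this to be the delicate part of the proof: one must treat the edges of the rectangle, where one of the two sub-passage-times to $v$ equals $-\infty$ and the path to $v$ is forced, and track carefully the truncation $(\cdot)^+$ in \eqref{eq:LptRec}.

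Granting \eqref{eq:plandual}, the rest is bookkeeping with the reflection identity \eqref{eq:Lref}, which gives $\Lpt_{u, x}(\nu^{\leftarrow}) = \Lpt_{u+v-x, v}(\nu)$. Applying it with $\nu = \ww^*$ and then \eqref{eq:plandual} yields, for every $x\in\Rect_u^v$ and every $\ww$,
\[
\Lpt_{u, x}(T(\ww)) = \Lpt_{u+v-x,\, v}(\ww^*) = \Lpt_{u, v}(\ww) - \Lpt_{u,\, u+v-x}(\ww).
\]
Since $T(\ww)_u = 0$ always, the same formula applies with $T(\ww)$ in place of $\ww$; combining this with $\Lpt_{u,u}(\ww) = \ww_u = 0$ (valid when $\ww_u=0$) gives first $\Lpt_{u, v}(T(\ww)) = \Lpt_{u,v}(\ww)$ and then
\[
\Lpt_{u, x}\big(T(T(\ww))\big) = \Lpt_{u, v}(T(\ww)) - \Lpt_{u,\, u+v-x}(T(\ww)) = \Lpt_{u,v}(\ww) - \big(\Lpt_{u,v}(\ww) - \Lpt_{u,x}(\ww)\big) = \Lpt_{u, x}(\ww)
\]
for all $x$. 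Finally, a configuration $\nu$ on $\Rect_u^v$ with $\nu_u = 0$ is uniquely recovered from its array $\big(\Lpt_{u, x}(\nu)\big)_{x\in\Rect_u^v}$ by inverting \eqref{eq:LptRec}: $\nu_x = \Lpt_{u,x}(\nu) - (\Lpt_{u,x-e_1}(\nu) \vee \Lpt_{u,x-e_2}(\nu))$ when both predecessors lie in the rectangle, $\nu_x = \Lpt_{u,x}(\nu) - \Lpt_{u, x-e_i}(\nu)$ along the two coordinate axes through $u$, and $\nu_u = 0$. Since $T(T(\ww))$ and $\ww$ both vanish at $u$ and carry the same last-passage array from $u$, they coincide; hence $T\circ T = \mathrm{id}$ on $\{\ww_u = 0\}$, i.e. $\ww \mapsto (\ww^*)^{\leftarrow}$ is an involution on that space.
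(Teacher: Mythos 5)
Your proof is correct. A point of comparison: the paper never writes out a proof of this lemma --- it only remarks that the statement can be verified by computation using Lemmas \ref{lem:incid-2} and \ref{lem:incid} --- so what you supply is a complete argument along essentially that route, packaged a bit differently. Your central identity $\Lpt_{x,v}(\ww^*)=\Lpt_{u,v}(\ww)-\Lpt_{u,x}(\ww)$ is precisely the telescoped form of Lemma \ref{lem:incid} combined with the normalization $\Lpt_{v,v}(\ww^*)=\ww^*_v=0$: those two identities say that $x\mapsto\Lpt_{x,v}(\ww^*)$ and $x\mapsto\Lpt_{u,v}(\ww)-\Lpt_{u,x}(\ww)$ have the same nearest-neighbour increments, so you could simply cite that lemma in place of your induction (your induction in effect re-proves it, which is also fine). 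The second half --- turning the reflection identity \eqref{eq:Lref} into a formula for the array $\bigl(\Lpt_{u,x}(T(\ww))\bigr)_{x\in\Rect_u^v}$, iterating it, and then recovering the weights from the array --- is a clean global way to conclude, and it avoids the entry-by-entry computation of the dual of $(\ww^*)^{\leftarrow}$ that the paper's hint suggests. One caution, which you partly flag yourself: on this space the weights (hence passage times) may be negative, so in the inductive step and in the recovery step you should argue directly from \eqref{eq:Lpt} (every path from $x$ to $v$ passes through $x+e_1$ or $x+e_2$, with only one choice available on the north or east edge) rather than applying \eqref{eq:LptRec} literally with its truncation $(\cdot)^+$; with that reading all cases, including the forced-path edge cases and the recovery formula $\nu_x=\Lpt_{u,x}(\nu)-\bigl(\Lpt_{u,x-e_1}(\nu)\vee\Lpt_{u,x-e_2}(\nu)\bigr)$ and its one-sided versions on the south and west edges, check out.
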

One can verify Lemma \ref{lem:dwop} by computation using Lemmas \ref{lem:incid-2} and \ref{lem:incid}.  We will not appeal to Lemma \ref{lem:dwop} except for the purpose of motivation, so we omit its proof. 

We close this subsection with another set of increment identities which say that certain increments in the primal weights $\ww$ are equal to other increments in the dual weights $\ww^*$. Their proofs can be found in \cite[Lemma 4.7]{Sep-18}. 
\begin{lemma}
\label{lem:incid}
The following statements hold for $x \in \Rect_u^v$. 
\begin{enumerate}[label={\rm(\alph*)}, ref={\rm\alph*}] \itemsep=3pt
\item \label{lem:incid:1} If $x + e_1 \leq v$ then $\init{\I}_{x, v}(\ww^*) = \ter{\I}_{u, x+e_1}(\ww)$. 
\item If $x + e_2 \leq v$ then $\init{\J}_{x, v}(\ww^*) = \ter{\J}_{u, x+e_2}(\ww)$. 
\end{enumerate}
\end{lemma}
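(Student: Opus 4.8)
The plan is to establish both identities simultaneously, for every $x\in\Rect_u^v$ with $x<v$, by induction on the antidiagonal level $N(x)=(v-x)\cdot(e_1+e_2)$, working from sites $x$ near $v$ inward toward $u$. The engine is the dual-increment recursion: applying \eqref{eq:inrec} to the weights $\ww^*$ writes $\init{\I}_{x,v}(\ww^*)$ and $\init{\J}_{x,v}(\ww^*)$ in terms of $\init{\I}_{x+e_2,v}(\ww^*)$ and $\init{\J}_{x+e_1,v}(\ww^*)$, which sit at level $N(x)-1$, so the induction is well founded. On the target side, $\ter{\I}_{u,\cdot}(\ww)$ and $\ter{\J}_{u,\cdot}(\ww)$ are increments of the \emph{primal} passage time based at $u$; feeding $s^+-(-s)^+=s$ into the pair \eqref{eq:bwrec} yields the transport identity $\ter{\I}_{u,z}(\ww)-\ter{\J}_{u,z}(\ww)=\ter{\I}_{u,z-e_2}(\ww)-\ter{\J}_{u,z-e_1}(\ww)$, which will be what reconciles the two recursions. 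Working at the level of increments is convenient here because it sidesteps a sign subtlety (from the truncation $(\cdot)^+$ in \eqref{eq:LptRec}) that would surface if one tried instead to prove the equivalent global statement $\Lpt_{x,v}(\ww^*)=\Lpt_{u,v}(\ww)-\Lpt_{u,x}(\ww)$.

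First I would dispose of the base cases, which are the sites on the top and right edges of $\Rect_u^v$ (for $x=v$ there is nothing to check). If $x\cdot e_2=v\cdot e_2$ and $x<v$, then $x+e_2\not\le v$, so only the first identity is in force; since $\Rect_x^v$ is a one-dimensional horizontal segment one has $\init{\I}_{x,v}(\ww^*)=\Lpt_{x,v}(\ww^*)-\Lpt_{x+e_1,v}(\ww^*)=\ww^*_x$, and $\ww^*_x=\ter{\I}_{u,x+e_1}(\ww)$ is immediate from the $x\cdot e_2=v\cdot e_2,\ x<v$ term in definition \eqref{Edwop}. The right-edge case ($x\cdot e_1=v\cdot e_1$, $x<v$) is symmetric and gives the second identity with $\ww^*_x=\ter{\J}_{u,x+e_2}(\ww)$.

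For the inductive step I would take an interior site, meaning $x+e_1+e_2\le v$; this is exactly the condition under which \eqref{eq:inrec} is available and under which the first line of \eqref{Edwop} gives $\ww^*_x=\ter{\I}_{u,x+e_1}(\ww)\wedge\ter{\J}_{u,x+e_2}(\ww)$. Substituting the induction hypothesis for $\init{\I}_{x+e_2,v}(\ww^*)$ and $\init{\J}_{x+e_1,v}(\ww^*)$ into \eqref{eq:inrec} gives $\init{\I}_{x,v}(\ww^*)=\ww^*_x+(\ter{\I}_{u,x+e_1+e_2}(\ww)-\ter{\J}_{u,x+e_1+e_2}(\ww))^+$; the transport identity with $z=x+e_1+e_2$ rewrites the truncated term as $(\ter{\I}_{u,x+e_1}(\ww)-\ter{\J}_{u,x+e_2}(\ww))^+$, and then the elementary identity $a\wedge b+(a-b)^+=a$ collapses the right-hand side to $\ter{\I}_{u,x+e_1}(\ww)$. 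The second identity comes out the same way, now using $a\wedge b+(b-a)^+=b$, which closes the induction and finishes the lemma.

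I do not anticipate a genuine obstacle: the substance of the argument is the transport identity together with the two one-line identities for the minimum and the positive part. The only thing requiring attention is the case split, namely keeping the edge sites of $\Rect_u^v$ (where one increment equals $+\infty$ and \eqref{eq:inrec} is not available) separate from the interior sites, and verifying that the induction hypothesis really does apply at $x+e_1$ and $x+e_2$, which is guaranteed precisely by the interior condition $x+e_1+e_2\le v$. The remainder is bookkeeping.
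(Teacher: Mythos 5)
Your proof is correct: the base cases on the north and east edges of $\Rect_u^v$ follow directly from definition \eqref{Edwop}, and the inductive step — applying \eqref{eq:inrec} to $\ww^*$, rewriting $(\ter{\I}_{u,x+e_1+e_2}-\ter{\J}_{u,x+e_1+e_2})^+$ via the transport identity obtained by subtracting the two lines of \eqref{eq:bwrec}, and collapsing with $a\wedge b+(a-b)^+=a$ and $a\wedge b+(b-a)^+=b$ — closes cleanly, with the interior condition $x+e_1+e_2\le v$ guaranteeing both that the recursion applies and that the hypothesis is available at $x+e_1$ and $x+e_2$. The paper does not prove Lemma \ref{lem:incid} itself but defers to \cite[Lemma 4.7]{Sep-18}, whose argument is essentially this same recursion-based induction, so your proposal matches the intended proof.
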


\subsection{Increment-stationary exponential LPP}
\label{sec:Burke}
Another crucial ingredient for the present work is that, even with inhomogeneity, one can create versions of the exponential LPP with stationary increments by introducing suitable boundary weights \cite{Emr-16}. Through various couplings with such processes, we will be able to perform exact calculations and in particular identify the distributions of the Busemann functions in Theorem \ref{thm:buslim}. Throughout this section, we recommend consulting with Figure \ref{fig:Burke}, which illustrates the first increment-stationary coupling we study.

\begin{figure}[h]
                \centering
                \begin{tikzpicture}[scale = 0.8]
		\draw[gray, very thin] (-2,-2) grid (4,4);
		\draw[](-0.6, 2.5) node [right]{{$\pi$}};
		\draw[<->](-2, 4.5)node[above]{}--(-2, -2)--(4.5, -2)node[right]{};
		\draw[dashed, very thick] (-2,4) -- (-1,4) -- (-1,3) -- (0,3) -- (0,2) -- (1,2) -- (1,1) -- (2,1) -- (2,0) -- (3,0) -- (3,-1) --(4,-1) -- (4,-2);
		\foreach \x in {0,...,4}
			\foreach \y in {\x,...,4}
				\filldraw (4-\x,\y) circle [radius=.08];		
		\foreach \x in {-2,...,3}
			\foreach \y in {-2,...,\x}
				\draw[draw=black,fill=white]  (1-\x,\y) circle [radius=.08];	
        \draw[](-1.3, -1.3) node{$u$};
		\draw[](4.3, 4.3) node{$v$};
        \draw[](4.85-7.6,4) node{$b_{n}$};
        \draw[](4.85-7.6,2) node{$b_{j}$};
        \draw[](1, 4.55-7.1) node{$a_{i}$};
		\draw[](4.85-7.6,-1) node{$b_{\ell}$};
		\draw[](-1, 4.55-7.1) node{$a_{k}$};
		\draw[](2.5, 4.55-7.1) node{$\dots$};
        \draw[](0, 4.55-7.1) node{$\dots$};
        \draw[](4.85-7.6,0.8) node{$\vdots$};
        \draw[](4.85-7.6,3.2) node{$\vdots$};
		\draw[](3.85+0.3, 4.55-7.1) node{$a_{m}$};
        \foreach \x in {-2, ..., 3}
        {\draw[thick,->] (\x, 2-\x+0.1) -- (\x + 1,2-\x+0.1);
        \draw[thick,->] (2-\x+0.1, \x) -- (2-\x+0.1, \x + 1);
        }
        \node at (2.5,2.5){$\sG_{u-e_1-e_2, v, \pi}^+$};
        \node at (.5,.5){$\sG_{u-e_1-e_2, v, \pi}^-$};        
		\end{tikzpicture}
	    \caption{Illustrates the notation in Proposition \ref{prop:Burke} on $\Rect_{u-e_1-e_2}^v$ with $u = (k, \ell) < (m, n) = v$. A down-right path $\pi$ (dashed) from $(k-1, n)$ to $(m, \ell-1)$, the bulk weights $\w_x$ (black) strictly above $\pi$ (in $\sG_{u-e_1-e_2, v, \pi}^+$), and the dual weights $(\wh{\w}^{u, v, z})^*_x$ (hollow) strictly below $\pi$ (in $\sG_{u-e_1-e_2, v, \pi}^-$) are shown. Right and up arrows into $x \in \pi$ represent the increments $\wh{\I}_{u-e_1-e_2, x}^{u, v, z}$ and $\wh{\J}_{u-e_1-e_2, x}^{u, v, z}$, respectively. If $z \in (-a_{k:m}^{\min}, b_{\ell:n}^{\min})$, the drawn random variables are independent with marginals $\w_x \sim \Exp(a_i+b_j)$, $(\wh{\w}^{u, v, z})^*_{x} \sim \Exp(a_{i+1}+b_{j+1})$, $\wh{\I}_{u-e_1-e_2, x}^{u, v, z} \sim \Exp(a_{i}+z)$ and $\wh{\J}_{u-e_1-e_2, x}^{u, v, z} \sim \Exp(b_{j}-z)$ at $x = (i, j)$. 
     }\label{fig:Burke}
            \end{figure}
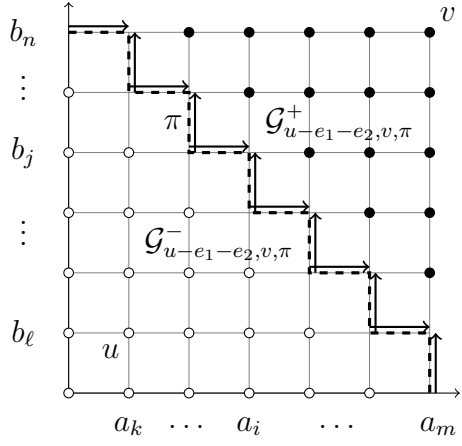

Let $u, v \in \bbZ^2$ satisfy $u \le v$, and pick a boundary parameter $z \in (-\smin{a}_{(u \cdot e_1):(v \cdot e_1)}, \smin{b}_{(u \cdot e_2):(v \cdot e_2)})$. Introduce a family of weights $\wh{\w}^{u, v, z} = \{\wh{\w}_x^{u, v, z}: x \in \Rect_{u-e_1-e_2}^v\}$ by 
\begin{align}
\label{Ewh}
\begin{split}
\wh{\w}^{u, v, z}_x &= \frac{\tau_x}{a_{x \cdot e_1} + z}\cdot \one_{\{x \cdot e_1 \ge u \cdot e_1,\, x \cdot e_2 = u \cdot e_2-1\}} + \frac{\tau_x}{b_{x \cdot e_2} - z} \cdot \one_{\{x \cdot e_2 \ge u\cdot e_2,\, x \cdot e_1 = u \cdot e_1-1\}} \\
&+\frac{\tau_x}{a_{x \cdot e_1} + b_{x \cdot e_2}} \cdot \one_{\{x \ge u\}} \quad \text{ for } x \in \Rect_{u-e_1-e_2}^v 
\end{split}
\end{align}
reusing the independent $\Exp(1)$-distributed weights $\{\tau_x: x \in \bbZ^2\}$ from Subsection \ref{sec:iExpLPP}. By definition, the weights $\wh{\w}^{u, v, z}$ are mutually independent, and the marginal distributions on the \emph{south} and \emph{west} boundaries are given by $\wh{\w}^{u, v, z}_{u-e_1-e_2} = 0$, 
\begin{align}
\label{ESWBd}
\begin{split}
\wh{\w}^{u, v, z}_{i, u \cdot e_2-1} &\sim \Exp\{a_i + z\} \quad \text{ for } u \cdot e_1 \le i \le v \cdot e_1 \quad \text{ and } \\
\wh{\w}^{u, v, z}_{u \cdot e_1-1, j} &\sim \Exp\{b_j-z\} \quad \text{ for } u \cdot e_2 \le j \le v \cdot e_2. 
\end{split}
\end{align}
Furthermore, due to \eqref{eq:wdef}, we have that $\wh{\w}^{u, v, z}_x = \w_x$ for $x \in \Rect_u^v$. 

The LPP process corresponding to the weights \eqref{Ewh} is given by 
\begin{align}
\label{ELpph}
\wh{\G}^{u, v, z}_{x, y} = \Lpt_{x, y}(\wh{\w}^{u, v, z}) \quad \text{ for } x, y \in \Rect_{u-e_1-e_2}^v. 
\end{align}
Denote the increments of this process with respect to the terminal points by 
\begin{align}
\label{EInch}
\begin{split}
\wh{\I}^{u, v, z}_{x, y} &= \ter{\I}_{x, y}(\wh{\w}^{u, v, z}) = \wh{\G}^{u, v, z}_{x,y}-\wh{\G}^{u, v, z}_{x, y-e_1} \quad \text{ and } \\ 
\wh{\J}^{u, v, z}_{x, y} &= \ter{\J}_{x, y}(\wh{\w}^{u, v, z})= \wh{\G}^{u, v, z}_{x,y}-\wh{\G}^{u, v, z}_{x, y-e_2} \quad \text{ for } x, y \in \Rect_{u-e_1-e_2}^v \text{ with } x \le y.
\end{split}
\end{align}

As the next proposition shows, the increments in \eqref{EInch} enjoy a tractable distributional structure, which can be termed the \emph{Burke property} in analogy with earlier works \cite{Bal-Cat-Sep-06, Sep-12-corr}. 
\begin{proposition}[Burke property]
\label{prop:Burke}
The following statements hold.  
\begin{enumerate}[label={\rm(\alph*)}] 
\itemsep=3pt
\item $\wh{\I}^{u, v, z}_{u-e_1-e_2, x} \sim \Exp(a_{x \cdot e_1} + z)$ for $x \in \Rect_{u-e_2}^v$. 
\item $\wh{\J}^{u, v, z}_{u-e_1-e_2, x} \sim \Exp(b_{x \cdot e_2}-z)$ for $x \in \Rect_{u-e_1}^v$.  
\item $(\wh{\w}^{u, v, z})^*_x \sim \Exp(a_{x \cdot e_1+1} + b_{x \cdot e_2+1})$ for $x \in \Rect_{u-e_1-e_2}^{v-e_1-e_2}$. 
\item For any down-right path $\pi$ from $(u \cdot e_1-1, v \cdot e_2)$ to $(v \cdot e_1, u \cdot e_2-1)$, the collection 
\begin{align*}
&\{(\wh{\w}^{u, v, z})^*_x: x \in \sG_{u-e_1-e_2, v, \pi}^-\} \cup \{\wh{\I}_{u-e_1-e_2, x}^{u, v, z}: x, x-e_1 \in \pi\} \\ 
&\cup \{\wh{\J}_{u-e_1-e_2, x}^{u, v, z}: x, x-e_2 \in \pi\} \cup 
\{\w_x: x \in \sG_{u-e_1-e_2, v, \pi}^+\}
\end{align*}
is independent. 
\end{enumerate}
\end{proposition}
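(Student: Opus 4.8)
The plan is to prove this by induction on the size of the rectangle $\Rect_u^v$, adding one vertex at a time along a down-right path that sweeps from the south-west boundary up to the north-east corner, exactly as in the classical proof of the Burke property for the homogeneous CGM \cite{Bal-Cat-Sep-06, Sep-12-corr}. The single-cell case is the base step: on $\Rect_u^{u+e_1+e_2}$ the three nontrivial boundary weights $\wh\w_{u+e_1}, \wh\w_{u+e_2}, \wh\w_{u+e_1+e_2}$ are independent with $\wh\w_{u+e_1}\sim\Exp(a_{(u+e_1)\cdot e_1}+z)$, $\wh\w_{u+e_2}\sim\Exp(b_{(u+e_2)\cdot e_2}-z)$, $\wh\w_{u+e_1+e_2}\sim\Exp(a_{(u+e_1)\cdot e_1}+b_{(u+e_2)\cdot e_2})$, and as noted in the excerpt the $*$-map here is just the involution $F$ of \eqref{eq:Lindley} up to re-indexing. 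So the content of the base step is the algebraic identity: if $I\sim\Exp(\lambda)$, $J\sim\Exp(\mu)$, $W\sim\Exp(\lambda+\mu)$ are independent, then $F(I,J,W)=(W+(I-J)^+, W+(J-I)^+, I\wedge J)$ has independent components distributed as $\Exp(\mu)$, $\Exp(\lambda)$, $\Exp(\lambda+\mu)$ respectively. This is the standard M/M/1 queueing output calculation (Burke's theorem in disguise); I would either cite \cite{Bal-Cat-Sep-06} for it or include the short density computation. Note the rate swap $\lambda\leftrightarrow\mu$ between the first two output coordinates is exactly why the reflection $\leftarrow$ appears in the statement of Lemma \ref{lem:dwop}, and it is consistent with the roles of the $a$- and $b$-boundaries.

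For the inductive step, suppose the claim holds for all rectangles with fewer than $N$ vertices, and let $\Rect_u^v$ have $N$ vertices. Pick an ``inner'' corner $c$ of $\Rect_u^v$, i.e. a vertex with $c = v' + e_1 + e_2$ where $v' \le v$, $c \le v$, such that $\Rect_u^{c}$ together with the vertices already peeled off still forms a valid configuration; concretely one deforms the down-right path $\pi'$ through $c$ to the down-right path $\pi$ through $c-e_1, c-e_2$ by a single ``corner flip''. By Lemma \ref{lem:incid} the increments $\wh\I^{u,v,z}_{u-e_1-e_2, c}$, $\wh\J^{u,v,z}_{u-e_1-e_2,c}$ along edges at $c$ coincide with increments of the dual weights $(\wh\w^{u,v,z})^*$ of the sub-rectangle, and by the recursion \eqref{eq:IncRec} the triple at the corner is obtained from the triple at $c-e_1, c-e_2, c-e_1-e_2$ by applying $F$ (equivalently, running \eqref{eq:LptRec} one more step). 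The inductive hypothesis applied to the down-right path $\pi'$ (which avoids the corner cell) gives that the three inputs to $F$ at this corner — two boundary increments and one dual weight $(\wh\w)^*_{c-e_1-e_2}$, equivalently the original weight $\w_{c}$ depending on which side of $\pi'$ we are measuring — are independent with the correct exponential rates, and are independent of all the other variables indexed by $\pi'$ and by the two $\sG^\pm$ regions it determines. Applying the base-step algebraic identity locally at the corner converts these three into the three variables indexed by $\pi$ (two new boundary increments plus the weight $\w_{c-e_1-e_2}$, or the dual weight, again depending on orientation), still independent and with the rates asserted; all other variables are untouched by the flip and remain independent of the new ones because the map only involves the cell at $c$. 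This propagates the independence and the marginals from $\pi'$ to $\pi$, completing the induction. Parts (a), (b), (c) are then read off from part (d) by specializing $\pi$ to the ``staircase'' down-right paths that hug the south and west boundaries, together with Lemma \ref{lem:incid} and the shift invariance of Lemma \ref{lem:LppShf}; the restriction $(\wh\w^{u,v,z})_x = \w_x$ for $x\in\Rect_u^v$ (noted just before \eqref{ELpph}) identifies the relevant dual weights with the asserted $\Exp(a_{x\cdot e_1+1}+b_{x\cdot e_2+1})$ law.

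The main obstacle, and the only place requiring genuine care beyond transcribing the homogeneous argument, is bookkeeping the inhomogeneous rates through the corner flip: one must check that after applying $F$ at a corner cell whose south-and-west inputs carry rates $\lambda = a_{i} + z$-type and $\mu = b_{j} - z$-type (in the boundary case) or $\lambda = a_i + b_{j'}$, $\mu = a_{i'} + b_j$-type (in the bulk case), the output rates land on exactly the values claimed in (a)–(d) for the deformed path, i.e. that the rate of the new north/east boundary increment picks up the correct $a$- or $b$-index after the swap. This is purely a matter of tracking indices $i,j$ through \eqref{Ewh}, \eqref{ESWBd} and \eqref{eq:Lindley}, but it is where an inhomogeneous proof can go wrong, so I would set up the induction on down-right paths carefully and verify the index arithmetic in the two cases (corner in the bulk, corner touching a boundary) explicitly before asserting the general step. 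A secondary but routine point is confirming that the family $\{\wh\w^{u,v,z}_x\}$ is genuinely well-defined — i.e. that the rates $a_{x\cdot e_1}+z$ and $b_{x\cdot e_2}-z$ are positive — which is exactly the hypothesis $z \in (-\smin{a}_{(u\cdot e_1):(v\cdot e_1)}, \smin{b}_{(u\cdot e_2):(v\cdot e_2)})$ imposed before \eqref{Ewh}.
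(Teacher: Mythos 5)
Your overall architecture — induction along down-right paths via corner flips, powered by a single-cell lemma about the involution $F$ of \eqref{eq:Lindley} — is exactly the argument the paper has in mind (it omits the proof, citing the inductive argument of \cite[Lemma 4.2]{Bal-Cat-Sep-06} and \cite[Proposition 4.1]{Emr-16}). However, the base-step identity, which is the load-bearing ingredient, is stated incorrectly. If $I\sim\Exp(\lambda)$, $J\sim\Exp(\mu)$, $W\sim\Exp(\lambda+\mu)$ are independent, then the components of $F(I,J,W)=(W+(I-J)^+,\,W+(J-I)^+,\,I\wedge J)$ are independent with laws $\Exp(\lambda)$, $\Exp(\mu)$, $\Exp(\lambda+\mu)$ \emph{in that order}: $F$ preserves this product law, and there is no $\lambda\leftrightarrow\mu$ swap. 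A one-line check: $E\bigl[e^{-s(W+(I-J)^+)}\bigr]=\tfrac{\lambda+\mu}{\lambda+\mu+s}\bigl(\tfrac{\lambda}{\lambda+\mu}+\tfrac{\mu}{\lambda+\mu}\cdot\tfrac{\lambda}{\lambda+s}\bigr)=\tfrac{\lambda}{\lambda+s}$; or let $\mu\to\infty$, so $J\to0$, $W\to0$ and $W+(I-J)^+\to I\sim\Exp(\lambda)$, not $\Exp(\mu)$. Your appeal to Lemma \ref{lem:dwop} to motivate the swap is a misreading: the reflection $\leftarrow$ there reverses the rectangle (hence the order of the parameter sequences), because the dual-weight LPP runs from the opposite corner; it does not exchange $a$- and $b$-rates inside a cell.

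This is not a cosmetic slip, because in the inhomogeneous model the swapped version makes the induction collapse. At the cell with northeast corner $y$ the inputs are the horizontal increment at $y-e_2$ (rate $a_{y\cdot e_1}+z$), the vertical increment at $y-e_1$ (rate $b_{y\cdot e_2}-z$), and $\w_y\sim\Exp(a_{y\cdot e_1}+b_{y\cdot e_2})$. With a swap, the new vertical increment at $y$ would carry rate $a_{y\cdot e_1}+z$, so at the neighboring cell with corner $y+e_1$ the two input rates would sum to $a_{y\cdot e_1+1}+a_{y\cdot e_1}+2z$, which does not match the rate $a_{y\cdot e_1+1}+b_{y\cdot e_2}$ of $\w_{y+e_1}$; the hypothesis ``$W$ has rate equal to the sum of the rates of $I$ and $J$'' fails and the lemma cannot be applied again (and the marginals already contradict parts (a)--(b)). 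With the correct no-swap lemma, the index bookkeeping you flag as the main obstacle is automatic: horizontal increments retain the rate $a_{\,\cdot}+z$ of their column, vertical increments retain $b_{\,\cdot}-z$ of their row, and the dual weight at $x$, being the minimum of the two inputs of the cell with corner $x+e_1+e_2$, is $\Exp(a_{x\cdot e_1+1}+b_{x\cdot e_2+1})$ as in (c). One further small point: the dual weight is an \emph{output} of $F$ at the flipped cell (the inputs are the two old increments and the bulk weight $\w$ at the new corner); since $F$ is an involution this is only a presentational issue, not a gap. With the corrected local lemma, the rest of your outline coincides with the paper's intended proof.
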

See Figure \ref{fig:Burke} below for an illustration. In part (c) of the proposition, $(\wh{\w}^{u, v, z})^*$ denotes the dual weights associated with the $\wh{\w}^{u, v, z}$-weights according to \eqref{Edwop}. Recall also from \eqref{eq:Gpm} that $\sG^\pm_{u-e_1-e_2, v, \pi}$ in part (d) are the two subsets of $\Rect_{u-e_1-e_2}^{v}$ strictly above and strictly below a given down-right path $\pi$. Parts (a), (b) and (d) together with definition \eqref{Ewh} imply that 
\begin{align}
\label{eq:hvinc}
\begin{split}
\{\wh{\I}_{u-e_1-e_2, (i, \ell)}^{u, v, z}: u \cdot e_1 \le i \le v \cdot e_1\} \stackrel{\rm{dist.}}{=} \{\wh{\w}_{(i, 0)}^{u, v, z}: u \cdot e_1 \le i \le v \cdot e_1\}, \\ 
\{\wh{\J}_{u-e_1-e_2, (k, j)}^{u, v, z}: u \cdot e_2 \le j \le v \cdot e_2\} \stackrel{\rm{dist.}}{=} \{\wh{\w}_{(0, j)}^{u, v, z}: u \cdot e_2 \le j \le v \cdot e_2\}
\end{split}
\end{align}
for any horizontal level $\ell \in \{u \cdot e_2-1, \dotsc, v \cdot e_2\}$ and vertical level $k \in \{u \cdot e_1-1, \dotsc, v \cdot e_1\}$. In particular, the $\wh{\G}^{u, v, z}$-process with the initial point fixed at $u-e_1-e_2$ has stationary increments in the sense that the joint distributions of the left-hand sides in \eqref{eq:hvinc} are not level-dependent.

Parts (a) and (b) as well as a slightly stronger version of \eqref{eq:hvinc} previously appeared in \cite[Proposition 4.1]{Emr-16}. 
The proof is standard following the inductive argument in the proof of the homogeneous case in \cite[Lemma 4.2]{Bal-Cat-Sep-06}, so we omit it. 

\subsection{Summary of coupled environments}
With the previous section in mind and before proceeding to the proofs, we now collect some of the notation for the various environments that will appear below to make the exposition easier to follow. Four types of environments, other than the bulk environment $\w$ defined in \eqref{eq:wdef}, appear in this section. The different environments are distinguished by a few features, which we now summarize. We outline the environments in the order that they will appear in the discussion to follow.

As we have just seen, we can construct an increment-stationary model by placing appropriate independent exponential weights on the south-west (SW) boundary of a rectangle and computing appropriate increments. The resulting field of increments and dual weights is illustrated in Figure \ref{fig:Burke}. This results in the environment $\wh{\w}^{u,v,z}$ defined in \eqref{Ewh} that we have just encountered. 

We can also build an increment stationary model by placing boundary conditions on the north-east (NE) boundary and computing appropriate increments. This results in the weights $\wt{\w}^{u,v,z}$ defined below at \eqref{Ewt}. Taking advantage of the involution recorded in Lemma \ref{lem:dwop} and the structure of the Burke property in Proposition \ref{prop:Burke}, these can be connected back to an environment of the type we have just seen. Because the reflection map in \eqref{eq:wref} reverses the order of parameters, to connect the south-west and north-east boundary models, it is convenient to introduce a south-west boundary model with reversed parameters, which we denote by $\wh{\w}^{u, v, z, \leftarrow}$ and define in \eqref{Ewhrev} below.

Finally, the main object of study in this section are the Busemann increments themselves. The cocycle and recovery properties imply that all Busemann functions in a rectangle can be recovered from the values of the Busemann functions on the north-east boundary and the bulk weights. It is thus natural to place the Busemann increments as north-east boundary conditions similar to the $\wt{\w}$ environments. This results in the environments $\wb{\w}^{x, v, \square}$ where $\square \in \{\xi, (k,\infty), (\infty,\ell) : \xi \in ]\mfc_1^x, \mfc_2^x[, k \geq x \cdot e_1, \ell \geq x \cdot e_2\}$ defined below at \eqref{eq:wb} and \eqref{eq:wb-thin}.\\

\begin{center}
\begin{tabular}{|c|c|c|c|}
\hline
Environment & Boundary Type  & Parameters & Definition\\
\hline
$\wh{\w}^{u,v,z}$ & SW & Normal & \eqref{Ewh}\\
\hline
$\wt{\w}^{u,v,z}$ & NE & Normal & \eqref{Ewt}\\
\hline
$\wh{\w}^{u,v,z,\leftarrow}$ & SW & Reversed & \eqref{Ewhrev}\\
\hline
$\wb{\w}^{x, v, \square}$ & NE, Busemann & Normal & \eqref{eq:wb} and \eqref{eq:wb-thin}\\
\hline
\end{tabular}
\end{center}
Each of these cases come equipped with passage times similar to $\wh{\G}^{u, v, z}$ as defined in \eqref{ELpph} and increments similar to $\wh{\I}^{u, v, z}$ and $\wh{\J}^{u, v, z}$ as defined in \eqref{EInch}.

\subsection{Northeast boundary and reversed parameters}
\label{sec:RevLPP}

Our argument will utilize several variations of the $\wh{\G}$-process defined at \eqref{ELpph}. These processes come in two basic types, one with \emph{northeast boundary} and another with \emph{reversed} inhomogeneity parameters.  

To introduce these notions, pick two vertices $u , v \in \bbZ^2$ with $u \le v$ and a boundary parameter $z \in (-\smin{a}_{(u \cdot e_1):(v \cdot e_1)}, \smin{b}_{(u \cdot e_2):(v \cdot e_2)})$ as before. Consider the weights on the rectangle $\Rect_{u}^{v+e_1+e_2}$ given by 
\begin{align}
\label{Ewt}
\begin{split}
\wt{\w}^{u, v, z}_x &= \frac{\tau_x}{a_{x \cdot e_1}+z} \cdot \one_{\{x \cdot e_1  \le v \cdot e_1, x \cdot e_2 = v \cdot e_2 + 1\}} + \frac{\tau_x}{b_{x \cdot e_2}-z} \cdot \one_{\{x \cdot e_1  = v \cdot e_1 + 1, x \cdot e_2 \le v \cdot e_2\}}\\
&+ \frac{\tau_x}{a_{x \cdot e_1} + b_{x \cdot e_2}} \cdot \one_{\{x \le v\}} \qquad \text{ for } x \in \Rect_{u}^{v+e_1+e_2}. 
\end{split}
\end{align}
As in \eqref{Ewh}, these weights agree with the $\w$ weights in the bulk, 
\begin{align}
\wt{\w}^{u, v, z}_x = \w_x \quad \text{ for } x \in \Rect_{u}^v, \label{Ecpl}
\end{align}
but now the boundary weights are placed on the north and east sides of $\Rect_u^{v+e_1+e_2}$. 

Define the last-passage times corresponding to the weights $\wt{\w}^{u, v, z}$ by 
\begin{align}
\wt{\G}^{u, v, z}_{x, y} = \Lpt_{x, y}(\wt{\w}^{u, v, z}) \quad \text{ for } x, y \in \Rect_{u}^{v+e_1+e_2},  \label{EGt}
\end{align}
and denote the corresponding increments with respect to initial points by 
\begin{align}
\label{EIJt}
\wt{\I}^{u, v, z}_{x, y} = \init{\I}_{x, y}(\wt{\w}^{u, v, z}) \quad \text{ and } \wt{\J}^{u, v, z}_{x, y} = \init{\J}_{x, y}(\wt{\w}^{u, v, z}) \quad \text{ for } x, y \in \Rect_{u}^{v+e_1+e_2} \text{ with } x \le y. 
\end{align}

To connect the $\wt{\G}^{u, v, z}$-process to a process of the form \eqref{ELpph}, let $\wh{\w}^{u, v, z, \leftarrow}$ denote the weights in \eqref{Ewh} computed with the reversed parameter sequences $a_{u, v}^{\leftarrow} = \{a_{v \cdot e_1}, a_{v \cdot e_1-1}, \dotsc, a_{u \cdot e_1}\}$ and $b_{u, v}^{\leftarrow} = \{b_{v \cdot e_2}, b_{v \cdot e_2-1}, \dotsc, b_{u \cdot e_2}\}$ in place of the parameters $a_{(u \cdot e_1):(v \cdot e_1)}$ and $b_{(u \cdot e_2):(v \cdot e_2)}$, respectively. More explicitly, 
\begin{align}
\label{Ewhrev}
\begin{split}
&\wh{\w}^{u, v, z, \leftarrow}_x  \\ 
&= \frac{\tau_x}{a_{x \cdot e_1}^{\leftarrow} + z}\cdot \one_{\{x \cdot e_1 \ge u \cdot e_1,\, x \cdot e_2 = u \cdot e_2-1\}} + \frac{\tau_x}{b_{x \cdot e_2}^{\leftarrow} - z} \cdot \one_{\{x \cdot e_2 \ge u,\, x \cdot e_1 = u \cdot e_1-1\}} \\
&+\frac{\tau_x}{a_{x \cdot e_1}^{\leftarrow} + b_{x \cdot e_2}^{\leftarrow}} \cdot \one_{\{x \ge u\}} \\ 
&=\frac{\tau_x}{a_{(u+v-x) \cdot e_1} + z}\cdot \one_{\{x \cdot e_1 \ge u \cdot e_1,\, x \cdot e_2 = u \cdot e_2-1\}} + \frac{\tau_x}{b_{(u+v-x) \cdot e_2} - z} \cdot \one_{\{x \cdot e_2 \ge u,\, x \cdot e_1 = u \cdot e_1-1\}} \\
&+\frac{\tau_x}{a_{(u+v-x) \cdot e_1} + b_{(u+v-x) \cdot e_2}} \cdot \one_{\{x \ge u\}} \quad \text{ for } x \in \Rect_{u-e_1-e_2}^v. 
\end{split}
\end{align}
Define 
$\wh{\G}^{u, v, z, \leftarrow}$ as in \eqref{ELpph}, and $\wh{\I}^{u, v, z, \leftarrow}$ and $\wh{\J}^{u, v, z, \leftarrow}$ as in \eqref{EInch} using the weights $\wh{\w}^{u, v, z, \leftarrow}$. 

\begin{lemma}
\label{lem:NEDisId}
The following distributional identities hold. 
\begin{enumerate}[label={\rm(\alph*)}] 
\itemsep=3pt
\item $\{\wt{w}_x^{u, v, z}: x \in \Rect_{u}^{v+e_1+e_2}\} \stackrel{\rm{dist.}}{=} \{\wh{w}_{u+v-x}^{u, v, z, \leftarrow}: x \in \Rect_u^{v+e_1+e_2}\}$.  
\item $\{\wt{\G}_{x, y}^{u, v, z}: x, y \in \Rect_u^{v+e_1+e_2}\} \stackrel{\rm{dist.}}{=} \{\wh{\G}^{u, v, z, \leftarrow}_{u+v-y, u+v-x}: x, y \in \Rect_{u}^{v+e_1+e_2}\}$. 
\item $
\begin{aligned}[t]
&\{\wt{\I}^{u, v, z}_{x, y}, \wt{\J}^{u, v, z}_{x, y}: x, y \in \Rect_{u}^{v+e_1+e_2} \text{ and } x \le y\} \\ 
&\stackrel{\rm{dist.}}{=} \{\wh{\I}^{u, v, z, \leftarrow}_{u+v-y, u+v-x}, \wh{\J}^{u, v, z, \leftarrow}_{u+v-y, u+v-x}: x, y \in \Rect_u^{v+e_1+e_2} \text{ and } x \le y\}. 
\end{aligned}
$
\end{enumerate}
\end{lemma}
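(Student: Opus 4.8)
The plan is to derive all three identities from the reflection identity \eqref{eq:Lref} together with the distributional invariance of the driving weights $\{\tau_x\}$ under the reflection $x \mapsto u+v-x$. The first statement is the only place where any probabilistic input enters; the remaining two are purely deterministic consequences of it, obtained by applying the appropriate last-passage and increment functionals to both sides of the coupling in part (a).

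First I would prove part (a). The key observation is that \eqref{Ewt} and \eqref{Ewhrev} are built from the same deterministic recipe applied to the same parameter data, just reflected: comparing the indicator sets in \eqref{Ewt} (boundary on the north side $x \cdot e_2 = v \cdot e_2 + 1$ and east side $x \cdot e_1 = v \cdot e_1 + 1$, bulk on $\{x \le v\}$) with those in \eqref{Ewhrev} (boundary on the south side $x \cdot e_2 = u \cdot e_2 - 1$ and west side $x \cdot e_1 = u \cdot e_1 - 1$, bulk on $\{x \ge u\}$), one checks that the map $x \mapsto u+v-x$ carries the north/east boundary of $\Rect_u^{v+e_1+e_2}$ to the south/west boundary of $\Rect_{u-e_1-e_2}^v$, carries the bulk $\{x \le v\}$ to the bulk $\{x \ge u\}$, and, crucially, matches the rate denominators: the north-boundary weight $\tau_x/(a_{x\cdot e_1} + z)$ at $x$ corresponds to the west... actually to the south-boundary weight $\tau_{u+v-x}/(a_{(u+v-(u+v-x))\cdot e_1}+z) = \tau_{u+v-x}/(a_{x \cdot e_1}+z)$ of $\wh{\w}^{u,v,z,\leftarrow}$ at $u+v-x$ (using the explicit rewriting of $a^{\leftarrow}$ in terms of $u+v-x$ that already appears in \eqref{Ewhrev}), and similarly for the east/west and bulk weights. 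Hence $\wt{\w}_x^{u,v,z}$ and $\wh{\w}_{u+v-x}^{u,v,z,\leftarrow}$ are obtained from $\tau_x$ and $\tau_{u+v-x}$ respectively by dividing by the \emph{same} deterministic rate. Since $\{\tau_x : x \in \bbZ^2\}$ is an i.i.d.\ family, it is equal in distribution to $\{\tau_{u+v-x} : x \in \bbZ^2\}$, and pushing this equality through the (deterministic, coordinatewise) rate-division map yields part (a).

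For part (b), I would apply the deterministic identity \eqref{eq:Lref}. With $\ww = \wh{\w}^{u,v,z,\leftarrow}$ on the rectangle $\Rect_{u-e_1-e_2}^v$, its reflected weights $\ww^{\leftarrow}$ (reflection about the midpoint of $\Rect_{u-e_1-e_2}^v$, i.e.\ $x \mapsto (u-e_1-e_2)+v-x$) satisfy, by \eqref{eq:Lref}, $\Lpt_{x,y}(\ww^{\leftarrow}) = \Lpt_{(u-e_1-e_2)+v-y,\,(u-e_1-e_2)+v-x}(\ww)$. One checks from part (a) — after the index shift matching $\Rect_u^{v+e_1+e_2}$ with $\Rect_{u-e_1-e_2}^v$ under $x \mapsto u+v-x$ — that $\{\wt{\w}_x^{u,v,z}\}$ has the same law as this reflected weight array, so applying $\Lpt_{x,y}(\cdot)$ and using Lemma \ref{lem:LppShf}-style bookkeeping together with \eqref{eq:Lref} gives $\wt{\G}_{x,y}^{u,v,z} \stackrel{d}{=} \wh{\G}_{u+v-y,\,u+v-x}^{u,v,z,\leftarrow}$. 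Part (c) then follows immediately: the increments $\wt{\I}, \wt{\J}$ are fixed deterministic functions (differences of $\wt{\G}$-values at adjacent points, cf.\ \eqref{EIJt}) of the $\wt{\G}$-process, and under the reflection $x \mapsto u+v-x$ a forward increment $\G_{x,y}-\G_{x+e_1,y}$ becomes a backward increment of the reflected process, so Lemma \ref{lem:incid-2} (or a direct computation from part (b)) converts $\init{\I}$ of $\wt{\w}^{u,v,z}$ into $\wh{\I}$ of $\wh{\w}^{u,v,z,\leftarrow}$ at the reflected index, and likewise for $\J$; the joint equality in distribution is inherited from part (b) since everything is a deterministic function of the full weight array.

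The main obstacle is purely bookkeeping: there are three rectangles in play ($\Rect_u^{v+e_1+e_2}$ for $\wt{\w}$, $\Rect_{u-e_1-e_2}^v$ for $\wh{\w}^{\leftarrow}$, and the ambient $\bbZ^2$), two reflections (the parameter reversal already baked into $\wh{\w}^{u,v,z,\leftarrow}$ via $a^{\leftarrow}, b^{\leftarrow}$, and the spatial reflection $x \mapsto u+v-x$), and one must verify that these compose correctly so that the net effect on the rate denominators is the identity — i.e.\ that reversing the parameters and then reflecting space brings each site back to its original rate. No step is deep, but the indicator-set matching in part (a) and keeping the reflection centers straight between $u+v-x$ and $(u-e_1-e_2)+v-x$ in part (b) both require care. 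Once part (a) is set up cleanly, parts (b) and (c) are immediate applications of \eqref{eq:Lref} and Lemma \ref{lem:incid-2}.
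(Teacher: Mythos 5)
Your proposal is correct and follows essentially the same route as the paper: part (a) by matching the boundary/bulk indicator sets and rate denominators of \eqref{Ewt} and \eqref{Ewhrev} under $x \mapsto u+v-x$ and invoking the i.i.d.\ property of the $\tau$-weights, and parts (b)--(c) by pushing this coupling through \eqref{eq:Lref}, the shift identity of Lemma \ref{lem:LppShf}, and Lemma \ref{lem:incid-2} to convert initial-point increments of $\wt{\G}$ into terminal-point increments of $\wh{\G}^{u,v,z,\leftarrow}$ at reflected indices.
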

\begin{proof}
Applying the reflection map $\leftarrow$ (on $\Rect_{u-e_1-e_2}^{v}$) from \eqref{eq:wref} to the $\wh{\w}^{u, v, z, \leftarrow}$-weights and then a shift by $e_1+e_2$ produces the following weights on $\Rect_{u}^{v+e_1+e_2}$. 
\begin{align}
\label{EwtRef}
\begin{split}
&(\wh{\w}^{u, v, z, \leftarrow})^{\leftarrow}_{x-e_1-e_2} = \wh{\w}^{u, v, z, \leftarrow}_{u-e_1-e_2+v-(x-e_1-e_2)} = \wh{\w}^{u, v, z, \leftarrow}_{u+v-x}\\ 
&= \frac{\tau_{u+v-x}}{a_{x \cdot e_1}+z} \cdot \one_{\{x \cdot e_1  \le v \cdot e_1, x \cdot e_2 = v \cdot e_2 + 1\}} + \frac{\tau_{u+v-x}}{b_{x \cdot e_2}-z} \cdot \one_{\{x \cdot e_1  = v \cdot e_1 + 1, x \cdot e_2 \le v \cdot e_2\}}\\
&+ \frac{\tau_{u+v-x}}{a_{x \cdot e_1} + b_{x \cdot e_2}} \cdot \one_{\{x \le v\}} \qquad \text{ for } x \in \Rect_{u}^{v+e_1+e_2}.
\end{split}
\end{align}
Since the $\tau$-variables are i.i.d., a comparison of \eqref{EwtRef} with \ref{Ewt} proves (a). 
Using the first line of \eqref{EwtRef} together with Lemma \ref{lem:LppShf} and identity \eqref{eq:Lref} also gives 
\begin{align}
\label{eq:Grev}
\begin{split}
\Lpt_{x, y}(\wh{\w}^{u, v, z, \leftarrow}_{u+v-\bullet}) &= \Lpt_{x, y}((\wh{\w}^{u, v, z, \leftarrow})^{\leftarrow}_{\bullet-e_1-e_2}) = \Lpt_{x-e_1-e_2, y-e_1-e_2}((\wh{\w}^{u, v, z, \leftarrow})^{\leftarrow}) \\
&= \Lpt_{u+v-y, u+v-x}(\wh{\w}^{u, v, z, \leftarrow}) 
= \wh{\G}^{u, v, z, \leftarrow}_{u+v-y, u+v-x} \quad \text{ for } x, y \in \Rect_{u}^{v+e_1+e_2}. 
\end{split}
\end{align}
The last equality holds by definition. Combining definition \eqref{EGt} with part (a) and \eqref{eq:Grev}, one reaches part (b). 
Arguing as in \eqref{eq:Grev} and invoking Lemma \ref{lem:incid-2} along with definition \eqref{EInch}, one also finds that  
\begin{align}
\label{eq:GrevInc}
\begin{split}
\init{\I}_{x, y}(\wh{\w}_{u+v-\bullet}^{u, v, z, \leftarrow}) &= \ter{\I}_{u+v-y, u+v-x}(\wh{\w}^{u, v, z, \leftarrow}) = \wh{\I}_{u+v-y, u+v-x}^{u, v, z, \leftarrow}, \\ 
\init{\J}_{x, y}(\wh{\w}_{u+v-\bullet}^{u, v, z, \leftarrow}) &= \ter{\J}_{u+v-y, u+v-x}(\wh{\w}^{u, v, z, \leftarrow}) = \wh{\J}_{u+v-y, u+v-x}^{u, v, z, \leftarrow}
\end{split}
\end{align}
for $x, y \in \Rect_u^{v+e_1+e_2}$ with $x \le y$. On account of part (a) again and definition \eqref{EIJt}, part (b) follows from \eqref{eq:GrevInc}. 
\end{proof}

Because the $\wh{\G}^{u, v, z, \leftarrow}$-process is precisely of the form in \eqref{ELpph}, one can now infer the following from Proposition \ref{prop:Burke}  and Lemma \ref{lem:NEDisId}.

\begin{proposition}
\label{prop:BurkeNE}    
The following statements hold.
\begin{enumerate}[label={\rm(\alph*)}] 
\itemsep=3pt
\item $\wt{\I}_{x, v+e_1+e_2}^{u, v, z} \sim \Exp\{a_{x \cdot e_1} + z\}$ for $x \in \Rect_{u}^{v+e_2}$. 
\item $\wt{\J}_{x, v+e_1+e_2}^{u, v, z} \sim \Exp\{b_{x \cdot e_2}-z\}$ for $x \in \Rect_{u}^{v+e_1}$.  
\item $\wt{\I}^{u, v, z}_{x-e_1, v+e_1+e_2} \wedge \wt{\J}^{u, v, z}_{x-e_2, v+e_1+e_2} \stackrel{\rm{dist.}}{=} (\wh{\w}^{u, v, z, \leftarrow})^*_{u+v-x} \sim \Exp\{a_{x \cdot e_1-1} + b_{x \cdot e_2-1}\}$ for $x \in \Rect_{u+e_1+e_2}^{v+e_1+e_2}$. 
\item \label{prop:BurkeNE:indp} For any down-right path $\pi$ from $(u \cdot e_1, v \cdot e_2+1)$ to $(v \cdot e_1+1, u \cdot e_2)$, the collection 
\begin{align*}
&\{\w_x: x \in \sG_{u, v+e_1+e_2, \pi}^-\} \cup \{\wt{\I}_{x, v+e_1+e_2}^{u, v, z}: x, x+e_1 \in \pi\} \\ 
&\cup \{\wt{\J}_{x, v+e_1+e_2}^{u, v, z}: x, x+e_2 \in \pi\} \cup \{\wt{\I}^{u, v, z}_{x-e_1, v+e_1+e_2} \wedge \wt{\J}^{u, v, z}_{x-e_2, v+e_1+e_2}: x \in \sG_{u, v+e_1+e_2, \pi}^+\}
\end{align*}
is independent.
\end{enumerate}
\end{proposition}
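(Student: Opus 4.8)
The plan is to obtain Proposition \ref{prop:BurkeNE} as a corollary of the southwest-boundary Burke property, Proposition \ref{prop:Burke}, by transporting it through the reflection identity Lemma \ref{lem:NEDisId}. The key observation is that the reversed-parameter process $\wh{\G}^{u,v,z,\leftarrow}$ of \eqref{Ewhrev} is \emph{literally} of the form \eqref{ELpph}: it is the southwest-boundary LPP built from the weights \eqref{Ewh} with the rate sequences $a_{(u\cdot e_1):(v\cdot e_1)}$ and $b_{(u\cdot e_2):(v\cdot e_2)}$ replaced by their reversals $a^{\leftarrow}_{u,v}$, $b^{\leftarrow}_{u,v}$. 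Thus Proposition \ref{prop:Burke} applies to $\wh{\w}^{u,v,z,\leftarrow}$ verbatim, as long as one reads the rate symbols $a_i, b_j$ appearing there as $a^{\leftarrow}_i = a_{u\cdot e_1 + v\cdot e_1 - i}$ and $b^{\leftarrow}_j = b_{u\cdot e_2 + v\cdot e_2 - j}$.

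The substance of the argument is then a change-of-variables dictionary for the reflection $x \mapsto u+v-x$ underlying Lemma \ref{lem:NEDisId}. I would check: (i) by the increment identity \eqref{eq:GrevInc} together with Lemma \ref{lem:NEDisId}(a), the family $\{\wt{\I}^{u,v,z}_{x,v+e_1+e_2},\,\wt{\J}^{u,v,z}_{x,v+e_1+e_2}\}$ has the same joint law as $\{\wh{\I}^{u,v,z,\leftarrow}_{u-e_1-e_2,\,u+v-x},\,\wh{\J}^{u,v,z,\leftarrow}_{u-e_1-e_2,\,u+v-x}\}$ over the matching ranges of $x$; (ii) for $x \ge u+e_1+e_2$, writing $p = u+v-x \le v-e_1-e_2$, the dual-weight formula \eqref{Edwop} (with base point $u-e_1-e_2$ and corner $v$) gives $(\wh{\w}^{u,v,z,\leftarrow})^{*}_{p} = \wh{\I}^{u,v,z,\leftarrow}_{u-e_1-e_2,\,p+e_1} \wedge \wh{\J}^{u,v,z,\leftarrow}_{u-e_1-e_2,\,p+e_2}$, which by (i) corresponds to $\wt{\I}^{u,v,z}_{x-e_1,v+e_1+e_2}\wedge\wt{\J}^{u,v,z}_{x-e_2,v+e_1+e_2}$; (iii) the reflection carries a down-right path $\pi$ from $(u\cdot e_1, v\cdot e_2+1)$ to $(v\cdot e_1+1, u\cdot e_2)$ onto a down-right path $\pi' = u+v-\pi$ from $(u\cdot e_1-1, v\cdot e_2)$ to $(v\cdot e_1, u\cdot e_2-1)$ (run backwards), interchanges $\sG^{+}_{u,v+e_1+e_2,\pi}$ with $\sG^{-}_{u-e_1-e_2,v,\pi'}$ and $\sG^{-}_{u,v+e_1+e_2,\pi}$ with $\sG^{+}_{u-e_1-e_2,v,\pi'}$, and preserves the type ($e_1$ or $e_2$) of each edge; (iv) $\sG^{-}_{u,v+e_1+e_2,\pi} \subseteq \Rect_u^v$, so the weights $\w_x$ appearing in part (d) there are genuine bulk weights $\wt{\w}^{u,v,z}_x$, matched by Lemma \ref{lem:NEDisId}(a) with the bulk weights of $\wh{\w}^{u,v,z,\leftarrow}$ on $\sG^{+}_{u-e_1-e_2,v,\pi'} \subseteq \Rect_u^v$. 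Finally the rate translation $a^{\leftarrow}_{(u+v-x)\cdot e_1} = a_{x\cdot e_1}$, $a^{\leftarrow}_{(u+v-x)\cdot e_1+1} = a_{x\cdot e_1-1}$ (and the $b$-analogues) converts the exponential marginals in Proposition \ref{prop:Burke}(a)--(c) into those asserted in parts (a)--(c) here.

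Granting this dictionary, the conclusion is immediate: the entire collection in part (d) has the same joint distribution as the collection in Proposition \ref{prop:Burke}(d) applied to $\wh{\w}^{u,v,z,\leftarrow}$, and since a distributional identity preserves mutual independence and the marginals, independence (and the marginal laws in (a)--(c)) transfers. I expect the only real friction to lie in steps (ii)--(iii): getting the reflected down-right path and the $\sG^{\pm}$ swap exactly right, and confirming that $\wt{\I}\wedge\wt{\J}$ at the shifted points is precisely the dual weight at the reflected point rather than an off-by-one variant, since \eqref{Edwop} singles out the boundary cells. Conceptually this is just the familiar ``reflect the boundary data to the opposite corner'' move, already implicit in the derivation of Lemma \ref{lem:NEDisId} from Proposition \ref{prop:Burke}.
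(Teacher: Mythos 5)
Your proposal is correct and is exactly the paper's route: the paper proves Proposition \ref{prop:BurkeNE} by noting that $\wh{\G}^{u,v,z,\leftarrow}$ is of the form \eqref{ELpph}, so Proposition \ref{prop:Burke} applies with the reversed rate sequences, and the conclusion transfers through the reflection identities of Lemma \ref{lem:NEDisId}. Your change-of-variables dictionary (rates $a^{\leftarrow}_{(u+v-x)\cdot e_1}=a_{x\cdot e_1}$, the $\sG^{\pm}$ swap under $x\mapsto u+v-x$, and the identification of $\wt{\I}\wedge\wt{\J}$ with the dual weight at the reflected point) just makes explicit the bookkeeping the paper leaves to the reader.
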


\subsection{Limits of LPP increments in strictly concave regions}

Recall 
that the strictly concave region associated with $x \in \bbZ^2$ is the nonempty open interval $]\mfc_1^x, \mfc_2^x[\subset [e_2,e_1]$. We now examine directional limits of the $\G$-increments given by \eqref{eq:LPPInc} for directions in this interval. Our next result establishes the a.s.\ existence of these limits along with their recursive and distributional structure. 
\begin{lemma}
\label{lem:BuseSc}
Fix $x \in \bbZ^2$, $\xi \in ]\mfc_1^x, \mfc_2^x[$ and $(u_n)$  satisfying $n^{-1}u_n \to \xi$ as $n \to \infty$. Then there exist random real numbers $\B_y^{\xi, \hor} = \B_y^{x, \xi, \hor}$ and $\B_y^{\xi, \ver} = \B_y^{x, \xi, \ver}$ 
for $y \in \bbZ^2_{\ge x}$ such that the following statements hold. 
\begin{enumerate}[label={\rm(\alph*)}, ref={\rm\alph*}] \itemsep=3pt
\item \label{lem:BuseSc:lim}
$\B_y^{\xi,\hor} \stackrel{\rm{a.s.}}{=} \lim \limits_{n \to \infty} \I_{y, u_n}$ and $\B_y^{\xi,\ver} \stackrel{\rm{a.s.}}{=} \lim \limits_{n \to \infty} \J_{y, u_n}$ for $y \in \bbZ_{\ge x}^2$. 
\item \label{lem:BuseSc:rec} $\B_y^{\xi, \hor} \stackrel{\rm{a.s.}}{=} \w_y + (\B_{y+e_2}^{\xi, \hor}-\B_{y+e_1}^{\xi, \ver})^+$ and $\B_y^{\xi, \ver} \stackrel{\rm{a.s.}}{=} \w_y + (\B_{y+e_1}^{\xi, \ver}-\B_{y+e_2}^{\xi, \hor})^+$ for $y \in \bbZ_{\ge x}^2$. 
\item \label{lem:BuseSc:marg} $\B_y^{\xi, \hor} \sim \Exp(a_{y \cdot e_1} + \chi^{x}(\xi))$ and $\B_{y}^{\xi, \ver} \sim \Exp(b_{y \cdot e_2}-\chi^x(\xi))$ for $y \in \bbZ_{\ge x}^2$. 
\item \label{lem:BuseSc:indp} 
For any $v \in \bbZ_{\ge x}$ and down-right path $\pi$ from $(x \cdot e_1, v \cdot e_2)$ to $(v \cdot e_1, x \cdot e_2)$, the collection 
\begin{align*}
&\{\w_y: y \in \sG_{x, v, \pi}^-\} \cup \{\B_{y-e_1}^{\xi,\hor}  \wedge \B_{y-e_2}^{\xi,\ver}: y \in \sG_{x, v, \pi}^+\} \\ 
&\cup \{\B_y^{\xi, \hor}: y, y+e_1 \in \pi\} \cup \{\B_y^{\xi, \ver}: y, y+e_2 \in \pi\} 
\end{align*}
is independent. 
%
%
\end{enumerate}
\end{lemma}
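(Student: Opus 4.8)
\textbf{Proof strategy for Lemma \ref{lem:BuseSc}.}
The plan is to obtain the limits by a squeezing argument, sandwiching the increments $\I_{y,u_n}$ and $\J_{y,u_n}$ between the boundary increments of two increment-stationary LPP processes whose parameters straddle $\chi^x(\xi)$. Fix $\xi\in\,]\mfc_1^x,\mfc_2^x[$ and recall from \eqref{eq:zetaext} that $\chi^x(\xi)=\rho^{-1}(\xi)$ lies in the open interval $(-\sinf{a}_{(x\cdot e_1):\infty},\sinf{b}_{(x\cdot e_2):\infty})$. Choose parameters $z^-<\chi^x(\xi)<z^+$ inside this interval, with $\rho(z^-)\prec\xi\prec\rho(z^+)$. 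For each of these two boundary parameters and for a large terminal vertex $v=v_N$ on level $N$ (with $N$ much larger than $n$), build the northeast-boundary stationary model $\wt\w^{x,v,z^{\pm}}$ of Section \ref{sec:RevLPP}, which agrees with $\w$ on $\Rect_x^v$. The comparison Lemma \ref{lem:incineq} applied inside $\Rect_x^{v+e_1+e_2}$ gives, for each fixed $y\in\bbZ^2_{\ge x}$ and all $u_n\le v$, the deterministic sandwich
\begin{align*}
\wt\I^{x,v,z^+}_{y,v+e_1+e_2}\le \I_{y,u_n}\le \wt\I^{x,v,z^-}_{y,v+e_1+e_2},\qquad
\wt\J^{x,v,z^-}_{y,v+e_1+e_2}\le \J_{y,u_n}\le \wt\J^{x,v,z^+}_{y,v+e_1+e_2},
\end{align*}
by the usual monotonicity of increments under moving the terminal point along a down-right boundary, together with the fact that the $\wt\w$-weights dominate/are dominated by appropriate comparison configurations; the precise signs are read off from Lemma \ref{lem:incineq} and the direction in which $\rho(z^{\pm})$ sits relative to $\xi$. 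By Proposition \ref{prop:BurkeNE}(a)--(b), the bounding quantities are $\Exp(a_{y\cdot e_1}+z^{\pm})$ and $\Exp(b_{y\cdot e_2}-z^{\pm})$ distributed, so their laws converge to $\Exp(a_{y\cdot e_1}+\chi^x(\xi))$ and $\Exp(b_{y\cdot e_2}-\chi^x(\xi))$ as $z^{\pm}\to\chi^x(\xi)$.

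The main work, and the step I expect to be the real obstacle, is to promote this distributional pinching into an almost sure limit. The standard route (as in \cite{Geo-etal-15,Sep-18} for the homogeneous case, which the authors indicate they follow) is: first show that $\I_{y,u_n}$ and $\J_{y,u_n}$ are monotone along appropriate subsequences, or more precisely that one can extract almost sure subsequential limits using the comparison lemma to control fluctuations; then use the shape theorem, here Proposition \ref{prop:shape}, to identify the mean of any subsequential limit. Concretely, for a down-right path through $y$, the sum of the $\I$-increments across a horizontal segment telescopes to a difference of passage times $\G_{\cdot,u_n}-\G_{\cdot,u_n}$, and dividing by the segment length and invoking Proposition \ref{prop:shape} pins the Cesàro average of the limiting increments to $\partial_{\xi_1}\shp^x(\xi)$, which by the duality \eqref{eq:chardir}--\eqref{eq:zetaext} equals the mean of $\Exp(a_{y\cdot e_1}+\chi^x(\xi))$ averaged in the right sense. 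Combining the distributional sandwich above (which forces each subsequential limit to be stochastically between two exponentials pinching to the target) with the correct mean forces the subsequential limit to be exactly the target random variable, hence the full limit exists a.s. This is the classical ``monotone coupling plus matching shape'' argument; the inhomogeneous twist is only that the exponential rate is $y$-dependent, which does not affect the logic. Define $\B^{\xi,\hor}_y$ and $\B^{\xi,\ver}_y$ to be these a.s.\ limits, proving (a).

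Given (a), the remaining parts are essentially free. Part (b), the recursion, follows by passing to the limit $n\to\infty$ in the increment recursion \eqref{eq:inrec} for $\I_{y,u_n},\J_{y,u_n}$: since the recursion only couples finitely many increments at sites $y,y+e_1,y+e_2$, and each converges a.s., the a.s.\ identities $\B^{\xi,\hor}_y=\w_y+(\B^{\xi,\ver}_{y+e_2\text{-index}}\,\cdots)^+$ hold — more carefully, $\B^{\xi,\hor}_y=\w_y+(\B^{\xi,\hor}_{y+e_2}-\B^{\xi,\ver}_{y+e_1})^+$ and symmetrically, exactly mirroring \eqref{eq:inrec}. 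Part (c), the marginals, is obtained by taking $z^{\pm}\to\chi^x(\xi)$ in the distributional sandwich: $\B^{\xi,\hor}_y$ is squeezed in distribution between $\Exp(a_{y\cdot e_1}+z^+)$ and $\Exp(a_{y\cdot e_1}+z^-)$, and letting both bounds converge identifies its law as $\Exp(a_{y\cdot e_1}+\chi^x(\xi))$; likewise for $\B^{\xi,\ver}_y$. Part (d), independence along down-right paths, is inherited in the limit from Proposition \ref{prop:BurkeNE}\eqref{prop:BurkeNE:indp}: for the stationary models $\wt\w^{x,v,z}$, the corresponding collection of bulk weights $\w_y$, boundary increments, and the minima $\wt\I\wedge\wt\J$ along $\pi$ is independent for every finite $z$ and every down-right path, and since convergence in distribution (indeed a.s.\ convergence along the coupled models) preserves independence of a fixed finite subcollection, taking $v$ large and $z\to\chi^x(\xi)$ yields independence of $\{\w_y:y\in\sG^-_{x,v,\pi}\}\cup\{\B^{\xi,\hor}_{y-e_1}\wedge\B^{\xi,\ver}_{y-e_2}:y\in\sG^+_{x,v,\pi}\}\cup\{\B^{\xi,\hor}_y\}\cup\{\B^{\xi,\ver}_y\}$. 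One subtlety to handle with a word of care: the identification of $\B^{\xi,\hor}_{y-e_1}\wedge\B^{\xi,\ver}_{y-e_2}$ as the limit of $\wt\I^{x,v,z}_{y-e_1,v+e_1+e_2}\wedge\wt\J^{x,v,z}_{y-e_2,v+e_1+e_2}$ uses the recovery/recursion structure (Lemma \ref{lem:incid} and \eqref{eq:wrec}) to match it with the dual weight at $u+v-y$, exactly as in Proposition \ref{prop:BurkeNE}(c).
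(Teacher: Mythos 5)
Your overall architecture---couple the bulk increments with increment-stationary models at parameters $z^{\pm}$ straddling $\chi^x(\xi)$, pinch in distribution as $z^{\pm}\to\chi^x(\xi)$, upgrade to an a.s.\ limit, and read the marginals and down-right independence off the Burke property---is indeed the paper's strategy in spirit. But the step you call a ``deterministic sandwich'' is a genuine gap, and it is exactly where the real work of the lemma lies. There is no pathwise inequality of the form $\wt{\I}^{x,v,z^+}_{y,v+e_1+e_2}\le \I_{y,u_n}\le \wt{\I}^{x,v,z^-}_{y,v+e_1+e_2}$: Lemma \ref{lem:incineq} compares increments of a \emph{single} weight configuration as the terminal point moves by unit steps, and moving the endpoint from $u_n$ to the far corner $v+e_1+e_2$ requires both $e_1$- and $e_2$-steps, which push the increment in opposite directions; moreover the corner increment depends on boundary weights far from $y$ that are independent of the bulk weights near $y$ and $u_n$, so no realization-wise ordering can hold. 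What replaces this in the paper is a two-part mechanism that your proposal never supplies: (i) a coupling placing the bulk increments and the Burke-distributed increments in one environment, namely the dual north-west weights $\wc{\w}^{x,z}$ of \eqref{Edw}--\eqref{Edw3}, which have the same law as $\w$ by \eqref{EwcDisId} and whose increments to the corner coincide with the stationary increments $\wh{\I},\wh{\J}$ through the duality of Lemma \ref{lem:incid-3}; and (ii) exit-point control, i.e.\ the events $E_n^z$ of \eqref{E41}, whose probability is shown to vanish via the law of large numbers of Lemma \ref{lem:halfstatLLN} (a consequence of the shape theorem) precisely when $z$ sits on the correct side of $\chi^x(\xi)$. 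Only after discarding these events can one substitute increments toward $u_n+e_1+e_2$ for increments toward $u_n+e_2$ and then apply the one-step crossing inequality $\I_{y,u_n}\le\I_{y,u_n+e_2}$ of Lemma \ref{lem:incineq}. Without an argument of this kind, the probabilistic pinching of $\varlimsup_n\I_{y,u_n}$ and $\varliminf_n\I_{y,u_n}$ is not established, and nothing downstream follows.

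A smaller point: the paragraph identifying the limit via Ces\`aro averages and Proposition \ref{prop:shape} is both unnecessary and not a valid deduction as written---matching a mean does not identify a subsequential limit. Once the distributional pinching is applied separately to $\varlimsup_n \I_{y,u_n}$ and $\varliminf_n \I_{y,u_n}$ for all $z^{\pm}$ near $\chi^x(\xi)$, the a.s.\ limit follows directly from the pointwise inequality $\varliminf\le\varlimsup$ together with Lemma \ref{lem:disteq}; the paper implements this by proving matching upper and lower bounds on the \emph{joint} CDFs in \eqref{E49}, which simultaneously delivers the independence along the north-east boundary of each rectangle that feeds into parts \eqref{lem:BuseSc:marg} and \eqref{lem:BuseSc:indp}. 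Your treatment of part \eqref{lem:BuseSc:rec} (passing to the limit in \eqref{eq:inrec}) and of part \eqref{lem:BuseSc:indp} (identifying the limiting boundary data with an NE-stationary configuration and invoking Proposition \ref{prop:BurkeNE}\ref{prop:BurkeNE:indp}) matches the paper once part \eqref{lem:BuseSc:lim} is in place.
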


\begin{remark} (Consistency)
In the setting of the preceding lemma, pick $y, z \in \bbZ_{\ge x}^2$ with $y \ge z$. Due to \eqref{eq:critineq}, $]\mfc_1^x, \mfc_2^x[ \subset ]\mfc_1^z, \mfc_2^z[$. Therefore, $\mfc_1^z \prec \xi \prec \mfc_2^z$ as well. Then part (a) implies that $\B_y^{x, \xi, \hor} \stackrel{\rm{a.s.}}{=} \B_y^{z, \xi, \hor}$ while part (b) gives $\B_y^{z, \xi, \hor} \sim \Exp\{a_{y \cdot e_1} + \zmin{z}(\xi)\}$. For these to be consistent, we need $\zmin{z}(\xi) = \zmin{x}(\xi)$, which holds on $]\mfc_1^x, \mfc_2^x[$ by \eqref{eq:zetaext}. 
\end{remark}


The proof of Lemma \ref{lem:BuseSc} is deferred to the end of this subsection. For now, we proceed to record some implications of it. 

\begin{lemma}
\label{lem:BuseScMon}
Let $x \in \bbZ^2$ be as in Lemma \ref{lem:BuseSc} and $\xi, \zeta \in ]\mfc_1^x, \mfc_2^x[$ with $\xi \prec \zeta$. Then, a.s.,  
\begin{align*}
\B_y^{\xi, \hor} \ge \B_y^{\zeta, \hor} \quad \text{ and } \quad \B_y^{\xi, \ver} \le \B_y^{\zeta, \ver} \quad \text{ for } y \in \bbZ^2_{\ge x}. 
\end{align*}
\end{lemma}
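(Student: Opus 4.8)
The key idea is to compare the two Busemann families using a fixed approximating sequence. By Lemma \ref{lem:BuseSc}\eqref{lem:BuseSc:lim}, the Busemann increments $\B_y^{\xi,\hor}$ and $\B_y^{\xi,\ver}$ are determined by a.s.\ limits of the finite-volume increments $\I_{y,u_n}$ and $\J_{y,u_n}$ along \emph{any} sequence $u_n/n \to \xi$. Since $\xi \prec \zeta$ and both lie in $]\mfc_1^x,\mfc_2^x[$, I would fix a single sequence $(v_n)$ with $v_n \cdot e_1 / n \to \xi \cdot e_1$ and also choose an auxiliary sequence $(w_n)$ with $w_n/n \to \zeta$; more precisely, I would pick the sequences so that $v_n \le w_n$ coordinatewise for all $n$ (possible because $\xi \prec \zeta$ means $\xi \cdot e_1 < \zeta \cdot e_1$, so eventually $v_n \cdot e_1 \le w_n \cdot e_1$ and $v_n \cdot e_2 \ge w_n \cdot e_2$ — wait, this is the wrong order). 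The correct setup: since $\xi \prec \zeta$ we have $\xi \cdot e_1 < \zeta \cdot e_1$, hence $v_n \cdot e_1 \le w_n \cdot e_1$ and $v_n \cdot e_2 \ge w_n \cdot e_2$ eventually, so $v_n$ and $w_n$ are \emph{not} ordered. Instead I would introduce the coordinatewise maximum $m_n = v_n \vee w_n$, which satisfies $m_n/n \to \xi \vee \zeta$ in the sense that $m_n \cdot e_1/n \to \zeta\cdot e_1$ and $m_n \cdot e_2 / n \to \xi \cdot e_2$; this $m_n$ dominates both $v_n$ and $w_n$, and it does \emph{not} in general converge to a direction on $[e_2,e_1]$. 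So that route needs care.

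A cleaner approach: use the monotonicity of the finite increments in the terminal point directly. By Lemma \ref{lem:incineq}\eqref{lem:incineq:1} and its $e_2$-counterpart, for fixed $y$ the map $v \mapsto \init\I_{y,v}$ is non-decreasing when $v$ moves in the $-e_1$ (equivalently $+e_2$ after fixing a level) direction — concretely, if $v$ and $v'$ lie on the same anti-diagonal level with $v \cdot e_1 \le v' \cdot e_1$, then $\I_{y,v} \ge \I_{y,v'}$ and $\J_{y,v} \le \J_{y,v'}$. So I would choose sequences $v_n, w_n \in \bbV_n$ (same level) with $v_n \cdot e_1 \le w_n \cdot e_1$, $v_n/n \to \xi$, $w_n/n \to \zeta$; this is possible exactly because $\xi \prec \zeta$. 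Then for each $n$ large enough that $y \le v_n$ (hence $y \le w_n$ too), the level-monotonicity gives $\I_{y,v_n} \ge \I_{y,w_n}$ and $\J_{y,v_n} \le \J_{y,w_n}$. Passing to the limit $n \to \infty$ and invoking Lemma \ref{lem:BuseSc}\eqref{lem:BuseSc:lim} for each of the two directions yields $\B_y^{\xi,\hor} \ge \B_y^{\zeta,\hor}$ and $\B_y^{\xi,\ver} \le \B_y^{\zeta,\ver}$ simultaneously for all $y \in \bbZ^2_{\ge x}$, on the single full-probability event where all these limits hold.

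One subtlety to address: Lemma \ref{lem:BuseSc} as stated fixes a sequence $(u_n)$ in advance and produces the limit along \emph{that} sequence, so strictly one should note that the limit does not depend on the choice of sequence (which is implicit in the a.s.\ equality, since any two admissible sequences can be interleaved into a third admissible sequence converging to the same direction). With that remark, I am free to use the specific comparable sequences $v_n, w_n$ above. Since $\bbZ^2_{\ge x}$ is countable, intersecting the countably many full-probability events (one per $y$, per direction) still leaves a full-probability event, so the inequalities hold simultaneously. The main obstacle is simply the bookkeeping of choosing admissible comparable sequences on a common level and making the limit-independence observation precise; the analytic content is entirely carried by the path-crossing Lemma \ref{lem:incineq}.
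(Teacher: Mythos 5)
Your proof is correct and is essentially the paper's own argument: the paper likewise applies the comparison Lemma \ref{lem:incineq} to a $\xi$-directed and a $\zeta$-directed sequence of terminal points whose coordinates are eventually ordered ($u_n^\xi\cdot e_1<u_n^\zeta\cdot e_1$, $u_n^\zeta\cdot e_2<u_n^\xi\cdot e_2$) and then passes to the limit via Lemma \ref{lem:BuseSc}\eqref{lem:BuseSc:lim}, your same-level choice being just a convenient special case of this ordering. The only blemish is the parenthetical ``$y\le v_n$, hence $y\le w_n$ too'' (which does not follow, since $w_n\cdot e_2\le v_n\cdot e_2$), but both inequalities hold for all large $n$ because $\zeta\in\,]e_2,e_1[$, so nothing breaks.
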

\begin{proof}
Pick sequences $(u_n^\xi)_{n \in \bbZ_{>0}}$ and $(u_n^\zeta)_{n \in \bbZ_{>0}}$ as in Lemma \ref{lem:BuseSc} and let $y \in \bbZ_{\ge x}^2$. Since $\xi, \zeta \in ]e_2, e_1[$ with $\xi \prec \zeta$, for sufficiently large $n_0 = n_0^y \in \bbZ_{>0}$, we have $y \cdot e_1 \le u_n^\xi \cdot e_1 < u_n^\zeta \cdot e_1$ and $y \cdot e_2 \le u_n^\zeta \cdot e_2 < u_n^\xi \cdot e_2$ for $n \ge n_0$. Therefore, Lemma \ref{lem:incineq} implies that 
\begin{align*}
\I_{y, u_n^\xi} \ge \I_{y, u_n^\zeta} \quad \text{ and } \quad \J_{y, u_n^\xi} \le \J_{y, u_n^\zeta} \quad \text{ for } n \ge n_0. 
\end{align*} 
Since $\xi, \zeta \in ]\mfc_1^x, \mfc_2^x[$, we send $n \to \infty$ and appeal to Lemma \ref{lem:BuseSc}(a) 
 to complete the proof. 
\end{proof}

Now fix a countable dense subset $\sU_0$ of $]e_2,e_1[$. Let $x \in \bbZ^2$ and $\sV_0^x = \sU_0 \cap ]\mfc_1^x, \mfc_2^x[$. By virtue of Lemmas \ref{lem:BuseSc}\eqref{lem:BuseSc:lim} and \ref{lem:BuseScMon}, there exists an a.s.\ event $\Omega_1 = \Omega_1^x$ such that the limits 
\begin{align}
\B_x^{\xi, \hor} = \lim \limits_{n \to \infty} \I_{x, u_n^\xi} \quad \text{ and } \quad \B_x^{\xi, \ver} = \lim \limits_{n \to \infty} \J_{x, u_n^\xi}, \label{EBuseLim}
\end{align}
and the inequalities 
\begin{align}
\B_x^{\xi, \hor} \ge \B_x^{\zeta, \hor} \quad \text{ and } \quad \B_x^{\xi, \ver} \le \B_x^{\zeta, \ver}\label{EBuseMon}
\end{align}
hold whenever $\xi, \zeta \in \sV_0^x$ with $\xi \preceq \zeta$, and $\omega \in \Omega_1$. Then define
\begin{align}
\label{EBusehvpm}
\begin{split}
\B_x^{\xi+, \hor} &= \sup \limits_{\substack{\zeta \in \sV_0^x \\ \zeta \succeq \xi}} \B_x^{\zeta, \hor} = \lim_{\substack{\zeta \in \sV_0^x \\ \zeta \downarrow \xi}} \B_x^{\zeta, \hor}, 
\qquad \B_x^{\xi+, \ver} = \inf \limits_{\substack{\zeta \in \sV_0^x \\ \zeta \succeq \xi}} \B_x^{\zeta, \ver} = \lim_{\substack{\zeta \in \sV_0^x \\ \zeta \downarrow \xi}} \B_x^{\zeta, \ver} \\
\B_x^{\xi-, \hor} &= \inf \limits_{\substack{\zeta \in \sV_0^x \\ \zeta \preceq \xi}} \B_x^{\zeta, \hor} = \lim_{\substack{\zeta \in \sV_0^x \\ \zeta \uparrow \xi}} \B_x^{\zeta, \hor}, 
\qquad \B_x^{\xi-, \ver} = \sup \limits_{\substack{\zeta \in \sV_0^x \\ \zeta \preceq \xi}} \B_x^{\zeta, \ver} = \lim_{\substack{\zeta \in \sV_0^x \\ \zeta \uparrow \xi}} \B_x^{\zeta, \ver}
\end{split}
\end{align}
for each $\xi \in ]\mfc_1^x, \mfc_2^x[$ and $\omega \in \Omega_1$. 
In particular, $\B_x^{\xi\pm, \hor}$ and $\B_x^{\xi\pm, \ver}$ coincide with $\B_x^{\xi, \hor}$ and $\B_x^{\xi, \ver}$, respectively, whenever $\xi \in \sV_0^x$.


The next lemma shows that the former are versions of the latter with some path regularity. 

\begin{lemma}
\label{lem:pmbusdef}
Let $x \in \bbZ^2$ and $\Omega_1$ denote the event on which \eqref{EBuseLim} and \eqref{EBuseMon} hold. 
\begin{enumerate}[label={\rm(\alph*)}, ref={\rm\alph*}] \itemsep=3pt
\item\label{lem:pmbusdef-1} On $\Omega_1$, $\B_x^{\zeta+, \hor}$ and $\B_x^{\zeta+, \ver}$ are right continuous with left limits while $\B_x^{\zeta-, \hor}$ and $\B_x^{\zeta-, \ver}$ are left continuous with right limits in $\zeta \in ]\mfc_1^x, \mfc_2^x[$. 
\item\label{lem:pmbusdef-2} On $\Omega_1$, $\B_x^{\zeta\pm, \hor}$ are nonincreasing while $\B_x^{\zeta\pm, \ver}$ are nondecreasing in $\zeta \in ]\mfc_1^x, \mfc_2^x[$. 
\item\label{lem:pmbusdef-3} If $\xi \in ]\mfc_1^x, \mfc_2^x[$, then
$\bfP(
\B_x^{\xi^-, \hor} = \B_x^{\xi, \hor} = \B_x^{\xi^+, \hor})= \bfP(\B_x^{\xi^-, \ver} = \B_x^{\xi, \ver} = \B_x^{\xi+, \ver})=1$. 
\end{enumerate}
\end{lemma}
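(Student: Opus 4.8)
The plan is to prove the three assertions in the order (b), then (a), then (c): the monotonicity in (b) feeds the one-sided continuity in (a), and both are needed for the distributional squeeze that proves (c). Throughout we work on the event $\Omega_1 = \Omega_1^x$ on which \eqref{EBuseLim} and \eqref{EBuseMon} hold. For (b), fix $\xi \prec \xi'$ in $]\mfc_1^x, \mfc_2^x[$. From the $\sup$/$\inf$ definitions in \eqref{EBusehvpm} and the inclusions $\{\zeta \in \sV_0^x : \zeta \succeq \xi'\} \subseteq \{\zeta \in \sV_0^x : \zeta \succeq \xi\}$ and $\{\zeta \in \sV_0^x : \zeta \preceq \xi\} \subseteq \{\zeta \in \sV_0^x : \zeta \preceq \xi'\}$ we immediately get $\B_x^{\xi'+,\hor} \le \B_x^{\xi+,\hor}$ and $\B_x^{\xi-,\hor} \ge \B_x^{\xi'-,\hor}$. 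Choosing $\zeta_0 \in \sV_0^x$ with $\xi \prec \zeta_0 \prec \xi'$ (density) and invoking \eqref{EBuseMon} to compare $\B_x^{\zeta_0,\hor}$ against both families gives $\B_x^{\xi+,\hor} \le \B_x^{\zeta_0,\hor} \le \B_x^{\xi'-,\hor}$ as well as the pointwise inequality $\B_x^{\xi+,\hor} \le \B_x^{\xi-,\hor}$. Splicing these yields the full nonincreasing chain for the $\hor$ variables, and the $\ver$ variables are handled identically with all inequalities reversed using the second line of \eqref{EBuseMon}.

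For (a), monotonicity (part (b)) already gives that each of $\B_x^{\zeta\pm,\hor}$ and $\B_x^{\zeta\pm,\ver}$ has one-sided limits at every point of $]\mfc_1^x, \mfc_2^x[$, so only the continuous side must be identified. For $\B_x^{\zeta+,\hor}$: by monotonicity $\lim_{\eta\downarrow\zeta}\B_x^{\eta+,\hor}$ exists and is $\le \B_x^{\zeta+,\hor}$; conversely, for any $\zeta_0 \in \sV_0^x$ with $\zeta_0 \succ \zeta$ one has $\B_x^{\eta+,\hor} \ge \B_x^{\zeta_0,\hor}$ for every $\eta$ with $\zeta \prec \eta \prec \zeta_0$, so the limit is $\ge \sup\{\B_x^{\zeta_0,\hor} : \zeta_0 \in \sV_0^x,\ \zeta_0 \succ \zeta\} = \B_x^{\zeta+,\hor}$ by density of $\sV_0^x$. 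Hence $\B_x^{\zeta+,\hor}$ is right continuous, and left limits exist by monotonicity; the left continuity of $\B_x^{\zeta-,\hor}$ is the mirror image, and the $\ver$ statements follow by symmetry. This is the routine regularization of a monotone function, so we keep it brief.

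For (c), fix $\xi \in \,]\mfc_1^x, \mfc_2^x[$. Arguing as in the proof of Lemma \ref{lem:BuseScMon} via Lemma \ref{lem:incineq}\eqref{lem:incineq:1}: for $\zeta \in \sV_0^x$ with $\zeta \succ \xi$ the terminal points eventually satisfy $u_n^\xi \cdot e_1 \le u_n^\zeta \cdot e_1$ and $u_n^\xi \cdot e_2 \ge u_n^\zeta \cdot e_2$, whence $\I_{x, u_n^\xi} \ge \I_{x, u_n^\zeta}$ for all large $n$; letting $n \to \infty$ and using Lemma \ref{lem:BuseSc}\eqref{lem:BuseSc:lim} gives $\lim_n \I_{x, u_n^\xi} \ge \B_x^{\zeta,\hor}$, and taking the supremum over such $\zeta$ yields $\lim_n \I_{x, u_n^\xi} \ge \B_x^{\xi+,\hor}$; symmetrically $\lim_n \I_{x, u_n^\xi} \le \B_x^{\xi-,\hor}$. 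Thus, a.s.,
\[
\B_x^{\xi+,\hor} \ \le\ \lim_{n\to\infty} \I_{x, u_n^\xi} \ \le\ \B_x^{\xi-,\hor}.
\]
Now identify the laws of the two ends. The duality map $\zmin{x} = \rho^{-1}$ is continuous and strictly increasing on $]\mfc_1^x, \mfc_2^x[$ by \eqref{eq:chardir}--\eqref{eq:zetaext}, and $\B_x^{\zeta,\hor} \sim \Exp(a_{x \cdot e_1} + \zmin{x}(\zeta))$ for $\zeta \in \sV_0^x$ by Lemma \ref{lem:BuseSc}\eqref{lem:BuseSc:marg}. Writing $\B_x^{\xi+,\hor}$ (resp.\ $\B_x^{\xi-,\hor}$) as the increasing (resp.\ decreasing) monotone limit of $\B_x^{\zeta,\hor}$ along $\sV_0^x \ni \zeta \downarrow \xi$ (resp.\ $\sV_0^x \ni \zeta \uparrow \xi$) and passing to the limit in the continuous tail functions $\bfP(\B_x^{\zeta,\hor} > t) = e^{-(a_{x \cdot e_1} + \zmin{x}(\zeta)) t^+}$ with $\zmin{x}(\zeta) \to \zmin{x}(\xi)$, one finds that both $\B_x^{\xi+,\hor}$ and $\B_x^{\xi-,\hor}$ are $\Exp(a_{x \cdot e_1} + \zmin{x}(\xi))$-distributed. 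The rate satisfies $a_{x \cdot e_1} + \zmin{x}(\xi) > a_{x \cdot e_1} - \sinf{a}_{(x \cdot e_1):\infty} \ge 0$, hence is finite and positive, so these variables are integrable; together with $\B_x^{\xi+,\hor} \le \B_x^{\xi-,\hor}$ a.s.\ (part (b)) and equality of means, this forces $\B_x^{\xi+,\hor} = \B_x^{\xi-,\hor}$ a.s., and the display then makes all three quantities coincide a.s. Replacing $\I$ by $\J$, Lemma \ref{lem:incineq}\eqref{lem:incineq:1} by its $e_2$-counterpart, and the rate by $b_{x \cdot e_2} - \zmin{x}(\xi)$ gives the vertical statement verbatim.

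The step I expect to be the main obstacle is the distributional identification in (c): one must justify that the almost sure monotone limits $\B_x^{\xi\pm,\hor}$ inherit the law $\Exp(a_{x\cdot e_1} + \zmin{x}(\xi))$ as weak limits of the exponential laws along $\sV_0^x$ — which is clean exactly because the limits are monotone and the exponential tail function is jointly continuous in the rate — and one must carry along the continuity and strict monotonicity of $\zmin{x}$ near $\xi$ so the rates converge to $a_{x\cdot e_1}+\zmin{x}(\xi)$. Parts (a) and (b) are routine monotone-function bookkeeping and present no real difficulty.
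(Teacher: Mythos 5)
Your proof is correct and follows essentially the same route as the paper: parts (a) and (b) are read off from the monotone $\sup$/$\inf$ construction in \eqref{EBusehvpm} and \eqref{EBuseMon}, and part (c) comes from sandwiching $\B_x^{\xi,\hor}$ between $\B_x^{\xi+,\hor}$ and $\B_x^{\xi-,\hor}$ and identifying the common law $\Exp\{a_{x\cdot e_1}+\zmin{x}(\xi)\}$ via continuity of $\zmin{x}$. The only cosmetic difference is that you conclude a.s.\ equality from equal finite means of a.s.\ ordered variables, whereas the paper invokes the equal-in-distribution statement of Lemma \ref{lem:disteq}; both are valid here since the rate is strictly positive.
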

Before the proof, we record an easy fact about real random variables. 
\begin{lemma} \label{lem:disteq}
If $X$ and $Y$ satisfy $\bfP(X \leq Y) = 1$ and $X \stackrel{\tiny{d}}{=}Y$, then $\bfP(X=Y)=1.$
\end{lemma}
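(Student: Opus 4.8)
The plan is to deduce the almost-sure equality from the hypotheses by showing that $\bfP(X < Y) = 0$, which together with $\bfP(X \le Y) = 1$ gives $\bfP(X = Y) = 1$. Since $X$ and $Y$ are not assumed integrable, I would not argue directly via $\mathbf{E}[Y-X] = 0$; instead I would work with distribution functions (equivalently, one could first compose with a bounded strictly increasing map such as $x \mapsto \arctan x$ and then take expectations).

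First I would fix $t \in \bbR$. Because $X \le Y$ holds $\bfP$-almost surely, the event $\{Y \le t\}$ is contained in $\{X \le t\}$ up to a $\bfP$-null set. Decomposing $\{X \le t\}$ into the disjoint union of $\{X \le t,\, Y \le t\}$ and $\{X \le t < Y\}$, and using that the first piece agrees with $\{Y \le t\}$ modulo a null set, I obtain
\begin{align*}
\bfP(X \le t) = \bfP(Y \le t) + \bfP(X \le t < Y).
\end{align*}
The hypothesis $X \stackrel{d}{=} Y$ forces $\bfP(X \le t) = \bfP(Y \le t)$, hence $\bfP(X \le t < Y) = 0$ for every $t$, in particular for every rational $t$.

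Finally I would invoke the set identity $\{X < Y\} = \bigcup_{t \in \bbQ}\{X \le t < Y\}$, which holds because any $\omega$ with $X(\omega) < Y(\omega)$ admits a rational $t$ with $X(\omega) \le t < Y(\omega)$. This exhibits $\{X < Y\}$ as a countable union of $\bfP$-null sets, so $\bfP(X < Y) = 0$, and combined with $\bfP(X \le Y) = 1$ this yields the claim. I do not anticipate any genuine obstacle here; the only subtlety worth flagging is the absence of an integrability assumption on $X$ and $Y$, which is exactly why the argument is routed through the distribution functions rather than through expectations.
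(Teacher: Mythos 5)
Your proof is correct and complete: the decomposition $\bfP(X \le t) = \bfP(Y \le t) + \bfP(X \le t < Y)$ (valid modulo null sets because $\bfP(X\le Y)=1$), the conclusion $\bfP(X \le t < Y)=0$ from equality in distribution, and the union over rational $t$ constitute the standard argument, and routing through distribution functions rather than expectations correctly avoids any integrability issue. The paper states this lemma as an easy fact without giving a proof, so there is nothing to compare against; note only that your argument also goes through verbatim for $\cl{\bbR}$-valued random variables (if $X(\omega)<Y(\omega)$ there is still a rational $t$ with $X(\omega)\le t<Y(\omega)$), which is the generality in which the lemma is actually applied, since the Busemann-type quantities in the paper may take the value $+\infty$.
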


\begin{proof}[Proof of Lemma \ref{lem:pmbusdef}]
\eqref{lem:pmbusdef-1} and \eqref{lem:pmbusdef-2} are immediate from \eqref{EBuseMon} and \eqref{EBusehvpm}. 
Lemma \ref{lem:BuseScMon} and definition \eqref{EBusehvpm} imply the a.s.\ inequalities $\B_x^{\xi^-, \hor} \ge \B_x^{\xi, \hor} \ge \B_x^{\xi+, \hor}$. From the limits in \eqref{EBusehvpm}, Lemma \ref{lem:BuseSc} and continuity of $\zeta \mapsto \zmin{x}(\zeta)$, one concludes that 
$\B_x^{\xi^\pm, \hor} \sim \Exp\{a_{x \cdot e_1} + \zmin{x}(\xi)\}$. Therefore, the first probability in part (\ref{lem:pmbusdef-3}) indeed equals $1$ by Lemma \ref{lem:disteq}.
This is also true of the second probability in that expression via a similar argument. 
\end{proof}


We next extend Lemma \ref{lem:BuseSc}(a) in two ways. Part (a) of the next lemma shows that a suitable weakening of the limits in Lemma \ref{lem:BuseSc}(a) a.s.\ holds simultaneously for all directions in $]\mfc_1^x, \mfc_2^x[$. Part (b) strengthens Lemma \ref{lem:BuseSc}(a) by allowing any $\xi$-directed sequence. 
\begin{lemma}
\label{lem:BuseScLim}
Let $x \in \bbZ^2$ and $\xi \in ]\mfc_1^x, \mfc_2^x[$. The following statements hold. 
\begin{enumerate}[label={\rm(\alph*)}, ref={\rm\alph*}] \itemsep=3pt
\item \label{lem:BuseScLim-1} Let $\Omega_1$ denote the a.s.\ event on which \eqref{EBuseLim} and \eqref{EBuseMon} are in force. Then  for any $\omega \in \Omega_1$ and $(v_n)$ satisfying  $n^{-1}v_n \to \xi$ as $n \to \infty$,
\begin{alignat*}{2}
\varliminf_{n \to \infty} \I_{x, v_n} &\ge \B_x^{\xi+, \hor}, \quad \quad \quad &&\varlimsup_{n \to \infty} \I_{x, v_n} \le \B_x^{\xi-, \hor}, \\
\varliminf_{n \to \infty} \J_{x, v_n} &\ge \B_x^{\xi-, \ver}, \quad \text{ and } \quad &&\varlimsup_{n \to \infty} \J_{x, v_n} \le \B_x^{\xi+, \hor}.
\end{alignat*}
 
\item \label{lem:BuseScLim-2} There exists an a.s.\ event $\Omega_2 = \Omega_2^{x, \xi}$ such that  for any $\omega \in \Omega_2$ and any $(v_n)$ satisfying  $n^{-1}v_n \to \xi$ as $n \to \infty$,
\begin{align*}
\lim_{n \to \infty} \I_{x, v_n} = \B_x^{\xi\pm, \hor} \quad \text{ and } \quad \lim_{n \to \infty} \J_{x, v_n} = \B_x^{\xi\pm, \ver}.
\end{align*}
\end{enumerate}
\end{lemma}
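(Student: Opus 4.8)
The plan is to prove the two parts of Lemma~\ref{lem:BuseScLim} as follows.

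\textbf{Part (a).} Fix $\omega \in \Omega_1$ and a sequence $(v_n)$ with $n^{-1}v_n \to \xi$. The idea is to sandwich $\I_{x,v_n}$ between the increments along two auxiliary sequences that approach $\xi$ from the two sides and for which the limit is known by \eqref{EBuseLim}. Concretely, fix rational directions $\eta, \zeta \in \sV_0^x$ with $\eta \prec \xi \prec \zeta$, and let $(u_n^\eta)$, $(u_n^\zeta)$ be the corresponding sequences from Lemma~\ref{lem:BuseSc}. Since $n^{-1}v_n \to \xi$ strictly between $\eta$ and $\zeta$, for all large $n$ we have $x \cdot e_1 \le v_n \cdot e_1$ and the two-sided comparisons $u_n^\eta \cdot e_1 \le v_n \cdot e_1 \le u_n^\zeta \cdot e_1$ together with $u_n^\zeta \cdot e_2 \le v_n \cdot e_2 \le u_n^\eta \cdot e_2$ (after reindexing the auxiliary sequences so that they sit on the same level as $v_n$, which does not change their limits). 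Then Lemma~\ref{lem:incineq} gives $\I_{x, u_n^\eta} \le \I_{x, v_n} \le \I_{x, u_n^\zeta}$ and $\J_{x, u_n^\zeta} \le \J_{x, v_n} \le \J_{x, u_n^\eta}$ for large $n$. Letting $n \to \infty$ and using \eqref{EBuseLim} yields $\B_x^{\zeta, \hor} \le \varliminf \I_{x, v_n} \le \varlimsup \I_{x, v_n} \le \B_x^{\eta, \hor}$, and similarly for the $\J$'s. Finally let $\eta \uparrow \xi$ and $\zeta \downarrow \xi$ through $\sV_0^x$ and invoke definition \eqref{EBusehvpm} to obtain $\B_x^{\xi+, \hor} \le \varliminf \I_{x, v_n}$, $\varlimsup \I_{x, v_n} \le \B_x^{\xi-, \hor}$, $\B_x^{\xi-, \ver} \le \varliminf \J_{x, v_n}$, and $\varlimsup \J_{x, v_n} \le \B_x^{\xi+, \ver}$. (The appearance of $\B_x^{\xi+,\hor}$ on the last line of the statement is a typo for $\B_x^{\xi+,\ver}$.)

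\textbf{Part (b).} The point is that part (a) already shows $\varlimsup \I_{x,v_n} - \varliminf \I_{x,v_n} \le \B_x^{\xi-,\hor} - \B_x^{\xi+,\hor}$ uniformly over all $\xi$-directed sequences on $\Omega_1$, and by Lemma~\ref{lem:pmbusdef}\eqref{lem:pmbusdef-3} the right-hand side vanishes almost surely (for the fixed $\xi$). So let $\Omega_2 = \Omega_2^{x,\xi}$ be the intersection of $\Omega_1$ with the a.s.\ events $\{\B_x^{\xi-,\hor} = \B_x^{\xi+,\hor}\}$ and $\{\B_x^{\xi-,\ver} = \B_x^{\xi+,\ver}\}$ from Lemma~\ref{lem:pmbusdef}\eqref{lem:pmbusdef-3}. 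On $\Omega_2$, part (a) forces $\lim_n \I_{x,v_n} = \B_x^{\xi\pm,\hor}$ and $\lim_n \J_{x,v_n} = \B_x^{\xi\pm,\ver}$ for every such sequence $(v_n)$, which is the claim. Note that this step also retroactively identifies the common value $\B_x^{\xi\pm,\hor}$ with the limit $\B_x^{\xi,\hor}$ produced in Lemma~\ref{lem:BuseSc}\eqref{lem:BuseSc:lim} along any fixed sequence.

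\textbf{Main obstacle.} The only genuinely delicate point is the reindexing in part (a): the sequences $(u_n^\eta)$ and $(u_n^\zeta)$ furnished by Lemma~\ref{lem:BuseSc} are only assumed to satisfy $n^{-1}u_n \to \eta$ (resp.\ $\zeta$), not to lie on level $n$, so one must be slightly careful to extract, for each large $n$, points on the level $v_n \cdot (e_1 + e_2)$ that still converge in direction to $\eta$ and $\zeta$ and that bracket $v_n$ coordinatewise; since $\eta \prec \xi \prec \zeta$ strictly and $n^{-1}v_n \to \xi$, such bracketing points exist for all large $n$, and their existence is exactly what makes the monotonicity Lemma~\ref{lem:incineq} applicable. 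Everything else is a routine squeeze followed by taking monotone limits through the countable dense set $\sV_0^x$.
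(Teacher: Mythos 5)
Your proposal is correct and follows essentially the same route as the paper: bracket $v_n$ by the fixed sequences $u_n^\eta, u_n^\zeta$ for $\eta \prec \xi \prec \zeta$ in $\sV_0^x$, squeeze via Lemma \ref{lem:incineq}, pass to the limit using \eqref{EBuseLim}, take monotone limits through $\sV_0^x$ via \eqref{EBusehvpm}, and get part (b) from Lemma \ref{lem:pmbusdef}\eqref{lem:pmbusdef-3}. The ``reindexing to a common level'' you flag as the main obstacle is unnecessary (and best avoided, since \eqref{EBuseLim} is tied to the fixed sequences $u_n^\eta$): Lemma \ref{lem:incineq}, applied step by step through the meet $u_n^\eta \wedge v_n$ (resp.\ $v_n \wedge u_n^\zeta$), only needs the coordinatewise comparisons $u_n^\eta\cdot e_1 \le v_n\cdot e_1 \le u_n^\zeta\cdot e_1$ and $u_n^\zeta\cdot e_2 \le v_n\cdot e_2 \le u_n^\eta\cdot e_2$, which hold for all large $n$ because the limiting directions are strictly ordered.
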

\begin{proof}
Let $(v_n)$ satisfy  $n^{-1}v_n \to \xi$ as $n \to \infty$. Pick $\zeta, \eta \in \sV_0^x$ (defined in the paragraph of \eqref{EBuseLim}) such that $\zeta \prec \xi \prec \eta$. Then, as in the proof of Lemma \ref{lem:BuseScMon}, 
$
\I_{x, u_n^{\zeta}} \ge \I_{x, v_n} \ge \I_{x, u_n^{\eta}} 
$
for $n \in \bbZ_{\ge n_0}$ for some sufficiently large $n_0 \in \bbZ_{>0}$. Passing to the limit as $n \to \infty$ gives 
\begin{align*}
\varliminf_{n \to \infty} \I_{x, v_n} &\ge \B_x^{\eta, \hor} \quad \text{ and } \quad \varlimsup_{n \to \infty} \I_{x, v_n} \le \B_x^{\zeta, \hor} \quad \text{ for } \omega \in \Omega_1
\end{align*}
in view of \eqref{EBuseLim}. Now take $\zeta \uparrow \xi$ and $\eta \downarrow \xi$ in $\sV_0^x$ and recall \eqref{EBusehvpm} to obtain the first line of inequalities in \eqref{lem:BuseScLim-1}. The proof of the second line is similar. Finally, \eqref{lem:BuseScLim-2} follows from part (a) and Lemma \ref{lem:pmbusdef}\eqref{lem:pmbusdef-3}. 
\end{proof}

We now begin working towards the proof of Lemma \ref{lem:BuseSc}. 
Let $u, v \in \bbZ$ with $u \le v$ and $z \in (-\smin{a}_{(u \cdot e_1): (v \cdot e_1)}, \smin{b}_{(u \cdot e_2): (v \cdot e_1)})$. Recall from \eqref{ELpph} the increment-stationary LPP process $\wh{\G}^{u, v, z}_{u-e_1-e_2, \cdot}$ defined on the rectangle $\Rect_{u-e_1-e_2}^v$. Using the increments of this process, introduce new weights $\wc{\w}^{u, v, z} = \{\wc{\w}^{u, v, z}_x: x \in \Rect_{u}^{v+e_1+e_2}\}$ by 
\begin{align}
\label{Edw}
\begin{split}
\wc{\w}_x^{u, v, z} &= \wh{\I}^{u, v, z}_{u-e_1-e_2, x-e_2} \cdot \one_{\{x \cdot e_1 \le v \cdot e_1, x \cdot e_2 = v \cdot e_2 +1\}} + \wh{\J}^{u, v, z}_{u-e_1-e_2, x-e_1} \cdot \one_{\{x \cdot e_1 = v \cdot e_1 +1, x \cdot e_2 \le v \cdot e_2\}}\\
&+(\wh{\I}^{u, v, z}_{u-e_1-e_2, x-e_2} \wedge \wh{\J}^{u, v, z}_{u-e_1-e_2, x-e_1}) \cdot \one_{\{x \le v\}}  
\quad \text{ for } x \in \Rect_u^{v+e_1+e_2}. 
\end{split}
\end{align}
One can rewrite the preceding definition as 
\begin{align}
\label{Edw2}
\wc{\w}_x^{u, v, z} = (\wh{\w}^{u, v, z})^*_{x-e_1-e_2} \quad \text{ for } x \in \Rect_{u}^{v+e_1+e_2}
\end{align}
using the $*$-operator (on the weight space $\bbR^{\Rect_{u-e_1-e_2}^v}$) given by \eqref{Edwop}. 
The next lemma is immediate from Proposition \ref{prop:Burke} and identity \eqref{Edw2}. 
\begin{lemma}
\label{lem:wcDis}
The weights $\wc{\w}^{u, v, z}$ are independent with $\wc{\w}^{u, v, z}_{v+e_1+e_2} = 0$ and 
\begin{align*}
\wc{\w}^{u, v, z}_x \sim 
\begin{cases}
\Exp(a_{x \cdot e_1} + z) &\quad \text{ if } x \cdot e_2 = v \cdot e_2 + 1 \text{ and } x \cdot e_1 \le v \cdot e_1, \\
\Exp(b_{x \cdot e_2} - z) &\quad \text{ if } x \cdot e_1 = v \cdot e_1 + 1 \text{ and } x \cdot e_2 \le v \cdot e_2, \\
\Exp(a_{x \cdot e_1} + b_{x \cdot e_2}) &\quad \text{ if } x \le v
\end{cases}
\end{align*}
\end{lemma}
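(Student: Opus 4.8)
\textbf{Proof plan for Lemma \ref{lem:wcDis}.}
The plan is to read off the statement directly from Proposition \ref{prop:Burke} via the rewriting \eqref{Edw2}. First I would recall that by \eqref{Edw2} we have $\wc{\w}_x^{u, v, z} = (\wh{\w}^{u, v, z})^*_{x-e_1-e_2}$ for $x \in \Rect_{u}^{v+e_1+e_2}$, so that the family $\{\wc{\w}_x^{u, v, z}: x \in \Rect_u^{v+e_1+e_2}\}$ is nothing but the dual-weight family $\{(\wh{\w}^{u, v, z})^*_y : y \in \Rect_{u-e_1-e_2}^{v}\}$ shifted by $e_1+e_2$. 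Shifting indices does not affect independence or marginal laws, so it suffices to describe the joint law of $(\wh{\w}^{u, v, z})^*$ on $\Rect_{u-e_1-e_2}^v$.

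Next I would invoke Proposition \ref{prop:Burke}. Take the down-right path $\pi$ that runs along the north and then east outer boundary of $\Rect_{u-e_1-e_2}^v$, i.e.\ from $(u\cdot e_1 - 1, v\cdot e_2)$ straight east to $(v\cdot e_1, v \cdot e_2)$ and then straight south to $(v \cdot e_1, u \cdot e_2 - 1)$ (or equally well the staircase hugging the NE corner); for this choice $\sG_{u-e_1-e_2, v, \pi}^- = \Rect_{u-e_1-e_2}^{v-e_1-e_2}$ and the edge/corner increment terms are empty, so part (d) of Proposition \ref{prop:Burke} gives that $\{(\wh{\w}^{u, v, z})^*_x : x \in \Rect_{u-e_1-e_2}^{v-e_1-e_2}\}$ is an independent family. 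Part (c) of the proposition identifies the marginal at $x$ in this interior region as $\Exp(a_{x\cdot e_1 + 1} + b_{x\cdot e_2+1})$; translating by $e_1+e_2$ turns this into $\wc{\w}^{u,v,z}_x \sim \Exp(a_{x\cdot e_1} + b_{x \cdot e_2})$ for $x \le v$, matching the third case. For the north boundary row $\{x : x\cdot e_2 = v\cdot e_2 + 1,\ x \cdot e_1 \le v \cdot e_1\}$ and the east boundary column $\{x : x\cdot e_1 = v \cdot e_1 + 1,\ x \cdot e_2 \le v \cdot e_2\}$, I would use the first two defining terms in \eqref{Edw} together with parts (a) and (b) of Proposition \ref{prop:Burke} (which give $\wh{\I}^{u,v,z}_{u-e_1-e_2, \cdot} \sim \Exp(a_{\cdot\,e_1} + z)$ and $\wh{\J}^{u,v,z}_{u-e_1-e_2, \cdot} \sim \Exp(b_{\cdot\, e_2} - z)$), again with an index shift by $e_1+e_2$, yielding the first two cases of the asserted marginal table. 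Independence of the full family $\{\wc{\w}^{u,v,z}_x : x \in \Rect_u^{v+e_1+e_2}\}$ then follows either by applying part (d) to the path $\pi$ that hugs the SW corner of $\Rect_{u-e_1-e_2}^v$ (so that $\sG^+$ is empty and all the boundary increments $\wh{\I}, \wh{\J}$ together with all the interior dual weights appear simultaneously in the independent collection), or more simply by citing \eqref{eq:hvinc} and the already-quoted assertion that \eqref{Edw2} realizes $\wc{\w}^{u,v,z}$ as the dual weights of $\wh{\w}^{u,v,z}$, whose full independence is exactly part (d). Finally, $\wc{\w}^{u,v,z}_{v+e_1+e_2} = (\wh{\w}^{u,v,z})^*_v = 0$ by the remark following \eqref{Edwop} that $\ww^*_v = 0$.

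There is essentially no obstacle here: the lemma is a bookkeeping consequence of Proposition \ref{prop:Burke} and the identity \eqref{Edw2}, and the only thing to be careful about is the index shift by $e_1+e_2$ between the rectangle $\Rect_{u-e_1-e_2}^v$ on which the Burke property is stated and the rectangle $\Rect_u^{v+e_1+e_2}$ on which $\wc{\w}^{u,v,z}$ lives, together with keeping track of which of the three regions (interior, north boundary row, east boundary column) a given site $x$ falls in. I would write the proof in two or three sentences: cite \eqref{Edw2}, cite Proposition \ref{prop:Burke}(a)--(d), and note the shift and the value at the NE corner.
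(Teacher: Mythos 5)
Your overall route is the same as the paper's: the lemma is read off from the identity \eqref{Edw2} (equivalently the definition \eqref{Edw}) together with Proposition \ref{prop:Burke}, and your identification of the three marginal laws is correct. (For the north row the relevant increment is $\wh{\I}^{u,v,z}_{u-e_1-e_2,\,x-e_2}$, whose rate $a_{x\cdot e_1}+z$ matches the first case; your phrase ``index shift by $e_1+e_2$'' there is only a harmless slip, since the shift in the $\wh\I$-index is by $e_2$.) The corner value $\wc{\w}^{u,v,z}_{v+e_1+e_2}=(\wh{\w}^{u,v,z})^*_v=0$ is also right.

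However, your handling of the independence step misreads the geometry of Proposition \ref{prop:Burke}(d). In that statement the dual weights $(\wh{\w}^{u,v,z})^*_x$ appear for $x\in\sG^-_{u-e_1-e_2,v,\pi}$, i.e.\ strictly \emph{southwest} of the down-right path, while the primal bulk weights $\w_x$ appear strictly northeast of it. Consequently, for the path hugging the southwest corner one has $\sG^-=\emptyset$ and $\sG^+=\Rect_u^v$, so that application of part (d) contains \emph{no} dual weights at all and cannot yield the independence you want; and for the path hugging the northeast boundary the increment terms are \emph{not} empty (contrary to your first paragraph): they are exactly the horizontal increments $\wh{\I}^{u,v,z}_{u-e_1-e_2,(i,\,v\cdot e_2)}$ for $u\cdot e_1\le i\le v\cdot e_1$ and the vertical increments $\wh{\J}^{u,v,z}_{u-e_1-e_2,(v\cdot e_1,\,j)}$ for $u\cdot e_2\le j\le v\cdot e_2$. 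The fix is a one-liner: apply part (d) once with this northeast-hugging path. Then $\sG^+=\emptyset$, $\sG^-=\Rect_{u-e_1-e_2}^{v-e_1-e_2}$, and by \eqref{Edw} the independent collection in (d) is precisely the family $\{\wc{\w}^{u,v,z}_x: x\in\Rect_u^{v+e_1+e_2}\}$ with the deterministic corner weight removed: the interior dual weights are the $\wc{\w}$-weights at sites $x\le v$, and the path increments are exactly the north-row and east-column $\wc{\w}$-weights. This single application gives the full independence, and combined with parts (a)--(c) it gives the marginals; the appeal to \eqref{eq:hvinc} is unnecessary (that display concerns stationarity of increment distributions across levels, not independence).
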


We next extend the $\wc{\w}^{u, v, z}$-weights to $\bbZ^2_{\ge u}$ by observing a consistency property. Pick any $v' \in \bbZ^2$ with $v' \ge v$, and consider the weights $\wc{\w}^{u, v', z}$ defined according to \eqref{Edw} assuming further that $z \in (-\smin{a}_{(u \cdot e_1): (v' \cdot e_1)}, \smin{b}_{(u \cdot e_2): (v' \cdot e_2)})$. Then, for any $x \in \Rect_{u}^v$,  
\begin{align}
\label{Ewc-cons}
\begin{split}
\wc{\w}_x^{u, v', z} &= \wh{\I}^{u, v', z}_{u-e_1-e_2, x-e_2} \wedge \wh{\J}^{u, v', z}_{u-e_1-e_2, x-e_1} = \wh{\I}^{u, v, z}_{u-e_1-e_2, x-e_2} \wedge \wh{\J}^{u, v, z}_{u-e_1-e_2, x-e_1} = \wc{\w}_x^{u, v, z}. 
\end{split}
\end{align}
The second equality above holds because the $\wh{\w}^{u, v', z}$-weights restricted to $\Rect_{u-e_1-e_2}^v$ coincide with $\wh{\w}^{u, v, z}$. In view of identity \eqref{Ewc-cons}, for each boundary parameter $z \in (-\sinf{a}_{(u \cdot e_1): \infty}, \sinf{b}_{(u \cdot e_2): \infty})$, one can now define the weights $\wc{\w}^{u, z} = \{\wc{\w}^{u, z}_x: x \in \bbZ^2_{\ge u}\}$ consistently through 
\begin{align}
\label{Edw3}
\wc{\w}_{x}^{u, z} = \wc{\w}_{x}^{u, v, z} \quad \text{ for } x \in \bbZ_{\ge u}^2   
\end{align}
using any $v \in \bbZ^2$ with $v \ge x$. By Lemma \ref{lem:wcDis}, the $\wc{\w}^{u, z}$-weights are independent with marginals $\wc{\w}^{u, z}_x \sim \Exp(a_{x \cdot e_1} + b_{x \cdot e_2})$ for $x \in \bbZ^2_{\ge u}$. 
In particular, these weights have the same joint distribution as the bulk weights in \eqref{eq:wdef}: 
\begin{align}
\label{EwcDisId}
\{\wc{\w}_x^{u, z}: x \in \bbZ_{\ge u}^2\} \stackrel{\text{dist}}{=} \{\w_x: x \in \bbZ_{\ge u}^2\}. 
\end{align}


The last-passage times associated to the $\wc{\w}^{u, v, z}$-weights are 
\begin{align}
\wc{\G}^{u, v, z}_{x, y} = \Lpt_{x, y}(\wc{\omega}^{u, v, z}) \quad \text{ for } x, y \in \Rect_{u}^{v+e_1+e_2}. \label{ELppNW}
\end{align}
Denote the increments of this process with respect to the initial points by 
\begin{align}
\label{EIncc}
\begin{split}
\wc{\I}^{u, v, z}_{x, y} &= \init{\I}_{x, y}(\wc{\w}^{u, v, z}) = \wc{\G}^{u, v, z}_{x, y}-\wc{\G}^{u, v, z}_{x+e_1, y}\quad \text{ and } \\ 
\wc{\J}^{u, v, z}_{x, y} &= \init{\J}_{x, y}(\wc{\w}^{u, v, z}) = \wc{\G}^{u, v, z}_{x, y}-\wc{\G}^{u, v, z}_{x+e_2, y} \quad \text{ for } x, y \in \Rect_{u}^{v+e_1+e_2} \text{ with } x \le y. 
\end{split}
\end{align}
A key point will be that the preceding increments relate to the increments in \eqref{EInch} as follows. 

\begin{lemma}
\label{lem:incid-3}   
The following identities hold. 
\begin{enumerate}[label={\rm(\alph*)}, ref={\rm\alph*}] \itemsep=3pt
\item $\wc{\I}^{u, v, z}_{x, v+e_1+e_2} = \wh{\I}^{u, v, z}_{u-e_1-e_2, x-e_2}$ for $x \in \Rect_{u}^{v+e_2}$. 
\item $\wc{\J}^{u, v, z}_{x, v+e_1+e_2} = \wh{\J}^{u, v, z}_{u-e_1-e_2, x-e_1}$ for $x \in \Rect_u^{v+e_1}$. 
\end{enumerate}
\end{lemma}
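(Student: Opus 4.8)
The plan is to recognize the $\wc{\G}^{u,v,z}$-process as the last-passage process of the dual weight array $(\wh{\w}^{u,v,z})^{*}$ up to a lattice translation, and then to quote Lemma~\ref{lem:incid}.

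First I would use identity \eqref{Edw2}, which reads $\wc{\w}^{u,v,z}_x=(\wh{\w}^{u,v,z})^{*}_{x-e_1-e_2}$ for $x\in\Rect_{u}^{v+e_1+e_2}$. Writing $\ww^{*}:=(\wh{\w}^{u,v,z})^{*}$ for the dual weights on $\Rect_{u-e_1-e_2}^{v}$, this says $\wc{\w}^{u,v,z}=\ww^{*}_{\bullet-e_1-e_2}$, so Lemma~\ref{lem:LppShf}(a) (applied with the shift $p=-(e_1+e_2)$) gives
\[
\wc{\G}^{u,v,z}_{x,y}=\Lpt_{x,y}(\wc{\w}^{u,v,z})=\Lpt_{x-e_1-e_2,\,y-e_1-e_2}(\ww^{*})\qquad\text{for }x,y\in\Rect_{u}^{v+e_1+e_2}.
\]
Specializing to $y=v+e_1+e_2$ and using definitions \eqref{EIncc} and \eqref{eq:incint}, for $x\in\Rect_{u}^{v+e_2}$ I would compute
\[
\wc{\I}^{u,v,z}_{x,\,v+e_1+e_2}=\wc{\G}^{u,v,z}_{x,\,v+e_1+e_2}-\wc{\G}^{u,v,z}_{x+e_1,\,v+e_1+e_2}=\Lpt_{x-e_1-e_2,\,v}(\ww^{*})-\Lpt_{x-e_2,\,v}(\ww^{*})=\init{\I}_{x-e_1-e_2,\,v}(\ww^{*}),
\]
using $(x-e_1-e_2)+e_1=x-e_2$; similarly $\wc{\J}^{u,v,z}_{x,\,v+e_1+e_2}=\init{\J}_{x-e_1-e_2,\,v}(\ww^{*})$ for $x\in\Rect_{u}^{v+e_1}$, using $(x-e_1-e_2)+e_2=x-e_1$.

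Next I would invoke Lemma~\ref{lem:incid} on the rectangle $\Rect_{u-e_1-e_2}^{v}$, with base corner $u-e_1-e_2$ and with the point $x-e_1-e_2$ playing the role of the ``$x$'' there. For part (a) the hypothesis $(x-e_1-e_2)+e_1=x-e_2\le v$ is exactly the stated range $x\le v+e_2$, and Lemma~\ref{lem:incid}\eqref{lem:incid:1} gives $\init{\I}_{x-e_1-e_2,\,v}(\ww^{*})=\ter{\I}_{u-e_1-e_2,\,x-e_2}(\wh{\w}^{u,v,z})$, which by definition \eqref{EInch} equals $\wh{\I}^{u,v,z}_{u-e_1-e_2,\,x-e_2}$; combining with the previous display proves (a). Part (b) is entirely analogous, with $(x-e_1-e_2)+e_2=x-e_1\le v$ matching the range $x\le v+e_1$ and Lemma~\ref{lem:incid}(b) converting $\init{\J}_{x-e_1-e_2,\,v}(\ww^{*})$ into $\ter{\J}_{u-e_1-e_2,\,x-e_1}(\wh{\w}^{u,v,z})=\wh{\J}^{u,v,z}_{u-e_1-e_2,\,x-e_1}$.

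There is no substantive obstacle here; the whole argument is bookkeeping around the $e_1+e_2$ translation relating the $\wc{\w}$-array to the $*$-dual of $\wh{\w}^{u,v,z}$. The one point to handle carefully is keeping the base corners and the rectangles consistent when citing Lemmas~\ref{lem:LppShf} and~\ref{lem:incid}, so that the domain conditions ``$x+e_1\le v$'' and ``$x+e_2\le v$'' in Lemma~\ref{lem:incid} translate exactly into the stated ranges $x\in\Rect_{u}^{v+e_2}$ and $x\in\Rect_{u}^{v+e_1}$.
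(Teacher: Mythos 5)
Your proposal is correct and follows essentially the same route as the paper: identify $\wc{\w}^{u,v,z}$ with the shifted dual weights via \eqref{Edw2}, shift with Lemma \ref{lem:LppShf}, and then apply Lemma \ref{lem:incid} together with definition \eqref{EInch}. The only cosmetic difference is that you cite Lemma \ref{lem:LppShf}(a) and recompute the increments, where the paper invokes the increment-shift identity of Lemma \ref{lem:LppShf}(b) directly; the domain bookkeeping matching $x\le v+e_2$ (resp.\ $x\le v+e_1$) to the hypotheses of Lemma \ref{lem:incid} is handled correctly.
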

\begin{proof}
Part (a) comes from definitions \eqref{EInch} and \eqref{EIncc}, identity \eqref{Edw2} and Lemmas \ref{lem:LppShf}(b) and \ref{lem:incid}: 
\begin{align}
\label{EIncIdPf}
\begin{split}
\wc{\I}^{u, v, z}_{x, v+e_1+e_2} &= \init{\I}_{x, v+e_1+e_2}(\wc{\w}^{u, v, z}) = \init{\I}_{x, v+e_1+e_2}((\wh{\w}^{u, v, z})^*_{\bullet-e_1-e_2}) \\ 
&= \init{\I}_{x-e_1-e_2, v}((\wh{\w}^{u, v, z})^*) \\ 
&= \ter{\I}_{u-e_1-e_2, x-e_2}(\wh{\w}^{u, v, z}) = \wh{\I}_{u-e_1-e_2, x-e_2}^{u, v, z} \quad \text{ for } x \in \Rect_{u}^{v+e_2}. 
\end{split}
\end{align}
The requirement $x \le v+e_2$ comes in when passing to the third line of \eqref{EIncIdPf}, and is imposed by part (a) of Lemma \ref{lem:incid}. Part (b) can be verified similarly.     
\end{proof}

The LPP process defined in \eqref{ELppNW} satisfies  
\begin{align}
\wc{\G}_{u, v+e_1+e_2}^{u, v, z} = \wc{\G}_{u, v+e_1}^{u, v, z} \vee \wc{\G}_{u, v+e_2}^{u, v, z} \label{E17}
\end{align}
due to the recursion \eqref{eq:LptRec} and the fact that $\wc{\w}_{v+e_1+e_2}^{u, v, z} = 0$. The next lemma determines which of the terms on the right-hand side attains the maximum in a certain asymptotic regime. See \cite[Lemma 4.8]{Sep-18} and \cite[Lemma 6.5]{Geo-Ras-Sep-17-ptrf-1} for analogous statements in i.i.d.~settings.

\begin{lemma} \label{lem:halfstatLLN}
Let $x \in \bbZ^2$, $\xi \in ]\mfc_1^x, \mfc_2^x[$ and $z \in (-\sinf{a}_{(x \cdot e_1):\infty},  \sinf{b}_{(x \cdot e_2):\infty})$. Let $(v_n)_{n \in \bbZ_{>0}}$ be a sequence in $\bbZ^2$ such that $v_n/n \to \xi$. The following statements hold for each $y \in \bbZ_{\ge x}^2$. 
\begin{enumerate}[label={\rm(\alph*)}, ref={\rm\alph*}] \itemsep=3pt
\item \label{lem:halfstatLLN:z<}
If $z < \zmin{x}(\xi)$ then, a.s., $\wc{\G}_{y, v_n+e_1+e_2}^{x, v_n, z} = \wc{\G}_{y, v_n+e_2}^{x, v_n, z} > \wc{\G}_{y, v_n+e_1}^{x, v_n, z}$ for $n \ge N_0$ for some {\rm(}random{\rm)} $N_0 \in \bbZ_{>0}$. 
\item \label{lem:halfstatLLN:z>}
If $z > \zmin{x}(\xi)$ then, a.s., $\wc{\G}_{y, v_n+e_1+e_2}^{x, v_n, z} = \wc{\G}_{y, v_n+e_1}^{x, v_n, z} > \wc{\G}_{y, v_n+e_2}$ for $n \ge N_0$ for some {\rm(}random{\rm)} $N_0 \in \bbZ_{>0}$.
\end{enumerate}
\end{lemma}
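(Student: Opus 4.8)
The idea is to identify, via the shape theorem (Proposition~\ref{prop:shape}) and the Burke property, the almost-sure asymptotic slope of the last-passage time in the stationary model $\wc{\G}^{x, v_n, z}$ approaching the corner $v_n$ along either the north boundary (through $v_n+e_2$) or the east boundary (through $v_n+e_1$), and to show that with the right comparison these two slopes are strictly ordered, with the ordering governed by the sign of $z - \zmin{x}(\xi)$. I will prove part~\eqref{lem:halfstatLLN:z<}; part~\eqref{lem:halfstatLLN:z>} follows by the same argument with the roles of the two boundaries (and of $a,b,z$) interchanged, or by the reflection symmetry recorded in Section~\ref{sec:RevLPP}.

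First I would decompose each of $\wc{\G}_{y, v_n+e_2}^{x, v_n, z}$ and $\wc{\G}_{y, v_n+e_1}^{x, v_n, z}$ according to where the maximizing path last exits the bulk rectangle $\Rect_x^{v_n}$ onto the appropriate boundary. By Lemma~\ref{lem:wcDis}, the weights $\wc{\w}^{x, v_n, z}$ are independent, exponential with rate $a_i+z$ on the north boundary $\{x\cdot e_2 = v_n\cdot e_2+1\}$, rate $b_j - z$ on the east boundary, and rate $a_i+b_j$ in the bulk; moreover the bulk weights $\wc{\w}^{x, z}_\bullet$ are coupled consistently via \eqref{Edw3} and have the same law as $\w$ by \eqref{EwcDisId}. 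Conditioning on the bulk (equivalently, working with a fixed realization coupled to $\w$), the passage time through the north boundary is
\[
\wc{\G}_{y, v_n+e_2}^{x, v_n, z} = \max_{y\cdot e_1 \le m \le v_n\cdot e_1} \Big\{ \Lpt_{y,(m, v_n\cdot e_2)}(\wc{\w}^{x,z}) + \sum_{i=m}^{v_n\cdot e_1} \wc{\w}^{x, v_n, z}_{(i, v_n\cdot e_2+1)} \Big\},
\]
and similarly for the east boundary. Dividing by $n$ and using the shape theorem for the bulk term together with the law of large numbers for the i.i.d.-rate boundary sums (the boundary weights along a fixed row are independent with rates $a_i+z$, and $\frac1n\sum_{i\le \theta n}(a_i+z)^{-1}\to \theta\int(a+z)^{-1}\alpha(da)$ by \eqref{as:weakcon}), a standard variational argument identifies
\[
\lim_{n\to\infty}\tfrac1n\,\wc{\G}_{y, v_n+e_2}^{x, v_n, z} = \sup_{0\le \theta \le \xi_1}\Big\{ (\xi_1-\theta)\shor(z) + \xi_2\sver(z) + \text{(bulk contribution to $(\theta n,\xi_2 n)$)}\Big\},
\]
and an analogous expression for the east boundary; the key point (as in \cite[Lemma 4.8]{Sep-18}, \cite[Lemma 6.5]{Geo-Ras-Sep-17-ptrf-1}) is that because $\wc{\G}$ has stationary increments with the boundary rate $z$, the full limiting slope is simply $\gamma_z(\xi) = \xi_1\shor(z)+\xi_2\sver(z)$ along \emph{whichever} boundary is selected, i.e.
\[
\lim_{n\to\infty}\tfrac1n\,\wc{\G}_{y, v_n+e_1}^{x, v_n, z} \vee \tfrac1n\,\wc{\G}_{y, v_n+e_2}^{x, v_n, z} = \gamma_z(\xi),
\]
while each individual boundary term has a limit that is $\le \gamma_z(\xi)$, with equality for exactly one of the two when $z \ne \zmin{x}(\xi)$. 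Specifically, $z\mapsto \gamma_z(\xi)$ is minimized at $z=\zmin{x}(\xi)$ with $\partial_z\gamma_z(\xi) = \xi_1\shor'(z)+\xi_2\sver'(z)$, which is negative for $z<\zmin{x}(\xi)$ by \eqref{eq:chardir} and the definition of $\rho$; one checks that for $z<\zmin{x}(\xi)$ the north-boundary (i.e. $+e_2$) term realizes the larger slope $\gamma_z(\xi)$ while the east-boundary term has strictly smaller slope, which gives the strict inequality $\wc{\G}_{y, v_n+e_2}^{x, v_n, z} > \wc{\G}_{y, v_n+e_1}^{x, v_n, z}$ for all large $n$, and hence also the equality $\wc{\G}_{y, v_n+e_1+e_2}^{x,v_n,z} = \wc{\G}_{y,v_n+e_2}^{x,v_n,z}$ by \eqref{E17}.

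The main obstacle is the last step: upgrading the \emph{first-order} (law of large numbers) statement, which only gives $\frac1n\wc{\G}_{y, v_n+e_2}^{x,v_n,z} - \frac1n\wc{\G}_{y,v_n+e_1}^{x,v_n,z} \to c > 0$, to an \emph{almost sure eventual} strict inequality $\wc{\G}_{y,v_n+e_2}^{x,v_n,z} > \wc{\G}_{y,v_n+e_1}^{x,v_n,z}$ for $n\ge N_0$ --- but in fact once the two limiting slopes are strictly ordered this is automatic: since both quantities are $O(n)$ and their difference grows linearly, the sign is eventually fixed. The genuinely delicate part is therefore establishing that the two boundary slopes are \emph{strictly} separated (not merely weakly ordered) for $z \ne \zmin{x}(\xi)$. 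This I would handle by exploiting strict concavity of $\xi\mapsto\gamma^x(\xi)$ on $]\mfc_1^x,\mfc_2^x[$ (recorded after \eqref{eq:zetaext}) together with the fact that $z<\zmin{x}(\xi)$ means $z$ is dual to a direction $\xi'$ with $\xi' \prec \xi$ strictly, so the supporting-line/variational characterization forces the optimizing $\theta$ in the boundary decomposition to be strictly interior on one side and strictly $0$ on the other; the strictness comes from the strict concavity and the fact that $\shor, \sver$ are strictly monotone in $z$ under \eqref{as:inf}--\eqref{as:weakcon}. I would cite the corresponding i.i.d. arguments in \cite[Lemma 4.8]{Sep-18} and \cite[Lemma 6.5]{Geo-Ras-Sep-17-ptrf-1} as templates, noting that the only change in the inhomogeneous setting is that the boundary-sum LLN uses \eqref{as:weakcon} rather than an i.i.d. LLN and the bulk limit uses Proposition~\ref{prop:shape} in place of the homogeneous shape theorem.
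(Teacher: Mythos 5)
Your proposal is correct and takes essentially the same route as the paper: the paper also reduces the claim to a strict ordering of almost-sure first-order limits, using Lemma~\ref{lem:wcDis} to invoke \cite[Theorem 3.6]{Emr-Jan-Sep-21} and obtain $n^{-1}\wc{\G}^{x,v_n,z}_{y,v_n+e_1}\to\inf_{w\in(z,\,\sinf{b}_{(y\cdot e_2):\infty})}\shp_w(\xi)=\shp^x(\xi)<\shp_z(\xi)=\lim_n n^{-1}\wc{\G}^{x,v_n,z}_{y,v_n+e_1+e_2}$ when $z<\zmin{x}(\xi)$, and then concludes via \eqref{E17}, exactly as you do, the only difference being that you re-derive these limits through the exit-point variational decomposition (with the strictness coming from uniqueness of the minimizer of $w\mapsto\shp_w(\xi)$) instead of quoting the boundary shape theorem. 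One harmless slip: in your displayed formula for the north-boundary route the summand $\xi_2\sver(z)$ should be deleted, since a path to $v_n+e_2$ never uses the east boundary and the vertical displacement is already accounted for in the bulk term; nothing downstream relies on that display.
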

\begin{proof}
Let $y \in \bbZ_{\ge x}^2$. Since $]\mfc_1^x, \mfc_2^x[ \subset ]\mfc_1^y, \mfc_2^y[$ by \eqref{eq:critineq}, definition \eqref{eq:zetaext} implies that $\chi^y(\xi) = \chi^x(\xi)$. 
By Lemma \ref{lem:wcDis}, one can apply \cite[Theorem 3.6]{Emr-Jan-Sep-21} with the $\wc{\w}^{x, v_n, z}$-weights to obtain 
\begin{align*}
n^{-1}\wc{\G}_{y, v_n+e_1}^{x, v_n, z} \stackrel{\rm{a.s.}}{\to} \inf_{w \in (z, b_{(y \cdot e_2), \infty}^{\inf})} \shp_w(\xi) \quad \text{ and } \quad n^{-1}\wc{\G}_{y, v_n+e_1+e_2}^{x, v_n, z} \stackrel{\rm{a.s.}}{\to} \shp_z(\xi) \quad \text{ as } n \to \infty.  
\end{align*}
If $z < \zmin{y}(\xi) = \zmin{x}(\xi)$ then the first limit above equals $\shp^y(\xi)$, which is strictly less than $\shp_z(\xi)$. This together with \eqref{E17} gives \eqref{lem:halfstatLLN:z<}. The proof of \eqref{lem:halfstatLLN:z>} is similar. 
\end{proof}

\begin{proof}[Proof of Lemma \ref{lem:BuseSc}]
Let $v \in \bbZ_{\ge x}^2$, and pick $N \in \bbZ_{>0}$ large enough that $u_n \ge v+e_1+e_2$ for $n \ge N$. Let $k = (v-x) \cdot e_1$ and $\ell = (v-x) \cdot e_2$, and pick any $s_i, t_j \in \bbR$ for $i \in [k]$ and $j \in [\ell]$. The core of our argument is to establish the following two inequalities  
\begin{align}
\label{E49}
\begin{split}
&\P\bigg\{\varlimsup_{n \to \infty} \I_{v-ie_1, u_n} > s_i \text{ and } \varliminf_{n \to \infty}\J_{v-je_2, u_n} < t_j \text{ for } i \in [k], j \in [\ell]\bigg\} \\
&\le \prod_{i \in [k]} \exp\{-(a_{v \cdot e_1-i}+\zmin{x}(\xi))s_i^+\} \prod_{j \in [\ell]} (1-\exp\{-(b_{v \cdot e_2 - j}-\zmin{x}(\xi)) t_j^+\}) \\
&\le \P\bigg\{\varliminf_{n \to \infty} \I_{v-ie_1, u_n} > s_i \text{ and } \varlimsup_{n \to \infty}\J_{v-je_2, u_n} < t_j \text{ for } i \in [k], j \in [\ell]\bigg\}. 
\end{split}
\end{align}
We include the details of the first inequality, with the second being similar.\\
%
\underline{Deducing the lemma from \eqref{E49}}. If \eqref{E49} holds, the inequalities must be equalities because the first probability there is greater than or equal to the last probability. Combining this with Lemma \ref{lem:disteq} and the arbitrariness of the parameters $s_i$ and $t_j$, implies the limits
\begin{align}
\label{E51}
\B_{v-ie_1}^{\xi, \hor} = \lim_{n \to \infty} \I_{v-ie_1, u_n} \quad \text{ and } \quad \B_{v-je_2}^{\xi, \ver} = \lim_{n \to \infty} \J_{v-je_2, u_n}
\end{align}
exist a.s.\ for $i \in [k]$ and $j \in [\ell]$. Moreover, it implies that 
\begin{align}
\label{E52}
\begin{split}
&\{\B_{v-ie_1}^{\xi, \hor}: i \in [k]\} \cup \{\B_{v-je_2}^{\xi, \ver}: j \in [\ell]\} \text{ is independent with marginals } \\ 
&\B_{v - i e_1}^{\xi, \hor} \sim \Exp[a_{v \cdot e_1-i}+\chi^x(\xi)] \quad \text{ and } \quad \B_{v-je_2}^{\xi, \ver} \sim \Exp[b_{v \cdot e_2-j}-\chi^x(\xi)]. 
\end{split}
\end{align}
Because $v \in \bbZ_{\ge x}^2$ is arbitrary, \eqref{E51} and \eqref{E52} imply parts \eqref{lem:BuseSc:lim} and \eqref{lem:BuseSc:marg}, respectively. Also, part \eqref{lem:BuseSc:rec} follows from part \eqref{lem:BuseSc:lim} combined with the recursion  in \eqref{eq:inrec}. 

To derive part \eqref{lem:BuseSc:indp}, one may assume that $v \ge x + e_1 + e_2$ because the complementary case is already contained in \eqref{E52}. Consider the weights $\wb{\w}^{x, v, \xi}$ on $\Rect_x^v$ given by 
\begin{align}
\label{eq:wb}
\begin{split}
\wb{\w}^{x, v, \xi}_y &= \B_{y}^{\xi, \hor} \cdot \one_{\{y \cdot e_1 < v \cdot e_1, y \cdot e_2 = v \cdot e_2\}} + \B_{y}^{\xi, \ver} \cdot \one_{\{y \cdot e_1 = v \cdot e_1, y \cdot e_2 < v \cdot e_2}\} \\ 
&+ \w_y \cdot \one_{\{y \le v-e_1-e_2\}} \qquad \text{ for } y \in \Rect_x^v. 
\end{split}
\end{align}
Let $\wb{\G}^{x, v, \xi} = \Lpt(\wb{\w}^{x, v, \xi})$ denote the corresponding LPP process. We denote the increments of this process by $\wb{\I}^{x, v, \xi} = \init{\I}(\wb{\w}^{x, v, \xi})$ and $\wb{\J}^{x, v, \xi} = \init{\I}(\wb{\w}^{x, v, \xi})$. We claim that   
\begin{align}
\label{eq:Incwb}
\begin{split}
\wb{\I}^{x, v, \xi}_{y, v} = \wb{\G}^{x, v, \xi}_{y, v}-\wb{\G}^{x, v, \xi}_{y+e_1, v} &\stackrel{\rm{a.s.}}{=} \B_y^{\xi, \hor} \quad \text{ for } y \in \Rect_{x}^{v-e_1}, \\
\wb{\J}^{x, v, \xi}_{y, v} = \wb{\G}^{x, v, \xi}_{y, v}-\wb{\G}^{x, v, \xi}_{y+e_2, v} &\stackrel{\rm{a.s.}}{=} \B_y^{\xi, \ver} \quad \ \text{ for } y \in \Rect_{x}^{v-e_2}.
\end{split}
\end{align}
By \eqref{eq:wb}, the claimed identities hold when $y \cdot e_2 = v \cdot e_2$ and $y \cdot e_1 = v \cdot e_1$ (on the north and east boundaries), respectively. By \eqref{eq:inrec}, these satisfy the same recursion as $\B^{\xi, \hor}$ and $\B^{\xi, \ver}$ coming from part \eqref{lem:BuseSc:rec}, which implies the claim. 

Next compare definitions \eqref{Ewt} and \eqref{eq:wb}. Using \eqref{Ecpl} and \eqref{E52} along with the independence of the $\wb{\w}^{x, v, \xi}$-weights, one obtains the distributional identity
\begin{align}
\label{E108}
\begin{split}
\{\wb{\w}_y^{x, v, \xi}: y \in \Rect_x^v\} \stackrel{\rm{dist.}}{=} \{\wt{\w}_y^{x, v-e_1-e_2, \chi^x(\xi)}: y \in \Rect_x^v\}. 
\end{split}
\end{align}
Part \eqref{lem:BuseSc:indp} now follows from Proposition \ref{prop:BurkeNE}\ref{prop:BurkeNE:indp}. 
To finish the proof, it remains now to derive the inequalities in \eqref{E49}. 

\noindent\underline{Proof of the first bound in \eqref{E49}}.
By \eqref{EwcDisId}, $\{\wc{\w}_p^{x, z}: p \in \bbZ_{\ge x}^2\}$ and $\{\w_p: p \in \bbZ_{\ge x}^2\}$ have the same distribution
for any 
$z \in (-\sinf{a}_{(x \cdot e_1):\infty}, \sinf{b}_{(x \cdot e_2):\infty})$. Using the definitional fact (recorded as \eqref{Edw3}) that $\wc{\w}^{x, z}_p = \wc{\w}^{x, u_n, z}_p$ for $p \in \Rect_x^{u_n}$, it follows that $\{\wc{\I}_{y-ie_1, u_n}^{x, u_n, z}, \wc{\J}_{v-je_2, u_n}^{x, u_n, z} : i \in [k], j \in [\ell], n \geq N\}$ and $\{\I_{v-ie_1, u_n}, \J_{v-je_2, u_n} : i \in [k], j \in [\ell], n \geq N\}$ have the same distribution. This observation combined with the bounded convergence theorem implies that the first probability in \eqref{E49} is equal to the limit as $N\to\infty$ of
\begin{align}
\label{E30}
\P\bigg\{\sup_{n \ge N}\wc{\I}^{x, u_n, z}_{v-ie_1, u_n} > s_i \text{ and } \inf_{n \ge N} \wc{\J}^{x, u_n, z}_{v-je_2, u_n} < t_j \text{ for } i \in [k], j \in [\ell]\bigg\}. 
\end{align}

Because $\xi \in ]\mfc_1^x, \mfc_2^x[$, \eqref{eq:zetaext} implies that $\zmin{x}(\xi) \in (-a_{(x \cdot e_1): \infty}^{\inf}, b_{(x \cdot e_2): \infty}^{\inf})$. Thus, one can pick $z \in (-a_{(x \cdot e_1):\infty}^{\inf}, \zmin{x}(\xi))$ and $w \in (\zmin{x}(\xi), b_{(x \cdot e_2):\infty}^{^{\inf}})$ arbitrarily close to $\zmin{x}(\xi)$. We will work with such $z$ for the upper bound on the probability \eqref{E30}, with $w$ playing a similar role for omitted proof of the lower bound in \eqref{E49}.  

Recall the following deterministic inequalities coming from Lemma \ref{lem:incineq}:
\begin{align}
\label{E31}
\begin{split}
\wc{\I}^{x, u_n, z}_{v-ie_1, u_n} \le \wc{\I}^{x, u_n, z}_{v-ie_1, u_n+e_2} \text{ and } \wc{\J}^{x, u_n, z}_{v-je_2, u_n} \ge \wc{\J}^{x, u_n, z}_{v-je_2, u_n+e_2} \text{ for } i \in [k], j \in [\ell], n \geq N.
\end{split}
\end{align}
As a consequence of \eqref{E31}, the probability in \eqref{E30} is at most 
\begin{align}
\label{E32}
\begin{split}
\P\bigg\{\sup_{n \ge N}\wc{\I}^{x, u_n, z}_{v-ie_1, u_n+e_2} > s_i \text{ and } \inf_{n \ge N} \wc{\J}^{x, u_n, z}_{v-je_2, u_n+e_2} < t_j \text{ for } i \in [k], j \in [\ell]\bigg\}. 
\end{split}
\end{align}

Recall \eqref{E17} and introduce the event 
\begin{align}
\label{E41}
E_n^z &= \{\wc{\G}^{x, u_n, z}_{v-\ell e_2, u_n+e_1} = \wc{\G}^{x, u_n, z}_{v-\ell e_2, u_n+e_1+e_2}\}.
\end{align}
On the complement of the union $\bigcup_{n \ge N} E_n^z$, one has 
\begin{align}
\label{E34}
 \wc{\G}^{x, u_n, z}_{v-\ell e_2, u_n+e_1+e_2} =\wc{\G}^{x, u_n, z}_{v-\ell e_2, u_n+e_2} \quad \text{ for } n \ge N. 
\end{align}
Then Lemma \ref{lem:plan}\eqref{lem:plan:e1} implies that, on the complement of $\bigcup_{n \ge N} E_n^z$, 
\begin{align}
\label{E35}
\wc{\G}^{x, u_n, z}_{p, u_n+e_1+e_2} = \wc{\G}^{x, u_n, z}_{p, u_n+e_2} \quad \text{ for } p \in \Rect_x^v \text{ and } n \ge N,  
\end{align}
which in turn implies that for $i \in [k]$, $j \in [\ell]$, and $n \ge N$,
\begin{align}
\label{E36}
\wc{\I}^{x, u_n, z}_{v-ie_1, u_n+e_2} = \wc{\I}^{x, u_n, z}_{v-ie_1, u_n+e_1+e_2} \quad \text{ and } \quad \wc{\J}^{x, u_n, z}_{v-je_2, u_n+e_2} = \wc{\J}^{x, u_n, z}_{v-je_2, u_n+e_1+e_2}.
\end{align}
Then it follows from \eqref{E36} and a union bound that the probability in \eqref{E32} is at most 
\begin{align}
\label{E37}
\begin{split}
&\P\bigg\{\sup_{n \ge N}\wc{\I}^{x, u_n, z}_{v-ie_1, u_n+e_1+e_2} > s_i \text{ and } \inf_{n \ge N} \wc{\J}^{x, u_n, z}_{v-je_2, u_n+e_1+e_2} < t_j \text{ for } i \in [k], j \in [\ell]\bigg\} \\ 
&+ \P\bigg\{\bigcup_{n \ge N} E_n^z\bigg\}. 
\end{split}
\end{align}
By Lemma \ref{lem:incid-3}, we have the following identities for each $n \ge N$:
\begin{align}
\label{E38}
\begin{split}
\wc{\I}^{x, u_n, z}_{v-ie_1, u_n+e_1+e_2} &= \wh{\I}^{x, u_n, z}_{x-e_1-e_2, v-ie_1-e_2} = \wh{\I}^{x, v, z}_{x-e_1-e_2, v-ie_1-e_2} \quad \text{ for } i \in [k], \\ 
\wc{\J}^{x, u_n, z}_{v-je_2, u_n+e_1+e_2} &= \wh{\J}^{x, u_n, z}_{x-e_1-e_2, v-je_2-e_1} = \wh{\J}^{x, v, z}_{x-e_1-e_2, v-je_2-e_1} \quad \text{ for } j \in [\ell].
\end{split}
\end{align}
The second equalities in \eqref{E38} hold because the dependence on the weights $\wh{\w}^{x, u_n, z}$ above is only through their restriction to the rectangle $\Rect_{x-e_1-e_2}^{v}$. Since $n$ does not feature on the far right-hand sides in \eqref{E38}, Proposition \ref{prop:Burke} implies the first probability in \eqref{E37} is   
\begin{align}
\label{E39}
\begin{split}
&\P\{\wh{\I}^{x, v, z}_{x-e_1-e_2, v-ie_1-e_2} > s_i \text{ and } \wh{\J}^{x, v, z}_{x-e_1-e_2, v-je_2-e_1} < t_j \text{ for } i \in [k], j \in [\ell]\} \\ 
&= \prod_{i \in [k]} \exp\{-(a_{v \cdot e_1-i}+z)s_i^+\} \prod_{j \in [\ell]} (1-\exp\{-(b_{v \cdot e_2 - j}-z) t_j^+\}). 
\end{split}
\end{align}
Lemma \ref{lem:halfstatLLN}\eqref{lem:halfstatLLN:z<} implies that 
\begin{align}
\label{E44}
\lim_{N \to \infty} \P\bigg\{\bigcup_{n \ge N} E_n^z\bigg\} = \P\bigg\{\bigcap_{N} \bigcup_{n \ge N} E_n^z\bigg\}
= 0. 
\end{align}
Sending $N\to\infty$ and  $z \nearrow \chi^x(\xi)$ in \eqref{E37} now implies the first inequality in \eqref{E49}. The second inequality is similar.
\end{proof}

\subsection{Limits of LPP increments in thin rectangles}
We turn to the Busemann functions associated with thin rectangles. Existence is immediate: for $x \in \bbZ^2$ and $(k, l) \in \bbZ_{\ge x}^2$, the monotonicity in Lemma \ref{lem:incineq} implies that
\begin{align}
\label{EBuseThin}
\begin{split}
\B_{x}^{(k, \infty), \hor} &= \sup_{n \ge x \cdot e_2} \I_{x, (k, n)} = \lim_{n \to \infty} \I_{x, (k, n)}, \qquad \B_{x}^{(k, \infty), \ver} = \inf_{n \ge x \cdot e_2} \J_{x, (k, n)} = \lim_{n \to \infty} \J_{x, (k, n)} \\
\B_{x}^{(\infty, \ell), \hor} &= \inf_{n \ge x \cdot e_1} \I_{x, (n, \ell)} = \lim_{n \to \infty} \I_{x, (n, \ell)}, \qquad \B_{x}^{(\infty, \ell), \ver} = \sup_{n \ge x \cdot e_1} \J_{x, (n, \ell)} = \lim_{n \to \infty} \J_{x, (n, \ell)}, 
\end{split}
\end{align}
where the first equalities are definitions. The preceding limits are readily identified from the definition \eqref{eq:incint} of the increments in the extreme cases below. 
\begin{align}
\label{EBuseThin-2}
\begin{split}
\B_x^{(k, \infty), \hor} &= \infty \quad \text{ and } \quad \B_x^{(k, \infty), \ver} = \w_x \quad \text{ if } k = x \cdot e_1, \\ 
\B_x^{(\infty, \ell), \hor} &= \w_x \quad \text{ and } \quad \B_x^{(\infty, \ell), \ver} = \infty \quad \text{ if } \ell = x \cdot e_2. 
\end{split}
\end{align}

We continue with the following recursion, which is the analogue of Lemma \ref{lem:BuseSc}(b) for the thin rectangle Busemann functions. This result follows from \eqref{eq:inrec} and \eqref{EBuseThin}.
\begin{lemma}
\label{lem:BuseThinRec}
Let $x \in \bbZ^2$, $(k, \ell) \in \bbZ^2_{\ge x+e_1+e_2}$ and $\square \in \{(k, \infty), (\infty, \ell)\}$. Then 
\begin{align*}
\B_x^{\square, \hor} = \w_x + (\B_{x+e_2}^{\square, \hor}-\B_{x+e_1}^{\square, \ver})^+ \quad \text{ and } \quad \B_x^{\square, \ver} = \w_x + (\B_{x+e_1}^{\square, \ver}-\B_{x+e_2}^{\square, \hor})^+. 
\end{align*}
\end{lemma}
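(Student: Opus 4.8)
The plan is to pass to the limit in the increment recursion \eqref{eq:inrec}, exactly as \eqref{EBuseThin} is obtained from \eqref{eq:incint}. Fix $x\in\bbZ^2$ and $(k,\ell)\in\bbZ^2_{\ge x+e_1+e_2}$, and first treat the case $\square=(k,\infty)$. Because $k\ge x\cdot e_1+1$, for every integer $n\ge x\cdot e_2+1$ the vertex $(k,n)$ coordinatewise dominates $x+e_1+e_2$, so \eqref{eq:inrec} applies with $y=(k,n)$ (and the model weights $\w$) and yields the pathwise identities
\begin{align*}
\I_{x,(k,n)}&=\w_x+\bigl(\I_{x+e_2,(k,n)}-\J_{x+e_1,(k,n)}\bigr)^+,\\
\J_{x,(k,n)}&=\w_x+\bigl(\J_{x+e_1,(k,n)}-\I_{x+e_2,(k,n)}\bigr)^+,
\end{align*}
in which every increment is a finite real, being a difference of two finite last-passage times (all relevant initial points are dominated by $(k,n)$). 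I would then send $n\to\infty$ and use \eqref{EBuseThin}, which gives $\I_{x,(k,n)}\uparrow\B_x^{(k,\infty),\hor}$, $\I_{x+e_2,(k,n)}\uparrow\B_{x+e_2}^{(k,\infty),\hor}$, $\J_{x+e_1,(k,n)}\downarrow\B_{x+e_1}^{(k,\infty),\ver}$ and $\J_{x,(k,n)}\downarrow\B_x^{(k,\infty),\ver}$.

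The one point that needs care is that some of these limits may equal $+\infty$, so one has to check that no indeterminate ``$\infty-\infty$'' is created when passing to the limit inside the positive parts. Here the decisive observation is that all the \emph{vertical} thin-rectangle Busemann values $\B_y^{(k,\infty),\ver}$ are finite: by \eqref{EBuseThin} each of them is a nonincreasing limit of the nonnegative reals $\J_{y,(k,n)}$, hence lies in $[0,\infty)$. Thus, in both displayed recursions, the expression inside $(\cdot)^+$ is the difference of a (possibly infinite) horizontal value and a finite vertical value, so it is well defined in $[-\infty,\infty]$; since $s\mapsto s^+$ is continuous and nondecreasing on $[-\infty,\infty]$ and adding the finite constant $\w_x$ is continuous there, the right-hand sides converge to $\w_x+\bigl(\B_{x+e_2}^{(k,\infty),\hor}-\B_{x+e_1}^{(k,\infty),\ver}\bigr)^+$ and $\w_x+\bigl(\B_{x+e_1}^{(k,\infty),\ver}-\B_{x+e_2}^{(k,\infty),\hor}\bigr)^+$, respectively. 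Matching these with the limits of the left-hand sides proves the lemma for $\square=(k,\infty)$.

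The case $\square=(\infty,\ell)$ is handled by the symmetric argument, applying \eqref{eq:inrec} at $y=(n,\ell)$ for $n\ge x\cdot e_1+1$, letting $n\to\infty$, and invoking the row-monotone limits in \eqref{EBuseThin} with the roles of $\hor$ and $\ver$ interchanged; this time it is the horizontal values $\B_y^{(\infty,\ell),\hor}$ that are automatically finite (nonincreasing limits of nonnegative reals), so again no indeterminate difference appears. I expect the only real bookkeeping to be this finiteness/indeterminacy check together with the elementary verification that $(k,n)\ge x+e_1+e_2$ (resp.\ $(n,\ell)\ge x+e_1+e_2$) for all large $n$, which is precisely where the hypothesis $(k,\ell)\ge x+e_1+e_2$ enters; everything else is immediate from \eqref{eq:inrec} and \eqref{EBuseThin}.
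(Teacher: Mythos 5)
Your proposal is correct and follows exactly the route the paper takes: Lemma \ref{lem:BuseThinRec} is obtained by passing to the limit in the increment recursion \eqref{eq:inrec} with terminal points $(k,n)$ (resp.\ $(n,\ell)$) and invoking the monotone limits \eqref{EBuseThin}. Your extra care about possible infinite horizontal values and the finiteness of the vertical (resp.\ horizontal) thin-rectangle Busemann values is a valid and welcome elaboration of what the paper leaves implicit.
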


Our next result records the monotonicity which is inherited from  Lemma \ref{lem:incineq}. 
\begin{lemma}
\label{lem:BuseThinMon}
Let $x  \in \bbZ^2$, $(k, \ell) \in \bbZ^2_{\ge x}$. 
The following statements hold for $k' \in \bbZ_{\ge k}$ and $\ell' \in \bbZ_{\ge \ell}$. 
\begin{alignat*}{2}
&\B_x^{(k, \infty), \hor} \ge \B_x^{(k', \infty), \hor}, \quad  \quad \quad &&\B_x^{(k, \infty), \ver} \le \B_x^{(k', \infty), \ver}, \\
&\B_x^{(\infty, \ell), \hor} \le \B_x^{(\infty, \ell'), \hor}, \quad  \text{ and } \quad &&\B_x^{(\infty, \ell), \ver} \ge \B_x^{(\infty, \ell'), \ver}. 
\end{alignat*}
\end{lemma}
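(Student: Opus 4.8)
The plan is to read off all four inequalities from two ingredients already in hand: the identification of the thin-rectangle Busemann functions as monotone limits of finite-volume increments in \eqref{EBuseThin}, and the comparison (path-crossing) inequalities of Lemma \ref{lem:incineq}.

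First I would treat the two inequalities with $\square = (k,\infty)$, fixing $k' \in \bbZ_{\ge k}$. For each fixed $n$ large enough that the increments $\I_{x,(k,n)}$, $\I_{x,(k',n)}$, $\J_{x,(k,n)}$, $\J_{x,(k',n)}$ are all well defined, I would work inside the rectangle $\Rect_x^{(k',n)}$ and apply Lemma \ref{lem:incineq}\eqref{lem:incineq:1} once for each of the $k'-k$ unit $e_1$-steps from $(k,n)$ to $(k',n)$, holding the second coordinate fixed; this yields $\I_{x,(k,n)} \ge \I_{x,(k',n)}$ and $\J_{x,(k,n)} \le \J_{x,(k',n)}$. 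Taking $\sup_n$ in the first inequality and $\inf_n$ in the second and invoking the first two lines of \eqref{EBuseThin} then gives $\B_x^{(k,\infty),\hor} \ge \B_x^{(k',\infty),\hor}$ and $\B_x^{(k,\infty),\ver} \le \B_x^{(k',\infty),\ver}$. The two inequalities with $\square = (\infty,\ell)$ are obtained symmetrically: for $\ell' \in \bbZ_{\ge \ell}$ and $n$ large, Lemma \ref{lem:incineq}(b) applied to the $\ell'-\ell$ unit $e_2$-steps from $(n,\ell)$ to $(n,\ell')$ gives $\I_{x,(n,\ell)} \le \I_{x,(n,\ell')}$ and $\J_{x,(n,\ell)} \ge \J_{x,(n,\ell')}$, and passing to the limit $n \to \infty$ via the last two lines of \eqref{EBuseThin} finishes these.

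The only points requiring a bit of care are the degenerate boundary cases, which I would dispose of separately using \eqref{EBuseThin-2}: when $k = x \cdot e_1$ one has $\B_x^{(k,\infty),\hor} = \infty$, which dominates everything, and $\B_x^{(k,\infty),\ver} = \w_x$, which is bounded above by every $\J_{x,(k',n)}$ by the increment recursion \eqref{eq:inrec}; the case $\ell = x \cdot e_2$ is symmetric. I do not anticipate any genuine obstacle here — the lemma is purely a bookkeeping consequence of monotonicity, and the main thing to keep straight is which terminal coordinate is being incremented in each of the four cases.
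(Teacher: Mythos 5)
Your argument is correct and coincides with the paper's intended proof: the lemma is stated there as monotonicity "inherited from Lemma \ref{lem:incineq}", i.e.\ exactly your step of applying the comparison inequalities to the finite increments $\I_{x,(k,n)},\J_{x,(k,n)}$ (resp.\ $\I_{x,(n,\ell)},\J_{x,(n,\ell)}$) for each fixed $n$ and then passing to the monotone limits in \eqref{EBuseThin}. Your separate treatment of the degenerate cases via \eqref{EBuseThin-2} is harmless extra care; the inequalities there also follow directly from the recovery identity \eqref{eq:wrec}.
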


We now turn to the distributional structure of the limits in \eqref{EBuseThin}. 
For part (b) below, recall from \eqref{eq:rec} that $\ir{k}{x} \in \{x \cdot e_1, \dotsc, k\}$ is the first index where the minimum of the sequence $a_{(x \cdot e_1): k}$ is attained. Likewise, for $\jr{\ell}{x}$ and the sequence $b_{(x \cdot e_2): \ell}$. 

\begin{lemma}
\label{lem:nwBuseDis}
Let $x = (i, j) \in \bbZ^2$ and $(k, \ell) \in \bbZ^2_{\ge x}$. Let $\square \in \{(k, \infty), (\infty, \ell)\}$, and $v = (\ir{k}{x}, \ell)$ if $\square = (k, \infty)$, and $v = (k, \jr{\ell}{x})$ if $\square = (\infty, \ell)$. The following statements hold. 
\begin{enumerate}[label={\rm(\alph*)}, ref={\rm\alph*}] \itemsep=3pt
\item $
\begin{aligned}[t]
\B_{x}^{(k, \infty), \hor} &\sim \Exp(a_{i}-\smin{a}_{i: k}), \qquad \B_{x}^{(k, \infty), \ver} \sim \Exp(b_{j}+\smin{a}_{i: k}), \\
\B_{x}^{(\infty, \ell), \hor} &\sim \Exp(a_{i} + \smin{b}_{j: \ell}), \qquad \ \B_{x}^{(\infty, \ell), \ver} \sim \Exp(b_{j}-\smin{b}_{j: \ell}). 
\end{aligned}
$
\item 
$
\begin{aligned}[t]
\B_{x}^{(k, \infty), \hor} &\stackrel{\rm{a.s.}}{=} \B_{x}^{(\ir{k}{x}, \infty), \hor}, \qquad \B_{x}^{(k, \infty), \ver} \stackrel{\rm{a.s.}}{=} \B_{x}^{(\ir{k}{x}, \infty), \ver}, \\
\B_{x}^{(\infty, \ell), \hor} &\stackrel{\rm{a.s.}}{=} \B_{x}^{(\infty, \jr{\ell}{x}), \hor}, \qquad \B_{x}^{(\infty, \ell), \ver} \stackrel{\rm{a.s.}}{=} \B_x^{(\infty, \jr{\ell}{x}), \ver}. 
\end{aligned}
$\\
\item For any down-right path $\pi$ from $(i, v \cdot e_2)$ to $(v \cdot e_1, j)$, the collection 
\begin{align*}
&\{\w_y: y \in \sG_{x, v, \pi}^-\} \cup \{\B_y^{\square, \hor}: y, y+e_1 \in \pi\} \\
&\cup \{\B_y^{\square, \ver}: y, y+e_2 \in \pi\} \cup \{\B_{y-e_1}^{\square, \hor} \wedge \B_{y-e_2}^{\square, \ver}: y \in \sG_{x, v, \pi}^+\}
\end{align*}
is independent.  
\end{enumerate}
\end{lemma}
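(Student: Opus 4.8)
\textbf{Proof proposal for Lemma \ref{lem:nwBuseDis}.}

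The plan is to reduce the entire statement to the Burke property (Proposition \ref{prop:Burke}) by identifying the thin-rectangle Busemann increments $\B_x^{\square,\hor}$ and $\B_x^{\square,\ver}$ with increments of a suitable increment-stationary process whose boundary is read off from the \emph{first} most favorable row or column in the thin strip. Consider first the case $\square=(k,\infty)$. Fix $x=(i,j)$ and recall $\ir{k}{x}\in\{i,\dots,k\}$ is the first index where $\smin{a}_{i:k}$ is attained. The key observation is that the horizontal strip $\Rect_{x}^{(k,n)}$ can be viewed, after removing the columns strictly left of $\ir{k}{x}$, as a rectangle whose \emph{south} boundary sits at row $j$ but whose passage-time increments near the left edge behave exactly like the west boundary increments of a stationary model $\wh{\G}^{u,v,z}$ with boundary parameter $z=\smin{a}_{i:k}$: indeed, the column at index $\ir{k}{x}$ carries weights $\Exp(a_{\ir{k}{x}}+b_m) = \Exp(\smin{a}_{i:k}+b_m)$, which is precisely the marginal \eqref{ESWBd} of a west-boundary weight with parameter $z=\smin{a}_{i:k}$ (reading $b_m - z$ with $z=-\smin{a}_{i:k}$, i.e. the role of $z$ is played by $-\smin{a}_{i:k}$). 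The columns $i,\dots,\ir{k}{x}-1$ to the left of $\ir{k}{x}$ all have $a$-parameters strictly larger than $\smin{a}_{i:k}$ and form the bulk portion $\sG^+$ above the relevant down-right path. Taking $n\to\infty$ and invoking the shape theorem in the form used in Lemma \ref{lem:halfstatLLN} (the statement that the geodesic from a left-boundary site asymptotically prefers the boundary when $z<\zmin{}(\xi)$; here the direction is $e_1$, which is the $\xi\to e_1$ degeneration), the increments $\I_{x,(k,n)}$ and $\J_{x,(k,n)}$ converge to the $\init{\I},\init{\J}$ increments of this identified stationary model.

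Concretely, I would proceed as follows. \textbf{Step 1 (identification of $v$ and the boundary).} Set $z = \smin{a}_{i:k}$ for the $(k,\infty)$ case (resp. $z=-\smin{b}_{j:\ell}$ for $(\infty,\ell)$), let $v=(\ir{k}{x},\ell)$ (resp. $(k,\jr{\ell}{x})$), and observe that on $\Rect_x^{(k,n)}$ the weights restricted to row/column through $v$ and to the bulk $\sG^+$ match, in distribution and jointly, the weights $\wh{\w}^{u,v,-z}$ of an increment-stationary model rooted appropriately, with the extra left columns $i,\dots,\ir{k}{x}-1$ being genuine bulk weights $\w$. This uses only the form \eqref{eq:wdef} of the weights and the definition \eqref{Ewh}, together with the fact that $a_m > \smin{a}_{i:k}$ strictly for $i\le m < \ir{k}{x}$. \textbf{Step 2 (passing to the limit).} Run the argument of Lemma \ref{lem:BuseSc} verbatim but with the fixed terminal column/row in place of the $\xi$-directed sequence: the monotone limits \eqref{EBuseThin} exist automatically, and by the analogue of Lemma \ref{lem:halfstatLLN} (with $\xi$ replaced by $e_1$, resp. $e_2$, and the strict inequality $z < \zmin{}$ becoming $\smin{a}_{i:k} < \sinf{b}_{j:\infty}$, which is \eqref{as:inf}) the competing maximum in the recursion \eqref{E17} is a.s. eventually attained on the side that forces $\wc{\I},\wc{\J}$ to agree with the stationary $\wh{\I},\wh{\J}$. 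This yields part (a): the marginals $\B_x^{(k,\infty),\hor}\sim\Exp(a_i-\smin{a}_{i:k})$ and $\B_x^{(k,\infty),\ver}\sim\Exp(b_j+\smin{a}_{i:k})$ come directly from the increment recursion \eqref{eq:inrec} applied to a stationary model with boundary parameter $\smin{a}_{i:k}$ (a horizontal bulk-to-boundary increment in such a model is $\Exp(a_i + (\text{boundary param})) $ with the appropriate sign), exactly as in \eqref{E52}. \textbf{Step 3 (part (b)).} Since $\smin{a}_{i:k} = \smin{a}_{i:\ir{k}{x}}$ and the weights of $\Rect_x^{(\ir{k}{x},n)}$ are a sub-collection of those of $\Rect_x^{(k,n)}$, and the limit depends on the environment only through this sub-rectangle after the geodesic localizes, the a.s. equalities $\B_x^{(k,\infty),\hor}\stackrel{\rm a.s.}{=}\B_x^{(\ir{k}{x},\infty),\hor}$ etc. follow from the planarity Lemma \ref{lem:plan} together with Step 2, precisely as \eqref{E36}--\eqref{E38} were used in the proof of Lemma \ref{lem:BuseSc}. \textbf{Step 4 (part (c)).} Independence along a down-right path $\pi$ from $(i,v\cdot e_2)$ to $(v\cdot e_1, j)$ is then inherited from Proposition \ref{prop:Burke}(d) applied to the identified stationary model $\wh{\w}^{u,v,-z}$: the Busemann increments along $\pi$ correspond to the $\wh{\I},\wh{\J}$ increments, the weights below $\pi$ (in $\sG^-$) correspond to bulk $\w$-weights, and the $\B_{y-e_1}^{\square,\hor}\wedge\B_{y-e_2}^{\square,\ver}$ quantities above $\pi$ (in $\sG^+$) correspond to the dual weights $(\wh{\w})^*$, matching the structure of Proposition \ref{prop:Burke}(d) after the reflection bookkeeping, exactly as \eqref{eq:Incwb}--\eqref{E108} converted Lemma \ref{lem:BuseSc}(d) into Proposition \ref{prop:BurkeNE}(d). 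The cases $\square=(\infty,\ell)$ are symmetric under interchanging the roles of the $a$- and $b$-sequences and $e_1\leftrightarrow e_2$.

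\textbf{Main obstacle.} The subtle point — and the one requiring real care rather than bookkeeping — is Step 1's claim that the first most favorable column $\ir{k}{x}$ legitimately plays the role of a \emph{stationary boundary}. A priori there is no stationary structure in the thin rectangle; the innovation (as the introduction flags) is to reinterpret the deterministic fact "there is a first minimizing column" as the presence of a boundary coming from an increment-stationary model with parameter $z=\smin{a}_{i:k}$. Making this precise requires checking that the joint law of $\{\w_p : i\le p\cdot e_1 < \ir{k}{x}\}$ (the genuine bulk columns to the left) together with the weights on and to the right of column $\ir{k}{x}$ coincides with the joint law of the bulk-plus-west-boundary weights of $\wh{\w}^{u,v,-\smin{a}_{i:k}}$ on the corresponding rectangle — this is where \eqref{as:inf} is needed to guarantee $z=\smin{a}_{i:k}$ lies in the admissible range $(-\smin{a}_{\,\cdot},\,\smin{b}_{\,\cdot})$ so that Proposition \ref{prop:Burke} applies — and then verifying that the localization of the maximizer in \eqref{E17} holds in this boundary regime, which is the content of the $e_1$-directed analogue of Lemma \ref{lem:halfstatLLN}. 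Once that identification and localization are in hand, parts (a)--(c) are formal consequences of the already-established Burke machinery.
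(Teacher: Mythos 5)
Your proposal follows essentially the same route as the paper's proof: you reinterpret the first minimizing column/row as the boundary of an increment-stationary model with parameter $-\smin{a}_{i:k}$ (resp.\ $\smin{b}_{j:\ell}$), invoke the axis-direction analogues of Lemma \ref{lem:halfstatLLN} (the paper's Lemmas \ref{lem:halfstatLLN2} and \ref{lem:halfstatLLN3}) to localize, run the Lemma \ref{lem:BuseSc}-style two-sided CDF sandwich with the fixed terminal column, and transfer the Burke independence through the boundary-recursion identification, which is exactly the paper's argument. The only quibbles are cosmetic: for initial-point increments the minimizing column plays the role of an east (northeast-model) boundary rather than a west one, which your ``reflection bookkeeping'' absorbs, and part (b) is obtained in the paper more directly from part (a) together with Lemma \ref{lem:BuseThinMon} and Lemma \ref{lem:disteq} rather than via planarity.
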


To prove the preceding lemma, we need a thin rectangle version of the exit point lemma (Lemma \ref{lem:halfstatLLN}) for the LPP process in \eqref{EGt}. 
\begin{lemma}
\label{lem:halfstatLLN2}
Let $x = (i, j) \in \bbZ^2$ and $(k, \ell) \in \bbZ^2_{\ge x}$. The following statements hold. 
\begin{enumerate}[label={\rm(\alph*)}, ref={\rm\alph*}] \itemsep=3pt
\item Let $z \in (-\smin{a}_{i: k},  \sinf{b}_{j: \infty})$. Then, a.s., $\wt{\G}_{x, (k+1, n)}^{x, (k, n), z} = \wt{\G}_{x, (k+1, n+1)}^{x, (k, n), z} > \wt{\G}_{x, (k, n+1)}^{x, (k, n), z}$ for $n \ge N$ for some {\rm(}random{\rm)} $N \in \bbZ_{> j}$. 
\item Let $z \in (- \sinf{a}_{i: \infty}, \smin{b}_{j: \ell})$. Then, a.s., $\wt{\G}_{x, (m, \ell+1)}^{x, (m, \ell), z} = \wt{\G}_{x, (m+1, \ell+1)}^{x, (m, \ell), z} > \wt{\G}_{x, (m+1, \ell)}^{x, (m, \ell), z}$ for $m \ge M$ for some {\rm(}random{\rm)} $M \in \bbZ_{>i}$. 
\end{enumerate}
\end{lemma}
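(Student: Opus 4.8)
The plan is to establish part~(a) in detail; part~(b) follows by the symmetric argument, exchanging the roles of $e_1$ and $e_2$, of $a$ and $b$, and of $z$ and $-z$, and using the other thin-rectangle limit in Proposition~\ref{prop:shape}. For part~(a), write $v=(k,n)$, so that $(k+1,n)=v+e_1$ lies on the east boundary column $k+1$ of the domain of $\wt\w^{x,v,z}$, $(k,n+1)=v+e_2$ lies on the north boundary row $n+1$, and $(k+1,n+1)=v+e_1+e_2$. Since $\wt\w^{x,v,z}_{v+e_1+e_2}=0$ by \eqref{Ewt} and $\wt\G_{x,v+e_1}^{x,v,z}>0$, the recursion \eqref{eq:LptRec} forces $\wt\G_{x,v+e_1+e_2}^{x,v,z}=\wt\G_{x,v+e_1}^{x,v,z}\vee\wt\G_{x,v+e_2}^{x,v,z}$, so the statement reduces to showing that $\bfP$-a.s.\ $\wt\G_{x,(k+1,n)}^{x,(k,n),z}>\wt\G_{x,(k,n+1)}^{x,(k,n),z}$ for all large $n$. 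As in the proof of Lemma~\ref{lem:halfstatLLN}, I will do this by identifying the almost sure linear growth rates of the two sides and checking that the hypothesis $-\smin{a}_{i:k}<z<\sinf{b}_{j:\infty}$ makes the left-hand side strictly the larger.

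To treat the first quantity I will use that any up-right path from $x=(i,j)$ to $(k+1,n)$ enters the boundary column $k+1$ at some row $\ell\in\{j,\dots,n\}$, giving $\wt\G_{x,(k+1,n)}^{x,(k,n),z}=\max_{j\le\ell\le n}\big[\,\Lpt_{x,(k,\ell)}(\w)+\sum_{m=\ell}^{n}\tau_{(k+1,m)}/(b_m-z)\,\big]$, where the column weights $\tau_{(k+1,m)}/(b_m-z)\sim\Exp(b_m-z)$ are independent of the bulk weights and do not depend on $n$. Since $b_m-z\ge\sinf{b}_{j:\infty}-z>0$ uniformly in $m$, these weights have bounded means and variances, so \eqref{as:weakcon} and a routine law of large numbers give $\ell^{-1}\sum_{m=j}^{\ell}\tau_{(k+1,m)}/(b_m-z)\to\int\beta(\dd b)/(b-z)=:R_2$ a.s., while the thin-rectangle shape theorem (second display of Proposition~\ref{prop:shape}) gives $\ell^{-1}\Lpt_{x,(k,\ell)}(\w)\to\int\beta(\dd b)/(b+\smin{a}_{i:k})=:R_1$, finite by \eqref{as:inf} since $\smin{a}_{i:k}+\underline{\beta}\ge\sinf{a}_{i:\infty}+\sinf{b}_{j:\infty}>0$. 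The key point is that $R_1<R_2$ precisely because $z>-\smin{a}_{i:k}$, so that $b-z<b+\smin{a}_{i:k}$ on the nonempty support of $\beta$; hence the maximum over $\ell$ is asymptotically realized for $\ell$ of order $1$ and $n^{-1}\wt\G_{x,(k+1,n)}^{x,(k,n),z}\to R_2$. To make the ``max over $\ell$'' step rigorous I will write the bracketed quantity as $\big(\Lpt_{x,(k,\ell)}(\w)-\ell R_1\big)+\ell(R_1-R_2)+\big(\sum_{m=j}^{\ell-1}\tau_{(k+1,m)}/(b_m-z)-\ell R_2\big)+\sum_{m=j}^{n}\tau_{(k+1,m)}/(b_m-z)$ and use $R_1-R_2<0$ together with $\sup_{j\le\ell\le n}|\Lpt_{x,(k,\ell)}(\w)-\ell R_1|=o(n)$ and the analogous bound for the partial column sums.

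For the second quantity the argument is shorter: a path from $x$ to $(k,n+1)=v+e_2$ runs through the bulk and then uses at most $k-i+1$ of the north-boundary weights $\tau_{(q,n+1)}/(a_q+z)\sim\Exp(a_q+z)$ (well defined since $z>-\smin{a}_{i:k}\ge-a_q$ for $i\le q\le k$), so $\Lpt_{x,(k,n)}(\w)\le\wt\G_{x,(k,n+1)}^{x,(k,n),z}\le\Lpt_{x,(k,n)}(\w)+(k-i+1)\max_{i\le q\le k}\tau_{(q,n+1)}/(a_q+z)$, where the lower bound uses monotonicity of $\Lpt$ in the terminal point. A Borel--Cantelli estimate gives $n^{-1}\max_{i\le q\le k}\tau_{(q,n+1)}/(a_q+z)\to0$ a.s., and Proposition~\ref{prop:shape} gives $n^{-1}\Lpt_{x,(k,n)}(\w)\to R_1$; hence $n^{-1}\wt\G_{x,(k,n+1)}^{x,(k,n),z}\to R_1$. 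Together with the previous paragraph and $R_1<R_2$ this yields $\wt\G_{x,(k+1,n)}^{x,(k,n),z}>\wt\G_{x,(k,n+1)}^{x,(k,n),z}$ for all large $n$, and the remaining identity $\wt\G_{x,(k+1,n)}^{x,(k,n),z}=\wt\G_{x,(k+1,n+1)}^{x,(k,n),z}$ then follows from the recursion noted in the first paragraph.

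I expect the main obstacle to be the uniformity in the first decomposition: one must rule out that an entry row $\ell$ growing proportionally to $n$ --- which blends the slower bulk rate $R_1$ with the faster column rate $R_2$ --- could outperform the path with $\ell$ of order $1$, \emph{uniformly} over all $n$. This will be handled by the elementary observation that, since $R_1<R_2$, one has $\sup_{j\le\ell\le n}\big(\Lpt_{x,(k,\ell)}(\w)-\sum_{m=j}^{\ell-1}\tau_{(k+1,m)}/(b_m-z)\big)=o(n)$ $\bfP$-a.s.; everything else reduces to Proposition~\ref{prop:shape} and standard large-deviation estimates, paralleling the i.i.d.\ precedents \cite{Sep-18,Geo-Ras-Sep-17-ptrf-1} behind Lemma~\ref{lem:halfstatLLN}.
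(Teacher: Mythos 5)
Your proof is correct, but it takes a more self-contained route than the paper. The paper's own argument is essentially two lines: it cites Theorem 3.7(b) of \cite{Emr-Jan-Sep-21} for the almost sure limits $n^{-1}\wt{\G}_{x,(k,n+1)}^{x,(k,n),z}\to\int\beta(\dd b)/(b+\smin{a}_{i:k})$ and $n^{-1}\wt{\G}_{x,(k+1,n+1)}^{x,(k,n),z}\to\int\beta(\dd b)/(b-z)$, observes that the second limit is strictly larger because $z>-\smin{a}_{i:k}$ and $\beta$ is nonzero, and concludes as in Lemma \ref{lem:halfstatLLN} using the zero corner weight. You instead compare the two boundary points $(k+1,n)$ and $(k,n+1)$ directly (equivalent, by the same $\vee$-identity at the corner that you verify first) and reprove the two laws of large numbers by hand: the east-boundary passage time via the decomposition over the entry row $\ell$ into column $k+1$, and the north-boundary one via the sandwich $\G_{x,(k,n)}\le\wt{\G}_{x,(k,n+1)}^{x,(k,n),z}\le\G_{x,(k,n)}+(k-i+1)\max_{i\le q\le k}\tau_{(q,n+1)}/(a_q+z)$, both anchored to the thin-rectangle limit already recorded in Proposition \ref{prop:shape} plus elementary LLN and Borel--Cantelli estimates. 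What this buys is independence from the boundary-weight shape theorem of \cite{Emr-Jan-Sep-21}; what it costs is length and the uniform-in-$\ell$ fluctuation bounds, which, as you indicate, do follow from the pointwise limits since $\max_{\ell\le n}|c_\ell-\ell R|=o(n)$ whenever $c_\ell/\ell\to R$.

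Two small repairs. In your displayed decomposition the third bracket must enter with a minus sign: the bracketed quantity equals $\bigl(\Lpt_{x,(k,\ell)}(\w)-\ell R_1\bigr)+\ell(R_1-R_2)-\bigl(\sum_{m=j}^{\ell-1}\tau_{(k+1,m)}/(b_m-z)-\ell R_2\bigr)+\sum_{m=j}^{n}\tau_{(k+1,m)}/(b_m-z)$ (up to an $O(1)$ correction from replacing $\ell-j$ by $\ell$); as written the identity is false, though the estimate you then run is unaffected, since both fluctuation brackets are $o(n)$ uniformly over $j\le\ell\le n$ and the drift term is nonpositive. Also, the ``routine law of large numbers'' for $\ell^{-1}\sum_{m\le \ell}\tau_{(k+1,m)}/(b_m-z)$ deserves one more sentence: center with Kolmogorov's SLLN using the uniform variance bound $(\sinf{b}_{j:\infty}-z)^{-2}$, then apply \eqref{as:weakcon} to a continuous extension of $b\mapsto(b-z)^{-1}$ vanishing at infinity, which is legitimate because $\underline{\beta}\ge\sinf{b}_{j:\infty}>z$; this is standard and parallels computations elsewhere in the paper, but it is exactly where \eqref{as:inf}--\eqref{as:weakcon} enter.
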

\begin{proof}
Let $z \in (-\smin{a}_{i: k},  \sinf{b}_{j: \infty})$. It follows from \cite[Theorem 3.7(b)]{Emr-Jan-Sep-21} that, a.s., 
\begin{align*}
n^{-1}\wt{\G}_{x, (k, n+1)}^{x, (k, n), z} \to \int \frac{\beta(\dd b)}{b+\smin{a}_{i: k}} \quad \text{ and } \quad n^{-1}\wt{\G}_{x, (k+1, n+1)}^{x, (k, n), z} \to \int \frac{\beta(\dd b)}{b-z} \quad \text{ as } n \to \infty. 
\end{align*} 
Since $\beta$ is assumed nonzero, the second limit is strictly larger by the assumption $z > -\smin{a}_{i: k}$. This implies part (a) as in proof of Lemma \ref{lem:halfstatLLN}. The proof of part (b) is similar.
\end{proof}

A useful special case of Lemma \ref{lem:halfstatLLN2} is recorded as the following lemma. The idea (in part (a)) is that if $k = \ir{k}{x} > i$ then one can regard the weights along column $k$ as the east boundary weights with boundary parameter $z = -a_k = - \smin{a}_{i:k}$. 

\begin{lemma}
\label{lem:halfstatLLN3}
Let $x = (i, j) \in \bbZ^2$, and $(k, \ell) \in \bbZ^2_{\ge x}$. The following statements hold. 
\begin{enumerate}[label={\rm(\alph*)}, ref={\rm\alph*}] \itemsep=3pt
\item If $k = \ir{k}{x} > i$ then, a.s., $\wt{\G}_{x, (k, n)}^{x, (k-1, n), -a_k} = \wt{\G}_{x, (k, n+1)}^{x, (k-1, n), -a_k}$ for $n \ge N$ for some {\rm(}random{\rm)} $N \in \bbZ_{>j}$.   
\item If $\ell = \jr{\ell}{x} > j$ then, a.s., $\wt{\G}_{x, (m, \ell)}^{x, (m, \ell-1), b_{\ell}} = \wt{\G}_{x, (m+1, \ell)}^{x, (m, \ell-1), b_{\ell}}$ for $m \ge M$ for some {\rm(}random{\rm)} $M \in \bbZ_{>i}$. 
\end{enumerate}
\end{lemma}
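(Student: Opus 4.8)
The plan is to obtain Lemma~\ref{lem:halfstatLLN3} as a short corollary of Lemma~\ref{lem:halfstatLLN2}; the only work is to pick the boundary parameter correctly and to check that the choice is admissible. The guiding observation is the one stated just before the lemma: when $k=\ir{k}{x}>i$ the running minimum of $a_{i:k}$ is attained for the first time at $k$, so $a_k=\smin{a}_{i:k}$, and with boundary parameter $z=-a_k$ the east-boundary weights of the model $\wt{\G}^{x,(k-1,n),-a_k}$ (which lives on $\Rect_{x}^{(k-1,n)+e_1+e_2}$, with its east boundary precisely on column $k$) have marginals $\Exp(b_{j'}-z)=\Exp(b_{j'}+a_k)$ along that column, matching the law of the bulk weights $\w_{k,j'}\sim\Exp(a_k+b_{j'})$; compare \eqref{ESWBd} with \eqref{eq:wdef}. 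Part (a) is then exactly the statement that, for all large $n$, the last-passage path from $x$ to the corner $(k,n+1)$ exits through this east boundary, at $(k,n)$, rather than through the north boundary at $(k-1,n+1)$.

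First I would apply Lemma~\ref{lem:halfstatLLN2}(a) with $k$ replaced by $k-1$ and $z=-a_k$; its conclusion then reads
\[
\wt{\G}^{x,(k-1,n),-a_k}_{x,(k,n)}=\wt{\G}^{x,(k-1,n),-a_k}_{x,(k,n+1)}>\wt{\G}^{x,(k-1,n),-a_k}_{x,(k-1,n+1)}\qquad\text{for }n\ge N,
\]
and the first equality is precisely part (a) (the strict inequality being extra information that we discard). So the whole content reduces to checking $z=-a_k\in(-\smin{a}_{i:k-1},\,\sinf{b}_{j:\infty})$, the interval required by Lemma~\ref{lem:halfstatLLN2}(a). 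For the lower bound, $\ir{k}{x}=k>i$ forces $a_{i'}>\smin{a}_{i:k}=a_k$ for all $i\le i'\le k-1$, hence $\smin{a}_{i:k-1}>a_k$, i.e.\ $-a_k>-\smin{a}_{i:k-1}$. For the upper bound, $k\ge i$ gives $a_k\ge\sinf{a}_{i:\infty}$, so by \eqref{as:inf}, $a_k+\sinf{b}_{j:\infty}\ge\sinf{a}_{i:\infty}+\sinf{b}_{j:\infty}>0$, i.e.\ $-a_k<\sinf{b}_{j:\infty}$. This settles part (a).

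Part (b) is symmetric, viewing row $\ell$ as a north boundary at parameter $z=b_\ell$ (so the north-boundary weights $\Exp(a_{i'}+b_\ell)$ match the bulk along row $\ell$): apply Lemma~\ref{lem:halfstatLLN2}(b) with $\ell$ replaced by $\ell-1$ and $z=b_\ell$. Here $\jr{\ell}{x}=\ell>j$ gives $b_\ell=\smin{b}_{j:\ell}<\smin{b}_{j:\ell-1}$, and $\ell\ge j$ together with \eqref{as:inf} gives $b_\ell+\sinf{a}_{i:\infty}\ge\sinf{b}_{j:\infty}+\sinf{a}_{i:\infty}>0$; hence $z=b_\ell\in(-\sinf{a}_{i:\infty},\,\smin{b}_{j:\ell-1})$, the admissible range for Lemma~\ref{lem:halfstatLLN2}(b), whose conclusion then reduces to the equality asserted in part (b). There is no substantive obstacle in this argument — it is essentially a one-line reduction — and the only thing requiring care is the bookkeeping: identifying which column (resp.\ row) of the boundary model plays the role of the east (resp.\ north) boundary, and getting the sign of the boundary parameter right ($z=-a_k$ versus $z=+b_\ell$), which is forced by matching the exponential rates $\Exp(a+z)$ and $\Exp(b-z)$ from \eqref{ESWBd} against the bulk rate $\Exp(a_i+b_j)$.
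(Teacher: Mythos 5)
Your proposal is correct and matches the paper's proof, which likewise obtains the lemma by applying Lemma \ref{lem:halfstatLLN2} with $(k-1,n)$ (resp.\ $(m,\ell-1)$) in place of $(k,n)$ (resp.\ $(m,\ell)$) and boundary parameter $z=-a_k$ (resp.\ $z=b_\ell$). Your explicit verification that $-a_k\in(-\smin{a}_{i:k-1},\sinf{b}_{j:\infty})$ and $b_\ell\in(-\sinf{a}_{i:\infty},\smin{b}_{j:\ell-1})$ is exactly the bookkeeping the paper leaves implicit, and it is carried out correctly.
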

\begin{proof}
To obtain part (a), apply Lemma \ref{lem:halfstatLLN2} with $(k-1, n)$ in place of $(k, n)$ and with $z = -a_k$. The proof of part (b) is similar. 
\end{proof}

We are now ready to prove the main lemma for the current subsection. 

\begin{proof}[Proof of Lemma \ref{lem:nwBuseDis}]
Recall that $x = (i, j) \in \bbZ^2$ and $(k, \ell) \in \bbZ^2_{\ge x}$. By symmetry, it suffices to prove the assertions of the lemma related to the $(k, \infty)$ Busemann functions. Hence, the vertex $v = (m, \ell)$ where $m = \ir{k}{x}$ and (without loss of generality) $\ell > j$. Somewhat similarly to the proof of Lemma \ref{lem:BuseSc}, the main part of our argument is to derive suitable bounds for the joint CDF of the Busemann functions along the northeast boundary of the rectangle $\Rect_{x}^{v}$. To this end, write $p = m-i$ and $q = \ell-j > 0$ for the side lengths of $\Rect_x^v$, and pick any $x_r, y_s \in \bbR$ for $r \in [p] \cup \{0\}$ and $s \in [q]$. 


\underline{Lower bound}. 
Let $z \in (-a_m, \inf b_{j, \infty})$. By \eqref{Ecpl} and Lemma \ref{lem:incineq}, for any $n \in \bbZ_{>\ell}$,
\begin{align}
\label{E78}
\begin{split}
\I_{(m-r, \ell), (k, n)} &= \wt{\I}_{(m-r, \ell), (k, n)}^{x, (k, n), z} \ge \wt{\I}_{(m-r, \ell), (k+1, n)}^{x, (k, n), z} \ \quad \text{ for } r \in [p] \cup \{0\}, \\ 
\J_{(m, \ell-s), (k, n)} &= \wt{\J}_{(m, \ell-s), (k, n)}^{x, (k, n), z} \le \wt{\J}_{(m, \ell-s), (k+1, n)}^{x, (k, n), z} \quad  \text{ for } s \in [q]. 
\end{split}
\end{align}
Next consider the event 
\begin{align}
\label{E76}
E_n^z = \{\wt{\G}_{(i, \ell), (k, n+1)}^{x, (k, n), z} = \wt{\G}_{(i, \ell), (k+1, n+1)}^{x, (k, n), z}\} = \{\wt{\G}_{(i, \ell), (k, n+1)}^{(i, \ell), (k, n), z} = \wt{\G}_{(i, \ell), (k+1, n+1)}^{(i, \ell), (k, n), z}\}.
\end{align}
The second equality in \eqref{E76} is due the event $E_n^z$ depending only on the smaller collection of weights $\wt{\w}^{(i, \ell), (k, n), z}$. As a consequence of Lemma \ref{lem:plan}(a), on the complement of $E_n^z$, 
\begin{align}
\label{E77}
\begin{split}
\wt{\I}_{(m-r, \ell), (k+1, n)}^{x, (k, n), z} &= \wt{\I}^{x, (k, n), z}_{(m-r, \ell), (k+1, n+1)} \quad \text{ for } r \in [p] \cup \{0\}, \\ 
\wt{\J}_{(m, \ell-s), (k+1, n)}^{x, (k, n), z} &= \wt{\J}^{x, (k, n), z}_{(m, \ell-s), (k+1, n+1)} \quad \text{ for } s \in [q]. 
\end{split}
\end{align}
It follows from \eqref{E78}, \eqref{E77} and a union bound that  
\begin{align}
\label{E11}
\begin{split}
&\P\bigl\{\I_{(m-r, \ell), (k, n)} > x_r \text{ for } r \in [p] \cup \{0\} \text{ and } \J_{(m, \ell-s), (k, n)} < y_s \text{ for } s \in [q]\bigr\} \\
&\ge \P\bigl\{\,\wt{\I}_{(m-r, \ell), (k+1, n)}^{x, (k, n), z} > x_r \text{ for } r \in [p] \cup \{0\} \text{ and } \wt{\J}_{(m, \ell-s), (k+1, n)}^{x, (k, n), z} < y_s \text{ for } s \in [q]\bigr\}\\ 
&\ge \P\bigl\{\,\wt{\I}_{(m-r, \ell), (k+1, n+1)}^{x, (k, n), z} > x_r \text{ for } r \in [p] \cup \{0\} \text{ and } \wt{\J}_{(m, \ell-s), (k+1, n+1)}^{x, (k, n), z} < y_s \text{ for } s \in [q]\bigr\}\\ 
&-\P\{E_n^z\}\\
&= \prod_{r \in [p] \cup \{0\}} \exp\{-(a_{m-r}+z)x_r^+\}\prod_{s \in [q]} \exp\{-(b_{\ell-s}-z)y_s^+\}-\P\{E_n^z\}.
\end{split}
\end{align}
The exact expression in the last step of \eqref{E11} is due to Proposition \ref{prop:BurkeNE}. 
Via the second representation of the event $E_n^z$ in \eqref{E76} and Lemma \ref{lem:halfstatLLN2}(a), one has $\P\{E_n^z\} \to 0$ as $n \to \infty$. 
Therefore, letting $n \to \infty$ and then $z \downarrow -a_m$ in \eqref{E11} yields  
\begin{align}
\label{E12}
\begin{split}
&\P\bigl\{\B_{(m-r, \ell)}^{(k, \infty), \hor} > x_r \text{ for } r \in [p] \cup \{0\} \text{ and } \B_{(m, \ell-s)}^{(k, \infty), \ver} < y_s \text{ for } s \in [q]\bigr\} \\
&\ge \prod_{r \in [p]} \exp \{-(a_{m-r}-a_m)x_r^+\} \prod_{s \in [q]} (1-\exp\{-(b_{\ell-s}+a_m)y_s^+\}). 
\end{split}
\end{align}
In particular, one obtains from \eqref{E12} that  
\begin{align}
\label{E116}
\B_{(m, \ell)}^{(k, \infty), \hor} \stackrel{\rm{a.s.}}{=} \infty. 
\end{align}

\underline{Upper bound}. We next develop an upper bound matching \eqref{E12}. 
The key new observation is that one can profitably interpret $-a_m$ as a boundary parameter in this setting.

Since $m \le k$, repeated use of the first inequality in Lemma \ref{lem:incineq}(a) gives    
\begin{align}
\label{E79}
\begin{split}
\I_{(m-r, \ell), (k, n)} &\le \I_{(m-r, \ell), (m, n)} \quad \text{ for } r \in [p], \\ 
\J_{(m, \ell-s), (k, n)} &\ge \J_{(m, \ell-s), (m, n)} = \w_{(m, \ell-s)} 
\quad \text{ for } s \in [q]. 
\end{split}
\end{align}
The last equality in \eqref{E79} comes from the definition in \eqref{eq:incint}. 
By \eqref{E79}, 
\begin{align}
\label{E22}
\begin{split}
&\P\{\I_{(m-r, \ell), (k, n)} > x_r \text{ for } r \in [p] \text{ and } \J_{(m, \ell-s), (k, n)} < y_s \text{ for } s \in [q]\} \\
&\le \P\{\I_{(m-r, \ell), (m, n)} > x_r \text{ for } r \in [p] \text{ and } \w_{(m, \ell-s)} < y_s \text{ for } s \in [q]\} \\ 
&= \P\{\I_{(m-r, \ell), (m, n)} > x_r \text{ for } r \in [p]\} \cdot \P\{\w_{(m, \ell-s)} < y_s \text{ for } s \in [q]\} \\
&= \P\{\I_{(m-r, \ell), (m, n)} > x_r \text{ for } r \in [p]\} \cdot \prod_{s \in [q]} (1-\exp\{-(b_{\ell-s}+a_m)y_s^+\}). 
\end{split}
\end{align}
For the third line in \eqref{E22}, note that the increments $\I_{(m-r, \ell), (m, n)}$ for $r \in [p]$ do not use the bulk weights below the horizontal level $\ell$. 

To continue developing the bound in \eqref{E22}, consider the case $m > i$. Since $a_{m-r} < a_m$ for $r \in [p]$, the weights $\wt{\w}^{x, (m-1, n), -a_m}$ defined on $\Rect_{x}^{(m, n+1)}$ by \eqref{Ewt} make sense. In fact, these weights coincide with the bulk weights on $\Rect_{x}^{(m, n)}$: 
\begin{align}
\label{E82}
\wt{\w}^{x, (m-1, n), -a_m}_y = \w_y \quad \text{ for } y \in \Rect_{x}^{(m, n)}. 
\end{align}
The preceding equality holds on $\Rect_x^{(m-1, n)}$ due to \eqref{Ecpl}. Thus, the new content of \eqref{E82} is that the equality holds also along the column $\{m\} \times [n]$, which acts as the east boundary. On account of \eqref{E82}, one has 
\begin{align}
\label{E83}
\begin{split}
\I_{(m-r, \ell), (m, n)} &= \wt{\I}_{(m-r, \ell), (m, n)}^{x, (m-1, n), -a_m} \quad \text{ for } r \in [p].
\end{split}
\end{align}
Now introduce the event 
\begin{align}
\label{E80}
\begin{split}
F_n &= \{\wt{\G}_{(i, \ell), (m-1, n+1)}^{x, (m-1, n), -a_m} = \wt{\G}_{(i, \ell), (m, n+1)}^{x, (m-1, n), -a_m}\}= \{\wt{\G}_{(i, \ell), (m-1, n+1)}^{(i, \ell), (m-1, n), -a_m} = \wt{\G}_{(i, \ell), (m, n+1)}^{(i, \ell), (m-1, n), -a_m}\}. 
\end{split}
\end{align}
The second equality in \eqref{E80} holds because the event $F_n$ depends only on the weights $\wt{\w}^{(i, \ell), (m-1, n), -a_m}$. 
As a consequence of Lemma \ref{lem:plan}(b), one can switch the terminal points from $(m, n)$ to $(m, n+1)$ below on the complement of $F_n$: 
\begin{align}
\label{E81}
\begin{split}
\wt{\I}_{(m-r, \ell), (m, n)}^{x, (m-1, n), -a_m} &= \wt{\I}_{(m-r, \ell), (m, n+1)}^{x, (m-1, n), -a_m} \quad \text{ for } r \in [p].
\end{split}
\end{align}
By \eqref{E83}, \eqref{E81} and a union bound, 
\begin{align}
\label{E84}
\begin{split}
&\P\{\I_{(m-r, \ell), (m, n)} > x_r \text{ for } r \in [p] \} 
= \P\{\wt{\I}_{(m-r, \ell), (m, n)}^{x, (m-1, n), -a_m} > x_r \text{ for } r \in [p]\} \\
&\le \P\{\wt{\I}_{(m-r, \ell), (m, n+1)}^{x, (m-1, n), -a_m} > x_r \text{ for } r \in [p]\} 
+ \P\{F_n\} \\ 
&= \prod_{r \in [p]} \exp \{-(a_{m-r}-a_m)x_r^+\} 
+\P\{F_n\}. 
\end{split}
\end{align}
The first term at the end of \eqref{E84} is again computed through Proposition \ref{prop:BurkeNE}. 
One can conclude from the second form of $F_n$ in \eqref{E80} and Lemma \ref{lem:halfstatLLN3}(a) (the condition $k = \ir{k}{x} > i$ in the lemma is precisely that $m > i$ holds) that $\P\{F_n\} \to 0$ as $n \to \infty$. Consequently, combining \eqref{E22} and \eqref{E84} and then passing to the limit as $n \to \infty$ gives 
\begin{align}
\label{E21}
\begin{split}
&\P\{\B_{(m-r, \ell)}^{(k, \infty), \hor} > x_r \text{ for } r \in [p] \text{ and } \B_{(m, \ell-s)}^{(k, \infty), \ver} < y_s \text{ for } s \in [q]\} \\ 
&\le \prod_{r \in [p]} \exp \{-(a_{m-r}-a_m)x_r^+\}  \prod_{s \in [q]} (1-\exp\{-(b_{\ell-s}+a_m)y_s^+\}), 
\end{split}
\end{align}
which provides the sought upper bound for the case $m > i$. The remaining case $m = i$ is already contained in \eqref{E22}. 

\underline{Completing the proof}. The matching bounds in \eqref{E12} and \eqref{E21} together with Lemma \ref{lem:disteq} imply that 
\begin{align}
\label{E23}
\begin{split}
&\B_{(m-r, \ell)}^{(k, \infty), \hor} \sim \Exp\{a_{m-r}-a_m\} \quad \text{ for } r \in [p] \cup \{0\}, \\
&\B_{(m, \ell-s)}^{(k, \infty), \ver} \stackrel{\rm{a.s.}}{=} \w_{(m, \ell-s)} 
\sim \Exp\{b_{\ell-s}+a_m\} \quad \ \ \text{ for } s \in [q], \text{ and }\\
&\{\B_{(m-r, \ell)}^{(k, \infty), \hor}: r \in [p]\} \cup \{\B_{(m, \ell-s)}^{(k, \infty), \ver}: s \in [q]\} \text{ are mutually independent. }
\end{split}
\end{align}
Because $x$ and $\ell$ are arbitrary, the first line of \eqref{E23} yields  
\begin{align}
\label{E117}
\B_y^{(k, \infty), \hor} \sim \Exp\{a_{y \cdot e_1}-\smin{a}_{y \cdot e_1, k}\} \quad \text{ for } y \in \bbZ^2 \text{ with } y \cdot e_1 \le k, 
\end{align}
proving the first statement in part (a). 

Next, considering the case $p = m-i > 0$, introduce a variation of the weights in \eqref{eq:wb} on the rectangle $\Rect_x^{v}$ (where $v = (m, \ell)$) as follows.  
\begin{align}
\label{eq:wb-thin}
\begin{split}
\wb{\w}_y^{x, v, (k, \infty)} &= 
\B_y^{(k, \infty), \hor} \cdot \one_{\{y \cdot e_1 < m, y \cdot e_2 = \ell\}} + \w_y \cdot \one_{\{y \cdot e_2 < \ell\}} \\ 
&\stackrel{\rm{a.s.}}{=} \B_y^{(k, \infty), \hor} \cdot \one_{\{y \cdot e_1 < m, y \cdot e_2 = \ell\}} + \B_y^{(k, \infty), \ver} \cdot \one_{\{y \cdot e_1 = m, y \cdot e_2 < \ell\}}\\ 
&+ \w_y \cdot \one_{\{y \cdot e_1 < m, y \cdot e_2 < \ell\}} \quad \text{ for } y \in \Rect_{x}^v. 
\end{split}
\end{align}
Comparing \eqref{eq:wb-thin} with the $\wt{w}$-weights defined at \eqref{Ewt} implies the following:  
\begin{align}
\label{E111}
\wb{\w}^{x, v, (k, \infty)} \stackrel{\rm{dist.}}{=} \wt{\w}^{x, (m-1, \ell-1), -a_{m}}. 
\end{align}
Similarly to \eqref{eq:Incwb}, one also obtains the identities 
\begin{align}
\label{E110}
\begin{split}
\wb{\I}^{x, v, (k, \infty)}_{y, (m, \ell)} &= \underline{\I}_{y, (m, \ell)}(\wb{\w}^{x, v, (k, \infty)}) \stackrel{\rm{a.s.}}{=} \B_y^{(k, \infty), \hor} \quad \text{ if } y \cdot e_1 < m, \\
\wb{\J}^{x, v, (k, \infty)}_{y, (m, \ell)} &= \underline{\J}_{y, (m, \ell)}(\wb{\w}^{x, v, (k, \infty)}) \stackrel{\rm{a.s.}}{=} \B_y^{(k, \infty), \ver} \quad \text{ if } y \cdot e_2 < \ell
\end{split}
\end{align}
for $y \in \Rect_x^v$ through the agreement of the recursions in \eqref{eq:inrec} and Lemma \ref{lem:BuseThinRec} and their boundary values on the north and east sides. In view of \eqref{E111} and \eqref{E110}, it follows from Proposition \ref{prop:BurkeNE}(b) that 
\begin{align}
\label{E115}
\begin{split}
\B_y^{(k, \infty), \ver} &\stackrel{\rm{dist.}}{=} \underline{\J}_{y, (m, \ell)}(\wt{\w}^{x, (m-1, \ell-1), -a_{m}}) = \wt{\J}_{y, (m, \ell)}^{x, (m-1, \ell-1), -a_m} \sim \Exp(b_{y \cdot e_2}+a_m) 
\end{split}
\end{align}
for $y \in \Rect_x^v$ with $y \cdot e_2 < \ell$. With \eqref{E115}, the second statement in part (a) is also proved.

Part (b) follows from part (a), the monotonicities in Lemma \ref{lem:BuseThinMon} and Lemma \ref{lem:disteq}. 

To finish the proof, pick any down-right path $\pi$ from $(i, \ell)$ to $(m, j)$. If $m > i$ then, by \eqref{E111}, \eqref{E110} and Proposition \ref{prop:BurkeNE}(d), the collection 
\begin{align}
\label{E112}
 \begin{split}
&\{\w_y: y \in \sG_{x, v, \pi}^-\} \cup \{\B_y^{(k, \infty), \hor}: y, y+e_1 \in \pi\} \\ 
&\cup \{\B_{y}^{(k, \infty), \ver}: y, y+e_2 \in \pi\} \cup \{\B_{y-e_1}^{(k, \infty), \hor} \wedge \B_{y-e_2}^{(k, \infty), \ver}: y \in \sG_{x, v, \pi}^+\},  
\end{split}
\end{align}
which is distributionally equivalent to 
\begin{align*}
\begin{split}
&\{\w_y: y \in \sG_{x, v, \pi}^-\} \cup \{\wt{\I}_{y, (m, \ell)}^{x, (m-1, \ell-1), -a_m}: y, y+e_1 \in \pi\} \cup \{\wt{\J}_{y, (m, \ell)}^{x, (m-1, \ell-1), -a_m}: y, y+e_2 \in \pi\} \\ 
&\cup \{\wt{\I}_{y-e_1, (m, \ell)}^{x, (m-1, \ell-1), -a_m} \wedge \wt{\J}_{y-e_2, (m, \ell)}^{x, (m-1, \ell-1), -a_m}: y \in \sG_{x, v, \pi}^+\}, 
\end{split}
\end{align*}
is independent. In the remaining case $m = i$, the preceding independence also holds by the second line of \eqref{E23}. 
\end{proof}

\subsection{Limits of LPP increments in flat regions}
We continue with the study of Busemann functions in the flat regions. Recall from Section \ref{sec:LimShp} that for each $x \in \bbZ^2$, there are two (possibly empty) flat regions: one between the vertical axis and direction $\mfc_1^x$ and another between the horizontal axis and direction $\mfc_2^x$. Define the Busemann functions along the critical directions by  
\begin{align}
\label{EBuseCrit}
\begin{split}
&\B_x^{\mfc_1^x, \hor} = \inf \limits_{\substack{k \in \bbZ \\ k \ge x \cdot e_1}} \B_x^{(k, \infty), \hor} = \lim_{k \to \infty} \B_x^{(k, \infty), \hor}, \ \B_x^{\mfc_1^x, \ver} = \sup \limits_{\substack{k \in \bbZ \\ k \ge x \cdot e_1}} \B_x^{(k, \infty), \ver} = \lim_{k \to \infty} \B_x^{(k, \infty), \ver}, \\
&\B_x^{\mfc_2^x, \hor} = \sup \limits_{\substack{\ell \in \bbZ \\ \ell \ge x \cdot e_2}} \B_x^{(\infty, \ell), \hor} = \lim_{\ell \to \infty} \B_x^{(\infty, \ell), \hor}, \ \ \B_x^{\mfc_2^x, \ver} = \inf \limits_{\substack{\ell \in \bbZ \\ \ell \ge x \cdot e_2}} \B_x^{(\infty, \ell), \ver} = \lim_{\ell \to \infty} \B_x^{(\infty, \ell), \ver}. 
\end{split}
\end{align}
The second equalities above are due to Lemma \ref{lem:BuseThinMon}. 
Part \eqref{BuseFlat.a} of the next lemma shows that the definitions \eqref{EBuseCrit} from outside the concave region are matched by limits from inside the concave region. 
Part \eqref{BuseFlat.b} then shows that these quantities capture limits of LPP increments in flat regions.

\begin{lemma}
\label{lem:BuseFlat}
The following statements hold for each $x \in \bbZ^2$ $\P$-almost surely. 
\begin{enumerate}[label={\rm(\alph*)}, ref={\rm\alph*}] \itemsep=3pt
\item\label{BuseFlat.a} Let  $\square \in \{+, -\}$.
\begin{align*}
\B_x^{\mfc_1^x, \hor} &= \sup \limits_{\substack{\zeta \in ]\mfc_1^x, \mfc_2^x[}} \B_x^{\zeta\square, \hor} = \lim \limits_{\zeta \downarrow \mfc_1^x} \B_x^{\zeta\square, \hor}, 
\quad 
\B_x^{\mfc_1^x, \ver} = \inf \limits_{\substack{\zeta \in ]\mfc_1^x, \mfc_2^x[}} \B_x^{\zeta\square, \ver} = \lim \limits_{\zeta \downarrow \mfc_1^x} \B_x^{\zeta\square, \ver} \\
\B_x^{\mfc_2^x, \hor} &= \inf \limits_{\substack{\zeta \in ]\mfc_1^x, \mfc_2^x[}} \B_x^{\zeta\square, \hor} = \lim \limits_{\zeta \uparrow \mfc_2^x} \B_x^{\zeta\square, \hor}, 
\quad 
\B_x^{\mfc_2^x, \ver} = \sup \limits_{\substack{\zeta \in ]\mfc_1^x, \mfc_2^x[}} \B_x^{\zeta\square, \ver} = \lim \limits_{\zeta \uparrow \mfc_2^x} \B_x^{\zeta\square, \ver}. 
\end{align*}
\item\label{BuseFlat.b} If $\xi \in [e_2, \mfc_1^x]$ and $(u_n)_{n \in \bbZ_{>0}}$ satisfies $n^{-1}u_n \to \xi$ and  $u_n \cdot e_1 \to \infty$, then 
\begin{align*}
\B_x^{\mfc_1^x, \hor} \stackrel{\rm{a.s.}}{=} \lim_{n \to \infty}\I_{x, u_n}\quad \text{ and } \quad \B_x^{\mfc_1^x, \ver} \stackrel{\rm{a.s.}}{=} \lim_{n \to \infty}\J_{x, u_n}. 
\end{align*}
If $\xi \in [\mfc_2^x, e_1]$  and $(u_n)_{n \in \bbZ_{>0}}$ satisfies $n^{-1}u_n \to \xi$ and $u_n \cdot e_2 \to \infty$, then 
\begin{align*}
\B_x^{\mfc_2^x, \hor} \stackrel{\rm{a.s.}}{=} \lim_{n \to \infty}\I_{x, u_n}\quad \text{ and } \quad \B_x^{\mfc_2^x, \ver} \stackrel{\rm{a.s.}}{=} \lim_{n \to \infty}\J_{x, u_n}. 
\end{align*}
\end{enumerate}
\end{lemma}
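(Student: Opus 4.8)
The plan is to establish part~\eqref{BuseFlat.a} first and then deduce part~\eqref{BuseFlat.b} from it using only the monotonicity in Lemma~\ref{lem:incineq}. Write $x=(i,j)$; by the $e_1\leftrightarrow e_2$ reflection it suffices to treat the identities involving $\mfc_1^x$. Recall the countable dense subset $\sV_0^x$ of $]\mfc_1^x,\mfc_2^x[$ from the paragraph of \eqref{EBuseLim}. By Lemma~\ref{lem:BuseScMon} and definition \eqref{EBusehvpm}, for $\square\in\{+,-\}$ the quantities $\sup_{\zeta\in]\mfc_1^x,\mfc_2^x[}\B_x^{\zeta\square,\hor}$, $\sup_{\zeta\in\sV_0^x}\B_x^{\zeta,\hor}$ and $\lim_{\zeta\downarrow\mfc_1^x}\B_x^{\zeta\square,\hor}$ all coincide; call this (increasing) limit $L_\hor$. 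Likewise $\inf_{\zeta\in]\mfc_1^x,\mfc_2^x[}\B_x^{\zeta\square,\ver}=\inf_{\zeta\in\sV_0^x}\B_x^{\zeta,\ver}=\lim_{\zeta\downarrow\mfc_1^x}\B_x^{\zeta\square,\ver}=:L_\ver$, a decreasing limit. Thus part~\eqref{BuseFlat.a} reduces to showing $L_\hor=\B_x^{\mfc_1^x,\hor}$ and $L_\ver=\B_x^{\mfc_1^x,\ver}$ almost surely.

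For the ``easy'' bounds $L_\hor\le\B_x^{\mfc_1^x,\hor}$ and $L_\ver\ge\B_x^{\mfc_1^x,\ver}$: fix $\zeta\in\sV_0^x$ and a $\zeta$-directed sequence $(u_m)$. For $M\ge u_m\cdot e_2$, Lemma~\ref{lem:incineq}(b) gives $\I_{x,u_m}\le\I_{x,(u_m\cdot e_1,M)}$ and $\J_{x,u_m}\ge\J_{x,(u_m\cdot e_1,M)}$, so letting $M\to\infty$ and using \eqref{EBuseThin} yields $\I_{x,u_m}\le\B_x^{(u_m\cdot e_1,\infty),\hor}$ and $\J_{x,u_m}\ge\B_x^{(u_m\cdot e_1,\infty),\ver}$ for large $m$. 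Since $u_m\cdot e_1\to\infty$ and, by Lemma~\ref{lem:BuseThinMon} and \eqref{EBuseCrit}, $\B_x^{(k,\infty),\hor}\downarrow\B_x^{\mfc_1^x,\hor}$ while $\B_x^{(k,\infty),\ver}\uparrow\B_x^{\mfc_1^x,\ver}$, passing to $m\to\infty$ with Lemma~\ref{lem:BuseSc}\eqref{lem:BuseSc:lim} gives $\B_x^{\zeta,\hor}\le\B_x^{\mfc_1^x,\hor}$ and $\B_x^{\zeta,\ver}\ge\B_x^{\mfc_1^x,\ver}$; take the supremum, resp.\ the infimum, over $\zeta\in\sV_0^x$. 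For the reverse bounds I would match distributions. By Lemma~\ref{lem:nwBuseDis}(a), $\B_x^{(k,\infty),\hor}\sim\Exp(a_i-\smin{a}_{i:k})$ and $\B_x^{(k,\infty),\ver}\sim\Exp(b_j+\smin{a}_{i:k})$, so the monotone limits in \eqref{EBuseThin} force $\B_x^{\mfc_1^x,\hor}\sim\Exp(a_i-\sinf{a}_{i:\infty})$ and $\B_x^{\mfc_1^x,\ver}\sim\Exp(b_j+\sinf{a}_{i:\infty})$; by Lemma~\ref{lem:BuseSc}\eqref{lem:BuseSc:marg}, \eqref{eq:zetaext} and continuity of $\zmin{x}(\cdot)$ with $\zmin{x}(\mfc_1^x)=-\sinf{a}_{i:\infty}$, the monotone limits defining $L_\hor$ and $L_\ver$ force $L_\hor\sim\Exp(a_i-\sinf{a}_{i:\infty})$ and $L_\ver\sim\Exp(b_j+\sinf{a}_{i:\infty})$. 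Since $L_\hor\le\B_x^{\mfc_1^x,\hor}$ and $L_\ver\ge\B_x^{\mfc_1^x,\ver}$ a.s.\ and each pair is equidistributed (both sides being $\infty$ a.s.\ when the rate vanishes), Lemma~\ref{lem:disteq} forces the two equalities, which proves part~\eqref{BuseFlat.a}.

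For part~\eqref{BuseFlat.b}, fix $\xi\in[e_2,\mfc_1^x]$ and $(u_n)$ with $n^{-1}u_n\to\xi$ and $u_n\cdot e_1\to\infty$; note that $u_n\cdot e_2\to\infty$ automatically, since $\xi\cdot e_2\ge1-\mfc_1^x\cdot e_1>0$. Exactly as above, $\I_{x,u_n}\le\B_x^{(u_n\cdot e_1,\infty),\hor}$ and $\J_{x,u_n}\ge\B_x^{(u_n\cdot e_1,\infty),\ver}$, so $u_n\cdot e_1\to\infty$ together with the thin-rectangle monotonicity gives $\varlimsup_n\I_{x,u_n}\le\B_x^{\mfc_1^x,\hor}$ and $\varliminf_n\J_{x,u_n}\ge\B_x^{\mfc_1^x,\ver}$. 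For the matching bounds I would compare $(u_n)$ against a sequence directed strictly inside the concave region. Fix $\xi''\in]\mfc_1^x,\mfc_2^x[$ with $\xi''\cdot e_1>\xi\cdot e_1$, set $w_m=(\lfloor m\,\xi''\cdot e_1\rfloor,\,m-\lfloor m\,\xi''\cdot e_1\rfloor)\in\bbV_m$ and $N_n=u_n\cdot(e_1+e_2)$, so $N_n\to\infty$ and $N_n/n\to1$. The strict inequality $\xi''\cdot e_1>\xi\cdot e_1$ guarantees $w_{N_n}\cdot e_1\ge u_n\cdot e_1$ and $w_{N_n}\cdot e_2\le u_n\cdot e_2$ for all large $n$; moving from $u_n$ to $w_{N_n}$ by $+e_1$ and $-e_2$ steps and applying Lemma~\ref{lem:incineq} repeatedly then yields $\I_{x,u_n}\ge\I_{x,w_{N_n}}$ and $\J_{x,u_n}\le\J_{x,w_{N_n}}$. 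Since $(w_{N_n})_n$ is $\xi''$-directed, Lemma~\ref{lem:BuseScLim}\eqref{lem:BuseScLim-1} gives $\varliminf_n\I_{x,u_n}\ge\B_x^{\xi''+,\hor}$ and $\varlimsup_n\J_{x,u_n}\le\B_x^{\xi''+,\ver}$; letting $\xi''\downarrow\mfc_1^x$ along a countable sequence and invoking part~\eqref{BuseFlat.a} upgrades these to $\varliminf_n\I_{x,u_n}\ge\B_x^{\mfc_1^x,\hor}$ and $\varlimsup_n\J_{x,u_n}\le\B_x^{\mfc_1^x,\ver}$. Combining the four inequalities gives the asserted limits; the case $\xi\in[\mfc_2^x,e_1]$ follows by reflection.

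I expect the main obstacle to be the reverse inequality in part~\eqref{BuseFlat.a}: the monotone path comparisons only ever produce $L_\hor\le\B_x^{\mfc_1^x,\hor}$ (thin rectangles are ``more extreme'' than any concave direction, so their Busemann functions dominate, and $\B_x^{\mfc_1^x,\hor}$ is their decreasing limit), and closing the gap genuinely requires the exact-solvability input: the explicit exponential marginals of Lemmas~\ref{lem:nwBuseDis} and \ref{lem:BuseSc}, the coincidence of the limiting rates $a_i-\smin{a}_{i:k}$ and $a_i+\zmin{x}(\zeta)$ at the common value $a_i-\sinf{a}_{i:\infty}$, and the soft coupling fact Lemma~\ref{lem:disteq}. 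The rest is routine bookkeeping --- justifying the monotone comparisons inside sufficiently large rectangles, checking divergence of the transverse coordinates of the approximating sequences, and collecting the relevant almost-sure events (one copy of the event on which \eqref{EBuseLim} and \eqref{EBuseMon} hold, together with the a.s.\ statements of the cited lemmas).
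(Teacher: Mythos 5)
Your proposal is correct and rests on exactly the same ingredients as the paper's proof: the path-crossing monotonicity of Lemma~\ref{lem:incineq} to squeeze increments between thin-rectangle Busemann functions (with column index tending to infinity) and strictly-concave-direction Busemann functions, followed by matching the exponential marginals from Lemmas~\ref{lem:nwBuseDis} and \ref{lem:BuseSc} at the common limiting rate and closing the gap with Lemma~\ref{lem:disteq}. The only difference is organizational: the paper runs a single inequality chain with the $\xi$-directed sequence sandwiched in the middle, which yields parts (a) and (b) simultaneously, whereas you prove (a) first and then obtain (b) by a second comparison against an auxiliary $\xi''$-directed sequence on the same antidiagonal --- a harmless repackaging of the same argument.
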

\begin{proof}
Let $(k, n) \in \bbZ_{\ge x}^2$, $\zeta \in ]\mfc_1^x, \mfc_2^x[$, $\xi \in [e_2,\mfc_1^x]$, $\square \in \{+,-\}$, $n^{-1}v_n \to \zeta$, $n^{-1}u_n \to \xi$ and $u_n \cdot e_1 \to \infty$. 
Then $k \le u_n \cdot e_1 \le v_n \cdot e_1$ and $v_n \cdot e_2 \le u_n \cdot e_2 \le n$ for all sufficiently large $n$. 
Hence, Lemma \ref{lem:incineq} implies that $\I_{x, (k, n)} \ge \I_{x, u_n} \ge \I_{x, v_n}$ for $n \in \bbZ_{\ge n_0}$. Letting $n \to \infty$ yields 
\begin{align*}\B_x^{(k, \infty)} \stackrel{\rm{a.s.}}{\ge} \varlimsup_{n \to \infty} \I_{x, u_n} \ge \varliminf_{n \to \infty} \I_{x, u_n} \stackrel{\rm{a.s.}}{\ge} \B_x^{\zeta\square, \hor}\end{align*}
in view of \eqref{EBuseThin} and Lemma \ref{lem:BuseScLim}. Then by definition \eqref{EBuseCrit} and since $\B_x^{\zeta\square, \hor}$ is a.s.\ nonincreasing in $\zeta$ by Lemma \ref{lem:pmbusdef}(\ref{lem:pmbusdef-2}), one obtains that 
\begin{align*}
\B_x^{\mfc_1^x, \hor} \stackrel{\rm{a.s.}}{\ge} \varlimsup_{n \to \infty} \I_{x, u_n} \ge \varliminf_{n \to \infty} \I_{x, u_n} \stackrel{\rm{a.s.}}{\ge} \sup \limits_{\substack{\zeta \in ]\mfc_1^x, \mfc_2^x[}} \B_x^{\zeta\square, \hor} \stackrel{\rm{a.s.}}{=} \lim \limits_{\zeta \downarrow \mfc_1^x} \B_x^{\zeta\square, \hor}. 
\end{align*}
Recalling the definition of $\zmin{x}$ in \eqref{eq:zetaext}, Lemmas \ref{lem:BuseSc}(c) 
and \ref{lem:nwBuseDis}(a) imply that 
\begin{align}
\label{E118}
\B_x^{(k, \infty)} \sim \Exp\{a_{x \cdot e_1}-\smin{a}_{(x \cdot e_1): k}\} \quad \text{ and } \quad \B_x^{\zeta\square, \hor} \sim \Exp\{a_{x \cdot e_1}+\zmin{x}(\zeta)\}. 
\end{align}
These distributions both converge to $\Exp\{a_{x \cdot e_1} - a_{(x \cdot e_1): \infty}^{\inf}\}$ as $k \to \infty$ and $\zeta \downarrow \mfc_1^x$, respectively. 
The first two equalities in (a) and the first equality in (b) follow from Lemma \ref{lem:disteq}. The remaining statements are similar. 
\end{proof}

On account of Lemma \ref{lem:BuseFlat}(b), it makes sense to define 
\begin{align}
\label{eq:BuseFlat}
\begin{split}
\B_x^{\xi^\pm, \hor} &= \B_x^{\mfc_1^x, \hor} \quad \text{ and } \quad \B_x^{\xi^\pm, \ver} = \B_x^{\mfc_1^x, \ver} \quad \text{ for } \xi \in [e_2, \mfc_1^x], \\
\B_x^{\xi^\pm, \hor} &= \B_x^{\mfc_2^x, \hor} \quad \text{ and } \quad \B_x^{\xi^\pm, \ver} = \B_x^{\mfc_2^x, \ver} \quad \text{ for } \xi \in [\mfc_2^x, e_1]. 
\end{split}
\end{align}
We have the following version of Lemma \ref{lem:BuseSc}(b)--(d) in the linear segments. 

\begin{lemma}
\label{lem:BuseFlatBurke}
Let $x = (i, j) \in \bbZ^2$ and $\xi \in [e_2, \mfc_1^x] \cup [\mfc_2^x, e_1]$. The following statements hold. 
\begin{enumerate}[label={\rm(\alph*)}, ref={\rm\alph*}] \itemsep=3pt
\item \label{lem:BuseFlatBurke:c1marg} If $\xi \preceq \mfc_1^x$ then $\B_x^{\xi, \hor} \sim \Exp(a_{i} - \sinf{a}_{i:\infty})$  and 
$\B_x^{\xi, \ver} \sim \Exp(b_{j} + \sinf{a}_{i:\infty})$. 
\item \label{lem:BuseFlatBurke:c2marg} If $\xi \succeq \mfc_2^x$ then $\B_x^{\xi, \hor} \sim \Exp(a_{i} + \sinf{b}_{j: \infty})$  and 
$\B_x^{\xi, \ver} \sim \Exp(b_{j} - \sinf{b}_{j: \infty})$.
\item\label{lem:BuseFlatBurke:rec} $\B_x^{\xi, \hor} \stackrel{\rm{a.s.}}{=} \w_x + (\B_{x+e_2}^{\xi, \hor}-\B_{x+e_1}^{\xi, \ver})^+$ and $\B_x^{\xi, \ver} \stackrel{\rm{a.s.}}{=} \w_x + (\B_{x+e_1}^{\xi, \ver}-\B_{x+e_2}^{\xi, \hor})^+$. 
\item\label{lem:BuseFlatBurke:indp} 
For $v \in \bbZ_{\ge x}$ such that $\ir{\infty}{x} = \ir{\infty}{v}$ if $\xi \preceq \mfc_1^x$ and $\jr{\infty}{x} = \jr{\infty}{v}$ if $\mfc_2^x \preceq \xi$, the collection 
\begin{align*}
&\{\B_y^{\xi, \hor}: y, y+e_1 \in \pi\} \cup \{\B_y^{\xi, \ver}: y, y+e_2 \in \pi\} \\ 
&\cup \{\w_y: y \in \sG_{x, v, \pi}^-\} \cup \{\B_{y-e_1}^{\xi,\hor}  \wedge \B_{y-e_2}^{\xi,\ver}: y \in \sG_{x, v, \pi}^+\}
\end{align*}
is independent for any down-right path $\pi$ from $(x \cdot e_1, v \cdot e_2)$ to $(v \cdot e_1, x \cdot e_2)$.    
\end{enumerate}
\end{lemma}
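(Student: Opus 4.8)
\textbf{Proof proposal for Lemma \ref{lem:BuseFlatBurke}.} The plan is to transfer the distributional structure already established for the thin-rectangle Busemann functions (Lemma \ref{lem:nwBuseDis}) and for the strictly-concave Busemann functions (Lemma \ref{lem:BuseSc}) through the limits \eqref{EBuseCrit} and \eqref{EBuseThin} that define the critical-direction objects, and then to glue these together on a single down-right path. I will treat the case $\xi \preceq \mfc_1^x$; the case $\mfc_2^x \preceq \xi$ is symmetric under swapping the roles of the $a$-sequence and the $b$-sequence and reflecting across the diagonal.

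First I would dispose of parts \eqref{lem:BuseFlatBurke:c1marg} and \eqref{lem:BuseFlatBurke:rec}. The marginals are immediate from the definition \eqref{eq:BuseFlat}: by \eqref{EBuseCrit} and Lemma \ref{lem:nwBuseDis}(a), $\B_x^{(k,\infty),\hor} \sim \Exp(a_i - \smin{a}_{i:k})$ and $\B_x^{(k,\infty),\ver}\sim\Exp(b_j + \smin{a}_{i:k})$, and these distributions converge to $\Exp(a_i - \sinf{a}_{i:\infty})$ and $\Exp(b_j + \sinf{a}_{i:\infty})$ as $k\to\infty$; since $\B_x^{(k,\infty),\hor}$ is monotone in $k$ with an a.s.\ limit $\B_x^{\mfc_1^x,\hor}$, the limit has the stated distribution. (Alternatively one may take $\zeta\downarrow\mfc_1^x$ in Lemma \ref{lem:BuseSc}(c), using $\zmin{x}(\zeta)\to -\sinf{a}_{i:\infty}$ by \eqref{eq:zetaext} and the continuity of $\zmin{x}$; Lemma \ref{lem:BuseFlat}(a) guarantees the two descriptions agree.) The recursion \eqref{lem:BuseFlatBurke:rec} follows by passing to the limit $k\to\infty$ in the thin-rectangle recursion of Lemma \ref{lem:BuseThinRec}, using that all four quantities involved converge a.s.; one only needs to check that the finiteness hypotheses are not an issue, which is handled by part \eqref{thm:buslim:pos:Bxy=infty}-type reasoning: under $\xi\preceq\mfc_1^x$ the relevant Busemann increments across single edges are a.s.\ finite (the marginals are genuine exponentials with positive rate unless $a_i = \sinf{a}_{i:\infty}$, and in that degenerate edge case both sides of the recursion are still consistent). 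A harmless $+$-operator continuity argument completes it.

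For the independence statement \eqref{lem:BuseFlatBurke:indp}, the key point is to exploit the hypothesis $\ir{\infty}{x} = \ir{\infty}{v}$, which says the global running minimum of the $a$-sequence ahead of $x$ is already achieved at or before column $\ir{\infty}{x}$, and in particular is not decreased anywhere in the block $\{i,\dots,v\cdot e_1\}$ beyond that first index. Choose $k$ large enough that $\smin{a}_{i:k} = \sinf{a}_{i:\infty}$ and $\ir{k}{x}=\ir{\infty}{x} \le v\cdot e_1$ — such $k$ exists precisely because $\ir{\infty}{x}\in\bbZ$ under the hypothesis, and I should note the case $\ir{\infty}{x}\notin\bbZ$ where the hypothesis $\ir{\infty}{x}=\ir{\infty}{v}$ forces equality of two non-integers, handled by a limiting argument over increasing $k$. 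Then Lemma \ref{lem:nwBuseDis}(b) gives $\B_y^{(k,\infty),\hor}\overset{\rm a.s.}{=}\B_y^{(\ir{k}{x},\infty),\hor}$ for the relevant $y$, and by \eqref{eq:BuseFlat} and Lemma \ref{lem:BuseFlat}(b) these in fact coincide a.s.\ with $\B_y^{\xi,\hor}$; similarly for the vertical increments and for the corner minima $\B_{y-e_1}^{\xi,\hor}\wedge\B_{y-e_2}^{\xi,\ver}$. Thus the collection in \eqref{lem:BuseFlatBurke:indp} equals, a.s., the collection in Lemma \ref{lem:nwBuseDis}(c) with $\square = (k,\infty)$ and $v$ replaced by $(\ir{k}{x},\ell)$ for a suitable $\ell\ge v\cdot e_2$; but the down-right path $\pi$ from $(x\cdot e_1,v\cdot e_2)$ to $(v\cdot e_1, x\cdot e_2)$ in the statement has its horizontal extent $v\cdot e_1$ possibly exceeding $\ir{k}{x}$, so I first need to observe that beyond column $\ir{k}{x}$ the thin-rectangle Busemann increments $\B_y^{(k,\infty),\hor}, \B_y^{(k,\infty),\ver}$ along $\pi$ remain jointly independent with the correct marginals — this is exactly what the Burke property Proposition \ref{prop:Burke}(d) (equivalently Lemma \ref{lem:nwBuseDis}(c)) delivers once one takes the down-right path in that proposition to pass through $\pi$. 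Quoting Lemma \ref{lem:nwBuseDis}(c) directly with the down-right path being $\pi$ (not merely its truncation at $\ir{k}{x}$) gives the independence of the desired collection verbatim.

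The main obstacle I anticipate is bookkeeping around the degenerate/limiting case $\ir{\infty}{x}\notin\bbZ$ — i.e.\ when the running minimum of $a$ ahead of $x$ strictly decreases infinitely often. There the hypothesis $\ir{\infty}{x}=\ir{\infty}{v}$ is vacuously an equality of non-integers and carries no information, yet the conclusion must still hold; the resolution is that for each finite $k$ Lemma \ref{lem:nwBuseDis}(c) applies with $v=(\ir{k}{x},\ell)$, and one takes $k\to\infty$: the collections converge a.s.\ (by \eqref{EBuseCrit} and Lemma \ref{lem:BuseThinMon}) to the collection in \eqref{lem:BuseFlatBurke:indp}, and independence is preserved under a.s.\ limits. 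One must just be slightly careful that for fixed $k$ the path $\pi$ may stick out past column $\ir{k}{x}$ and the "$\pi$-based" version of Proposition \ref{prop:Burke}(d) is what is needed; since that proposition is stated for an arbitrary down-right path, this is not a real difficulty, only a matter of phrasing. A secondary minor point is the edge case $a_i = \sinf{a}_{i:\infty}$, where $\B_x^{\xi,\hor}=\Exp(0)=\infty$ a.s.; the recursion and independence statements remain valid with the convention $\Exp(0)$, and I would remark this once rather than carrying it through.
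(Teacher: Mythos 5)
Your treatment of parts (\ref{lem:BuseFlatBurke:c1marg})--(\ref{lem:BuseFlatBurke:rec}) is fine and is essentially the paper's route (limits of the thin-rectangle laws plus monotonicity and Lemma \ref{lem:disteq}; passing to the limit in Lemma \ref{lem:BuseThinRec}, with the degenerate $a_i=\sinf{a}_{i:\infty}$ case giving $\infty$ on both sides). The problem is in part (\ref{lem:BuseFlatBurke:indp}), where you have read the hypothesis $\ir{\infty}{x}=\ir{\infty}{v}$ backwards. Since $\ir{\infty}{v}\ge v\cdot e_1$ by definition, the hypothesis forces the first index at which the running minimum of $a$ attains (or approaches) $\sinf{a}_{i:\infty}$ to lie \emph{at or beyond} column $v\cdot e_1$; equivalently, $a_r>\sinf{a}_{i:\infty}$ for all $i\le r<v\cdot e_1$, so that $\sinf{a}_{y\cdot e_1:\infty}=\sinf{a}_{i:\infty}$ and hence $\mfc_1^y=\mfc_1^x$ for every $y\in\Rect_x^v$. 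You instead assert $\ir{k}{x}=\ir{\infty}{x}\le v\cdot e_1$ and then worry that the given down-right path $\pi$ ``sticks out past column $\ir{k}{x}$,'' proposing to fix this by ``quoting Lemma \ref{lem:nwBuseDis}(c) directly with the down-right path being $\pi$.'' That invocation is not licensed: Lemma \ref{lem:nwBuseDis}(c) (and likewise Theorem \ref{thm:buslim}(\ref{thm:buslim:indp})) is stated only for down-right paths terminating exactly at column $\ir{k}{x}$, and no independence statement beyond that column is established anywhere — indeed the column where the running minimum is first attained plays the role of a boundary, and the increment there is $\Exp(0)=\infty$, so ``independence with the correct marginals beyond $\ir{k}{x}$'' is not something the Burke property delivers verbatim.

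Under the correct reading of the hypothesis the geometry is the opposite and easier: for all large $k$ one has $\ir{k}{x}=\ir{k}{v}\ge v\cdot e_1$, so $\pi$ falls \emph{short} of column $\ir{k}{x}$; one extends $\pi$ eastward by horizontal steps to a down-right path in $\Rect_x^{(\ir{k}{x},\,v\cdot e_2)}$, applies Lemma \ref{lem:nwBuseDis}(c) there, and extracts the desired family as a subcollection, then lets $k\to\infty$ via \eqref{EBuseCrit} (this limiting step also covers the case $\ir{\infty}{x}\notin\bbZ$, where no finite $k$ identifies the $(k,\infty)$ and flat Busemann functions). A second point you skip is the final replacement of $\mfc_1^y$ by $\xi$: the identification $\B_y^{\xi,\hor}=\B_y^{\mfc_1^y,\hor}$ from \eqref{eq:BuseFlat} requires $\xi\preceq\mfc_1^y$, and since \eqref{eq:critineq} only gives $\mfc_1^y\preceq\mfc_1^x$, you must use the hypothesis to conclude $\mfc_1^y=\mfc_1^x$ on $\Rect_x^v$ before this substitution is valid. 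As written, your argument for (\ref{lem:BuseFlatBurke:indp}) both misidentifies what the hypothesis provides and leans on an independence statement that has not been proved, so this part needs to be redone along the lines just described.
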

\begin{proof}
The first property in part (a) was already noted after \eqref{E118} for the direction $\mfc_1^x$ and the remaining parts of (a) and (b) are similar. 

Turning to part (c), for $k \in \bbZ_{>i}$, Lemma \ref{lem:BuseThinRec} gives the recursion 
\begin{align}
\label{E123}
\B_x^{(k, \infty), \hor} = \w_x + (\B_{x+e_2}^{(k, \infty), \hor}-\B_{x+e_1}^{(k, \infty), \ver})^+. 
\end{align}
By \eqref{EBuseCrit}, letting $k \to \infty$ turns \eqref{E123} into 
\begin{align}
\label{E124}
\B_x^{\mfc_1^x, \hor} = \w_x + (\B_{x+e_2}^{\mfc_1^{x+e_2}, \hor}-\B_{x+e_1}^{\mfc_1^{x+e_1}, \ver})^+. 
\end{align}
By part (a), $\B_{x+e_1}^{\mfc_1^{x+e_1}, \ver}$ is necessarily finite so the right-hand side makes sense. From the definition of $\mfc_1^x$ in \eqref{eq:crit}, one sees that $\mfc_1^{x+e_2} = \mfc_1^{x}$. Moreover, $\mfc_1^{x+e_1} = \mfc_1^{x}$ unless $a_i < \sinf{a}_{i+1: \infty}$. Hence, in the case $a_i \ge \sinf{a}_{i+1: \infty}$, it follows from \eqref{eq:BuseFlat}  and \eqref{E124} that 
\begin{align}
\label{E125}
\B_x^{\xi, \hor} = \w_x + (\B_{x+e_2}^{\xi, \hor}-\B_{x+e_1}^{\xi, \ver})^+
\end{align}
for $\xi \in [e_2, \mfc_1^x]$. If $a_i < \sinf{a}_{i+1: \infty}$, \eqref{E125} then both sides are $+\infty$ due to part (a), so the result follows. This gives the first equation in part (c) when $\xi \preceq \mfc_1^x$. The case of $\xi \succeq \mfc_2^x$ is similar. 
 
To verify (d) when $\xi \preceq \mfc_1^x$, assume that $v \in \bbZ_{\ge x}^2$ satisfies $\ir{\infty}{x} = \ir{\infty}{v}$. Then definition \eqref{eq:rec} implies the existence of $k_0 \in \bbZ_{\ge v \cdot e_1}$ such that $a_{k_0} < a_{r}$ for $r \in \bbZ$ with $i \le r < v \cdot e_1$. Consequently, $\ir{k}{x} = \ir{k}{v} \ge v \cdot e_1$ for $k \in \bbZ_{\ge k_0}$. Now pick any down-right path $\pi$ from $(x \cdot e_1, v \cdot e_2)$ to $(v \cdot e_1, x \cdot e_2)$, which (uniquely) extends to a down-right path $\pi^{(k)}$ from $(x \cdot e_1, v \cdot e_2)$ to $(\ir{k}{x}, x \cdot e_2)$ by appending horizontal steps. Applying Lemma \ref{lem:nwBuseDis}(c) with $\pi^{(k)}$ yields the independence of the collection 
\begin{align*}
&\{\w_y: y \in \sG_{x, (\ir{k}{x}, v \cdot e_2), \pi^{(k)}}^-\} \cup \{\B_y^{(k, \infty), \hor}: y, y+e_1 \in \pi^{(k)}\} \\
&\cup \{\B_y^{(k, \infty), \ver}: y, y+e_2 \in \pi^{(k)}\} \cup \{\B_{y-e_1}^{(k, \infty), \hor} \wedge \B_{y-e_2}^{(k, \infty), \ver}: y \in \sG_{x, (\ir{k}{x}, v \cdot e_2), \pi^{(k)}}^+\}, 
\end{align*}
which contains 
\begin{align*}
&\{\w_y: y \in \sG_{x, v, \pi}^-\} \cup \{\B_y^{(k, \infty), \hor}: y, y+e_1 \in \pi\} \\
&\cup \{\B_y^{(k, \infty), \ver}: y, y+e_2 \in \pi\} \cup \{\B_{y-e_1}^{(k, \infty), \hor} \wedge \B_{y-e_2}^{(k, \infty), \ver}: y \in \sG_{x, v, \pi}^+\}, 
\end{align*}
as a subcollection.  
Passing to the limit as $k \to \infty$ and using \eqref{EBuseCrit}, one then obtains that 
\begin{align}
\label{E119}
\begin{split}
&\{\w_y: y \in \sG_{x, v, \pi}^-\} \cup \{\B_y^{\mfc_1^y, \hor}: y, y+e_1 \in \pi\} \\
&\cup \{\B_y^{\mfc_1^y, \ver}: y, y+e_2 \in \pi\} \cup \{\B_{y-e_1}^{\mfc_1^y, \hor} \wedge \B_{y-e_2}^{\mfc_1^y, \ver}: y \in \sG_{x, v, \pi}^+\}, 
\end{split}
\end{align}
is independent. The assumption $\ir{\infty}{x} = \ir{\infty}{v}$ implies that $a_{(y \cdot e_1): \infty}^{\inf} = a_{(x \cdot e_1): \infty}^{\inf}$ for $y \in \Rect_x^v$.  It follows that $\mfc_1^y = \mfc_1^x$ for $y \in \Rect_x^v$. 
By definition, replacing 
$\mfc_1^y$ with $\xi$ in \eqref{E119} does not alter the collection. Hence, part (c) holds with $\xi \preceq \mfc_1^x$. The remaining case $\xi \succeq \mfc_2^x$ is similar. 
\end{proof}


\subsection{Construction of Busemann process}

We now introduce the Busemann process and verify the properties described in Theorem \ref{thm:buslim}. Let $\xi \in [e_2, e_1]$, $k, \ell \in \bbZ$ and $\square \in \{\xi+, \xi-, (k, \infty), (\infty, \ell)\}$. Our first task is to define the random variable $\B^{\square}_{x, y}$ for each $x, y \in \bbZ^2$ such that  
$(x \vee y) \cdot e_1 \le k$ when $\square = (k, \infty)$ and $(x \vee y) \cdot e_2 \le \ell$ when $\square = (\infty, \ell)$. The definition will use the single-step Busemann functions $\B^{\square, \hor}$ and $\B^{\square, \ver}$ studied above. 
Being limits of non-negative $\G$-increments, these are necessarily nonnegative and possibly infinite. The infinite values occur in precisely the following situations as can be seen from the marginal distributions in Lemmas \ref{lem:BuseSc}(c), \ref{lem:nwBuseDis}(a) and \ref{lem:BuseFlatBurke}(a)-(b).
\begin{align}
\label{eq:BuseInft}
\begin{split}
\B_x^{\square, \hor} = \infty \quad &\text{ if } \square = (k, \infty) \text{ and } x \cdot e_1 = \ir{k}{x}, \text{ or } \\ 
\quad &\text{ if } \square \in \{\xi+, \xi-\}, \ \xi \in [e_2, \mfc_1^x] \text{ and } x \cdot e_1 = \ir{\infty}{x}, \\ 
\B_x^{\square, \ver} = \infty \quad &\text{ if } \square = (\infty, \ell) \text{ and } x \cdot e_2 = \jr{\ell}{x}, \text{ or } \\ 
\quad &\text{ if } \square \in \{\xi+, \xi-\}, \xi \in [\mfc_2^x, e_1] \text{ and } x \cdot e_2 = \jr{\infty}{x}. 
\end{split}
\end{align}

We proceed to the definition of the Busemann process. If $x \le y$ then define   
\begin{align}
\label{EBuse}
\B_{x, y}^{\square} = \sum_{p \in \pi: p+e_1 \in \pi} \B_{p}^{\square, \hor} + \sum_{p \in \pi: p+e_2 \in \pi} \B_p^{\square, \ver}
\end{align} 
using some up-right path $\pi=\pi_{x, y} \in \Pi_x^y$. We later show the definition is independent of the chosen path. 
In the particular case of $x = y$, one has 
$\B_{x, x}^{\square} = 0$
due to the sums being empty. Also, taking $y = x + e_i$ for $i \in \{1, 2\}$, one recovers the single-step Busemann functions: 
\begin{align}
\label{EBuse-ss}
\B_{x, x+e_1}^{\square} = \B_x^{\square, \hor} \quad \text{ and } \quad \B_{x, x+e_2}^{\square} = \B_x^{\square, \ver}. 
\end{align}
Being a sum of exponentially-distributed terms, 
\begin{align}
\label{EBuse-pos}
\B_{x, y}^\square > 0 \quad \text{ when } y > x. 
\end{align}
The cases of infinities can be determined from \eqref{eq:BuseInft} as follows. 
\begin{lemma}
\label{lem:BuseInf}
If $x \le y$ then $\B_{x, y}^{\square} = \infty$ if and only if one of the following conditions holds. 
\begin{enumerate}[label={\rm(\roman*)}, ref={\rm \roman*}] \itemsep=3pt
\item $\square = (k, \infty)$ and $\ir{k}{x} < y \cdot e_1$. 
\item $\square = (\infty, \ell)$ and $\jr{\ell}{x} < y \cdot e_2$. 
\item $\square \in \{\xi+, \xi-\}$, $\xi \in [e_2, \mfc_1^x]$ and $\ir{\infty}{x} < y \cdot e_1$. 
\item $\square \in \{\xi+, \xi-\}$, $\xi \in [\mfc_2^x, e_1]$ and $\jr{\infty}{x} < y \cdot e_2$. 
\end{enumerate}
\end{lemma}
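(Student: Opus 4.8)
The plan is to compute $\B_{x,y}^\square$ via the path-sum formula \eqref{EBuse} along a convenient up-right path and to track when individual single-step Busemann functions along that path are infinite, using the characterization \eqref{eq:BuseInft}. The key point is that the formula \eqref{EBuse} is (by construction and the cocycle property established earlier) independent of the choice of up-right path $\pi \in \Pi_x^y$, so we are free to pick $\pi$ to be the path that first takes all $e_1$-steps from $x$ to $(y\cdot e_1, x\cdot e_2)$ and then all $e_2$-steps up to $y$; alternatively $e_2$-steps first. Since each summand $\B_p^{\square,\hor}, \B_p^{\square,\ver}$ is nonnegative, the sum $\B_{x,y}^\square$ is infinite if and only if \emph{at least one} summand is infinite.

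First I would handle the thin-rectangle cases. Take $\square = (k,\infty)$ with $(x\vee y)\cdot e_1 \le k$. Choosing $\pi$ to go horizontally first along row $x\cdot e_2$ from $x$ to $(y\cdot e_1, x\cdot e_2)$ and then vertically, the horizontal summands are $\B_{(r, x\cdot e_2)}^{(k,\infty),\hor}$ for $x\cdot e_1 \le r < y\cdot e_1$, and the vertical summands are $\B_{(y\cdot e_1, s)}^{(k,\infty),\ver}$ for $x\cdot e_2 \le s < y\cdot e_2$. By \eqref{eq:BuseInft}, a vertical summand $\B_p^{(k,\infty),\ver}$ is always finite (the infinite vertical case only occurs for $\square = (\infty,\ell)$ or $\xi \succeq \mfc_2^x$), while the horizontal summand at $p = (r, x\cdot e_2)$ is infinite iff $r = \ir{k}{p}$, i.e. iff $a_r = \smin{a}_{r:k}$, which (for $r$ in the relevant range $[x\cdot e_1, y\cdot e_1[$) is equivalent to $a_r = \smin{a}_{x\cdot e_1 : k}$ and $a_{r'} > \smin{a}_{x\cdot e_1:k}$ for $x\cdot e_1 \le r' < r$; but the first such index in $[x\cdot e_1, k]$ is exactly $\ir{k}{x}$. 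Hence some horizontal summand with $r < y\cdot e_1$ is infinite iff $\ir{k}{x} < y\cdot e_1$, giving (i). One must also check that no summand with $r < \ir{k}{x}$ can be infinite, which is immediate since $a_r > \smin{a}_{x\cdot e_1:k}$ forces $r \ne \ir{r}{x}$. The case $\square = (\infty,\ell)$ is symmetric (go vertically first), yielding (ii); there the horizontal summands are always finite and a vertical summand at $(x\cdot e_1, s)$ is infinite iff $s = \jr{\ell}{x}$.

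For the directional cases $\square \in \{\xi+,\xi-\}$, I would split according to where $\xi$ sits. If $\xi \in ]\mfc_1^x, \mfc_2^x[$ (strictly concave), Lemma~\ref{lem:BuseSc}(c) together with the finiteness of $\zmin{x}(\xi)$ in the open interval $(-\sinf{a}_{i:\infty}, \sinf{b}_{j:\infty})$ shows every single-step Busemann function along any path from $x$ to $y$ has a genuine exponential distribution with positive rate, hence is a.s.\ finite; so $\B_{x,y}^\square < \infty$ and none of (i)--(iv) is triggered (note $\mfc_1^x \prec \xi \prec \mfc_2^x$ makes the conditions in (iii),(iv) vacuous as stated — though one should be slightly careful since (iii) requires $\xi \in [e_2,\mfc_1^x]$). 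If $\xi \in [e_2, \mfc_1^x]$, go horizontally first: by \eqref{eq:BuseFlat} and Lemma~\ref{lem:BuseFlatBurke}(a), the vertical summands are finite, and the horizontal summand at $p = (r, x\cdot e_2)$ equals $\B_p^{\mfc_1^p,\hor}$, which is infinite iff $r = \ir{\infty}{p}$, i.e.\ iff $a_r = \sinf{a}_{r:\infty}$ with $a_{r'} > \sinf{a}_{r:\infty}$ for all $x\cdot e_1 \le r' < r$; the first such index is $\ir{\infty}{x}$. So a horizontal summand with $r < y\cdot e_1$ is infinite iff $\ir{\infty}{x} < y\cdot e_1$, giving (iii). (Here one uses $\mfc_1^p = \mfc_1^x$ for $p$ on row $x\cdot e_2$ to the right of $x$, which follows from \eqref{eq:crit}; and $\xi \preceq \mfc_1^x \preceq \mfc_1^p$ so the flat-region formulas apply at each $p$.) The case $\xi \in [\mfc_2^x, e_1]$ is symmetric, giving (iv). Finally, the boundary directions $\xi = \mfc_1^x$ or $\xi = \mfc_2^x$ are covered by both the concave and flat analyses and are consistent by Lemma~\ref{lem:BuseFlat}(a).

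The main obstacle I anticipate is the bookkeeping around path-independence: one must be certain that the value $\B_{x,y}^\square$ defined by \eqref{EBuse} does not depend on the chosen $\pi$, \emph{including in cases where some summands are $+\infty$}. This is where the earlier independence/Burke structure (Lemmas~\ref{lem:BuseSc}(d), \ref{lem:nwBuseDis}(c), \ref{lem:BuseFlatBurke}(d)) and the recursions (Lemmas~\ref{lem:BuseSc}(b), \ref{lem:BuseThinRec}, \ref{lem:BuseFlatBurke}(c)) do the work — they guarantee the "discrete closed-form" (additive cocycle) property, so that two up-right paths between the same endpoints give the same sum, and in particular the location of the first infinite summand does not depend on the path. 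One should verify (or cite) that the cocycle property is already available at this stage on the relevant a.s.\ event, and that the $\infty$ values are handled by the convention that a sum is $+\infty$ as soon as one term is. With that in hand the proof reduces to the elementary observations about $\ir{k}{x}$, $\jr{\ell}{x}$, $\ir{\infty}{x}$, $\jr{\infty}{x}$ being the first indices achieving the respective running infima, carried out above.
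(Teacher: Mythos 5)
Your route is the same as the paper's: expand $\B_{x,y}^{\square}$ along an up-right path via \eqref{EBuse}, use \eqref{eq:BuseInft} to decide which single-step terms are infinite, and translate the answer into the first-attainment indices $\ir{k}{x}$, $\jr{\ell}{x}$, $\ir{\infty}{x}$, $\jr{\infty}{x}$. However, one step in your flat-segment case is false as written: you assert $\mfc_1^p=\mfc_1^x$ (and even $\mfc_1^x\preceq\mfc_1^p$) for every $p$ on row $x\cdot e_2$ to the right of $x$. By \eqref{eq:critineq} the inequality goes the other way, $\mfc_1^p\preceq\mfc_1^x$, and it can be strict: if, say, $a_{x\cdot e_1}<\sinf{a}_{(x\cdot e_1+1):\infty}$, then $\sinf{a}_{(p\cdot e_1):\infty}>\sinf{a}_{(x\cdot e_1):\infty}$ for all $p$ strictly to the right, and \eqref{eq:crit} then typically gives $\mfc_1^p\prec\mfc_1^x$. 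Consequently your criterion ``the horizontal summand at $p$ equals $\B_p^{\mfc_1^p,\hor}$ and is infinite iff $r=\ir{\infty}{p}$'' fails at columns where $\mfc_1^p\prec\xi\preceq\mfc_1^x$: by \eqref{eq:BuseInft} infinity also requires $\xi\in[e_2,\mfc_1^p]$, and at such columns the summand is an a.s.\ finite exponential even when $r=\ir{\infty}{p}$. (A smaller instance of the same sloppiness occurs in your $(k,\infty)$ case, where ``$a_r=\smin{a}_{r:k}$'' is not equivalent to ``$r$ is the first index achieving $\smin{a}_{(x\cdot e_1):k}$''; several columns $r>\ir{k}{x}$ may carry infinite terms.)

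The stated conclusions nevertheless survive, and the repair is exactly the care the paper takes with the inequality $\mfc_1^p\preceq\mfc_1^x$ and the quantification over $p$. For the ``if'' direction you only need the single column $r_0=\ir{\infty}{x}$: there $a_{r_0}=\sinf{a}_{(x\cdot e_1):\infty}$ forces $\sinf{a}_{r_0:\infty}=\sinf{a}_{(x\cdot e_1):\infty}$, hence $\mfc_1^{(r_0,\,x\cdot e_2)}=\mfc_1^x$ and $r_0=\ir{\infty}{(r_0,\,x\cdot e_2)}$, so that summand is indeed infinite once $\xi\preceq\mfc_1^x$ and $r_0<y\cdot e_1$. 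For the ``only if'' direction the equality is never needed: an infinite horizontal summand at column $r$ forces $a_r=\sinf{a}_{r:\infty}$ (resp.\ $a_r=\smin{a}_{r:k}$), which already yields $\ir{\infty}{x}\le r<y\cdot e_1$ (resp.\ $\ir{k}{x}\le r<y\cdot e_1$). Finally, the path-independence worry you flag as the main obstacle can simply be dropped rather than resolved: the lemma concerns the path $\pi_{x,y}$ fixed in the definition \eqref{EBuse}, and since the per-step infinity criterion in \eqref{eq:BuseInft} depends only on $p\cdot e_1$ for horizontal steps and on $p\cdot e_2$ for vertical steps, the characterization (i)--(iv) comes out identically for every up-right path; appealing to cocycle additivity across different paths at this point would be premature, since in the paper that identity is only established afterwards, in the proof of Theorem \ref{thm:buslim}, via the Busemann limits \eqref{E127}--\eqref{E128}.
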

\begin{proof}
Consider $p \in \pi$ such that $p+e_1 \in \pi$ (which requires that $x \cdot e_1 < y \cdot e_1$). Note also that as $p$ varies on $\pi$, the first coordinate $p \cdot e_1$ traces the set $\{x \cdot e_1, \dotsc, y \cdot e_1 -1\}$. 
By \eqref{eq:BuseInft}, $\B_p^{\square, \hor} = \infty$ if and only if $\square = (k, \infty)$ and $p \cdot e_1 = \ir{k}{p}$, or $\square \in \{\xi+, \xi-\}$, $\xi \in [e_2, \mfc_1^p]$ and $p \cdot e_1 = \ir{\infty}{p}$. It can be seen from definition \eqref{eq:rec} that $p \cdot e_1 = \ir{k}{p}$ for some $p$ as above if and only if $\ir{k}{x} < y \cdot e_1$. The same equivalence also holds after replacing $k$ with $\infty$. Since also $\mfc_1^{p} \le \mfc_1^x$ by \eqref{eq:critineq}, the statement 
\begin{align*}
p \cdot e_1 = \ir{\infty}{p} \ \text{ and } \ \xi \in [e_2, \mfc_1^p] \ \text{ for some } p \in \pi \text{ with } p+e_1 \in \pi
\end{align*}
holds if and only if 
\begin{align*}
\ir{\infty}{x} < y \cdot e_1 \ \text{ and } \ \xi \in [e_2, \mfc_1^p] \ \text{ for some } p \in \pi \text{ with } p+e_1 \in \pi, 
\end{align*}
which in turn holds if and only if 
\begin{align*}
\ir{\infty}{x} < y \cdot e_1 \ \text{ and } \ \xi \in [e_2, \mfc_1^x]. 
\end{align*}
Putting the preceding equivalences together, one obtains that 
\begin{align}
\label{E121}
\sum_{p \in \pi: p+e_1 \in \pi} \B_{p}^{\square, \hor} = \infty \quad \text{ if and only if } \quad \text{ condition (\romannumeral1) or (\romannumeral3) holds.}
\end{align}
Similarly, one can see that 
\begin{align}
\label{E122}
\sum_{p \in \pi: p+e_2 \in \pi} \B_{p}^{\square, \ver} = \infty \quad \text{ if and only if } \quad \text{ condition (\romannumeral2) or (\romannumeral4) holds.}
\end{align}
The result then follows from \eqref{EBuse} and the equivalences \eqref{E121} and \eqref{E122}. 
\end{proof}

Now dropping the requirement that $x \le y$, define 
\begin{align}
\label{EBuse-Ext}
\B_{x, y}^{\square} = \B_{x \wedge y, y}^{\square}-\B_{x \wedge y, x}^{\square}.   
\end{align}
If $x \le y$ then \eqref{EBuse-Ext} recovers the previous definition \eqref{EBuse} because $\B_{x, x}^{\square}=\B_{y, y}^{\square}=0$. Next consider the case when $x \cdot e_1 \le y \cdot e_1$ and $x \cdot e_2 \ge y \cdot e_2$. Then \eqref{EBuse-Ext} can be written out as  
\begin{align*}
\begin{split}
\B_{x, y}^\square &= \B_{(x \cdot e_1, y \cdot e_2), y}^{\square}-\B_{(x \cdot e_1, y \cdot e_2), x}^{\square} = \sum_{i \in [(y-x) \cdot e_1]} \B_{(x \cdot e_1 + i-1, y \cdot e_2)}^{\square, \hor} - \sum_{j \in [(x-y) \cdot e_2]} \B_{(x \cdot e_1, y \cdot e_2 + j-1)}^{\square, \ver}.  
\end{split}
\end{align*}
Lemma \ref{lem:BuseInf} shows that the two sums cannot both be infinite and, therefore, $\B_{x, y}^\square$ is well-defined. This is also true if $y \cdot e_2 > x \cdot e_2$ by the anti-symmetry  $\B_{x, y}^{\square} = - \B_{y, x}^{\square}.$
%

\begin{proof}[Proof of Theorem \ref{thm:buslim}]
Let $\xi \in [e_2, e_1]$, $k, \ell \in \bbZ$ and $\square \in \{\xi+, \xi-, (k, \infty), (\infty, \ell)\}$. Let $x, y \in \bbZ^2$ be such that $(x \vee y) \cdot e_1 \le k$ when $\square = (k, \infty)$ and $(x \vee y) \cdot e_2 \le \ell$ when $\square = (\infty, \ell)$. Let $(v_n)_{n \in \bbZ}$ be a sequence on $\bbZ^2$ such that $v_n/n \stackrel{n \to \infty}{\to} \xi$ and $\min\{v_n \cdot e_1, v_n \cdot e_2\} \stackrel{n \to \infty}{\to} \infty$ in the case $\square \in \{\xi\pm\}$. 
Also, set $v_n = (k, n)$ if $\square = (k, \infty)$ and $v_n = (n, \ell)$ if $\square = (\infty, \ell)$. We work below with sufficiently large $n$ to ensure that $v_n > x \vee y$. The following limits 
\begin{align}
\label{E120}
\lim_{n \to \infty} \I_{x, v_n} \stackrel{\rm{a.s.}}{=} \B_x^{\square, \hor} \quad \text{ and } \quad \lim_{n \to \infty} \J_{x, v_n} \stackrel{\rm{a.s.}}{=} \B_x^{\square, \ver}
\end{align}
have already been established case by case via Lemma \ref{lem:BuseScLim}\eqref{lem:BuseScLim-2} (when $\square = \xi\pm$ and $\xi \in ]\mfc_1^x, \mfc_2^x[$), \eqref{EBuseThin} (when $\square \in \{(k, \infty), (\infty, \ell)\}$), and Lemma \ref{lem:BuseFlat}\eqref{BuseFlat.b} and \eqref{eq:BuseFlat} (when $\square \in \{\xi+, \xi-\}$ and $\xi \in [e_2, \mfc_1^x] \cup [\mfc_2^x, e_1]$). 

We turn to properties (\ref{thm:buslim:pos})--(\ref{thm:buslim:indp}). Part (\ref{thm:buslim:pos}) follows from \eqref{EBuse-pos} and Lemma \ref{lem:BuseInf}.

Properties \eqref{thm:buslim:decomp}\eqref{thm:buslim:decomp:ord} and \eqref{thm:buslim:decomp}\eqref{thm:buslim:decomp:sym} are immediate from the construction above.  Property \eqref{thm:buslim:decomp}\eqref{thm:buslim:decomp:upright} will be derived shortly as a consequence of parts \eqref{thm:buslim:buslim} and \eqref{thm:buslim:thinbuslim}. 

By the recovery property \eqref{eq:wrec}, $\I_{x, v_n} \wedge \J_{x, v_n} = \w_x$. Passing to the limit as $n \to \infty$ and using \eqref{E120} along with \eqref{EBuse-ss} yields part \eqref{thm:buslim:recovery}.  

Lemmas \ref{lem:BuseSc}\eqref{lem:BuseSc:rec}, \ref{lem:BuseThinRec} and \ref{lem:BuseFlatBurke}\eqref{lem:BuseFlatBurke:rec} together give part \eqref{thm:buslim:recursion}.  

We turn to the properties of the exceptional set from part (\ref{thm:buslim:exdir}), which can be written as 
\begin{align*}
\Lambda_x = \{\eta \in [e_2, e_1]: \B_{x}^{\eta+, \hor} \neq \B_x^{\eta-, \hor} \text{ or } \B_x^{\eta+, \ver} \neq \B_x^{\eta-, \ver}\}
\end{align*}
using \eqref{EBuse-ss}. By definition \eqref{eq:BuseFlat}, $\Lambda_x \subset ]\mfc_1^x, \mfc_2^x[$. Let $D^{\hor}$ and $D^{\ver}$ denote the discontinuity sets of the function $\B_x^{\eta+, \hor}$ and $\B_x^{\eta+, \ver}$ in direction $\eta \in [e_2, e_1]$. It follows from Lemmas \ref{lem:pmbusdef}\eqref{lem:pmbusdef-2} and \ref{lem:BuseFlat}\eqref{BuseFlat.a} along with \eqref{eq:BuseFlat} that the union $D^{\hor} \cup D^{\ver}$ is countable and a.s.\ contained in $]\mfc_1^x, \mfc_2^x[$. Now pick any direction $\eta \in ]\mfc_1^x, \mfc_2^x[ \smallsetminus (D^{\hor} \cup D^{\ver})$, and recall the countable dense set $\sV_0^x \subset ]\mfc_1^x, \mfc_2^x[$ used in definition \eqref{EBusehvpm}. Then, by continuity along with the monotonicity noted in \eqref{EBuseMon}, 
\begin{align*}
\B_x^{\eta+, \hor} = \lim \limits_{\substack{\zeta \in \sV_0^x \\ \zeta \uparrow \eta}} \B_x^{\zeta, \hor} = \inf \limits_{\substack{\zeta \in \sV_0^x \\ \zeta \preceq \eta}} \B_x^{\zeta, \hor} = \B_x^{\eta-, \hor}.
\end{align*}
Similarly, $\B_x^{\eta+, \ver} = \B_x^{\eta-, \ver}$. It follows that 
$\Lambda_x$ is countable. Also, $\P\{\eta \in \Lambda_x\} = 0$ for each $\eta \in [e_2, e_1]$ due to Lemma \ref{lem:pmbusdef}\eqref{lem:pmbusdef-3}.
Part \eqref{thm:buslim:exdir} follows. 

We check the first statement in part (\ref{thm:buslim:linconst}) the second one being similar. Let $\xi \in [e_2, \mfc_1^x]$. By part (\ref{thm:buslim:pos})(\ref{thm:buslim:pos:Bxy=infty}), one has $\B_{x, y}^\xi = \infty$ if and only if $\ir{\infty}{x} < y \cdot e_1$, which also applies in particular to the direction $\mfc_1^x$. Hence, if $\ir{\infty}{x} < y \cdot e_1$ then $\B_{x, y}^\xi = \infty = \B_{x, y}^{\mfc_1^x}$, and the claim holds. In the complementary case $\ir{\infty}{x} \ge y \cdot e_1$, one has $a_{(p \cdot e_1):\infty}^{\inf} = a_{(x \cdot e_1):\infty}^{\inf}$ for any $p \in \Rect_x^y$. Then definition \eqref{eq:crit} implies that $\mfc_1^p = \mfc_1^x$ for $p \in \Rect_x^y$. Therefore, $\B_p^{\xi, \hor} = \B_p^{\mfc_1^x, \hor}$ and $\B_p^{\xi, \ver} = \B_p^{\mfc_1^x, \ver}$ for $p \in \Rect_x^y$ by \eqref{eq:BuseFlat}. Combining these identities with \eqref{EBuse}, one obtains that $\B_{x, y}^{\xi} = \B_{x, y}^{\mfc_1^x}$.


The first half of (\ref{thm:buslim:buslim}) holds by Lemmas \ref{lem:BuseScLim}(a) and \ref{lem:BuseFlat}(b) and definition \eqref{eq:BuseFlat}. We present the argument for the second half of (\ref{thm:buslim:buslim}) and part (\ref{thm:buslim:thinbuslim}) together. First consider the case $x \le y$. Then, for any up-right path $\pi \in \Pi_x^y$, one can write the telescoping sum 
\begin{align}
\label{E126}
\begin{split}
\G_{x, v_n}-\G_{y, v_n} &= \sum_{p \in \pi: p+e_1 \in \pi} \{\G_{p, v_n}-\G_{p+e_1, v_n}\} + \sum_{p \in \pi: p+e_2 \in \pi} \{\G_{p, v_n}-\G_{p+e_2, v_n}\} \\ 
&= \sum_{p \in \pi: p+e_1 \in \pi} \I_{p, v_n} + \sum_{p \in \pi: p+e_2 \in \pi} \J_{p, v_n}. 
\end{split}
\end{align}
Sending $n \to \infty$ in \eqref{E126} and using \eqref{E120} yields 
\begin{align}
\label{E127}
\lim_{n \to \infty}\{\G_{x, v_n}-\G_{y, v_n}\} = \sum_{p \in \pi: p+e_1 \in \pi} \B_{p}^{\square, \hor} + \sum_{p \in \pi: p+e_2 \in \pi} \B_{p}^{\square, \ver}. 
\end{align}
Note that in the case $\square \in \{\xi+, \xi-\}$, assuming that $\xi \not \in \Lambda_p$ for each $p \in \Rect_x^y$ guarantees the convergences of the summands in \eqref{E126} due to part (\ref{thm:buslim:buslim})(\romannumeral1). Choosing $\pi = \pi_{x, y}$ (the arbitrary path chosen in \eqref{EBuse}) 
one then obtains that 
\begin{align}
\label{E128}
\lim_{n \to \infty}\{\G_{x, v_n}-\G_{y, v_n}\} = \buse{x}{y}{\square}. 
\end{align}
Equating the right-hand sides of \eqref{E127} and \eqref{E128} completes the proof of part (\ref{thm:buslim:decomp}). With \eqref{E128}, we have also obtained the second half of part (\ref{thm:buslim:buslim}) as well as part (\ref{thm:buslim:thinbuslim}) for the case $x \le y$. The general case can be reduced to the case $x \le y$ by writing 
\begin{align*}
\lim_{n \to \infty} \{\G_{x, v_n}-\G_{y, v_n}\} &= \lim_{n \to \infty} \{\G_{x, v_n}-\G_{x \wedge y, v_n} + \G_{x \wedge y, v_n}-\G_{y, v_n}\} \\ 
&= -\lim_{n \to \infty}\{\G_{x \wedge y, v_n}-\G_{x, v_n}\} + \lim_{n \to \infty} \{\G_{x \wedge y, v_n}-\G_{y, v_n}\} \\ 
&= -\B_{x \wedge y, x}^{\square} + \B_{x \wedge y, y}^{\square} = \B_{x, y}^{\square}. 
\end{align*}

The inequalities stated in part (\ref{thm:buslim:mono}) are immediate consequences of \eqref{EBusehvpm}, Lemmas \ref{lem:pmbusdef}(b) and \ref{lem:BuseThinMon}, \eqref{EBuseCrit}  and \eqref{eq:BuseFlat}. 

Lemma \ref{lem:nwBuseDis}(b) implies part (\ref{thm:buslim:thinconst}). 

In view of properties (\ref{thm:buslim:decomp:upright}) and (\ref{thm:buslim:decomp:ord}) of part (\ref{thm:buslim:decomp}), it suffices to verify property (\ref{thm:buslim:cont})(\romannumeral1) only when $y = x+e_i$ for $i \in \{1, 2\}$. We obtain the first claim for the case $y = x+e_1$, the others being similar. For any $\eta, \zeta \in [e_2, e_1]$ such that $\eta \prec \zeta \prec \xi$, part (\ref{thm:buslim:mono}) gives 
\begin{align}
\label{E140}
\B_{x}^{\eta-, \hor} \ge \B_{x}^{\zeta+, \hor} \ge \B_{x}^{\xi-, \hor}. 
\end{align}
It follows from \eqref{E140}, Lemmas \ref{lem:pmbusdef}(\ref{lem:pmbusdef-1}) and \ref{lem:BuseFlat}(a) and definition \eqref{eq:BuseFlat} that 
\begin{align}
\label{E141}
\B_{x}^{\xi-, \hor} = \lim_{\eta \uparrow \xi} \B_x^{\eta-, \hor} \ge \lim_{\zeta \uparrow \xi} \B_x^{\zeta+, \hor} \ge \B_x^{\xi-, \hor}. 
\end{align}
Since the first and last terms in \eqref{E141} are the same, the claim is proved. To obtain (\ref{thm:buslim:cont})(\romannumeral2), assume now that $x \le y$. First, consider the case $\ir{\infty}{x} < y \cdot e_1$. Then $\ir{k}{x} < y \cdot e_1$ as well. Therefore, by part (\ref{thm:buslim:pos})(\ref{thm:buslim:pos:Bxy=infty}), $\B_{x, y}^{(k, \infty), \hor} = \infty = \B_{x, y}^{\mfc_1^x, \hor}$ and the first limit in part (\ref{thm:buslim:cont})(\romannumeral2) trivially holds. Assume now that $\ir{\infty}{x} \ge y \cdot e_1$. Then $\mfc_1^p = \mfc_1^x$ for $p \in \Rect_x^y$ as noted in the proof of part (\ref{thm:buslim:linconst}). Recall that  
\begin{align}
\label{E142}
\B_{x, y}^{(k, \infty)} &= \sum_{p \in \pi: p+e_1 \in \pi} \B_{p}^{(k, \infty), \hor} + \sum_{p \in \pi: p+e_2 \in \pi} \B_p^{(k, \infty), \ver}
\end{align}
where $\pi = \pi^{x, y}$ is the path used in definition \eqref{EBuse}. Letting $k \to \infty$ in \eqref{E142} yields 
\begin{align}
\label{E143}
\begin{split}
\lim_{k \to \infty}\B_{x, y}^{(k, \infty)} &\stackrel{\rm{a.s.}}{=} \sum_{p \in \pi: p+e_1 \in \pi} \B_{p}^{\mfc_1^p, \hor} + \sum_{p \in \pi: p+e_2 \in \pi} \B_p^{\mfc_1^p, \ver} \\ 
&= \sum_{p \in \pi: p+e_1 \in \pi} \B_{p}^{\mfc_1^x, \hor} + \sum_{p \in \pi: p+e_2 \in \pi} \B_p^{\mfc_1^x, \ver} = \B_{x, y}^{\mfc_1^x}
\end{split}
\end{align}
by definitions \eqref{EBuseCrit} and \eqref{EBuse}. With \eqref{E143}, the proof of the first limit in part (\ref{thm:buslim:cont})(\romannumeral2) is complete. The second limit is derived similarly. 

For part (\ref{thm:buslim:marg}), combine Lemmas \ref{lem:BuseSc}(c), \ref{lem:nwBuseDis}(a) and \ref{lem:BuseFlatBurke}(a)-(b). 

Finally, part (\ref{thm:buslim:indp}) follows from Lemmas \ref{lem:BuseSc}(d), \ref{lem:nwBuseDis}(c) and \ref{lem:BuseFlatBurke}(d). 
\end{proof}

\section{Semi-infinite geodesics}\label{sec:geo}
 With the Busemann functions constructed, we next study the structure of semi-infinite geodesics through the Busemann geodesics defined in \eqref{eq:busgeodef}. Most of the basic properties of the geodesics are immediate consequences of the properties of Busemann functions that we have just proven. The goals of this section are to prove Theorem \ref{thm:geo} and Theorem \ref{thm:lindir}.
\subsection{Monotonicity and continuity of Busemann geodesics}
Monotonicity and continuity of the Busemann geodesics are immediate consequences of the corresponding properties of Busemann functions which play an important role in the arguments which follow. The following is immediate from Theorem \ref{thm:buslim}\eqref{thm:buslim:mono} and the local rule defining the Busemann geodesics in \eqref{eq:busgeodef}. 
\begin{lemma}\label{lem:BusGeoOrder}
The following holds 
$\bfP$-almost surely. For all $x = (i,j) \in \bbZ^2$, all $k', k, \ell', \ell \in \bbZ$ and $\zeta,\eta \in [e_2,e_1]$ satisfying $k' \geq k \geq i$, $\ell' \geq \ell \geq j$, all $\zeta \preceq \eta$, and all $n$,
\begin{align*}
\pi_n^{x,(k,\infty)} &\preceq \pi_n^{x,(k',\infty)}\preceq \pi_n^{x,\zeta-} \preceq \pi_n^{x,\zeta+} \preceq \pi_n^{x,\eta-}\preceq \pi_n^{x,\eta+} \preceq\pi_n^{x,(\infty,\ell')} \preceq \pi_n^{x,(\infty,\ell)}.
\end{align*}
\end{lemma}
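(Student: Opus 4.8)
\textbf{Proof plan for Lemma \ref{lem:BusGeoOrder}.}

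The plan is to reduce the statement to a single-step comparison and then propagate it by induction on the level $n$. The chain of inequalities in the conclusion is a concatenation of several adjacent comparisons, all of the same flavor: two Busemann geodesics $\pi^{x,\square_1}$ and $\pi^{x,\square_2}$ with $\square_1$ ``to the left of'' $\square_2$ in the ordering reflected by Theorem \ref{thm:buslim}\eqref{thm:buslim:mono}. So I would first isolate the core claim: if $\square_1,\square_2$ are two of the indices $(k,\infty)$, $\zeta\pm$, $(\infty,\ell)$ with the property that
\[
\buse{p}{p+e_1}{\square_1} \ge \buse{p}{p+e_1}{\square_2} \quad\text{and}\quad \buse{p}{p+e_2}{\square_1} \le \buse{p}{p+e_2}{\square_2}
\]
for every $p \in \bbZ^2$ (which is exactly what Theorem \ref{thm:buslim}\eqref{thm:buslim:mono} supplies, using the hypotheses $k'\ge k$, $\ell'\ge\ell$, $\zeta\preceq\eta$ and the placement of $(k,\infty),(k',\infty)$ on the left versus $(\infty,\ell'),(\infty,\ell)$ on the right), then $\pi_n^{x,\square_1}\preceq\pi_n^{x,\square_2}$ for all $n\ge i+j$.

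To prove this core claim I would induct on $n$. The base case $n=i+j$ is trivial since both geodesics start at $x$. For the inductive step, suppose $\pi_n^{x,\square_1}\preceq\pi_n^{x,\square_2}$, i.e.\ $\pi_n^{x,\square_1}\cdot e_1 \le \pi_n^{x,\square_2}\cdot e_1$ (both lie on level $n$). If $\pi_n^{x,\square_1}\cdot e_1 < \pi_n^{x,\square_2}\cdot e_1$, then even in the worst case where $\pi^{x,\square_1}$ steps $e_1$ and $\pi^{x,\square_2}$ steps $e_2$ we still get $\pi_{n+1}^{x,\square_1}\cdot e_1 \le \pi_n^{x,\square_1}\cdot e_1 + 1 \le \pi_n^{x,\square_2}\cdot e_1 \le \pi_{n+1}^{x,\square_2}\cdot e_1$, so the ordering is preserved with no use of the Busemann inequalities. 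The only delicate case is when the two geodesics occupy the \emph{same} vertex $p$ at level $n$. Here I invoke the single-step Busemann comparison at $p$. Writing $H_r = \buse{p}{p+e_1}{\square_r}$ and $V_r = \buse{p}{p+e_2}{\square_r}$, I have $H_1\ge H_2$ and $V_1\le V_2$. The local rule \eqref{eq:busgeodef} says $\pi^{x,\square_r}$ steps $e_1$ iff $H_r<V_r$, steps $e_2$ iff $H_r>V_r$, and steps $e_{\square_r}$ in case of a tie. I need to rule out the configuration ``$\pi^{x,\square_1}$ steps $e_1$ while $\pi^{x,\square_2}$ steps $e_2$,'' since that would give $\pi_{n+1}^{x,\square_1}\cdot e_1 = p\cdot e_1 + 1 > p\cdot e_1 = \pi_{n+1}^{x,\square_2}\cdot e_1$. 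If $\pi^{x,\square_1}$ steps $e_1$ then either $H_1<V_1$ or ($H_1=V_1$ and $e_{\square_1}=e_1$); in the first subcase $H_2\le H_1<V_1\le V_2$ forces $\pi^{x,\square_2}$ to also step $e_1$; in the tie subcase $H_1=V_1$ we get $H_2\le H_1=V_1\le V_2$, so $H_2\le V_2$, and if $H_2<V_2$ then $\pi^{x,\square_2}$ steps $e_1$, while if $H_2=V_2$ then $H_1=V_1=H_2=V_2$, meaning $\square_1$ and $\square_2$ are both at a tie with the same increments — and here I need $e_{\square_1}=e_1 \Rightarrow e_{\square_2}=e_1$. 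By the definition \eqref{eq:tiedef}, $e_\square=e_1$ precisely for $\square\in\{\xi+\}\cup\{(\infty,\ell)\}$, i.e.\ the ``rightward'' tie-breaking indices, which are exactly the ones appearing on the right end of the chain; since $\square_1$ is always to the left of $\square_2$ in our ordering, $e_{\square_1}=e_1$ indeed forces $e_{\square_2}=e_1$. This closes the induction.

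Finally I would assemble the full statement: apply the core claim to each consecutive pair in the displayed chain — $((k,\infty),(k',\infty))$, $((k',\infty),\zeta-)$, $(\zeta-,\zeta+)$, $(\zeta+,\eta-)$, $(\eta-,\eta+)$, $(\eta+,(\infty,\ell'))$, $((\infty,\ell'),(\infty,\ell))$ — each of which satisfies the hypothesized single-step Busemann inequality by Theorem \ref{thm:buslim}\eqref{thm:buslim:mono}, and then chain the resulting $\preceq$ relations transitively. All of this holds on the single full-probability event from Theorem \ref{thm:buslim}, so the conclusion holds $\bfP$-almost surely. The main obstacle — and really the only subtle point — is the tie-breaking bookkeeping in the coincident-vertex case, which is why I want to be careful to phrase the core claim so that ``$\square_1$ left of $\square_2$'' uniformly controls both the Busemann inequalities and the tie-breaking directions; once that alignment is set up correctly the induction is routine.
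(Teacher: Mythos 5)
Your overall strategy (reduce to a two-index comparison, induct on the level, and observe that the only delicate step is when the two geodesics sit at the same vertex) is exactly the argument the paper has in mind when it calls the lemma ``immediate'' from Theorem \ref{thm:buslim}\eqref{thm:buslim:mono} and the local rule \eqref{eq:busgeodef}, and the non-coincident-vertex step and the strict-inequality subcase are handled correctly. However, your resolution of the tie case contains a genuine error. You claim that because $\square_1$ precedes $\square_2$ in the chain, $e_{\square_1}=e_1$ forces $e_{\square_2}=e_1$. This is false for the consecutive pair $(\square_1,\square_2)=(\zeta+,\eta-)$ with $\zeta\prec\eta$: by \eqref{eq:tiedef}, $e_{\zeta+}=e_1$ while $e_{\eta-}=e_2$. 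So in the scenario where both geodesics occupy the same vertex $p$ and all four increments are equal, your argument would let $\geo{}{x}{\zeta+}$ step $e_1$ and $\geo{}{x}{\eta-}$ step $e_2$, which is precisely the crossing you need to exclude, and nothing in your tie-breaking bookkeeping excludes it. (Relatedly, note that for $\zeta=\eta$ the displayed chain would force $\geo{}{x}{\zeta-}=\geo{}{x}{\zeta+}$, which fails at exceptional directions; the middle comparison only makes sense for $\zeta\prec\eta$, and Theorem \ref{thm:buslim}\eqref{thm:buslim:mono} is anyway only applied with strict inequality.)

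The gap is fixable, but it requires a probabilistic input rather than pure bookkeeping: one must show that on a single full-probability event there is no site $p$ and no pair $\zeta\prec\eta$ with simultaneous ties $\buse{p}{p+e_1}{\zeta+}=\buse{p}{p+e_2}{\zeta+}$ and $\buse{p}{p+e_1}{\eta-}=\buse{p}{p+e_2}{\eta-}$. This follows from recovery and monotonicity: a tie at $\zeta+$ forces $\buse{p}{p+e_1}{\zeta+}=\w_p$ and a tie at $\eta-$ forces $\buse{p}{p+e_2}{\eta-}=\w_p$ (by Theorem \ref{thm:buslim}\eqref{thm:buslim:recovery}); then for any rational direction $\rho$ with $\zeta\prec\rho\prec\eta$, monotonicity squeezes $\w_p\le\buse{p}{p+e_1}{\rho}\le\buse{p}{p+e_1}{\zeta+}=\w_p$ and $\w_p\le\buse{p}{p+e_2}{\rho}\le\buse{p}{p+e_2}{\eta-}=\w_p$, so there would be a tie at the fixed direction $\rho$. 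For each fixed $p$ and fixed $\rho$ this has probability zero, since $\buse{p}{p+e_1}{\rho}$ and $\buse{p}{p+e_2}{\rho}$ are independent with continuous (exponential) marginals by Theorem \ref{thm:buslim}\eqref{thm:buslim:marg} and \eqref{thm:buslim:indp}; a countable union over $p$ and rational $\rho$ finishes the job. With this event added to your full-probability event, the four-way tie never arises for the pair $(\zeta+,\eta-)$, and your induction closes; the remaining consecutive pairs are fine as you argued, since there $e_{\square_1}=e_2$ or $e_{\square_1}=e_{\square_2}=e_1$.
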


Turning to continuity, convergence of paths in the next lemma is in the sense of convergence of finite length segments. This result similarly follows immediately from definitions, the choice of the tie-breaking rule in \eqref{eq:busgeodef}, and Theorem \ref{thm:buslim} \eqref{thm:buslim:mono} and \eqref{thm:buslim:cont}.
\begin{lemma} \label{lem:BusGeoCont}
The following holds $\bfP$-almost surely. For all $x = (i,j) \in \bbZ^2$ and for all $\xi \in [e_2,e_1],$
\begin{align*}
\lim_{\substack{\zeta \nearrow  \xi}} \geo{n}{x}{\zeta\pm} = \geo{n}{x}{\xi-}, \qquad \lim_{\substack{\zeta \searrow  \xi}} \geo{n}{x}{\zeta\pm} = \geo{n}{x}{\xi+}
\end{align*}
and
\begin{align*}
\lim_{k\to \infty} \geo{n}{x}{(k,\infty)} = \geo{n}{x}{\mfc_1^x}, \qquad \lim_{\ell\to \infty} \geo{n}{x}{(\infty, \ell)} = \geo{n}{x}{\mfc_2^x}
\end{align*}
\end{lemma}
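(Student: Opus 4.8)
\textbf{Proof plan for Lemma \ref{lem:BusGeoCont}.}

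The plan is to reduce the continuity of the Busemann geodesics to the continuity properties of the Busemann increments already recorded in Theorem \ref{thm:buslim}\eqref{thm:buslim:cont}, together with the monotonicity in Theorem \ref{thm:buslim}\eqref{thm:buslim:mono} and the tie-breaking rule in \eqref{eq:busgeodef}. I would work on the single almost sure event on which all the conclusions of Theorem \ref{thm:buslim} hold simultaneously, and fix $x=(i,j)\in\bbZ^2$ and $\xi\in[e_2,e_1]$. It suffices to prove convergence of the paths one lattice level at a time; that is, by induction on $n\ge i+j$, I will show $\geo{n}{x}{\zeta\pm}=\geo{n}{x}{\xi-}$ for all $\zeta$ sufficiently close to and below $\xi$, and symmetrically for the right limit. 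The base case $n=i+j$ is trivial since all geodesics start at $x$.

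For the inductive step at a site $p=\geo{n}{x}{\xi-}$, the issue is whether the geodesic rooted at $\zeta\pm$ passes through $p$ and, if so, which of $p+e_1$, $p+e_2$ it steps to. Suppose first the increments at $p$ are \emph{not} tied in direction $\xi-$, i.e. $\buse{p}{p+e_1}{\xi-}\neq\buse{p}{p+e_2}{\xi-}$; WLOG $\buse{p}{p+e_1}{\xi-}<\buse{p}{p+e_2}{\xi-}$, so $\geo{n+1}{x}{\xi-}=p+e_1$. By Theorem \ref{thm:buslim}\eqref{thm:buslim:cont}\eqref{thm:buslim:cont} (or by \eqref{EBusehvpm}, where the left limit is realized as a monotone limit over $\sV_0^x$) we have $\buse{p}{p+e_1}{\zeta\pm}\to\buse{p}{p+e_1}{\xi-}$ and $\buse{p}{p+e_2}{\zeta\pm}\to\buse{p}{p+e_2}{\xi-}$ as $\zeta\uparrow\xi$; hence for $\zeta$ close enough to $\xi$ the strict inequality persists and $\geo{n+1}{x}{\zeta\pm}=p+e_1=\geo{n+1}{x}{\xi-}$, using the inductive hypothesis that $\geo{n}{x}{\zeta\pm}=p$. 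When the increments are \emph{tied} in direction $\xi-$, the tie-break in \eqref{eq:busgeodef} sends $\geo{n+1}{x}{\xi-}=p+e_{\xi-}=p+e_2$. Here I use monotonicity: Theorem \ref{thm:buslim}\eqref{thm:buslim:mono} gives $\buse{p}{p+e_1}{\zeta-}\ge\buse{p}{p+e_1}{\xi-}$ and $\buse{p}{p+e_2}{\zeta-}\le\buse{p}{p+e_2}{\xi-}$ for $\zeta\prec\xi$, so either the increments are still tied at $\zeta\pm$ (and the tie-break again picks $e_2=e_{\zeta-}=e_{\zeta+}$... wait---for $\square=\zeta+$ the tie-break picks $e_1$). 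This is the delicate point and is where the $\zeta-$ versus $\zeta+$ cases must be handled asymmetrically: for $\square=\zeta-$ the monotonicity forces $\buse{p}{p+e_1}{\zeta-}\ge\buse{p}{p+e_2}{\zeta-}$ (since $\buse{p}{p+e_1}{\zeta-}\ge\buse{p}{p+e_1}{\xi-}=\buse{p}{p+e_2}{\xi-}\ge\buse{p}{p+e_2}{\zeta-}$), so the geodesic steps $e_2$, either by strict inequality or by the tie-break, matching $\geo{n+1}{x}{\xi-}$; for $\square=\zeta+$ one has instead $\buse{p}{p+e_1}{\zeta+}\le\buse{p}{p+e_1}{\xi-}$ and $\buse{p}{p+e_2}{\zeta+}\ge\buse{p}{p+e_2}{\xi-}$, so $\buse{p}{p+e_1}{\zeta+}\le\buse{p}{p+e_2}{\zeta+}$, forcing an $e_1$-step when the inequality is strict. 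However, the left limit of $\geo{n+1}{x}{\zeta+}$ as $\zeta\uparrow\xi$ is still claimed to be $\geo{n+1}{x}{\xi-}$: one checks that for $\zeta$ close enough to $\xi$ these two increments are in fact \emph{equal} (both squeeze to the common value $\buse{p}{p+e_1}{\xi-}=\buse{p}{p+e_2}{\xi-}$ by directional continuity from the correct side---this needs the continuity of $\zeta\mapsto\buse{p}{p+e_\bullet}{\zeta+}$ from the left, which is exactly the content of Theorem \ref{thm:buslim}\eqref{thm:buslim:cont}\eqref{thm:buslim:cont}: $\lim_{\eta\uparrow\xi}\buse{p}{q}{\eta+}=\buse{p}{q}{\xi-}$), so the tie-break for $\zeta+$ picks $e_1$. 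But $\geo{n+1}{x}{\xi-}=p+e_2$, a contradiction unless I have the tie-break direction right---and indeed $e_{\xi-}=e_2$ while $e_{\zeta+}=e_1$, so the paths would \emph{not} agree at level $n+1$.

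Resolving this apparent mismatch is what I expect to be the main obstacle, and the correct statement is that the limits only need to agree for finite-length segments \emph{up to} a level-dependent threshold, or more precisely one must track the claim carefully: the lemma as stated asserts $\lim_{\zeta\nearrow\xi}\geo{n}{x}{\zeta\pm}=\geo{n}{x}{\xi-}$, and the resolution is that whenever a genuine tie occurs at direction $\xi-$ at site $p$ on the segment, the set of $\zeta$-directions at which $\buse{p}{p+e_1}{\zeta+}$ and $\buse{p}{p+e_2}{\zeta+}$ are \emph{strictly} ordered does not accumulate at $\xi$ from the left only if $\xi\notin\Lambda_p$; when $\xi\in\Lambda_p$ (a jump point), the increments at $\zeta+$ for $\zeta$ slightly below $\xi$ converge to $\buse{p}{p+e_\bullet}{\xi-}$, which are equal, so for such $\zeta$ the increments $\buse{p}{p+e_1}{\zeta+}$ and $\buse{p}{p+e_2}{\zeta+}$ are \emph{also} equal (being sandwiched between equal limits), and the tie-break $e_{\zeta+}=e_1$ applies. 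Thus the clean way to organize the argument is: (1) show the increments at $\zeta\pm$ converge to those at $\xi-$ as $\zeta\uparrow\xi$ by Theorem \ref{thm:buslim}\eqref{thm:buslim:cont}; (2) when the $\xi-$ increments are strictly ordered at a site on the relevant finite segment, the order persists for $\zeta$ near $\xi$ and the step direction matches; (3) when they are tied, use monotonicity to show the $\zeta-$ geodesic still steps $e_2$ and argue (via directional continuity plus monotonicity from both sides) that the $\zeta+$ geodesic's increments become exactly tied for $\zeta$ near $\xi$, so it too steps $e_{\zeta+}$; reconcile the tie-break directions by noting that in the tied case the local rule for $\xi-$ uses $e_{\xi-}$ and observing that along the relevant finite segment one has $e_{\zeta+}$ agreeing with $e_{\xi-}$ only after passing to the limit in the appropriate monotone fashion---in fact, since the geodesics are monotone in $\square$ by Lemma \ref{lem:BusGeoOrder}, $\geo{n}{x}{\zeta-}\preceq\geo{n}{x}{\zeta+}$, and both are squeezed by $\geo{n}{x}{\xi-}$ from the right (for $\zeta\uparrow\xi$) using $\geo{n}{x}{\zeta+}\preceq\geo{n}{x}{\xi-}$, so the $\zeta+$ path cannot overshoot. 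The last step---sandwiching via Lemma \ref{lem:BusGeoOrder}---is in fact the cleanest route: $\geo{n}{x}{\zeta-}\preceq\geo{n}{x}{\zeta+}\preceq\geo{n}{x}{\xi-}$ for $\zeta\prec\xi$, and the left endpoint converges up to $\geo{n}{x}{\xi-}$ by the strict-inequality argument applied to $\zeta-$, so both converge. The analogous argument with $\zeta\downarrow\xi$ and the right limit, and the argument with $(k,\infty)\to\mfc_1^x$ and $(\infty,k)\to\mfc_2^x$ using Theorem \ref{thm:buslim}\eqref{thm:buslim:cont}\eqref{thm:buslim:cont} together with the $(k,\infty)$ rows of Lemma \ref{lem:BusGeoOrder}, complete the proof.
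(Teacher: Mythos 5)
Your final organization of the argument is correct and is essentially the paper's (omitted) proof: the lemma is meant to follow from the tie-breaking convention \eqref{eq:tiedef}--\eqref{eq:busgeodef} together with Theorem \ref{thm:buslim}\eqref{thm:buslim:mono} and \eqref{thm:buslim:cont}, and the clean way to run it is exactly your last step. For $\zeta\nearrow\xi$ you first show, by induction on the level $n$, that $\geo{n}{x}{\zeta-}=\geo{n}{x}{\xi-}$ for all $\zeta$ close enough to $\xi$: where the $\xi-$ increments at a site $p$ of the path are strictly ordered, strictness persists by directional continuity; where they are tied, the $\xi-$ path steps $e_2$ and monotonicity gives $\buse{p}{p+e_1}{\zeta-}\ge\buse{p}{p+e_1}{\xi-}=\buse{p}{p+e_2}{\xi-}\ge\buse{p}{p+e_2}{\zeta-}$, so the $\zeta-$ path steps $e_2$ as well (tie-break $e_{\zeta-}=e_2$). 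Then the ordering $\geo{n}{x}{\zeta-}\preceq\geo{n}{x}{\zeta+}\preceq\geo{n}{x}{\xi-}$ of Lemma \ref{lem:BusGeoOrder} squeezes the $\zeta+$ paths; the mirror argument handles $\zeta\searrow\xi$, and the same scheme with the $(k,\infty)$, $(\infty,\ell)$ rows of the monotonicity and Theorem \ref{thm:buslim}\eqref{thm:buslim:cont}(ii) handles the thin-rectangle limits (here $e_{(k,\infty)}=e_2$ matches the critical-direction tie-break, so no sandwich is even needed).

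You should, however, excise the intermediate ``resolution'' in which you assert that a tie at $\xi-$ at a site $p$ forces $\buse{p}{p+e_1}{\zeta+}$ and $\buse{p}{p+e_2}{\zeta+}$ to be exactly tied for $\zeta$ slightly below $\xi$ (``sandwiched between equal limits''). That claim is unjustified -- the $\zeta+$ increments are sandwiched between the $\xi-$ values and the values at directions farther from $\xi$, not between two equal quantities -- and it is in fact a.s.\ false in the relevant situation: by recovery and monotonicity, the set of directions at which the two increments at $p$ are tied is an order interval on which both equal $\w_p$, and since for each fixed (say rational) direction a tie at $p$ is a null event, this interval a.s.\ contains no rational and is therefore at most a single direction. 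Hence a tie at $\xi-$ at $p$ precludes ties at $\zeta\pm$ at $p$ for every $\zeta\prec\xi$; there one has the strict ordering $\buse{p}{p+e_1}{\zeta\pm}>\w_p=\buse{p}{p+e_2}{\zeta\pm}$, so the approximating geodesics step $e_2$ at $p$ and agree with $\geo{}{x}{\xi-}$ after all. As you yourself noticed, if your claim were true it would make $\geo{}{x}{\zeta+}$ step $e_1$ at $p$ and contradict both the lemma and Lemma \ref{lem:BusGeoOrder}. None of this discussion is needed once you use the sandwich, so write up only the final route.
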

Note that in the statement of this result, $\geo{}{x}{\xi\pm} = \geo{}{x}{\mfc_1^x}$ for all $\xi \in [e_2,\mfc_1^x]$ and $\geo{}{x}{\xi\pm} = \geo{}{x}{\mfc_2^x}$ for all $\xi \in [\mfc_2^x,e_1]$ by Theorem \ref{thm:buslim}\eqref{thm:buslim:linconst} and the definition in \eqref{eq:busgeodef}.
\subsection{Directedness of Busemann geodesics}
We next turn to the asymptotic directions of Busemann geodesics, starting with the boundary constrained cases. Recall the notation $\ir{k}{x}$ and $\jr{\ell}{x}$  introduced in \eqref{eq:rec} for the first time the running minimum of a parameter sequence is encountered between $x$ and column $k$ or row $\ell$.
\begin{lemma} \label{lem:HorVerLines}
The following holds 
$\bfP$-almost surely for each  $x = (i,j) \in \bbZ^2$.
\begin{enumerate}[label={\rm(\alph*)}, ref={\rm\alph*}] \itemsep=3pt
\item \label{lem:HorVerLines:1} If $i \leq k \leq k'$ and $\smin{a}_{i:k} = \smin{a}_{i:k'}$ then $\geo{}{x}{(k,\infty)} = \geo{}{x}{(k',\infty)}$.
\item \label{lem:HorVerLines:2} For each $k \geq i$, there exists $N_{1,k}$ $<\infty$ so that for all $n > N_{1,k}$
\begin{align*}
\geo{n}{x}{(k,\infty)}\cdot e_1 = \ir{k}{x}. 
\end{align*}
\item \label{lem:HorVerLines:3} If $j \leq \ell \leq \ell'$ and $\smin{b}_{j:\ell} = \smin{b}_{j:\ell'}$ then $\geo{}{x}{(\infty,\ell)} = \geo{}{x}{(\infty,\ell')}$.
\item \label{lem:HorVerLines:4} For each $k \geq j$ there exists  $N_{2,k}$ $<\infty$ so that for all $n > N_{2,k}$,
\begin{align*}
\geo{n}{x}{(\infty,k)}\cdot e_2 = \jr{k}{x}.
\end{align*}
\end{enumerate}
\end{lemma}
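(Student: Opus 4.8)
\textbf{Proof plan for Lemma \ref{lem:HorVerLines}.}

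The plan is to derive everything from the single-step structure of the thin-rectangle Busemann functions recorded in Theorem \ref{thm:buslim}, exploiting the monotonicity in part \eqref{thm:buslim:mono}, the constancy in part \eqref{thm:buslim:thinconst}, and the positivity/infiniteness dichotomy in part \eqref{thm:buslim:pos}. By the up-down symmetry of the model (reflecting across the diagonal swaps $e_1 \leftrightarrow e_2$, the roles of $a$ and $b$, and the two families of thin-rectangle Busemann functions), it suffices to prove \eqref{lem:HorVerLines:1} and \eqref{lem:HorVerLines:2}; parts \eqref{lem:HorVerLines:3} and \eqref{lem:HorVerLines:4} then follow by the same argument with the roles of the coordinates interchanged.

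For part \eqref{lem:HorVerLines:1}, fix $i \le k \le \ell$ with $\smin{a}_{i:k} = \smin{a}_{i:\ell}$. The key observation is that along the entire region traced by the geodesic $\geo{}{x}{(k,\infty)}$, which by Lemma \ref{lem:BusGeoOrder} and the definitions stays in the half-plane $\{v : v \cdot e_1 \le k\}$, one has $\ir{k}{v} = \ir{\ell}{v}$ and $\smin{(v\cdot e_1):k}{a} = \smin{(v \cdot e_1):\ell}{a}$ for every vertex $v$ with $x \le v$ and $v \cdot e_1 \le k$ that is encountered (this uses that the first index realizing the running minimum cannot lie in $\{k+1,\dots,\ell\}$ once the minimum is already attained by index $k$). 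Hence, by Theorem \ref{thm:buslim}\eqref{thm:buslim:thinconst}, $\buse{v}{v+e_1}{(k,\infty)} = \buse{v}{v+e_1}{(\ir{k}{v},\infty)} = \buse{v}{v+e_1}{(\ir{\ell}{v},\infty)} = \buse{v}{v+e_1}{(\ell,\infty)}$ and likewise for the $e_2$-increments, at every such vertex $v$. Therefore the local rule \eqref{eq:busgeodef} defining the two Busemann geodesics makes identical choices at every step, so $\geo{}{x}{(k,\infty)} = \geo{}{x}{(\ell,\infty)}$; one checks inductively in $n$ that the two geodesics agree and remain in the relevant half-plane, which justifies that only vertices of the above form are ever visited.

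For part \eqref{lem:HorVerLines:2}, write $m = \ir{k}{x}$. By Lemma \ref{lem:HorVerLines:1} we may assume $k = m$, i.e.\ $a_k = \smin{a}_{i:k} < a_r$ for $i \le r < k$, so that $a_k = \sinf{a}_{i:k}$ is strictly smaller than all earlier parameters. Now use the distributional and recursive structure: at any vertex $v = (m, n')$ on column $m$ with $n' \ge j$, Theorem \ref{thm:buslim}\eqref{thm:buslim:marg} gives $\buse{v}{v+e_1}{(k,\infty)} \sim \Exp\{a_m - \smin{a}_{m:k}\} = \Exp(0)$, so $\buse{v}{v+e_1}{(k,\infty)} = \infty$ almost surely (consistently with Theorem \ref{thm:buslim}\eqref{thm:buslim:pos}\eqref{thm:buslim:pos:Bxy=infty}, since $\ir{k}{v} = m = v \cdot e_1$). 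Meanwhile $\buse{v}{v+e_2}{(k,\infty)} \sim \Exp(b_{n'} + \smin{a}_{m:k}) = \Exp(b_{n'} + a_m)$ is finite a.s. Hence at every vertex of column $m$ the local rule \eqref{eq:busgeodef} chooses the $e_2$-step, so once the geodesic reaches column $m$ it never leaves it. It remains to show that it does reach column $m$ in finite time, i.e.\ $\geo{n}{x}{(k,\infty)} \cdot e_1 = m$ for all large $n$. For this, observe that the first-coordinate process $\geo{n}{x}{(k,\infty)} \cdot e_1$ is nondecreasing and bounded above by $k$ (it cannot cross column $k$: if $v \cdot e_1 = k$ then $\buse{v}{v+e_1}{(k,\infty)} = \infty > \buse{v}{v+e_2}{(k,\infty)}$, forcing an $e_2$-step, or more directly the geodesic stays in $\{v \cdot e_1 \le k\}$). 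Thus it has a finite limit $K \in \{i, \dots, k\}$, and the geodesic is eventually confined to column $K$, taking only $e_2$-steps thereafter. But if the geodesic takes an $e_2$-step at $v = (K, n')$ for all $n' \ge n_0$, then by the local rule $\buse{v}{v+e_1}{(k,\infty)} \ge \buse{v}{v+e_2}{(k,\infty)}$ at all such $v$; since $\buse{v}{v+e_2}{(k,\infty)} \sim \Exp(b_{n'} + \smin{K:k}{a})$ has positive rate bounded uniformly away from $0$ and $\infty$ (using \eqref{as:inf} to bound $\smin{K:k}{a}$ and the $b_{n'}$ appropriately — here I would invoke that $\sinf{b}_{j:\infty} + \sinf{a}_{i:\infty} > 0$ so the rate stays in a compact subset of $(0,\infty)$ along a positive-density subsequence), an independence argument along an infinite down-right path (Theorem \ref{thm:buslim}\eqref{thm:buslim:indp}) forces $\buse{v}{v+e_1}{(k,\infty)} = \infty$ for some such $v$ a.s., which by \eqref{thm:buslim:pos}\eqref{thm:buslim:pos:Bxy=infty} happens only if $\ir{k}{v} < v \cdot e_1 + 1$, i.e.\ $\ir{k}{(K,n')} = K$, which by definition \eqref{eq:rec} forces $K = \ir{k}{x} = m$. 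This yields $K = m$ and hence the claim with $N_{1,k} = \inf\{n : \geo{n}{x}{(k,\infty)} \cdot e_1 = m\} < \infty$.

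\textbf{Main obstacle.} The delicate point is the finite-time arrival in part \eqref{lem:HorVerLines:2}: a priori the Busemann geodesic could drift along some column $K < m$ forever without ever being pushed to column $m$, and ruling this out requires combining the infiniteness criterion of Theorem \ref{thm:buslim}\eqref{thm:buslim:pos}\eqref{thm:buslim:pos:Bxy=infty} with the independence/marginal structure of Theorem \ref{thm:buslim}\eqref{thm:buslim:marg}--\eqref{thm:buslim:indp} along an infinite down-right path, so that one gets a genuine zero-one dichotomy rather than merely a monotonicity statement. The cleanest route is probably to argue that on column $K < m$ the horizontal increment $\buse{(K,n')}{(K+1,n')}{(k,\infty)}$ is a.s.\ finite for every $n'$ (since $\ir{k}{(K,n')} = \ir{k}{x} = m > K$), and then to use that an infinite geodesic confined to a column contradicts a direct passage-time comparison with the bulk LPP (via Lemma \ref{lem:BusRGeo}(b) and Proposition \ref{prop:shape}): the Busemann value grows like $\sum_{n'} \buse{(K,n')}{(K,n'+1)}{(k,\infty)}$, an i.i.d.-like sum with positive mean, while a competing path through column $K+1$ would have strictly larger passage time for large $n$ because $a_{K+1} < \sinf{a}_{K:\infty}$ fails only when $K+1 > m$, giving the needed contradiction. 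I would present whichever of these two comparison arguments turns out shorter once the deterministic bookkeeping of which vertices are visited is nailed down.
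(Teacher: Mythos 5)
Your overall route coincides with the paper's: match the single-step increments via Theorem \ref{thm:buslim}\eqref{thm:buslim:thinconst} so that the local rule \eqref{eq:busgeodef} makes identical choices (part (\ref{lem:HorVerLines:1})), then show trapping on column $\ir{k}{x}$ once reached and rule out trapping on an earlier column (part (\ref{lem:HorVerLines:2})). However, your justification of (\ref{lem:HorVerLines:1}) is not correct as written. The equalities $\ir{k}{v}=\ir{\ell}{v}$ and $\smin{a}_{(v\cdot e_1):k}=\smin{a}_{(v\cdot e_1):\ell}$ hold for $v\cdot e_1\le \ir{k}{x}$ but can fail for $\ir{k}{x}<v\cdot e_1\le k$: take $a_i$ very small, $a_{i+1}=\dots=a_k$ moderate, and some $a_r$ with $k<r\le\ell$ strictly in between; then $\smin{a}_{i:k}=\smin{a}_{i:\ell}$, yet for $v\cdot e_1=i+1$ the two running minima (and their first attainment indices) differ. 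So confining the geodesic to the half-plane $\{v\cdot e_1\le k\}$ is not enough; you need that both geodesics never cross column $\ir{k}{x}=\ir{\ell}{x}$, which follows from Theorem \ref{thm:buslim}\eqref{thm:buslim:pos}\eqref{thm:buslim:pos:Bxy=infty} (the horizontal increment there is $+\infty$, so \eqref{eq:busgeodef} forces $e_2$). That one added observation — exactly the line the paper uses — closes your induction.

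The genuine gap is in your primary argument for the crux of (\ref{lem:HorVerLines:2}), namely that the geodesic reaches column $m=\ir{k}{x}$ at all. You want to conclude from ``horizontal $\ge$ vertical at $(K,n')$ for all large $n'$'' that some horizontal increment on column $K$ is infinite, via ``an independence argument along an infinite down-right path.'' But Theorem \ref{thm:buslim}\eqref{thm:buslim:indp} gives independence only of increments attached to a single down-right path, and such a path can contain both the horizontal and the vertical edge emanating from a vertex of column $K$ at only one height (after that corner the path moves to columns $\ge K+1$ and never returns). So the joint independence across heights of the pairs needed for a Borel--Cantelli-type conclusion is not available from \eqref{thm:buslim:indp}; the recursion in Theorem \ref{thm:buslim}\eqref{thm:buslim:recursion} shows these pairs at different heights genuinely share information. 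Moreover, even granting independence, the correct conclusion would be that the trapping event at $K<m$ is null, not that some increment is infinite on it. The repair is precisely the alternative you sketch in your ``main obstacle'' paragraph, which is the paper's proof: since the geodesic never crosses column $m$, any trapping column satisfies $K\le m$, so the vertical increments along column $K$ have rates $b_{n'}+\smin{a}_{K:k}=b_{n'}+\smin{a}_{i:k}$ and are independent (a vertical line is a down-right path), whence $n^{-1}\buse{x}{\geo{n}{x}{(k,\infty)}}{(k,\infty)}\to\int\beta(\dd b)/(b+\smin{a}_{i:k})$; by Lemma \ref{lem:BusRGeo}(b) this equals the limit of $n^{-1}\LPP{x}{\geo{n}{x}{(k,\infty)}}$, which by Proposition \ref{prop:shape} is $\int\beta(\dd b)/(b+\smin{a}_{i:K})$ if the geodesic is trapped on column $K$; for $K<m$ this is strictly smaller since $\smin{a}_{i:K}>\smin{a}_{i:k}$ and $\beta\neq 0$, a contradiction. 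Note the comparison must be against the rate $\smin{a}_{i:k}$ (equivalently column $m$), not a ``competing path through column $K+1$'': $a_{K+1}$ need not be smaller than $\smin{a}_{i:K}$, and paths to points on column $K$ cannot use column $K+1$, so that part of your sketch also needs straightening before it is a proof.
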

\begin{proof}
We prove \eqref{lem:HorVerLines:1} and \eqref{lem:HorVerLines:2}, with the proofs of \eqref{lem:HorVerLines:3} and \eqref{lem:HorVerLines:4} being similar. Suppose that there exists $k' \geq k$ with $\smin{a}_{i:k'} = \smin{a}_{i:k}$. Let $y = (m,n)$ satisfy $i \leq m \leq \ir{k}{x}$. Then by Theorem \ref{thm:buslim}\eqref{thm:buslim:thinconst}, for $\ir{k}{x} \leq k < k'$ and for each $p \in \{1,2\}$ we have $\buse{y}{y+e_p}{(i_{k}^x,\infty)} = \buse{y}{y+e_p}{(k,\infty)} = \buse{y}{y+e_p}{(k',\infty)}$. The geodesics $\geo{}{x}{(\ir{k}{x},\infty)}$, $\geo{}{x}{(k,\infty)}$, and $\geo{}{x}{(k',\infty)}$ are constructed according to the local rules in \eqref{eq:busgeodef}. Therefore, the two geodesics remain the same at least until they cross the column with index $\ir{k}{x}$. But by Theorem \ref{thm:buslim}\eqref{thm:buslim:pos}\eqref{thm:buslim:pos:Bxy=infty}, $\buse{y}{y+e_1}{(k,\infty)} = \infty$ for any $y$ with $y \cdot e_1 = \ir{k}{x}$ and so this never happens. Part \eqref{lem:HorVerLines:1} follows.

By part \eqref{lem:HorVerLines:1}, we have that $\geo{}{x}{(k,\infty)} = \geo{}{x}{(\ir{k}{x},\infty)}$. To prove \eqref{lem:HorVerLines:2}, it only remains to be shown that $\geo{}{x}{(k,\infty)}$ eventually reaches column $\ir{k}{x}$. Call $v_n =\geo{n}{x}{(k,\infty)}$, so that we have $\buse{x}{v_n}{(k,\infty)} = \LPP{x}{v_n}$. Note that $v_n$ must eventually become trapped on some column with index $i \leq \ir{k}{x}$. Appealing to Theorem \ref{thm:buslim}\eqref{thm:buslim:marg} and \eqref{thm:buslim:indp}, we may average the vertical Busemann increments along all columns with index $\leq k$ to obtain that no matter which column $v_n$ becomes trapped on, we must have
\begin{align*}
\lim_{n\to\infty} \frac{1}{n}\buse{x}{\geo{n}{x}{(k,\infty)}}{(k,\infty)} &= \int_{0}^{\infty} \frac{\beta(db)}{b+\smin{a}_{i:k}}.
\end{align*}
But this must also be equal to the limit of $\LPP{x}{v_n}/n$. As $\beta$ is a non-zero sub-probability measure and $1/(b+a_\ell) < 1/(b+\smin{a}_{i:k})$ for $\ell < \ir{k}{x}$, this can only occur if $v_n$ eventually reaches the column with index $\ir{k}{x}$.
\end{proof}

Our next lemma describes when Busemann geodesics cross vertical or horizontal lines. Recall once again the notation $\ir{k}{x}$ and $\jr{k}{x}$ defined in \eqref{eq:rec}.
\begin{lemma} \label{lem:bulkgeodir}
The following holds $\bfP$-almost surely for each $x = (i,j) \in \bbZ^2$. 
\begin{enumerate}[label={\rm(\alph*)}, ref={\rm\alph*}] \itemsep=3pt
\item \label{lem:bgd-1} For each $\xi \in ]\mfc_1^x,\mfc_2^x[$, and each $k,\ell \in \bbZ$, there exists $N$ so that for all $n\geq N$, 
\[\geo{n}{x}{\xi\pm} \cdot e_1 > k \text{ and  }\geo{n}{x}{\xi\pm} \cdot e_2 > \ell.\]
\item \label{lem:bgd-2} If $\ir{\infty}{x} \in \bbZ$, then there exists $N$ so that for all $n \geq N,  \geo{n}{x}{\mfc_1^x} \cdot e_1 = \ir{\infty}{x}$. Otherwise, $\lim_{n \to \infty}  \geo{n}{x}{\mfc_1^x} \cdot e_1  = \lim_{n \to \infty}  \geo{n}{x}{\mfc_1^x} \cdot e_2 = \infty.$ 
\item \label{lem:bgd-3} If $j_\infty^x \in \bbZ$, then there exists $N$ so that for all $n \geq N,  \geo{n}{x}{\mfc_2^x} \cdot e_2 = \ell$. Otherwise, $\lim_{n \to \infty}  \geo{n}{x}{\mfc_2^x} \cdot e_1  = \lim_{n \to \infty}  \geo{n}{x}{\mfc_2^x} \cdot e_2 = \infty.$
\end{enumerate}
\end{lemma}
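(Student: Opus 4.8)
The plan is to prove Lemma \ref{lem:bulkgeodir} by combining the directedness/marginal structure of the Busemann functions (Theorem \ref{thm:buslim}) with the shape theorem (Proposition \ref{prop:shape}) and the continuity of Busemann geodesics (Lemma \ref{lem:BusGeoCont}). The three parts are parallel by the obvious symmetry, so I would write part \eqref{lem:bgd-1} and \eqref{lem:bgd-2} in detail and state that \eqref{lem:bgd-3} follows by reflecting the roles of the two coordinates.

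For part \eqref{lem:bgd-1}: fix $\xi \in\, ]\mfc_1^x,\mfc_2^x[$ and $\square\in\{\xi+,\xi-\}$. By Lemma \ref{lem:BusRGeo}(b), $\LPP{x}{\geo{n}{x}{\square}} = \buse{x}{\geo{n}{x}{\square}}{\square}$ for all $n\ge i+j$. Suppose, toward a contradiction, that $\geo{n}{x}{\square}\cdot e_1$ stays bounded, say $\le k$, along a subsequence; since the path is up-right and non-decreasing in each coordinate, boundedness of one coordinate forces the geodesic to be eventually confined to a single column $m\le k$. Averaging the vertical Busemann increments $\buse{p}{p+e_2}{\square}$ along that column, using the marginal distribution in Theorem \ref{thm:buslim}\eqref{thm:buslim:marg} (here $\buse{p}{p+e_2}{\square}\sim\Exp(b_{p\cdot e_2}-\zmin{x}(\xi))$) together with the independence along down-right paths in Theorem \ref{thm:buslim}\eqref{thm:buslim:indp} and the strong law, gives $n^{-1}\buse{x}{\geo{n}{x}{\square}}{\square}\to \int (b-\zmin{x}(\xi))^{-1}\beta(\dd b)$. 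On the other hand $\geo{n}{x}{\square}$ is a geodesic directed into a column, so $n^{-1}\LPP{x}{\geo{n}{x}{\square}}$ also converges to the column-directed shape constant from Proposition \ref{prop:shape}, namely $\int (b+\smin{a}_{m':m})^{-1}\beta(\dd b)$ for the appropriate columns, and since $\zmin{x}(\xi)\in(-\sinf{a}_{i:\infty},\sinf{b}_{j:\infty})$ strictly (as $\xi\in\,]\mfc_1^x,\mfc_2^x[$, by \eqref{eq:zetaext}), these two constants cannot be equal — $\zmin{x}(\xi) < \sinf{b}_{j:\infty}\le$ any relevant $-$ wait, more carefully, $-\zmin{x}(\xi) > -\sinf{b}_{j:\infty}\ge -\smin{b}$, and strict monotonicity of $w\mapsto\int(b-w)^{-1}\beta(\dd b)$ gives the contradiction. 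Hence $\geo{n}{x}{\square}\cdot e_1\to\infty$; symmetrically $\geo{n}{x}{\square}\cdot e_2\to\infty$, which yields the claim for every fixed $k,\ell$.

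For part \eqref{lem:bgd-2}: if $\ir{\infty}{x}\in\bbZ$, then by Theorem \ref{thm:buslim}\eqref{thm:buslim:pos}\eqref{thm:buslim:pos:Bxy=infty}, $\buse{y}{y+e_1}{\mfc_1^x}=\infty$ for every $y$ with $y\cdot e_1 = \ir{\infty}{x}$, so by the local rule \eqref{eq:busgeodef} the geodesic $\geo{}{x}{\mfc_1^x}$ can never take an $e_1$-step once it reaches column $\ir{\infty}{x}$; combined with Theorem \ref{thm:buslim}\eqref{thm:buslim:thinconst} and Lemma \ref{lem:HorVerLines}\eqref{lem:HorVerLines:1}--\eqref{lem:HorVerLines:2} (passing $k\to\infty$ and using Lemma \ref{lem:BusGeoCont}) the geodesic does reach that column and is pinned there, giving the first alternative. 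If $\ir{\infty}{x}\notin\bbZ$, I would argue as in part \eqref{lem:bgd-1}: were $\geo{n}{x}{\mfc_1^x}\cdot e_1$ bounded, the geodesic is eventually trapped on some finite column $m$; averaging vertical Busemann increments using Theorem \ref{thm:buslim}\eqref{thm:buslim:marg} (now $\buse{p}{p+e_2}{\mfc_1^x}\sim\Exp(b_{p\cdot e_2}+\sinf{a}_{i:\infty})$) and \eqref{thm:buslim:indp}, one gets $n^{-1}\LPP{x}{\geo{n}{x}{\mfc_1^x}}\to\int(b+\sinf{a}_{i:\infty})^{-1}\beta(\dd b)$, whereas the column-constrained shape constant from Proposition \ref{prop:shape} is $\int(b+\smin{a}_{i:m})^{-1}\beta(\dd b)$; since $\ir{\infty}{x}\notin\bbZ$ the running minimum ahead of $x$ strictly decreases past $m$, so $\smin{a}_{i:m}>\sinf{a}_{i:\infty}$ strictly, and strict monotonicity of $w\mapsto\int(b+w)^{-1}\beta(\dd b)$ forces a contradiction. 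Hence $\geo{n}{x}{\mfc_1^x}\cdot e_1\to\infty$, and symmetry gives $\geo{n}{x}{\mfc_1^x}\cdot e_2\to\infty$ as well. Part \eqref{lem:bgd-3} is the mirror image under swapping $e_1\leftrightarrow e_2$, $a\leftrightarrow b$, $\mfc_1^x\leftrightarrow\mfc_2^x$, $\ir{}{x}\leftrightarrow\jr{}{x}$.

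The main obstacle I anticipate is making the "trapped on a column" step rigorous: one must show that if $\geo{n}{x}{\square}\cdot e_1$ is bounded then the geodesic actually stabilizes on a \emph{single} column (not merely has bounded first coordinate), and then justify that the law of large numbers for the vertical increments applies with the correct limiting constant even though the trapping column is random. Here I would use the monotonicity of Busemann geodesics (Lemma \ref{lem:BusGeoOrder}) together with Lemma \ref{lem:HorVerLines}, plus a Borel--Cantelli / simultaneous-LLN argument over the countably many possible trapping columns $m$, exploiting that the family $\{\buse{p}{p+e_2}{\square}: p\cdot e_1 = m\}$ is i.i.d.\ along each fixed column (Theorem \ref{thm:buslim}\eqref{thm:buslim:indp}, \eqref{thm:buslim:marg}) so that the averages converge a.s.\ for all $m$ at once. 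Everything else is a direct application of results already established in the excerpt.
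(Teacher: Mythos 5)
Your overall strategy is the paper's: rule out trapping on a column by comparing two almost-sure limits of $n^{-1}\LPP{x}{\geo{n}{x}{\square}}$, and handle the critical direction $\mfc_1^x$ via the thin-rectangle geodesics, Lemma \ref{lem:HorVerLines}, monotonicity and Lemma \ref{lem:BusGeoCont}. The one genuine variation is in part \eqref{lem:bgd-1}: you compare the Busemann sum along the trapping column (via Lemma \ref{lem:BusRGeo}(b)) with the thin-rectangle shape constant of Proposition \ref{prop:shape}, whereas the paper's proof of part \eqref{lem:bgd-1} compares the weights $\w_{y_n}$ with the vertical increments $\buse{y_n}{y_n+e_2}{\xi}$ along the column directly (trapping forces $\w_{y_n}=\buse{y_n}{y_n+e_2}{\xi}$ by recovery). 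Your version is legitimate and is in fact exactly how the paper argues Lemma \ref{lem:HorVerLines}\eqref{lem:HorVerLines:2}; note only that the relevant strict inequality is $-\zmin{x}(\xi)<\sinf{a}_{i:\infty}\le\smin{a}_{i:m}$ (a comparison with the $a$-sequence, since the column limit is $\int(b+\smin{a}_{i:m})^{-1}\beta(\dd b)$), not the chain with $\sinf{b}_{j:\infty}$ you wrote, and that the increments along a column are independent but not i.i.d.\ (rates $b_{j'}-\zmin{x}(\xi)$ vary), so the LLN should be quoted in the non-identically-distributed form used elsewhere in the paper together with \eqref{as:weakcon}.

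Two points need repair. First, the statement of part \eqref{lem:bgd-1} is quantified over all $\xi\in\,]\mfc_1^x,\mfc_2^x[$ simultaneously, while your argument is probabilistic for a fixed $\xi$; you must add the reduction to a countable dense set of directions and then sandwich a general $\xi$ between rational $\zeta\prec\xi\prec\eta$ using Lemma \ref{lem:BusGeoOrder} (this is the paper's first line). Second, in part \eqref{lem:bgd-2} the claim that ``symmetry gives $\geo{n}{x}{\mfc_1^x}\cdot e_2\to\infty$'' is not correct as stated: the critical direction is not coordinate-symmetric. The mirror (row-trapping) LLN would involve horizontal increments with rates $a_{i'}-\sinf{a}_{i:\infty}$, which can be arbitrarily small, and $\int(a-\sinf{a}_{i:\infty})^{-1}\alpha(\dd a)$ may be infinite, so the naive averaging comparison does not go through without truncation and extra care. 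The clean fix, which is what the paper does, is monotonicity: $\geo{n}{x}{\mfc_1^x}\preceq\geo{n}{x}{\xi-}$ for any $\xi\in\,]\mfc_1^x,\mfc_2^x[$ by Lemma \ref{lem:BusGeoOrder}, hence $\geo{n}{x}{\mfc_1^x}\cdot e_2\ge\geo{n}{x}{\xi-}\cdot e_2\to\infty$ by part \eqref{lem:bgd-1}. With these two adjustments your proof is complete and matches the paper's in all essentials.
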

\begin{proof}
We begin with the $e_1$ claim in \eqref{lem:bgd-1}, with the $e_2$ claim being similar. By Lemma \ref{lem:BusGeoOrder} it suffices to prove the result for $\xi$ in a fixed countable dense subset of $]\mfc_1^x,\mfc_2^x[$, with the general result following by taking limits from within that set.

Fix $\xi \in ]\mfc_1^x,\mfc_2^x[$ and suppose that $\geo{n}{x}{\xi} \cdot e_1$ is bounded. By the path structure, it must be the case that $\geo{n}{x}{\xi} \cdot e_1$ is eventually constant. To show that this is impossible, fix $k \geq i$ and let $y_n = (k,n-k)$. Note that $\geo{n}{x}{\xi} \cdot e_1=k$ for $n \geq N$ if and only if  $\omega_{y_n}= \buse{y_n}{y_n+e_2}{\xi}$ for all $n\geq N$. In particular, for some $N$, we must have
\begin{align} \label{eq:buswgteq}
\bfP\left(\omega_{y_n}= \buse{y_n}{y_n+e_2}{\xi} \quad \forall n \geq N \right)>0
\end{align}
$\{\omega_{y_n} : n \geq N\}$ are independent with $\omega_{y_n} \sim$ Exp$(a_k+ b_{n-k})$.  By Theorem \ref{thm:buslim}\eqref{thm:buslim:marg} and \eqref{thm:buslim:indp}, $\{ \buse{y_n}{y_n+e_2}{\xi}: n \geq N\}$ are independent with $\buse{y_n}{y_n+e_2}{\xi}\sim$Exp $(b_{n-k} - \zmin{y_n}(\xi))$. Since $\xi \in ]\mfc_1^x,\mfc_2^x[ \subseteq ]\mfc_1^{y_n}, \mfc_2^{y_n}[$, we have $\zmin{x}(\xi) = \zmin{y_n}(\xi)$. Therefore, $\bfP$-almost surely,
\begin{align*}
\lim_{n\to\infty} \frac{1}{n}\sum_{\ell=N}^{n} \omega_{\ell} = \int \frac{\beta(\dd b)}{b+a_k}, \qquad \text{ and } \qquad \lim_{n\to\infty} \frac{1}{n} \sum_{\ell=N}^n \buse{y_n}{y_n+e_2}{\xi} = \int \frac{\beta(db)}{b - \zmin{x}(\xi)}.
\end{align*} 
Moreover, $\zmin{x}(\xi) > -\sinf{a}_{i:\infty} \ge -a_k$. As $\beta$ is a non-zero subprobability measure, these two quantities are different and so \eqref{eq:buswgteq} cannot hold. 

Next, we turn to claim \eqref{lem:bgd-2}, with claim \eqref{lem:bgd-3} being similar. First, consider the case where $\ir{\infty}{x}\in\bbZ$ and call $\ir{\infty}{x}=k$. It now follows from Lemmas \ref{lem:HorVerLines} that for each $\ell \geq k$, $\geo{}{x}{(k,\infty)} = \geo{}{x}{(\ell,\infty)}$. By Lemma \ref{lem:BusGeoCont}, $\geo{}{x}{(\ell,\infty)} \to \geo{}{x}{\mfc_1^x}$ as $\ell\to\infty$, which implies the claim. 

If $\ir{\infty}{x}\notin\bbZ$, then $\sinf{a}_{i:\infty} < a_k$ for all $k$ and so the value of $\smin{a}_{i:k}$ changes infinitely often as we send $k \to \infty$. Fix $m$ and let $k$ be sufficiently large that $\smin{a}_{i:k} < \smin{a}_{i:m}$. Lemma \ref{lem:HorVerLines} then implies that there exists $N_k$ so that for all $n \geq N_k$, $\geo{n}{x}{(k,\infty)} \cdot e_1 > m$. Moreover, if $\ell \geq k$, then for each such $n$, we have $m < \geo{n}{x}{(k,\infty)} \cdot e_1 \leq \geo{n}{x}{(\ell,\infty)} \cdot e_1$ by Lemma \ref{lem:BusGeoOrder}. Sending $\ell \to \infty$ gives $m < \geo{n}{x}{\mfc_1^x} \cdot e_1$ for each $n \geq N_k$. 
It now follows from claim \eqref{lem:bgd-1} that $\lim_{n\to\infty} \pi_n^{x,\mfc_1^x} \cdot e_1  = \infty$. \qedhere
\end{proof}

With the previous results in mind, we can now complete the proof of Theorem \ref{thm:geo}\eqref{thm:geo:ex}.
\begin{proof}[Proof of Theorem \ref{thm:geo}\eqref{thm:geo:ex}]
We prove Theorem \ref{thm:geo}\eqref{thm:geo:ex}\eqref{thm:geo:ex:con} by first considering a fixed countable set of directions and then squeezing. Take $\xi \in ]\mfc_1^x,\mfc_2^x[$ and recall that we have $\buse{x}{\geo{n}{x}{\xi}}{\xi}= \LPP{x}{\geo{n}{x}{\xi}}$.
Define
\begin{align*}
\zeta &= \varlimsup_{n\to\infty} \frac{\geo{n}{x}{\xi}}{n}. 
\end{align*}
Let $n_k$ be a subsequence along which we have the convergence $\frac{\geo{n_k}{x}{\xi}}{n_k} \to \zeta$. 
It follows from Theorem \ref{thm:buslim}\eqref{thm:buslim:marg} and \eqref{thm:buslim:indp}, standard concentration of estimates for independent exponentials (such as \cite[Lemma A.2]{Emr-Jan-Sep-21}), the Borel-Cantelli lemma and assumption \eqref{as:weakcon} that 
\begin{align*}
\lim_{k\to\infty} \frac{1}{n_k}\buse{x}{\geo{n_k}{x}{\xi}}{\xi} =  \zeta \cdot e_1 \int_0^\infty \frac{\alpha(\dd a)}{a+\zmin{x}(\xi)} + \zeta \cdot  e_2 \int_0^\infty \frac{\beta(\dd b)}{b-\zmin{x}(\xi)}.
\end{align*}

By Lemma \ref{lem:bulkgeodir} we have $\geo{n_k}{x}{\xi} \cdot e_1 \to \infty$ and $\geo{n_k}{x}{\xi} \cdot e_2 \to \infty$. Using Lemma \ref{lem:BusRGeo}, by Proposition \ref{prop:shape}, $\zeta$ satisfies 
\begin{align*}
\gamma^x(\zeta) &= \zeta \cdot e_1 \int_0^\infty \frac{\alpha(\dd a)}{a+\zmin{x}(\xi)} + \zeta \cdot  e_2 \int_0^\infty \frac{\beta(\dd b)}{b-\zmin{x}(\xi)}
\end{align*}
By strict concavity of $\gamma^x$ on $]\mfc_1^x,\mfc_2^x[$ and concavity on $[e_2,e_1]$, this holds if and only if $\zeta = \xi$. A similar argument with a subsequence corresponding to $\varliminf  \geo{n}{x}{\xi}/n$ completes the proof of the case of a fixed $\xi \in ]\mfc_1^x,\mfc_2^x[$. Theorem \ref{thm:geo}\eqref{thm:geo:ex}\eqref{thm:geo:ex:con} then follows from Lemma \ref{lem:BusGeoOrder} and Lemma \ref{lem:BusGeoCont} by considering a countable dense set of fixed directions in $]\mfc_1^x,\mfc_2^x[$. 

Part \eqref{thm:geo:ex:bdy} and all of the claims in part \eqref{thm:geo:ex:lin} except \eqref{eq:thm:geo:ex:lin-1} are contained in Lemma \ref{lem:bulkgeodir}. One of the two inequalities is trivial since all geodesics are contained in $[e_2,e_1].$ The other inequality follows from part \eqref{thm:geo:ex:con} and Lemmas \ref{lem:BusGeoOrder} and \ref{lem:BusGeoCont} by taking a sequence $\xi_k \in ]\mfc_1^x,\mfc_2^x[$ with $\xi_k \searrow \mfc_1^x$ and using the limit $\geo{}{x}{\xi_k\pm} \to \geo{}{x}{\mfc_1^x}$. 
\end{proof}

We next turn to the proofs of Theorem \ref{thm:geo}\eqref{thm:geo:dir} and \eqref{thm:geo:uniq}.
\begin{proof}[Proof of Theorem \ref{thm:geo}\eqref{thm:geo:dir} and \eqref{thm:geo:uniq}]
Let $x = (i,j)$ and let $\pi$ be a semi-infinite geodesic containing $x$. Suppose first that $\pi_n \cdot e_1$ remains bounded. By the path structure, there exists $k$ so that for all sufficiently large $n$, $\pi_n \cdot e_1 = k$. Call $N$ the index at which $\pi$ first satisfies $\pi_N \cdot e_1 = k$, so that for all $n \geq N$, $\pi_n \cdot e_1 = k$. We claim that $k = \ir{k}{x}$ and $\pi_n = \geo{n}{x}{(\ir{k}{x},\infty)}$ for $n \geq i+j$. 

Let   $\geo{}{x}{(k,n)}$ denote the unique geodesic between $x$ and $(k,n)$. For $n \geq N$, uniqueness of finite geodesics forces that $\geod{(i+j):(k+n)}=\geo{(i+j):(k+n)}{x}{(k,n)}$ But $\geo{}{x}{(k,n)}$ evolves according to the local rule \eqref{eq:geoloc}. Combining this observation with Theorem \ref{thm:buslim}\eqref{thm:buslim:buslim} and the local rule defining $\geo{}{x}{(k,\infty)}$ in \eqref{eq:busgeodef}, we see that as $n\to\infty$, $\geo{}{x}{(k,n)}$ converges to $\geo{}{x}{(k,\infty)}=\geo{}{x}{(\ir{k}{x},\infty)}$. It now follows from Theorem \ref{thm:geo}\eqref{thm:geo:ex}\eqref{thm:geo:ex:bdy} that $k=\ir{k}{x}$ and for all $n \geq i+j,$ $\geo{n}{x}{(\ir{k}{x},\infty)}=\geo{n}{x}{(k,\infty)}=\geod{n}$. The case where $\geod{n}\cdot e_2$ remains bounded is similar. 

Suppose now that there exists a subsequence $n_k$ with the property that $\geod{n_k}/n_k \to \xi \in ]\mfc_1^x, \mfc_2^x[$. Fix $\zeta,\eta \in ]\mfc_1^x,\mfc_2^x[$ with $\zeta \prec \xi \prec \eta$. By Theorem \ref{thm:geo}\eqref{thm:geo:ex}\eqref{thm:geo:ex:con}, we know that $\geo{}{x}{\zeta+}$ and $\geo{}{x}{\eta-}$ are $\zeta$ and $\eta$ directed, respectively. Uniqueness of finite geodesics now forces that for all $n \geq i+j$, we must have $\geo{n}{x}{\zeta+} \preceq \geod{n} \preceq \geo{n}{x}{\eta-}$. Sending $\zeta \nearrow \xi$ and $\eta \searrow \xi$ and appealing to Lemma \ref{lem:BusGeoCont}, we have for all $n \geq i+j,$ $\geo{n}{x}{\xi-} \preceq \geod{n} \preceq \geo{n}{x}{\xi+}$ and consequently, by Theorem \ref{thm:geo}\eqref{thm:geo:ex}\eqref{thm:geo:ex:con}, $\geod{n}/n\to\xi$.

The only remaining possibility is that $\geod{n}\cdot e_1 \to \infty, \geod{n}\cdot e_2 \to \infty$, and all limit points of $\geod{n}/n$ are contained in one of $[e_2,\mfc_1^x]$ or $[\mfc_2^x,e_1]$. We consider the case of $[e_2,\mfc_1^x]$, with the other case being similar. Arguing as above, uniqueness of finite geodesics implies that for each $k \geq i$ and for each $\xi \in ]\mfc_1^x,\mfc_2^x[$, we must have that for all $n \geq i+j$, $\geo{n}{x}{(k,\infty)}\preceq \geod{n}\preceq \geo{n}{x}{\xi-}$. Sending $k \to \infty$ and $\xi \searrow \mfc_1^x$ and appealing to Lemma \ref{lem:BusGeoCont}, we conclude that for all $n \geq i+j$, $\geod{n}=\geo{n}{x}{\mfc_1^x}$.
\end{proof}

Before turning to the proof of Theorem \ref{thm:geo}\eqref{thm:geo:coal} (which appears in Section \ref{sec:dualcoal}), we make a detour to complete our discussion about the asymptotic directions of geodesics by proving Theorem \ref{thm:lindir}.
\subsection{Asymptotic direction of linear segment Busemann geodesics}
The next result, recorded as Theorem \ref{thm:lindir}, concerns possible behaviors of geodesics which correspond to the linear segments, but which do not become trapped on rows or columns. The basic idea is a classical (though possibly counterintuitive) one: we use the curvature of an appropriate shape function to control the geodesic. The reason this works, despite the limit shape having a flat segment in the directions of interest, is that the natural centering for any point-to-point passage time is not the asymptotic limit shape defined in \eqref{eq:shape}, but rather a limit shape that only sees the parameters which are involved in the computation of the passage time. This is the limit shape that would have been seen if the parameter sequences had been periodic with a finite period. The reason curvature estimates can be used to study the behavior in the linear region is that shape functions for periodic parameter sequences are always strictly concave.

\begin{proof}[Proof of Theorem \ref{thm:lindir}]
It suffices to consider $x=(1,1) \leq (m,n)=y$ and the result concerning $[e_2,\mfc_1^x]$. We introduce notation for the shape function which would have arisen if the parameter sequences had been the periodic extensions of $a_{1:m}$ and $b_{1:n}$: for $\xi=(\xi_1,\xi_2) \in \bbR_{\ge 0}^2$, call
\be\begin{aligned}
\shp_z^{x,y}(\xi) &= \frac{\xi_1}{m} \sum _{k=1}^m \frac{1}{a_k+z} + \frac{\xi_2}{n} \sum_{\ell=1}^n \frac{1}{b_\ell-z} = \xi_1 \shor^{x,y}(z) + \xi_2\sver^{x,y}(z), \text{ where } \\
\shor^{x,y}(z) &=  \shp_z^{x,y}(e_1) \qquad  \text{ and } \qquad \sver^{x,(m,n)}(z) =  \shp_z^{x,(m,n)}(e_2). 
\end{aligned}\ee

Set
\be\begin{aligned}
\shp^{x,y}(\xi) &= \inf_{- \smin{a}_{1:m}< z <  \smin{b}_{1:n}} \{\shp_z^{x,y}(\xi)\} = \shp_{\zmin{x,y}(\xi)}^{x,y}(\xi),
\end{aligned}\ee
where $\zmin{x,y}(\xi) \in (-\smin{a}_{1:m},\smin{b}_{1:n})$ is the unique minimizer of the infimum. We also record the derivatives which go into the main estimates:
\begin{align}
\partial_z\shp_z^{x,y}(\xi) = \xi_1 \partial_z \shor^{x,y}(z) + \xi_2\partial_z \sver^{x,y}(z) = - \frac{\xi_1}{m}\sum _{k=1}^m \frac{1}{(a_k+z)^2} + \frac{\xi_2}{n} \sum_{\ell=1}^n \frac{1}{(b_\ell-z)^2}.\label{eq:derivz}
\end{align}

By the cocycle property of Busemann functions, Theorem \ref{thm:buslim}\eqref{thm:buslim:decomp}\eqref{thm:buslim:decomp:upright}, we may write $\buse{x}{y}{\mfc_1^x}$ as a sum of nearest neighbor horizontal increments from $x=(1,1)$ to $(m,1)$ followed by a sum of vertical nearest-neighbor increments from $(m,1)$ to $(m,n)=y$:
\begin{align*}
\buse{x}{y}{\mfc_1^x} = \sum_{k=0}^{m-2} \buse{(k+1,1)}{(k+2,1)}{\mfc_1^x} + \sum_{\ell=0}^{n-2}\buse{(m,\ell+1)}{(m,\ell+2)}{\mfc_1^x}
\end{align*}
By parts \eqref{thm:buslim:indp} and \eqref{thm:buslim:marg} of the same theorem, these two sums each consist of jointly independent exponential random variables. Note that although the terms in each sum are independent, the two sums are not independent. 

Condition \eqref{eq:lincond-1} implies that for $y \geq x$, $\mfc_1^y = \mfc_1^x$. Abbreviate $\LPP{x}{y}=G(y)$, $\buse{x}{y}{\mfc_1^x} = B(y)$, $\pi(k) =(\pi(k)_1, \pi(k)_2) = \geo{k}{x}{\mfc_1^x}$, and $\chi_k=\zmin{x,\pi(k)}(\pi(k))$. By Lemma \ref{lem:BusRGeo}, for all $k\geq 2$, \[
G(\pi(k)) = B(\pi(k)).
\]

The key estimate needed to prove the result is to show that $k^{-1}\partial_z\shp_z^{x,\pi(k)}(\pi(k))\big|_{z=-\sinf{a}_{1:\infty}}$ converges to zero almost surely as $k\to\infty$ under our hypotheses. To prove this, we consider cases based on how close $\chi_k$ is to $-\sinf{a}_{1:\infty}$.

Let $\eta \in (0,\epsilon)$ and $\delta \in (0,\epsilon-\eta)$, where $\epsilon$ is as in Condition \ref{cond:lincond}. Suppose first that $|\chi_k+\sinf{a}_{1:\infty}| \leq k^{-\frac{1}{2}+\epsilon-\delta}$. By \eqref{eq:lincond-1} and using that $\pi(k)_1,\pi(k)_2 \leq k$, it follows that for some absolute constant $c>0$ and for $i = 1,\dots, \pi(k)_1$ and $j =1,\dots,\pi(k)_2$, 
\be\label{eq:parest}\begin{aligned}
(a_i + \chi_k) &= (a_i-\sinf{a}_{1:\infty})\bigg(1+\frac{\sinf{a}_{1:\infty}+\chi_k}{a_i-\sinf{a}_{1:\infty}}\bigg) \geq (a_i -\sinf{a}_{1:\infty})(1-c k^{-\delta}) \qquad \text{ and }\\
(b_j - \chi_k) &= (b_j+\sinf{a}_{1:\infty})\bigg(1-\frac{\chi_k+\sinf{a}_{1:\infty}}{b_j+\sinf{a}_{1:\infty}}\bigg) \leq (b_j + \sinf{a}_{1:\infty})\bigg(1+ck^{-\frac{1}{2}+\epsilon-\delta} \bigg).
\end{aligned}\ee

Recall that
\begin{align}\label{eq:minFOC}
0 = \partial_z\shp_z^{x,\pi(k)}(\pi(k))\big|_{z=\chi_k}= -\sum_{i=1}^{\pi(k)_1} \frac{1}{(a_i+\chi_k)^2} + \sum_{j=1}^{\pi(k)_2} \frac{1}{(b_j-\chi_k)^2} .
\end{align}
Using these observations, and again adjusting $c$ several times, we have
\begin{align*}
\partial_z\shp_z^{x,\pi(k)}(\pi(k))\big|_{z=-\sinf{a}_{1:\infty}} &= -\sum_{j=1}^{\pi(k)_1} \frac{1}{(a_j-\sinf{a}_{1:\infty})^2} + \sum_{j=1}^{\pi(k)_2} \frac{1}{(b_j+\sinf{a}_{1:\infty})^2} \\
   &\leq -(1-ck^{-\delta}) \sum_{i=1}^{\pi(k)_1} \frac{1}{(a_i+\chi_k)^2} + (1+ck^{-\delta})\sum_{j=1}^{\pi(k)_2} \frac{1}{(b_j-\chi_k)^2} \\
   &= c k^{-\delta}\bigg( \sum_{i=1}^{\pi(k)_1} \frac{1}{(a_i+\chi_k)^2} + \sum_{j=1}^{\pi(k)_2} \frac{1}{(b_j-\chi_k)^2} \bigg) \leq c k^{1-\delta}.
\end{align*}
The last inequality comes from equation (5.2) in \cite{Emr-Jan-Sep-21}. The corresponding lower bound can be argued similarly, so we conclude that there is an absolute constant $c>0$ so that
\begin{align}
\bigg|k^{-1}\partial_z\shp_z^{x,\pi(k)}(\pi(k))\big|_{z=-\sinf{a}_{1:\infty}} \bigg| \leq c k^{-\delta}.
\end{align}
Next, we consider the more difficult case where $|\chi_k+\sinf{a}_{1:\infty}|\geq k^{-\frac{1}{2}+\epsilon-\delta}$. We consider the subcase of $-\sinf{a}_{1:\infty}<\chi_k$, with the subcase of $-\sinf{a}_{1:\infty}>\chi_k$ being similar.  

The independence of Busemann increments in Theorem \ref{thm:buslim}\eqref{thm:buslim:indp} combined with the marginal distributions recorded in \eqref{eq:busmar} and straightforward concentration bounds for sums of independent exponential random variables (recorded as Lemma A.2 in \cite{Emr-Jan-Sep-21}) implies that for each $p>0$, there exists $C$ so that for all $y=(m,n) \geq (1,1)=x$ and all $s>0$,
\begin{align}
\bfP\bigg(\big|B(y) - \shp_{-\sinf{a}_{1:\infty}}^{x,y}(y)\big|  \geq s\bigg(\sqrt{-\partial_z \shor^{x,y}(-\sinf{a}_{1:\infty})}+\sqrt{\partial_z \sver^{x,y}(-\sinf{a}_{1:\infty})}\bigg)\bigg) \leq \frac{C}{s^p}.\label{eq:lin-conc-1}
\end{align}
Lemma 4.2 in \cite{Emr-Jan-Sep-21} shows that we also have, under the same hypotheses,
\begin{align}
\bfP\bigg(G(y) - \shp^{x,y}(y)  \geq s\bigg(\sqrt{-\partial_z \shor^{x,y}(\zmin{x,y}(y))}+\sqrt{\partial_z \sver^{x,y}(\zmin{x,y}(y))}\bigg)\bigg) \leq \frac{C}{s^p}.\label{eq:lin-conc-2}
\end{align}

Using \eqref{as:inf} and \eqref{as:weakcon}, we may adjust $C>0$ so that 
\[
\partial_z \sver^{x,y}(-\sinf{a}_{1:\infty}) = \sum_{\ell=1}^n \frac{1}{(b_\ell + \sinf{a}_{1:\infty})^2} \leq Cn\] 
for all $n \geq 1$. The hypothesis that $\overline{\mfa}_x<\infty$ implies that (possibly again adjusting $C$), we also have that for $m \geq 1$,
\begin{align}
-\partial_z \shor^{x,y}(-\sinf{a}_{1:\infty}) = \sum_{k=1}^m \frac{1}{(a_k - \sinf{a}_{1:\infty})^2} \leq C m. \label{eq:hypbd}
\end{align}

Applying the previous four displays and Borel-Cantelli, we may conclude that there exists a random $L$ so that whenever $|y|_1\geq L$, we have 
\be\begin{aligned}
&|B(y) - \shp_{-\sinf{a}_{1:\infty}}^{x,y}(y)\big|< |y|_1^{\frac{1}{2}+\eta} \qquad \text{ and } \qquad G(y) - \shp^{x,y}(y) \leq |y|^{\frac{1}{2}+\eta}.
\end{aligned}\label{eq:lin-conc-bds}
\ee
Recalling that we always have $\shp^{x,y}_{-\sinf{a}_{1:\infty}}(y) \geq \shp^{x,y}(y)$,  it follows that there is a random $L$ so that whenever $k \geq L$, we have
\begin{align}
\bigg|\shp_{-\sinf{a}_{1:\infty}}^{x,\pi(k)}(\pi(k)) - \shp^{x,\pi(k)}(\pi(k))\bigg| < k^{\frac{1}{2}+\eta}. \label{eq:BCbd}
\end{align}
So long as $k>L$ from \eqref{eq:BCbd}, we have the following by convexity of $z \mapsto \shp_z^{x,\pi(k)}(\pi(k))$: 
\begin{align*}
k^{\frac{1}{2}+\eta} &> \shp_{-\sinf{a}_{1:\infty}}^{x,\pi(k)}(\pi(k)) - \shp^{x,\pi(k)}(\pi(k))  = \shp_{-\sinf{a}_{1:\infty}}^{x,\pi(k)}(\pi(k)) -  \shp_{\chi_k}^{x,\pi(k)}(\pi(k)) \\
&\geq \shp_{-\sinf{a}_{1:\infty}}^{x,\pi(k)}(\pi(k)) -  \shp_{-\sinf{a}_{1:\infty}+k^{-\frac{1}{2}+\epsilon-\delta}}^{x,\pi(k)}(\pi(k)) \\
&\geq -k^{-\frac{1}{2}+\epsilon-\delta}\partial_z\shp_z^{x,\pi(k)}(\pi(k))\big|_{z=-\sinf{a}_{1:\infty}+k^{-\frac{1}{2}+\epsilon-\delta}} \\
&= k^{-\frac{1}{2}+\epsilon-\delta}\bigg(\sum _{\ell=1}^{\pi(k)_1} \frac{1}{(a_\ell-\sinf{a}_{1:\infty}+k^{-\frac{1}{2}+\epsilon-\delta})^2} - \sum_{\ell=1}^{\pi(k)_2} \frac{1}{(b_\ell+\sinf{a}_{1:\infty}-k^{-\frac{1}{2}+\epsilon-\delta})^2}\bigg)\\
&\geq k^{-\frac{1}{2}+\epsilon-\delta}\bigg(\sum _{\ell=1}^{\pi(k)_1} \frac{1}{(a_\ell-\sinf{a}_{1:\infty})^2}(1-ck^{-\delta}) - \sum_{\ell=1}^{\pi(k)_2} \frac{1}{(b_\ell+\sinf{a}_{1:\infty})^2}(1+ck^{-\delta})\bigg)\\
&\geq k^{-\frac{1}{2}+\epsilon-\delta}\bigg(\sum _{\ell=1}^{\pi(k)_1} \frac{1}{(a_\ell-\sinf{a}_{1:\infty})^2} - \sum_{\ell=1}^{\pi(k)_2} \frac{1}{(b_\ell+\sinf{a}_{1:\infty})^2}-ck^{1-\delta}\bigg)
\end{align*}
In the last step, we have used \eqref{eq:hypbd}, which required the assumption that $\overline{\mfa}_x<\infty$. Using that $-\sinf{a}_{1:\infty}<\chi_k$ implies that the derivative at $-a_{1:\infty}^{\inf}$ is negative (by convexity), we have
\begin{align*}
0 > \partial_z\shp_z^{x,\pi(k)}(\pi(k))\bigg|_{z=-\sinf{a}_{1:\infty}} = -\sum _{\ell=1}^{\pi(k)_1} \frac{1}{(a_\ell-\sinf{a}_{1:\infty})^2} + \sum_{\ell=1}^{\pi(k)_2} \frac{1}{(b_\ell+\sinf{a}_{1:\infty})^2} \geq -k(k^{\eta+\delta-\epsilon} + ck^{-\delta}).
\end{align*}
The case of $\chi_k < \sinf{a}_{1:\infty}$ is similar and so we conclude that
\[
\bigg| k^{-1}\partial_z\shp_z^{x,\pi(k)}(\pi(k))\bigg|_{z=-\sinf{a}_{1:\infty}}\bigg| \leq (k^{\eta+\delta-\epsilon} + k^{-\delta}) \stackrel{\tiny{k\to\infty}}{\to} 0.
\]

Now, let $(k_j : j \geq 1)$ be any sequence of distinct natural numbers along which $\pi(k_j)/k_j$ converges to a vector $\xi=(\xi_1,1-\xi_1)\in[e_2,e_1]$ and the limit
\begin{align*}
\lim_{j\to\infty} \frac{1}{\pi(k_j)_1}\sum _{\ell=1}^{\pi(k_j)_1} \frac{1}{(a_\ell-\sinf{a}_{1:\infty})^2} := \mfa
\end{align*} 
exists. Note that we have proven in Theorem \ref{thm:geo}\eqref{thm:geo:ex}\eqref{thm:geo:ex:lin} that under Condition \ref{cond:lincond}, $\pi(k_j)_1\to\infty$ and $\pi(k_j)_2\to\infty$ as $j\to\infty$.  It follows from \eqref{as:weakcon} that for any such sequence, we have 
\begin{align*}
\lim_{j\to\infty} \frac{1}{\pi(k_j)_2} \sum_{\ell=1}^{\pi(k_j)_2} \frac{1}{(b_\ell+\sinf{a}_{1:\infty})^2}  = \int \frac{1}{(b+\sinf{a}_{1:\infty})^2}\beta(db) = \bfB_x.
\end{align*}
Recalling that $k^{-1}\partial_z\shp_z^{x,\pi(k)}(\pi(k))\big|_{z=-\sinf{a}_{1:\infty}}\to 0$, we have 
\be\label{eq:limitpt}\begin{aligned}
0 &= \lim_{j\to\infty} - \frac{\pi(k_j)_1}{k_j} \frac{1}{\pi(k_j)_1}\sum _{\ell=1}^{\pi(k_j)_1} \frac{1}{(a_\ell-\sinf{a}_{1:\infty})^2} + \frac{\pi(k_j)_2}{k_j} \frac{1}{\pi(k_j)_2}\sum_{\ell=1}^{\pi(k_j)_2} \frac{1}{(b_\ell+\sinf{a}_{1:\infty})^2} \\
&= -\xi_1 \mfa + (1-\xi_1) \bfB_x
\end{aligned}\ee
and consequently, we have $\xi_1 = \dfrac{\bfB_x}{\mfa+\bfB_x}$. By definition, $\mfa \in [\underline{\mfa}_x,\overline{\mfa}_x]$ and therefore \begin{align}\xi_1 \in \bigg[\frac{\bfB_x}{\overline{\mfa}_x + \bfB_x}, \frac{\bfB_x}{\underline{\mfa}_x + \bfB_x}\bigg]. \label{eq:limitpts}\end{align}
Because the set of asymptotic directions of $\pi$ must be connected, it remains to show that the extreme points of this interval of directions are both attained along some subsequence. Recall that \eqref{eq:limitpt} holds for each sequence $k_j$ for which $\pi(k_j)_1/k_j$ converges to some $\xi_1$. We now construct subsequences realizing the extreme points. Let $n_j$ be a sequence of distinct integers along which we have
\begin{align*}
\lim_{j\to\infty} \frac{1}{n_j}\sum _{\ell=1}^{n_j} \frac{1}{(a_\ell-\sinf{a}_{1:\infty})^2} = \overline{\mfa}_x.
\end{align*} 
Let $k_j$ be the smallest index satisfying $\pi(k_j)_1 = n_j$ and then pass to a sub-sequence $k_{j_\ell}$ along which $\pi(k_{j_\ell})/k_{j_\ell}$ converges. By \eqref{eq:limitpt}, we see that the limit is $\xi\cdot e_1 = \bfB_x/(\overline{\mfa}_x+\bfB_x)$. Obtaining $\xi \cdot e_1=\bfB_x/(\underline{\mfa}_x+\bfB_x)$ is similar. It follows that the set of limit points of $\pi(k)/k$ is given precisely by the vectors in $[e_2,e_1]$ with first coordinate in the interval in \eqref{eq:limitpts}.
\end{proof}

\subsection{Dual paths and coalesence}\label{sec:dualcoal}
We next prove 
Theorem \ref{thm:geo}\eqref{thm:geo:coal} by adapting an argument introduced by the third author in \cite[Theorem 4.12]{Sep-18} and \cite[Theorem 3.6]{Sep-20}. 

Fix $x,y \in \bbZ^2$; with reference to Theorem \ref{thm:geo}\eqref{thm:geo:coal}, our goal is to prove that for $\xi \in ]\mfc_1^{x \wedge y},\mfc_2^{x \wedge y}[$, $\bfP(\geo{}{x}{\xi}\text{ and }\geo{}{y}{\xi}\text{ coalesce})=1.$  Without loss of generality (by re-indexing), we prove the claim for the case of $x \wedge y = (0,0)$, in which case the event in the probability only depends on $\{\w_v : v \geq (0,0)\}$. Noting that these weights are not impacted by the choice of parameter sequences $a_{-\infty:-1}$ and $b_{-\infty:-1}$, it will be convenient to assume without loss of generality that the parameter sequences are symmetric about zero.  
\begin{condition} \label{cond:sym}
    For all $k \in \bbZ$, $a_{-k}=a_k$ and $b_{-k}=b_k$.
\end{condition}
Throughout this section, we will work under Condition \ref{cond:sym} and for some fixed $\xi \in ]\mfc_1^{(0,0)},\mfc_2^{(0,0)}[$. We remain on the full probability event on which for all $x \in \bbZ^2$ and all $i\in\{1,2\}$, we have both $\buse{x}{x+e_i}{\xi+}=\buse{x}{x+e_i}{\xi-}:= \buse{x}{x+e_i}{\xi}$ and $\buse{x}{x+e_1}{\xi} \neq \buse{x}{x+e_2}{\xi}$.

We view the Busemann geodesics defined according to \eqref{eq:busgeodef} as consisting of directed edges and consider the graph $\sT^{\xi}$ obtained by taking the union of all of these edges on what we will call the primal lattice, $\bbZ^2$. We also consider the graph $\sT^{\xi,*}$ obtained by taking the union of their dual edges on the dual lattice $\bbZ^2 + (1/2,1/2)$, as illustrated in Figure \ref{fig:geodual}. 

For each site $x$ in the primal lattice $\bbZ^2$, exactly one of the oriented edges $(x,x+e_1)$ and $(x,x+e_2)$ is in $\sT^{\xi}$. The rule determining which of these two edges is included is as follows:
\be\begin{aligned}
&\begin{cases}(x,x+e_1) \in \sT^{\xi} \text{ if } \buse{x}{x+e_1}{\xi} < \buse{x}{x+e_2}{\xi}\\ (x,x+e_2) \in \sT^{\xi}  \text{ if } \buse{x}{x+e_1}{\xi} > \buse{x}{x+e_2}{\xi}\end{cases} 
\end{aligned} \label{eq:treedef}\ee

The dual graph (on the dual lattice $\bbZ^{2*}= \bbZ^2 + (1/2,1/2)$) is denoted by $\sT^{\xi,*}$. We associate to each $x \in \bbZ^2$ a unique point $x^*\in \bbZ^2$ via $x^* = x + (1/2,1/2)$. The dual graph is defined by including $(x^*, x^*-e_i) \in \sT^{\xi,*}$  if and only if $(x,x+e_i) \in \sT^{\xi}$. Note that the orientation of dual edges is reversed in the dual graph. See Figure \ref{fig:geodual} for an illustration. In particular, we have the following rule generating $\sT^{\xi,*}$:
\be\begin{aligned}
&\begin{cases} (x^*,x^*-e_1) \in \sT^{\xi,*} \text{ if } \buse{x}{x+e_1}{\xi} < \buse{x}{x+e_2}{\xi}, \\  (x^*,x^*-e_2) \in \sT^{\xi,*}  \text{ if } \buse{x}{x+e_1}{\xi} > \buse{x}{x+e_2}{\xi}\end{cases} \label{eq:dualtreedef}
\end{aligned}\ee
Given a site of the dual lattice $x^* \in \bbZ^{2*}$, we denote by $\geo{}{x^*}{\xi*}$ the unique south-west directed semi-infinite dual vertex path obtained by following the edges in $\sT^{\xi,*}$ originating from $x^*$.
\begin{figure}[h!]
\begin{subfigure}{.5\textwidth}
\begin{tikzpicture}[scale = 0.8]
\draw[gray,thin] (0,0) grid (5,5);
\draw[very thick] (0,0) -- (1,0);
\draw[very thick] (1,0) -- (1,1);
\draw[very thick] (2,0) -- (2,1);
\draw[very thick] (3,0) -- (3,1);
\draw[very thick] (4,0) -- (5,0);
\draw[very thick] (5,0) -- (5,1);
\draw[very thick] (0,1) -- (1,1);
\draw[very thick] (1,1) -- (2,1);
\draw[very thick] (2,1) -- (3,1);
\draw[very thick] (3,1) -- (3,2);
\draw[very thick] (4,1) -- (4,2);
\draw[very thick] (5,1) -- (5,2);
\draw[very thick] (0,2) -- (1,2);
\draw[very thick] (1,2) -- (1,3);
\draw[very thick] (2,2) -- (3,2);
\draw[very thick] (3,2) -- (3,3);
\draw[very thick] (4,2) -- (4,3);
\draw[very thick] (5,2) -- (5,3);
\draw[very thick] (0,3) -- (1,3);
\draw[very thick] (1,3) -- (1,4);
\draw[very thick] (2,3) -- (2,4);
\draw[very thick] (3,3) -- (4,3);
\draw[very thick] (4,3) -- (5,3);
\draw[very thick,->] (5,3) -- (6,3);
\draw[very thick] (0,4) -- (0,5);
\draw[very thick] (1,4) -- (2,4);
\draw[very thick] (2,4) -- (3,4);
\draw[very thick] (3,4) -- (3,5);
\draw[very thick] (4,4) -- (4,5);
\draw[very thick] (5,4) -- (5,5);
\draw[very thick] (0,5) -- (1,5);
\draw[very thick] (1,5) -- (2,5);
\draw[very thick] (2,5) -- (3,5);
\draw[very thick,->] (3,5) -- (3,6);
\draw[very thick,-> ] (4,5) -- (4,6);
\draw[very thick, -> ] (5,5) -- (5,6);
\draw[very thick, dashed,->] (2.5,4.5) -- (.5,4.5) -- (.5,3.5) -- (-.5,3.5);
\draw[very thick, dashed,->] (3.5,5.5) -- (3.5,3.5) -- (2.5,3.5) -- (2.5, 2.5) -- (1.5, 2.5) -- (1.5,1.5) -- (-.5,1.5);
\draw[very thick, dashed] (1.5,3.5) -- (1.5,2.5);
\draw[very thick, dashed] (2.5, 1.5) -- (1.5,1.5); 
\draw[very thick, dashed] (4.5,5.5) -- (4.5,3.5) -- (3.5,3.5);
\draw[very thick, dashed] (5.5, 3.5) -- (4.5, 3.5);
\draw[very thick, dashed,->] (3.5,2.5) -- (3.5,-.5);
\draw[very thick, dashed,->] (.5,.5) --  (-.5, .5);
\draw[very thick, dashed,->] (1.5,.5) -- (1.5,-.5);
\draw[very thick, dashed,->] (2.5,.5) --(2.5,-.5); 
\draw[very thick, dashed,->] (.5,2.5) --  (-.5,2.5);
\draw[very thick, dashed] (4.5, 2.5) -- (4.5,.5) -- (3.5,.5);
\end{tikzpicture}

\end{subfigure}
\begin{subfigure}{.3\textwidth}
\begin{tikzpicture}[scale = 0.8]
\draw[very thick,->] (1,0) -- (3,0);
\draw[gray, thin] (1,0) -- (1,2);
\draw[very thick, dashed,<-] (0,1) -- (2,1);
\draw[very thick,->] (5,0) -- (5,2);
\draw[gray,thin] (5,0) -- (7,0);
\draw[very thick, dashed,->] (6,1) -- (6,-1);
\draw (5,0)node[below]{$x$};
\draw (1,0)node[below]{$x$};
\draw (3,0)node[below]{$x+e_1$};
\draw (7,0)node[below]{$x+e_1$};
\draw (5,2)node[above]{$x+e_2$};
\draw (1,2)node[above]{$x+e_2$};
\end{tikzpicture}
\end{subfigure}
\caption{Left: Semi-infinite geodesics (solid) in $\sT^{\xi}$ are separated by dual paths (dashed) in $\sT^{\xi, *}$. Directions in the graph are indicated by arrows.  \\
Right: Possible configurations of geodesic (solid) and dual edges (dashed) at a given site. }  \label{fig:geodual}
\end{figure}
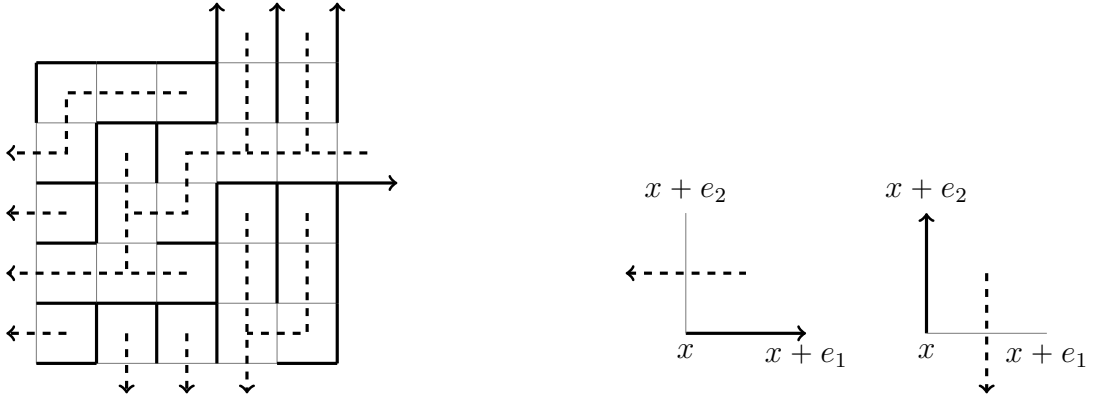

Condition \ref{cond:sym} implies that the distribution of $(\w_x)_{x\in\bbZ^2}$ is invariant under reflection about the coordinate axes: $(\w_x)_{x\in\bbZ^2} \stackrel{d}{=} (\w_{-x})_{x\in\bbZ^2}$. Under Condition \ref{cond:sym}, it then follows from Proposition \ref{prop:shape} for all $x=(i,j) \in \bbZ^2$ and all sequences $u_n\in\bbZ^2$ with $-u_n/n \to \xi \in ]e_2,e_1[$:
\begin{align}
\lim_{n\to\infty} \frac{\LPP{u_n}{x}}{n} &= \shp^{-x}(\xi) = \inf_{- \sinf{a}_{-i:\infty} < z <  \sinf{b}_{-j:\infty}} \left\{\shp_z(\xi)\right\} = \inf_{- \sinf{a}_{-\infty:i} < z <  \sinf{b}_{-\infty:j}} \left\{\shp_z(\xi)\right\}. 
\label{eq:bwdshape}
\end{align}
We have the following lemma about the structure of the strictly concave regions, which will be of use in the arguments that follow.
\begin{lemma} \label{lem:xiin} If Condition \ref{cond:sym} holds and $\xi \in ]\mfc_1^{(0,0)},\mfc_2^{(0,0)}[$,  then \[\xi\in\displaystyle \bigcap_{x\in\bbZ^2}]\mfc_1^x,\mfc_2^x[.\]
\end{lemma}
\begin{proof}
    If $x \geq (0,0)$, then the inclusion $]\mfc_1^{(0,0)}, \mfc_2^{(0,0)} [ \subseteq ]\mfc_1^x, \mfc_2^x[$ follows from \eqref{eq:critineq}. The case of $x \leq (0,0)$ is similar by Condition \ref{cond:sym}. If $x=(i,j)$ for $i<0$ and $j \geq 0$, then by Condition \ref{cond:sym}, $\sinf{a}_{i:\infty} =\sinf{a}_{0:\infty}$; thus, $\mfc_1^x= \mfc_1^{(0,j)}$. The case of $i \geq 0$ and $j<0$ is similar.
\end{proof}

Under Condition \ref{cond:sym}, symmetry implies that a version of Theorem \ref{thm:buslim} holds with initial points $u_n$ tending to infinity in the southwest direction. In particular, for $\xi \in ]\mfc_1^{(0,0)},\mfc_2^{(0,0)}[$, we may define a south-west directed Busemann function via
\begin{align*}
\lim_{n\to\infty} G_{u_n,y} -  G_{u_n,x}&= \buse{x}{y}{\xi,\text{sw}}
\end{align*}
for all $u_n \in \bbZ^2$ with $-u_n/n \to \xi.$

For the statement of our main estimate in this section, define the following passage time with the initial point removed.
\begin{align}
\LPP{x}{y}^o &= \max_{\geod{} \in \Path{x}{y}}\left\{\sum_{p \in \pi \smallsetminus \{x\}} \w_{p}\right\}. \label{eq:oLPP}
\end{align}
\begin{proposition}\label{prop:nobi}
Suppose that Condition \ref{cond:sym} holds and fix $\xi \in ]\mfc_1^{(0,0)},\mfc_2^{(0,0)}[$. The following holds $\bfP$ almost surely. For each $y=(i,j) \in \bbZ^2$ and all sequences $v_n, u_n$ with  $u_n \leq y \leq v_n$, $|u_n| \to \infty$, $|v_n| \to \infty$, and
\begin{align*}
\lim_{n\to\infty} \frac{v_n}{n} = \xi = \lim_{n\to\infty}\frac{- u_n}{n},
\end{align*}
for all sufficiently large $n$,
\begin{align*}
\LPP{u_n}{y} + \LPP{y}{v_n}^o < \LPP{u_n}{v_n}.
\end{align*}
\end{proposition}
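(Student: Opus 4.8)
The statement says: a point-to-point geodesic through $y$ from a southwest-directed initial point $u_n$ to a northeast-directed terminal point $v_n$ is strictly shorter than the unrestricted geodesic from $u_n$ to $v_n$, for all large $n$. Equivalently, no bi-infinite geodesic through $y$ with the given asymptotic directions exists. The approach is the one from \cite[Theorem 4.12]{Sep-18}: express the quantity $\LPP{u_n}{v_n} - \LPP{u_n}{y} - \LPP{y}{v_n}^o$ in terms of Busemann increments along a down-right path through $y$, and show it is bounded below by a sum of $n$ i.i.d.-like positive terms with a positive limiting average, so the difference tends to $+\infty$, hence is eventually positive. Throughout I would work on the full-probability event where the southwest and northeast Busemann functions $\Bse{x}{y}{\xi,\mathrm{sw}}$ and $\buse{x}{y}{\xi}$ both exist (by Theorem~\ref{thm:buslim} and its southwest analogue, available under Condition~\ref{cond:sym}), where $\xi \in \bigcap_{x} ]\mfc_1^x,\mfc_2^x[$ by Lemma~\ref{lem:xiin}, and where $\xi \notin \Lambda_p$ for all $p$.

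\textbf{Key steps.} First I would reduce to showing that $\liminf_{n} n^{-1}\bigl(\LPP{u_n}{v_n} - \LPP{u_n}{y} - \LPP{y}{v_n}^o\bigr) > 0$. The three passage times each have a shape-theorem limit: $n^{-1}\LPP{u_n}{v_n} \to \shp^{-?}$ type expressions — but here the useful route is not the crude shape comparison (which may only give $\ge 0$), but a finer decomposition. I would fix a large box $\Rect_{u_n}^{v_n}$ containing $y$, and write the difference using a telescoping identity: along any geodesic $\pi$ from $u_n$ to $v_n$, $\LPP{u_n}{v_n}$ equals the sum of edge weights; the restricted quantity $\LPP{u_n}{y} + \LPP{y}{v_n}^o$ is the best such sum over paths forced through $y$. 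The standard trick is to pick the point $q_n$ where the (unrestricted) geodesic from $u_n$ to $v_n$ crosses the antidiagonal level through $y$, and compare. When $q_n \ne y$, planarity (Lemma~\ref{lem:plan}) and the crossing lemma (Lemma~\ref{lem:incineq}) let one bound the defect below by a Busemann increment difference of the form $\buse{y}{\cdot}{\xi} + \Bse{\cdot}{y}{\xi,\mathrm{sw}} - (\text{passage increments})$ which, being a sum along a row/column segment of order $n$ of independent exponentials with mismatched rates (bulk rate $a_i+b_j$ versus Busemann rate involving $\zmin{x}(\xi)$, and these differ since $\beta$, $\alpha$ are nonzero and $\zmin{x}(\xi) \ne -\sinf{a}$, $\zmin{x}(\xi) \ne \sinf{b}$ strictly in the open concave interval), has a strictly positive limiting average by the law of large numbers. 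This is exactly the mechanism used in Lemma~\ref{lem:bulkgeodir}'s proof and in \cite[Theorem 4.12]{Sep-18}.

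Concretely, I would set up the stationary comparison: use the northeast-boundary stationary LPP $\wt{\G}^{u,v,z}$ with $z = \zmin{y}(\xi)$ on the rectangle, couple it to the bulk weights in the interior via \eqref{Ecpl}, and invoke Lemma~\ref{lem:halfstatLLN} (the exit-point lemma) to conclude that for $z$ slightly perturbed off $\zmin{y}(\xi)$ the maximizing path to $v_n$ exits through a prescribed side, forcing a macroscopic length of boundary to be used; the corresponding boundary weights accumulate at a rate strictly exceeding the rate at which the restricted-through-$y$ path can accumulate, because the bulk weights are "thinner" exponentials. Combining the southwest version (for the $\LPP{u_n}{y}$ piece, using the reflected parameters under Condition~\ref{cond:sym}) and the northeast version (for $\LPP{y}{v_n}^o$) then yields the strict linear-order gap. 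Borel--Cantelli upgrades the in-probability statement to almost sure, uniformly over the (countably many, after a density argument) choices of $y$.

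\textbf{Main obstacle.} The delicate point is handling the \emph{restricted} passage time $\LPP{u_n}{y} + \LPP{y}{v_n}^o$ cleanly: one must show that forcing the path through $y$ genuinely costs order $n$, not merely $o(n)$ or a constant. This is where the exit-point/curvature input is essential — the concentration estimates from \cite[Lemma 4.2, Lemma A.2]{Emr-Jan-Sep-21} control the fluctuations of all three passage times at scale $o(n)$, so the deterministic-level gap coming from the shape functions of the \emph{finite-period} environments (which are strictly concave, hence have a genuine curvature gap at directions in the open interval) survives in the limit. Managing the dependence between the southwest and northeast Busemann pieces (they are not independent, since they share bulk weights near $y$) requires routing the telescoping through a single down-right path pinned at $y$ and using Theorem~\ref{thm:buslim}\eqref{thm:buslim:indp} carefully; but only a finite (order-one) neighborhood of $y$ is shared, so this contributes only $O(1)$ and does not affect the linear-in-$n$ conclusion.
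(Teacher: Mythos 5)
Your plan has a fatal gap at its first step: you reduce the claim to showing $\liminf_n n^{-1}\bigl(\LPP{u_n}{v_n} - \LPP{u_n}{y} - \LPP{y}{v_n}^o\bigr) > 0$, but this liminf is zero, not positive. Since $u_n/n \to -\xi$, $v_n/n \to \xi$ and $y$ is fixed, forcing the path through $y$ only perturbs the direction by $O(1/n)$ on each side, so both the restricted and unrestricted passage times have the same first-order limit $\approx 2n\,\shp(\xi)$; any curvature/exit-point gap from the (strictly concave) finite-period shape functions is of order $n\cdot(1/n)^2 \to 0$. The defect in the proposition is a fluctuation-scale quantity, and no shape-theorem or concentration argument at scale $n$ can detect it — the proposition asserts only strict inequality for large $n$, not a macroscopic gap. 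In addition, your claim that the southwest and northeast Busemann pieces share only an order-one neighborhood of $y$ is incorrect: the southwest Busemann increments along the antidiagonal level $\bbV_{i+j}$ depend on all weights on and below that level, while the northeast ones depend on all weights on and above it, so they share the entire (infinite) antidiagonal of weights; this dependence cannot be dismissed as an $O(1)$ correction.

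The paper's proof handles both issues at once and is genuinely different in mechanism. It resamples the weights on the antidiagonal $\bbV_{i+j}$ by independent copies $\w'$, so that the northeast Busemann function computed in the primed environment is independent of the southwest one. Then it observes that if $\LPP{u_n}{y} + \LPP{y}{v_n}^o = \LPP{u_n}{v_n}$ for infinitely many $n$, comparison with the competing passage points $y+(-k,k)$ on the same level forces, in the limit, $\buse{y}{y+(-k,k)}{\xi,\mathrm{sw}} + \B^{\prime\,\xi}_{y,\,y+(-k,k)} > \w'_y - \w'_{y+(-k,k)}$ for \emph{every} $k$. Writing the left side via the cocycle property as a sum of independent exponential increments whose means essentially telescope under Condition \ref{cond:sym}, the partial sums converge (after diffusive scaling, by the invariance principle) to a Brownian motion, which almost surely fails to stay above any fixed level on $[0,1]$; together with a tail bound on $\max_k \w'_{(-k,k)}$ this yields a contradiction. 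So the essential ingredients are the decoupling trick on the antidiagonal and a random-walk fluctuation argument, neither of which appears in your proposal, and the curvature-based route you suggest cannot be repaired to give the result.
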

\begin{proof}
It suffices to prove the result for $y \in \bbZ^2$ fixed. Call $\ell = i+j$ and recall that $\bbV_\ell=\{x \in \bbR^2 : x \cdot (e_1+e_2)=\ell\}$. We augment the probability space by adding an extra family of weights $(\w'_{x} : x\in\bbV_{\ell} \cap \bbZ^2)$, independent  of $\w$ with the same distribution as $(\w_{x} : x \in \bbV_\ell\cap \bbZ^2)$ under $\bfP$. For notational convenience, we will continue to denote the measure on this extended space by $\bfP$. Extend to all of $\bbZ^2$ by setting $\w_x' = \w_x$ if $x \notin \bbV_\ell \cap \bbZ^2$.
For $x \leq y$, set $\LPP{x}{y}' = \Lpt_{x,y}(\w')$. 

We work on a $\bfP$ almost sure event where the following limits exist for all sequences $u_n$,$v_n$ as in the statement and all $x,y \in \bbZ^2$,
\begin{align*}
\lim_{n\to\infty} \LPP{u_n}{x} -  \LPP{u_n}{y}&= \buse{x}{y}{\xi,\text{sw}}, \qquad \lim_{n\to\infty} \LPP{x}{v_n}' -  \LPP{y}{v_n}'= \buse{x}{y}{'\xi}.
\end{align*}
 By reflection symmetry, $(\buse{x}{y}{'\xi} : x,y \in \bbZ^2) \stackrel{d}{=} (\buse{-x}{-y}{\xi,\textup{sw}} : x,y \in \bbZ^2)$. Moreover, we have that $(\buse{x}{y}{'\xi} : x,y \in \bbV_\ell\cap\bbZ^2)$ and $(\buse{x}{y}{\xi,\textup{sw}} : x,y \in \bbV_\ell\cap\bbZ^2)$ are independent as they are functions of disjoint collections of independent weights.  
 
 Notice that if for infinitely many values of $n$ we have $\LPP{u_n}{y} + \LPP{y}{v_n}^o = \LPP{u_n}{v_n}$ then it must be the case that along that sequence in $n$, we must have for all $k \in \bbZ_{>0}$, 
\begin{align*}
\LPP{u_n}{y} -  \LPP{u_n}{y+(-k,k)} + \LPP{y}{v_n}^o  -  \LPP{y+(-k,k)}{v_n}^o >  0 .
\end{align*}
Sending $n \to \infty$ along this subsequence, it therefore suffices to show that we cannot have
\begin{align}
\buse{y}{y+(-k,k)}{\xi,\text{sw}} + \buse{y}{y+(-k,k)}{'\xi}  >  \w_y' - \w_{y+(-k,k)}'. \label{eq:imposs}
\end{align}
for all $k \in \bbZ_{>0}$. Using the cocycle property, Theorem \ref{thm:buslim}\eqref{thm:buslim:decomp}\eqref{thm:buslim:decomp:upright}, we may write 
\begin{align*}
&\buse{y}{y+(-k,k)}{'\xi} + \buse{y}{y+(-k,k)}{\xi,\text{sw}} = \sum_{m=0}^{k-1} \bigg[\buse{y+(-m,m)}{y+(-m-1,m+1)} {\xi,\text{sw}}+\buse{y+(-m,m)}{y+(-m-1,m+1)} {'\xi}\bigg] \end{align*}
  and similarly, 
\begin{align*}
&\buse{y+(-m,m)}{y+(-m-1,m+1)}{'\xi} = \buse{y+(-m,m)}{y+(-m,m+1)}{'\xi} + \buse{y+(-m,m+1)}{y+(-m-1,m+1)}{'\xi}, \\
&\buse{y+(-m,m)}{y+(-m-1,m+1)}{\xi,\text{sw}} =\buse{y+(-m,m)}{y+(-m,m+1)} {\xi,\text{sw}} + \buse{y+(-m,m+1)}{y+(-m-1,m+1)}{\xi,\text{sw}}.
\end{align*} By Lemma \ref{lem:xiin}, the value of $\zmin{x}(\xi) := \chi 
\in (-\sinf{a}_{0:\infty},\sinf{b}_{0:\infty})$ does not depend on $x\in\bbZ^2$. By Theorem \ref{thm:buslim} \eqref{thm:buslim:indp} and \eqref{thm:buslim:marg}, the summands in these expressions are independent with
\begin{align*}
\buse{y+(-m,m)}{y+(-m,m+1)}{'\xi} \sim \Exp(b_{j+m}-\chi),\,  -\buse{y+(-m,m+1)}{y+(-m-1,m+1)}{'\xi} \sim \Exp(a_{i-m-1} + \chi),\\
-\buse{y+(-m,m)}{y+(-m,m+1)} {\xi,\text{sw}} \sim \Exp(b_{j+m+1}-\chi), \, \buse{y+(-m,m+1)}{y+(-m-1,m+1)} {\xi,\text{sw}}\sim \Exp(a_{m-i}+\chi).
\end{align*}
By the invariance principle \cite[Theorem 7.1.4]{Eth-Kur-86}, 
\begin{align*}
\left(\frac{1}{\sqrt{k}} \sum_{i=1}^{\lf k t \rf} \left[\buse{y+(-i,i)}{y+(-i+1,i-1)}{'\xi} - \buse{y+(-i,i)}{y+(-i+1,i-1)}{\xi,\text{sw}} \right]\right)_{t\geq 0} \Longrightarrow (W(C t))_{t\geq 0},
\end{align*}
where $W$ is standard Brownian motion, which we take for notational simplicity to be defined on $(\Omega,\sF,\bfP)$ and $C = 2\bigg[\int (a + \chi)^{-2} \alpha(da) + \int(b - \chi)^{-2} \beta(db)\bigg].$

By \eqref{as:inf}, $\sinf{a}_{0:\infty}+\sinf{b}_{0:\infty}>0$ and so there exists $c>0$ so that for $N \in \bbZ_{>0}$,
\begin{align*}
\bfP\left(\max_{k \in [N]}\{ \w_{y+(-k,k)}' \} \geq N^{1/4} \right) &\leq N e^{-c N^{1/4}}.
\end{align*}
We have
\begin{align*}
&\bfP\left(\buse{y+(-k,k)}{y}{\xi,\text{sw}} + \buse{y}{y+(-k,k)}{'\xi} > \w_{y}' - \w_{y+(-k,k)}'\qquad \forall k \in [N]\right) \\
&\qquad \leq N e^{-cN^{1/4}}  + \bfP\left(\buse{y+(-k,k)}{y}{\xi,\text{sw}} + \buse{y}{y+(-k,k)}{'\xi} > - N^{1/4}\, \forall k \in [N]\right) 
\end{align*}
As $N \to \infty$, the last probability converges to $\bfP\left(\inf_{0 \leq t \leq 1} \left\{ W(Ct)\right\} \geq 0 \right) = 0$.
\end{proof}

Define a family of weights $(\w_x^{\xi} : x \in \bbZ^2)$ via $\w_x^\xi = \buse{x-e_1}{x}{\xi} \wedge \buse{x-e_2}{x}{\xi}$. Theorem \ref{thm:buslim}\eqref{thm:buslim:indp} implies that this family is independent under $\bfP$ and by the distributional properties in Theorem \ref{thm:buslim}\eqref{thm:buslim:marg}, we see that
\be(\w_x)_{x \in \bbZ^2} \stackrel{d}{=}(\w_{x+e_1+e_2}^\xi)_{x\in\bbZ^2}.\label{eq:dualdist}\ee
Define passage times $\LPP{x}{y}^{\xi} = \Lpt_{x, y}(\w^\xi)$ according to \eqref{eq:Lpt}.  The next lemma states that paths in the dual graphs $\sT^{\xi,*}$ define geodesics in the environment $\w^{\xi}$ after re-centering. The proof is identical to the proof of Lemmas 4.1(i) and 4.3(i) in \cite{Sep-20} in the i.i.d.~case, which only depends on the cocycle and recovery properties of the Busemann functions.

\begin{lemma} \label{lem:dualgeo}
Suppose Condition \ref{cond:sym} holds and $\xi \in ]\mfc_1^{(0,0)},\mfc_2^{(0,0)}[$. If $y^*,z^* \in\bbZ^{2*}$ satisfy $\geo{m}{x^*}{\xi*} = y^*$ and $\geo{n}{x^*}{\xi*}=z^*$ and $m \leq n$, then
\begin{align*}
\buse{y}{z}{\xi} + \w_y^{\xi} &= \LPP{y}{z}^{\xi}
\end{align*}
where $y=y^*-(1/2,1/2)$ and $z=z^*-(1/2,1/2)$. In particular, the primal lattice sites $\geo{m:n}{x^*}{\xi*} - (1/2,1/2)$ are geodesics in the environment $\w^{\xi}$.
\end{lemma}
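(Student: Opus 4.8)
The plan is to follow the scheme of \cite[Lemma 4.1(i), Lemma 4.3(i)]{Sep-20} verbatim, since the only ingredients used there are the cocycle and recovery properties of the Busemann functions together with the definition of the dual edges, all of which are available here. Concretely, I would first show the single-step identity: if $(x^*,x^*-e_i)\in\sT^{\xi,*}$, i.e. $x=x^*-(1/2,1/2)$ steps from $x$ to $x+e_i$ in $\sT^\xi$, then $\buse{x}{x+e_i}{\xi}+\w_x^\xi=\LPP{x}{x+e_i}^\xi$. This is immediate from the definition $\w_z^\xi=\buse{z-e_1}{z}{\xi}\wedge\buse{z-e_2}{z}{\xi}$ and $\LPP{x}{x+e_i}^\xi=\w_x^\xi+\w_{x+e_i}^\xi$ (one vertex to its neighbor), once one notes that $\w_{x+e_i}^\xi\le\buse{x}{x+e_i}{\xi}$ by definition of the minimum, with equality precisely when $(x,x+e_i)$ is the chosen Busemann edge at $x$ (the minimizing direction in \eqref{eq:treedef}). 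Here one uses the tie-free event fixed at the start of the section, so the minimum in \eqref{eq:treedef}/\eqref{eq:dualtreedef} is attained uniquely.

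Next I would upgrade to a finite dual segment by induction on $n-m$. Write $w^*=\geo{m}{x^*}{\xi*}$, let $p^*=\geo{m+1}{x^*}{\xi*}=w^*-e_i$ be the next vertex, and set $w=w^*-(1/2,1/2)$, $p=p^*-(1/2,1/2)=w-e_i$. By the inductive hypothesis applied to the sub-dual-path from $p^*$ to $z^*$, $\buse{p}{z}{\xi}+\w_p^\xi=\LPP{p}{z}^\xi$, and by the single-step identity at $w$ (using that the chosen primal edge at $p$ is $(p,p+e_i)=(p,w)$), $\buse{p}{w}{\xi}+\w_p^\xi=\w_p^\xi+\w_w^\xi$, i.e. $\buse{p}{w}{\xi}=\w_w^\xi$. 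Adding these and using the cocycle property $\buse{w}{z}{\xi}=\buse{p}{z}{\xi}-\buse{p}{w}{\xi}$ (valid here by Theorem \ref{thm:buslim}\eqref{thm:buslim:decomp}, all quantities being a.s.\ finite for $\xi$ in the strictly concave region by Theorem \ref{thm:buslim}\eqref{thm:buslim:marg}) gives $\buse{w}{z}{\xi}+\w_w^\xi=\LPP{p}{z}^\xi+\w_w^\xi$. It remains to identify $\LPP{p}{z}^\xi+\w_w^\xi$ with $\LPP{w}{z}^\xi$: since the dual path from $w^*$ proceeds through $p^*$, the primal edge $(w,p)$ is \emph{not} in $\sT^\xi$, which forces $\buse{w}{w-e_i'}{\xi}<\buse{w}{w-(\text{other})}{\xi}$ in a way that makes the $\w^\xi$-recursion \eqref{eq:LptRec} at $w$ select the step toward $p$; hence $\LPP{w}{z}^\xi=\w_w^\xi+\LPP{p}{z}^\xi$, exactly as in \cite{Sep-20}. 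This yields $\buse{y}{z}{\xi}+\w_y^\xi=\LPP{y}{z}^\xi$ for $y^*=\geo{m}{x^*}{\xi*}$, $z^*=\geo{n}{x^*}{\xi*}$.

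Finally, the "in particular" clause follows from the displayed identity exactly as in the i.i.d.\ case: any finite sub-segment of $\geo{m:n}{x^*}{\xi*}-(1/2,1/2)$ realizes the maximum defining $\LPP{\cdot}{\cdot}^\xi$ between its endpoints, because along consecutive dual vertices the telescoping of $\buse{\cdot}{\cdot}{\xi}$ sums the $\w^\xi$-weights of the traversed primal vertices (shifted by one vertex), and the single-step argument shows each step is the one selected by the $\w^\xi$-recursion; so the path attains the passage time and is therefore a geodesic in the environment $\w^\xi$. The main obstacle is purely bookkeeping: getting the $(1/2,1/2)$ shift and the orientation reversal between $\sT^\xi$ and $\sT^{\xi,*}$ consistent throughout, and making sure the recovery identity is invoked with the correct neighbor (i.e.\ that $\w_w^\xi=\buse{p}{w}{\xi}$ with $p=w-e_i$ the actual predecessor along the dual path); there is no analytic difficulty, since no shape theorem or concentration estimate enters — only the algebraic cocycle/recovery structure from Theorem \ref{thm:buslim}, which we have already established.
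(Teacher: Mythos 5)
Your overall scheme---a one-step identity, telescoping via the cocycle property, and the bound $\w_{u}^{\xi}\le\buse{u-e_i}{u}{\xi}$ against competing paths---is indeed the scheme of \cite{Sep-20}, but the one-step identity is justified incorrectly and, as you state it, is false. You claim $\w_{x+e_i}^{\xi}=\buse{x}{x+e_i}{\xi}$ precisely when $(x,x+e_i)\in\sT^{\xi}$. However, $\w_{x+e_i}^{\xi}$ is the minimum of the two increments pointing \emph{into} $x+e_i$: for $i=1$ these are $\buse{x}{x+e_1}{\xi}$ and $\buse{x+e_1-e_2}{x+e_1}{\xi}$, not $\buse{x}{x+e_1}{\xi}$ and $\buse{x}{x+e_2}{\xi}$. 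By Theorem \ref{thm:buslim}\eqref{thm:buslim:recursion}, for every $u$,
\begin{align*}
\buse{u}{u+e_1}{\xi}-\buse{u}{u+e_2}{\xi}=\buse{u+e_2}{u+e_1+e_2}{\xi}-\buse{u+e_1}{u+e_1+e_2}{\xi},
\end{align*}
so which incoming increment at a site $v$ is minimal is governed by the $\sT^{\xi}$-edge at $v-e_1-e_2$, one cell to the southwest, and not by the edge at the base point of the step. Consequently, the hypothesis that the dual step at $z^*$ is $-e_i$ (a statement about the increments at $z=z^*-(1/2,1/2)$) does not yield $\buse{z-e_i}{z}{\xi}=\w_{z}^{\xi}$; this equality fails exactly when the $\sT^{\xi}$-directions at $z$ and at $z-e_1-e_2$ disagree, an event of positive probability. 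Since $\buse{z-e_i}{z}{\xi}\ge\w_{z}^{\xi}$ holds termwise along the path, no cancellation can repair the sum, so the base case and the inductive step (your identification of $\LPP{w}{z}^{\xi}$ with $\w_{w}^{\xi}+\LPP{p}{z}^{\xi}$ rests on the same misattribution) both collapse. A smaller slip: by the dual-path indexing convention, $\geo{m+1}{x^*}{\xi*}$ has coordinate sum one \emph{larger} than $\geo{m}{x^*}{\xi*}$, so your $p^*$ equals $w^*+e_i$, not $w^*-e_i$.

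The repair is to anchor the comparison at the correct cell: pair each dual vertex $v^*$ on the path with the northeast corner $v^*+(1/2,1/2)$ of its square. If the dual edge out of $v^*$ is $(v^*,v^*-e_i)$, i.e.\ $\buse{v}{v+e_i}{\xi}<\buse{v}{v+e_{3-i}}{\xi}$ with $v=v^*-(1/2,1/2)$, then the displayed identity gives exactly that the minimal incoming increment at $v+e_1+e_2$ is the one along $e_i$, that is, $\w_{v+e_1+e_2}^{\xi}=\buse{v+e_{3-i}}{v+e_1+e_2}{\xi}$; this is the correct one-step identity, and it matches the step of the shifted path. Telescoping with Theorem \ref{thm:buslim}\eqref{thm:buslim:decomp} shows the shifted path attains the Busemann increment plus the $\w^{\xi}$-weight at its lower endpoint, while for an arbitrary up-right path the bound $\w_{u}^{\xi}\le\buse{u-e_i}{u}{\xi}$ together with the cocycle property gives the matching upper bound on $\LPP{\cdot}{\cdot}^{\xi}$; equality of the two yields both the identity and the geodesic property. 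Carried out this way, the argument produces the statement for the sites $\geo{m:n}{x^*}{\xi*}+(1/2,1/2)$; this one-cell shift is immaterial for the later uses of the lemma (the laws of large numbers in Lemma \ref{lem:backLLN} and the exclusion of separating bi-infinite paths), but it is exactly the bookkeeping your pointwise argument gets wrong, so the single-step analysis must be redone with the minimizing \emph{incoming} edge, not the minimizing outgoing edge, as the selection rule.
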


We have the following directedness result concerning dual paths.
\begin{lemma}\label{lem:backLLN}
Suppose that Condition \ref{cond:sym} holds and $\xi \in ]\mfc_1^{(0,0)},\mfc_2^{(0,0)}[$. Then $\bfP$-almost surely, for all $x^* \in \bbZ^{2*}$,
\begin{align*}
\lim_{n\to-\infty} \frac{\geo{n}{x^*}{\xi*}}{n} &= \xi.
\end{align*}
\end{lemma}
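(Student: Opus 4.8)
\textbf{Proof plan for Lemma \ref{lem:backLLN}.}

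The plan is to transfer the forward directedness result for Busemann geodesics (Theorem \ref{thm:geo}\eqref{thm:geo:ex}\eqref{thm:geo:ex:con}) to the dual south-west paths via the duality encoded in Lemma \ref{lem:dualgeo} and the distributional identity \eqref{eq:dualdist}. First I would observe that, by Lemma \ref{lem:dualgeo}, for each fixed $x^* \in \bbZ^{2*}$ the primal sites $\geo{n}{x^*}{\xi*}-(1/2,1/2)$, read in increasing order of $n\cdot(e_1+e_2)$, form a semi-infinite geodesic in the environment $\w^\xi$, started from $x^*-(1/2,1/2)$ and running in the south-west direction; equivalently, reversing the orientation, it is an up-right semi-infinite geodesic in the reflected environment $(\w^\xi_{-z})_{z}$. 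By \eqref{eq:dualdist} and Condition \ref{cond:sym}, the reflected environment $(\w^\xi_{-z+e_1+e_2})_z$ has the same law as $(\w_z)_z$, so the dual geodesic from $x^*$ corresponds, in distribution, to a forward Busemann-type geodesic in a copy of the original model. The subtlety is that a priori a dual path need not be \emph{one of} the Busemann geodesics $\geo{}{z}{\zeta\pm}$ of this new model; but by Theorem \ref{thm:geo}\eqref{thm:geo:dir}, \emph{every} semi-infinite geodesic rooted at a point is of one of the three types, and by Theorem \ref{thm:geo}\eqref{thm:geo:uniq} it is sandwiched between the corresponding Busemann geodesics. Combined with Lemma \ref{lem:xiin}, which guarantees $\xi \in \,]\mfc_1^z,\mfc_2^z[\,$ for \emph{every} $z \in \bbZ^2$ (so there are no boundary- or linear-segment complications for this particular $\xi$), the only option is that the dual geodesic is concave-segment directed.

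More concretely, I would run the same averaging argument used in the proof of Theorem \ref{thm:geo}\eqref{thm:geo:ex}\eqref{thm:geo:ex:con}: along the dual path, Lemma \ref{lem:dualgeo} gives $\LPP{\geo{n}{x^*}{\xi*}-(1/2,1/2)}{x^*-(1/2,1/2)}^\xi$ (suitably re-centered) equal to a Busemann increment $\buse{\cdot}{\cdot}{\xi}$, whose growth rate we control via the marginals in Theorem \ref{thm:buslim}\eqref{thm:buslim:marg}, the independence in \eqref{thm:buslim:indp}, the exponential concentration bound \cite[Lemma A.2]{Emr-Jan-Sep-21}, and Borel--Cantelli. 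Passing to a subsequence along which $\geo{n}{x^*}{\xi*}/n \to \zeta$ for some $\zeta \in [e_2,e_1]$, the re-centered passage time grows like $|n|$ times $\zeta\cdot e_1 \int \alpha(\dd a)/(a+\chi) + \zeta\cdot e_2 \int \beta(\dd b)/(b-\chi)$ where $\chi = \zmin{x}(\xi)$ is $x$-independent by Lemma \ref{lem:xiin}. On the other hand, by \eqref{eq:bwdshape} (the reflected shape theorem available under Condition \ref{cond:sym}) the same quantity must grow like $|n|\,\shp^{-(x^*-(1/2,1/2))}(\zeta)$ provided both coordinates of $\geo{n}{x^*}{\xi*}$ diverge; strict concavity of $\shp^\bullet$ on $\,]\mfc_1,\mfc_2[\,$ then forces $\zeta = \xi$, exactly as in the forward case.

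The one genuine gap to fill is ruling out that the dual path gets trapped on a column or row (so that one coordinate of $\geo{n}{x^*}{\xi*}$ stays bounded as $n\to-\infty$), which is what would make the shape-theorem comparison inapplicable. Here I would invoke Lemma \ref{lem:xiin} together with the dual analogue of Lemma \ref{lem:bulkgeodir}\eqref{lem:bgd-1}: because $\xi$ lies strictly inside $\,]\mfc_1^z,\mfc_2^z[\,$ for every $z$, the reflected-model version of that lemma shows the dual geodesic crosses every vertical and every horizontal line, hence both coordinates diverge. I expect this boundedness exclusion --- i.e.\ checking that the reflected/dual setup really does satisfy the hypotheses needed to quote Lemma \ref{lem:bulkgeodir} and Theorem \ref{thm:geo} --- to be the main obstacle, since it requires being careful that the south-west Busemann function $\buse{\cdot}{\cdot}{\xi,\mathrm{sw}}$ and the environment $\w^\xi$ genuinely fit the framework of Section \ref{sec:geo} applied to the reflected parameter sequences. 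Once that is in place, the argument is a direct mirror of the forward directedness proof. Finally, the almost sure statement "for all $x^* \in \bbZ^{2*}$" follows since $\bbZ^{2*}$ is countable and each single-$x^*$ statement holds almost surely, after intersecting with the full-probability event on which \eqref{eq:bwdshape}, the concentration estimates, and the no-tie properties all hold simultaneously.
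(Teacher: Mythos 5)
Your proposal is correct and follows essentially the same route as the paper: the paper rules out trapping of the dual path on a row or column by exactly the LLN comparison of weights versus Busemann increments used after \eqref{eq:buswgteq} (the argument behind Lemma \ref{lem:bulkgeodir}\eqref{lem:bgd-1}), and then, along a subsequence with limit direction $\zeta$, equates the concentration limit of the Busemann increment with the shape-theorem limit of $\LPP{\cdot}{x}^{\xi}$ via Lemma \ref{lem:dualgeo}, \eqref{eq:dualdist}, Lemma \ref{lem:xiin} and Proposition \ref{prop:shape}, forcing $\zeta=\xi$ by strict concavity. Your preliminary detour through Theorem \ref{thm:geo}\eqref{thm:geo:dir}--\eqref{thm:geo:uniq} for the reflected model is unnecessary (and, as you note, not directly quotable since the dual path is not a Busemann geodesic of that model), but your ``more concrete'' averaging argument is precisely the paper's proof.
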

\begin{proof}
This proof is similar to that of Theorem \ref{thm:geo}\eqref{thm:geo:ex}. Recall the local rule defining $\sT^{\xi,*}$ in \eqref{eq:dualtreedef} and the recovery property of Busemann functions in Theorem \ref{thm:buslim}\eqref{thm:buslim:recovery}.

Fix $x^*\in \bbZ^{2*}.$ Our first claim is that for each $k \in \bbZ$, we have that $\geo{n}{x^*}{\xi*}\cdot e_p \leq k$ for all $n$ sufficiently large and $p \in \{1,2\}$.  Suppose this fails with positive probability for some $k$. Then calling $y_n = (k,n-k)$, there must exist $N \in \bbZ$ and $p \in \{1,2\}$ so that 
\[
\bfP(\w_{y_n} = \buse{y_n}{y_{n}+e_p}{\xi}\text{ for all } n \leq N)>0
\]
By Condition \ref{cond:sym}, the estimates following \eqref{eq:buswgteq} rule out this possibility.

Now call $x = x^*-(1/2,1/2)=(i,j)$ and $\geo{n}{x}{\xi**} := \geo{n}{x^*}{\xi*}-(1/2,1/2).$ Suppose that $n_k$ is a subsequence with $n_k \to -\infty$ along which we have for some $\zeta \in [e_2,e_1]$,
\[
\lim_{k\to\infty}\frac{\geo{n_k}{x}{\xi**} }{n_k} = \zeta
\]
As in the proof of Theorem \ref{thm:geo}\eqref{thm:geo:ex}\eqref{thm:geo:ex:con}, standard concentration estimates for sums of exponential random variables imply that
\begin{align*}
\lim_{k\to\infty} -\frac{1}{n_k}\buse{\geo{n_k}{x}{\xi**}}{x}{\xi} =  \zeta \cdot e_1 \int_0^\infty \frac{\alpha(\dd a)}{a+\zmin{x}(\xi)} + \zeta \cdot  e_2 \int_0^\infty \frac{\beta(\dd b)}{b-\zmin{x}(\xi)}.
\end{align*}
By Lemma \ref{lem:dualgeo} for each $k$, we have that
\begin{align*}
\frac{\buse{\geo{n_k}{x}{\xi**}}{x}{\xi}}{n_k} + \frac{\w_{\geo{n_k}{x}{\xi**}}^{\xi}}{n_k} = \frac{\LPP{\geo{n_k}{x}{\xi**}}{x}^{\xi}}{n_k}.
\end{align*}
By the distributional identity in \eqref{eq:dualdist}, Lemma \ref{lem:xiin}, and Proposition \ref{prop:shape}, the right-hand side converges to $-\shp^{x}(\zeta)$ almost surely. Because $\sinf{a}_{0:\infty}+\sinf{b}_{0:\infty}>0$, the middle term can be seen to converge to zero in probability. This implies that
\[
\shp^{x}(\zeta) =  \zeta \cdot e_1 \int_0^\infty \frac{\alpha(\dd a)}{a+\zmin{x}(\xi)} + \zeta \cdot  e_2 \int_0^\infty \frac{\beta(\dd b)}{b-\zmin{x}(\xi)}.
\]
By strict concavity of $\gamma^x$ on $]\mfc_1^x,\mfc_2^x[$, concavity on $[e_2,e_1]$, and the assumption that $\xi \in ]\mfc_1^{(0,0)},\mfc_2^{(0,0)}[\subseteq ]\mfc_1^x,\mfc_2^x[$, this holds if and only if $\zeta = \xi$.
 \end{proof}

With reference to Figure \ref{fig:geodual}, note that if a bi-infinite path $\pi^*$ exists in $\sT^{\xi,*}$, then it partitions $\sT^{\xi}$ into two disjoint forests. We say that $\pi^*$ separates two semi-infinite paths $\pi$ and $\nu$ in $\sT^{\xi}$ if one of the paths $\pi$ and $\nu$ lies strictly above $\pi^*$ and one lies strictly below. The next lemma is a deterministic fact coming from the construction of the graphs $\sT^{\xi}$ and their duals $\sT^{\xi,*}$. The proof is verbatim identical to that of Step 2 of Lemma 4.6 in \cite{Sep-20}.
 
\begin{lemma}  \label{lem:dualbi}
Take $\xi \in [e_2,e_1]$ and $x,y\in \bbZ^2$. If $\geo{}{x}{\xi} \cap \geo{}{y}{\xi} = \varnothing$ if and only if there is a bi-infinite path in $\sT^{\xi,*}$ which separates them. 
\end{lemma}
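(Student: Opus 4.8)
The plan is to treat Lemma~\ref{lem:dualbi} as a purely deterministic planarity statement about the two graphs $\sT^{\xi}$ and $\sT^{\xi,*}$, following Step~2 of the proof of \cite[Lemma~4.6]{Sep-20} line by line; the only structural facts used are that every vertex of $\bbZ^2$ has forward out-degree exactly $1$ in $\sT^{\xi}$ and every vertex of $\bbZ^{2*}$ has forward out-degree exactly $1$ in $\sT^{\xi,*}$ (immediate from \eqref{eq:treedef} and \eqref{eq:dualtreedef}), together with the \emph{non-crossing property}: drawn as unit segments in $\bbR^2$, the edge sets of $\sT^{\xi}$ and $\sT^{\xi,*}$ are disjoint. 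The first thing I would record is this non-crossing property by a one-line case check. The dual edge $(z^*,z^*-e_1)$ is a horizontal unit segment at the non-integer height $z\cdot e_2+\frac12$, so it can meet a primal segment only if that segment is the vertical primal edge from $z$ to $z+e_2$; and the dual edge $(z^*,z^*-e_2)$ can meet a primal segment only if it is the horizontal primal edge from $z$ to $z+e_1$. In each case, whenever the dual edge is present the defining rule \eqref{eq:dualtreedef} forces the primal edge $(z,z+e_i)$ with the \emph{other} index to lie in $\sT^{\xi}$, hence the potentially crossing primal edge is the one not selected at $z$ and is absent. Since primal and dual vertices lie on disjoint sublattices, no primal vertex lies on a dual edge either.

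For the ``if'' direction, suppose $\pi^*$ is a bi-infinite path in $\sT^{\xi,*}$ separating $\geo{}{x}{\xi}$ and $\geo{}{y}{\xi}$. Every $\sT^{\xi,*}$-edge is a $-e_1$ or $-e_2$ step, so traversed in the reverse direction $\pi^*$ is a bi-infinite monotone up-right dual lattice path, whose geometric realization is a simple bi-infinite curve with exactly two unbounded complementary components in $\bbR^2$. By the non-crossing property, $\geo{}{x}{\xi}$ and $\geo{}{y}{\xi}$ are connected subsets of $\bbR^2$ disjoint from this curve, so each lies entirely in one component; since $\pi^*$ separates them they lie in distinct components and are therefore vertex-disjoint.

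For the ``only if'' direction, assume $\geo{}{x}{\xi}\cap\geo{}{y}{\xi}=\varnothing$. I would first note that two vertex-disjoint up-right paths never swap their left--right order: on a level $\bbV_n$ where both are defined, a single up-right step changes each first coordinate by at most $1$, so the inequality $\geo{}{x}{\xi}_n\cdot e_1<\geo{}{y}{\xi}_n\cdot e_1$ can only be destroyed by the paths meeting, which is excluded. Hence, after relabeling, $\geo{}{x}{\xi}_n\prec\geo{}{y}{\xi}_n$ for all $n\ge N:=(x\vee y)\cdot(e_1+e_2)$, and the set $D$ of dual vertices lying strictly between the two primal paths is nonempty, with only finitely many vertices on each level $\ge N$. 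By non-crossing, the $\sT^{\xi,*}$-successor of any $D$-vertex on a level $\ge N$ again lies in $D$, so the forward dual path from any vertex of $D$ stays between the two geodesics down to level $N$, where it passes through one of finitely many $D$-vertices. Picking $z_m^*\in D$ on level $m$ for each $m>N$ and applying the pigeonhole principle yields a dual vertex $w^*$ on level $N$ through which infinitely many of the forward paths $\geo{}{z_m^*}{\xi*}$ pass; the dual vertices whose forward path meets $w^*$ form an infinite, locally finite tree (each dual vertex has at most two $\sT^{\xi,*}$-predecessors) rooted at $w^*$, so by K\"onig's lemma it has an infinite branch, i.e.\ a bi-infinite dual path $\pi^*$ through $w^*$. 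By non-crossing again, the backward (up-right) part of $\pi^*$ never crosses $\geo{}{x}{\xi}$ or $\geo{}{y}{\xi}$, so it remains inside $D$; thus $\pi^*$ separates the two geodesics. The main obstacle is precisely the bookkeeping in this last direction --- verifying that $D$ genuinely separates the two geodesics at all relevant levels, that forward paths from $D$ return to a level-$N$ vertex, and that the K\"onig branch stays in $D$ --- but this is the content of Step~2 of \cite[Lemma~4.6]{Sep-20} and transfers verbatim, since that argument uses only the forward out-degree-$1$ structure and the planarity recorded above.
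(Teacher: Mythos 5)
Your proposal is correct and takes essentially the same route as the paper: the paper proves this lemma simply by deferring to Step 2 of \cite[Lemma 4.6]{Sep-20}, and your reconstruction (non-crossing of primal and dual edges from \eqref{eq:treedef}--\eqref{eq:dualtreedef}, order preservation of disjoint up-right paths, trapping of forward dual paths in the region between the geodesics, then pigeonhole plus K\"onig's lemma to extract a bi-infinite separating dual path) is precisely that argument, using only the out-degree-one structure and planarity as intended.
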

%

\begin{proof}[Proof of Theorem \ref{thm:geo} \eqref{thm:geo:coal}]
We begin by noting that the event $\{\geo{}{x}{\xi} \cap \geo{}{y}{\xi} = \varnothing\}$ is measurable with respect to $\sigma(\w_w : w \geq  x \wedge y)$. By re-indexing the lattice, we may assume that $x \wedge y=(0,0)$. By coupling, we may alter the parameter sequences which factor into the distribution of sites which do not satisfy $w \geq (0,0)$ without changing this event and therefore may assume without loss of generality that Condition \ref{cond:sym} holds. In this new environment, by Lemma \ref{lem:xiin}, we have $\xi \in ]\mfc_1^{(0,0)},\mfc_2^{(0,0)}[$ and therefore, because of our assumption on the parameter sequences, $\xi \in \bigcap_{w \in \bbZ} ]\mfc_1^w,\mfc_2^w[$. 

By Lemma \ref{lem:dualbi}, $\geo{}{x}{\xi} \cap \geo{}{y}{\xi} = \varnothing$ if and only if there is a point $w^* \in \bbZ^{2,*}$ and a bi-infinite path $\geo{}{w*}{\xi*}  \in \sT^{\xi,*}$ containing $w^*$ which separates them. By Lemma \ref{lem:dualgeo}, such a path is a bi-infinite geodesic in the environment $\w^{\xi}$.  Lemma \ref{lem:backLLN} ensures that $\geo{-n}{w}{\xi*}/n \to - \xi$.  If such a path which separates $\geo{}{x}{\xi}$ and $\geo{}{y}{\xi}$ exists in $\sT^{\xi,*}$, the facts that $\geo{n}{x}{\xi}/n \to \xi$ and $\geo{n}{y}{\xi}/n \to \xi$ force $\geo{n}{w*}{\xi*}/n \to \xi$. Combining \eqref{eq:dualdist} with Proposition \ref{prop:nobi}  rules out this possibility and so we conclude that with probability one $\geo{}{x}{\xi} \cap \geo{}{y}{\xi} \neq \varnothing$. By definition of Busemann geodesics in \eqref{eq:busgeodef}, this implies coalescence.
\end{proof}

\section{Competition interfaces} \label{sec:cif} 
Recall the locations $\Cif^x(n)$ and $\Cifs^x(m)$, which denote the locations where the competition interface rooted at $x$ pass the horizontal and vertical levels $n$ and $m$, respectively. We now prove Theorem \ref{thm:cif1}, which records the distribution  of $\Cif^{x}(\infty)$, $\Cifs^{x}(\infty)$ (defined in \eqref{eq:cifslope} and \eqref{def:Cifs}) and that of the limit of $\Cifa_n^x/n$ (defined in \eqref{eq:cifdef}).
\begin{proof}[Proof of Theorem \ref{thm:cif1}]
We begin by showing part \eqref{thm:cif1:1}, with the proof of \eqref{thm:cif1:2} being similar. We can read off  the distribution of $\Cif^{x}(\infty)$ from Theorem \ref{thm:buslim} \eqref{thm:buslim:indp} and \eqref{thm:buslim:marg}. For 
 $x = (i,j) \in \bbZ^2$ and $m \in \bbZ_{\ge i}$, 
\begin{align*}
&\bfP\left(\Cif^{x}(\infty) = m\right) = \bfP\left(\Cif^{x}(\infty) \ge  m\right) - \bfP\left(\Cif^{x}(\infty) \ge m+1\right) \\
&\qquad
=\bfP\left(\Buse{x}{x+e_1}{(m,\infty)} > \Buse{x}{x+e_2}{(m,\infty)}\right)
-  \bfP\left(\Buse{x}{x+e_1}{(m+1,\infty)} > \Buse{x}{x+e_2}{(m+1,\infty)}\right)\\
&\qquad
=\frac{\smin{a}_{i:m}+b_j}{a_i+b_j} -  \frac{\smin{a}_{i:m+1}+b_j}{a_i+b_j} =  \frac{\smin{a}_{i:m}-\smin{a}_{i:m+1}}{a_i+b_j}.
\end{align*}
It follows that
\begin{align*}
\bfP\left(\Cif^{x}(\infty) = \infty\right) =\lim_{m\to\infty} \bfP\left(\Cif^{x}(\infty) \ge  m\right)
=  \frac{\sinf{a}_{i:\infty} + b_j}{a_i+b_j} . 
\end{align*}
Next, we turn to part \eqref{thm:cif1:3}. 
Define  for $x \in \bbZ^2$, 
\be\label{def:cif}  
\cif^{x}=\sup\{\xi \in [e_2,e_1] :  \buse{x}{x+e_2}{\xi+} \leq\buse{x}{x+e_1}{\xi+}\},  \ee
where the supremum is taken with respect to the total ordering $\preceq$ on $[e_2,e_1]$, with the understanding that if the set above is empty, the supremum is $e_2$. Note that, as above, $\buse{x}{x+e_1}{\xi+}$ is non-increasing and $\buse{x}{x+e_2}{\xi+}$ is non-decreasing in $\xi$. For fixed $\xi$ and $x\in \bbZ^2$,  $\buse{x}{x+e_1}{\xi}$ and $\buse{x}{x+e_2}{\xi}$ are independent exponential variables with marginal distributions recorded in \eqref{eq:busmar}. The distributional claims in \eqref{cif5} follow immediately. 

It remains to show that $\cif^x = \lim_{n\to\infty} \Cifa_n^x/n$, $\bfP$ almost surely. First, we note that $\bfP(\cif^x \in ]e_2,\mfc_1^x[ \, \cup \, ]\mfc_2^x,e_1[) = 0$ and therefore we may assume without loss of generality that $\cif^x \in \{e_1\} \cup \{e_2\} \cup\, [\mfc_1^x,\mfc_2^x]$.

Consider the case $\xi \notin \{e_1,e_2\}$. Take $\zeta,\eta \in ]e_2,e_1[\, \cap\, \sU_0$, where $\sU_0$ is any fixed countable dense subset of $[e_2,e_1]$ containing $e_1$  and $e_2$, with $\zeta \prec \cif^x \prec \eta$. Consider sequences $v_{n,\zeta},v_{n,\zeta}\in \bbZ^2$ with $v_{n,\zeta}/n\to\zeta$ and $v_{n,\eta}/n\to\eta$. By \eqref{def:cif} and Theorem \ref{thm:buslim}\eqref{thm:buslim:indp} and \eqref{thm:buslim:marg} (to rule out ties), we have $\buse{x}{x+e_1}{\zeta} > \buse{x}{x+e_2}{\zeta}$ and $\buse{x}{x+e_1}{\eta} < \buse{x}{x+e_2}{\eta}$. For all sufficiently large $n$, by Theorem \ref{thm:buslim} \eqref{thm:buslim:buslim}, 
\begin{align*}
\init{\J}_{x,v_{n,\zeta}} < \init{\I}_{x,v_{n,\zeta}} \text{ and } \init{\J}_{x,v_{n,\eta}} > \init{\I}_{x,v_{n,\eta}}
\end{align*}
It follows then that 
\begin{align*}
\zeta \preceq \varliminf \frac{\Cifa_n^x}{n} \preceq \varlimsup \frac{\Cifa_n^x}{n} \preceq \eta.
\end{align*}
Taking $\zeta\nearrow\cif^x$ and $\eta\searrow\cif^x$ gives  $\lim \Cifa_n^x/n = \cif^x$. 

We give the details of the case $\Cifa^x = e_2$, with the $e_1$ case being similar. Combining the hypothesis that $\cif^x = e_2$ with the observation that $\buse{x}{x+e_1}{e_2} \neq \buse{x}{x+e_2}{e_2}$,  (which follows from Theorem \ref{thm:buslim} \eqref{thm:buslim:indp} and \eqref{thm:buslim:marg}), we have $\buse{x}{x+e_1}{e_2} > \buse{x}{x+e_2}{e_2}.$  It then follows from Theorem \ref{thm:buslim} \eqref{thm:buslim:buslim} that if we take any sequence $v_n$ with $v_n\cdot e_1, v_n \cdot e_2 \to \infty$ and with $v_n/n \to e_2$, then we must have for all sufficiently large $n$, $\init{\I}_{x,v_n} > \init{\J}_{x,v_n}$. It then follows that $\varlimsup \Cifa_n^x/n \preceq e_2$, from which we see that $\lim \Cifa_n^x/n = e_2$.
\end{proof}

\section{Inhomogeneous TAZRP}\label{sec:particles}
Denote $\Cifa=\Cifa^{(1,1)}$, $\Cifa^* = \Cifa - (1/2,1/2)$, $\cif = \cif^{(1,1)}$, $\shp^{(1,1)} = \shp$, and recall $\Cifb_t$, which was introduced in \eqref{cif-456}.  
\begin{proposition}
 On the event where $\Cifa_n \cdot e_1 \to \infty$ and $\Cifa_n \cdot e_2 \to \infty$,
\begin{align*}
 \lim_{t\to\infty} \frac{\Cifb_t}{t}&=  \frac{\cif}{\shp(\cif)}.
\end{align*} 
\end{proposition}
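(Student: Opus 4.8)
The statement is essentially a ``continuous-time to discrete-time'' transfer: the discrete competition interface $\Cifa_n^*$ has asymptotic direction $\cif$ by Theorem~\ref{thm:cif1}\eqref{thm:cif1:3}, while the time parametrization of $\Cifb_t$ is governed by the passage times $\tau_n = \LPP{(1,1)}{\Cifa_n^*} - \w_{(1,1)}$. The plan is therefore to establish two ingredients and combine them: first, that $\Cifa_n^*/n \to \cif$ on the event in question, and second, that $\tau_n/n \to \shp(\cif)$ on the same event. Given both, the identity follows by a standard time-change argument: for $t \in [\tau_n, \tau_{n+1})$ we have $\Cifb_t = \Cifa_n^*$, so $\Cifb_t/t$ is sandwiched between $\Cifa_n^*/\tau_{n+1}$ and $\Cifa_n^*/\tau_n$, and as $t \to \infty$ (hence $n \to \infty$) both bounds converge to $\cif/\shp(\cif)$, using that $\tau_{n+1}/\tau_n \to 1$ because $\tau_n/n$ has a finite positive limit.

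First I would record that on the event $\{\Cifa_n \cdot e_1 \to \infty,\ \Cifa_n \cdot e_2 \to \infty\}$, we have $\Cifa_n^*/n \to \cif$ with $\cif \in \,]\mfc_1^{(1,1)}, \mfc_2^{(1,1)}[\,\cup\{\mfc_1^{(1,1)},\mfc_2^{(1,1)}\}$ and in particular $\cif \notin \{e_1,e_2\}$; this is immediate from Theorem~\ref{thm:cif1}\eqref{thm:cif1:3} together with the observation that $\cif^x = e_2$ (resp.\ $e_1$) forces $\Cifa_n^x \cdot e_2$ (resp.\ $\Cifa_n^x \cdot e_1$) to remain bounded, as one sees from the proof of that theorem. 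Since $\cif \in [\mfc_1^{(1,1)}, \mfc_2^{(1,1)}]$ and $\cif \notin\{e_1,e_2\}$, in particular $\shp(\cif) = \shp^{(1,1)}(\cif) \in (0,\infty)$, so the ratio $\cif/\shp(\cif)$ makes sense.

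Next I would argue $\tau_n/n \to \shp(\cif)$. Write $\Cifa_n^* = v_n$ with $v_n/n \to \cif$ and $\min\{v_n\cdot e_1, v_n \cdot e_2\} \to \infty$ on our event. If $\cif \in \,]\mfc_1^{(1,1)},\mfc_2^{(1,1)}[$, then Proposition~\ref{prop:shape} applies directly to give $\LPP{(1,1)}{v_n}/n \to \shp^{(1,1)}(\cif) = \shp(\cif)$, and since $\tau_n = \LPP{(1,1)}{v_n} - \w_{(1,1)}$ differs from $\LPP{(1,1)}{v_n}$ by a fixed constant, $\tau_n/n \to \shp(\cif)$ as well. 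The boundary cases $\cif \in \{\mfc_1^{(1,1)}, \mfc_2^{(1,1)}\}$ (which can occur only when the corresponding critical direction is interior, i.e.\ not equal to $e_1$ or $e_2$) also follow from Proposition~\ref{prop:shape}: the hypotheses there require only $v_n/n \to \xi \in [e_2,e_1]$ and $\min\{v_n\cdot e_1, v_n \cdot e_2\}\to\infty$, which hold, and $\shp^{(1,1)}$ is continuous on $[e_2,e_1]$, so the limit is $\shp^{(1,1)}(\cif) = \shp(\cif)$ by continuity of $\shp_z$ in $z$ and the definition \eqref{eq:shape}. (One could alternatively invoke Theorem~\ref{thm:buslim} to squeeze $\tau_n/n$ between passage times in nearby interior directions, but the shape theorem suffices.)

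Finally, the time change. Fix $\omega$ in the almost sure event on which both $\Cifa_n^*/n \to \cif$ and $\tau_n/n \to \shp(\cif) \in (0,\infty)$ hold, intersected with the event in the proposition statement. Since $\tau_n \to \infty$ and $\tau_n$ is nondecreasing in $n$, for each large $t$ there is a unique $n = n(t)$ with $t \in [\tau_{n}, \tau_{n+1})$, and $n(t) \to \infty$ as $t \to \infty$. For such $t$,
\begin{align*}
\frac{\Cifa_{n(t)}^*}{\tau_{n(t)+1}} \ \text{(coordinatewise)} \ \preceq\ \frac{\Cifb_t}{t}\ \preceq\ \frac{\Cifa_{n(t)}^*}{\tau_{n(t)}},
\end{align*}
where the comparison is understood componentwise since all coordinates are eventually positive. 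Writing $\Cifa_{n}^*/\tau_{n} = (\Cifa_n^*/n)\cdot(n/\tau_n)$ and similarly for the upper index, and using $\tau_{n+1}/n = (\tau_{n+1}/(n+1))\cdot((n+1)/n) \to \shp(\cif)$, both the left and right members converge to $\cif \cdot \shp(\cif)^{-1} = \cif/\shp(\cif)$ as $t \to \infty$. Hence $\Cifb_t/t \to \cif/\shp(\cif)$, which is the claim. The main obstacle, such as it is, is simply making sure the boundary cases $\cif \in \{\mfc_1^{(1,1)}, \mfc_2^{(1,1)}\}$ are covered by the shape theorem rather than requiring the finer Busemann machinery; once one notes that Proposition~\ref{prop:shape} already allows arbitrary $\xi \in [e_2,e_1]$ with divergent coordinates, everything is routine.
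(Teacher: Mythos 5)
Your proposal is correct and takes essentially the same route as the paper's (very terse) proof: Theorem \ref{thm:cif1}\eqref{thm:cif1:3} gives $\Cifa_n^*/n \to \cif$, Proposition \ref{prop:shape} (which indeed applies to any $\xi \in [e_2,e_1]$ once both coordinates diverge, so no case split on $\cif$ is needed) gives $\tau_n/n \to \shp(\cif)$, and the monotone time-change sandwich converts this into $\Cifb_t/t \to \cif/\shp(\cif)$. One small slip in a non-load-bearing aside: $\cif = e_2$ corresponds (almost surely) to $\Cifa_n \cdot e_1$ staying bounded, i.e.\ the interface trapped in a vertical strip (and symmetrically for $e_1$), not to $\Cifa_n \cdot e_2$ staying bounded as you wrote; since $\shp^{(1,1)}$ is finite and strictly positive on all of $[e_2,e_1]$ under \eqref{as:inf}--\eqref{as:weakcon}, this does not affect the rest of your argument.
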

 On the event where $\Cifa_n \cdot e_1\to k < \infty$ or $\Cifa_n \cdot e_2 \to \ell < \infty$, the limit exists and is given by
 \begin{align*}
  \lim_{t\to\infty} \frac{\Cifb_t}{t} = \frac{e_1}{\int (b+ \smin{a}_{1:k})^{-1}\beta(db)} \qquad \text{ or }\qquad    \lim_{t\to\infty} \frac{\Cifb_t}{t}=\frac{e_2}{\int (a+ \smin{b}_{1:k})^{-1}\alpha(da)}
 \end{align*}
 respectively. With this observation in mind, we can now prove Theorem \ref{th:2cl}.

\begin{proof}
On the event in the first part of the statement, by  the limit in Theorem \ref{thm:cif1} \eqref{thm:cif1:3}  and Proposition \ref{prop:shape}, we have
\be\label{cif78}  \begin{aligned}
 \lim_{t\to\infty} \frac{\Cifb_t}{t}&=\lim_{n\to\infty}\frac{\Cifa_n^*}{\tau_n} =\lim_{n\to\infty}\frac{\Cifa_n^*}{ \LPP{(1,1)}{\Cifa_n^*}-\w_{(1,1)}} =\lim_{n\to\infty}\frac{\Cifa_n^*}n \cdot \frac{n}{\LPP{(1,1)}{\Cifa_n^*}-\w_{(1,1)}}. \\[4pt] 
\end{aligned}\ee 
The cases correspond to the possible limits in Proposition \ref{prop:shape}. 
\end{proof}

\begin{proof}[Proof of Theorem \ref{th:2cl}] 
The location of a customer can only increase, so  $\ddd Z(\infty)=\lim_{t\to\infty} Z(t) \in \Z_{\ge2}\cup\{\infty\}$ exists by monotonicity.   
By   Lemma \ref{2cl-star-lm},  Lemma \ref{fp-lem},  \eqref{cif-456}, and   \eqref{def:Cifs}, 
\begin{align*}
   Z(\infty)&= \lim_{t\to\infty} J(t) +1 \stackrel{d}{=} \lim_{t\to\infty} \Cifb(t)\cdot e_2 +1 = \lim_{n\to\infty} \Cifa(n)\cdot e_2 +1 
   =\Cifs^{(1,1)}(\infty)+1.  \end{align*}  
This proves Theorem \ref{th:2cl} \eqref{th:2cl:1}.

Recalling that $a_i = 0$ for all $i$, we have $\sinf{b}_{1:\infty}>0$ (because of \eqref{as:inf}), so
\begin{align*}
\shp(e_2) = \int_0^\infty \frac{\beta(db)}{b} <\infty \qquad \text{ and }\qquad \shp(e_1) = \frac{1}{\inf b_{1:\infty}} < \infty. 
\end{align*}
It then follows from Proposition \ref{prop:shape} 
that
\begin{align*}
  \vela&= \lim_{t\to\infty} \frac{Z(t)}t= \lim_{t\to\infty} \frac{J(t)}t =\lim_{t\to\infty} \frac{\Cifb(t)\cdot e_2}{t} = \frac{\cif \cdot e_2}{\shp(\cif)}.
 \end{align*}
We can write each $\xi \in [e_2,e_1]$ as $(1-t,t)$ for a unique $t = t(\xi) \in [0,1]$. With this identification, for $t \neq 0$, we have $\xi \cdot e_2/\shp(\xi) = t/\shp(1-t,t) = 1/\shp(1/t-1,1)$ by homogeneity. From \eqref{eq:shape},  $x \mapsto \shp(x,1)$ is strictly increasing on $(0,\infty)$. It follows that $\vela \in \left[0, \left(\int_0^\infty b^{-1}\beta(db)\right)^{-1}\right]$. 
By Theorem \ref{thm:cif1} \eqref{thm:cif1:3},
\begin{align*}
\bfP\left(\vela=0\right) = \bfP\left(\cif = e_1\right) =  1 - \frac{\sinf{b}_{1:\infty}}{b_1}.
\end{align*}
 
  Denote by $\gamma^{-1}(x,1)$ the inverse function of the function $x \mapsto \gamma(x,1)$. For $0< s< \left(\int_0^\infty b^{-1} \beta(db)\right)^{-1}$, call
\begin{align*}
\zeta(s) &= \left(\frac{\shp^{-1}(1/s,1)}{1 + \shp^{-1}(1/s,1)}, \frac{1}{1+\shp^{-1}(1/s,1)}\right).
\end{align*}
Again using the notation $\cif=(1-t(\cif),t(\cif))$ as above, apply Theorem \ref{thm:cif1}\eqref{thm:cif1:3}  to obtain 
\begin{align*}
\bfP\left(\vela\le s\right)&=\bfP\left(  \frac{\cif \cdot e_2}{\gamma(\cif)}\le s \right) =\bfP(1/s \leq \shp(1/t(\cif)-1,1) =\bfP(\shp^{-1}(1/s,1) \leq 1/t(\cif)-1) \\
&= 1- \bfP\left(\cif \preceq  \zeta(s) \right) 
=1-\frac{\zmin{}(\zeta(s))}{b_{1}}.
\end{align*}
Differentiating, we see that for $0 < x < (\int_0^\infty b^{-1} \beta(db))^{-1}$, $\shp'(x,1) = 1/\zmin{}(x,1)$. Using this observation and homogeneity, it follows that
$\zmin{}(\zeta(s)) = 1/\shp'(\shp^{-1}(1/s,1), 1) = (\shp^{-1})'(1/s,1)$
where $\shp'(x,1)$ is the derivative of $x \mapsto \shp(x,1)$, $\shp^{-1}(x,1)$ is the inverse of the same map, and $(\shp^{-1})'(x,1)$ is the derivative of this inverse function.
\end{proof} 

\section*{Statements and declarations}
\subsection*{Data availability statement} Data sharing is not applicable to this article as no datasets were generated or analyzed during the current study.

\subsection*{Conflicts of interest/Competing interests} The authors have no conflicts of interest to declare that are relevant to the content of this article.

\bigskip 

\small 
\bibliographystyle{habbrv}
\bibliography{refs} 
\end{document}